\theoremstyle{plain}
    \newtheorem{theorem}{Theorem}[section]
    \newtheorem{lemma}[theorem]{Lemma}
    \newtheorem{corollary}[theorem]{Corollary}
    \newtheorem{proposition}[theorem]{Proposition}
 \theoremstyle{definition}
    \newtheorem{definition}[theorem]{Definition}
    \newtheorem{example}[theorem]{Example}
    \newtheorem{remark}[theorem]{Remark}
\theoremstyle{remark}
\numberwithin{equation}{section}
\DeclareMathOperator{\Ad}{Ad}
\DeclareMathOperator{\ad}{ad}
\DeclareMathOperator{\indx}{index}
\DeclareMathOperator{\End}{End}
\DeclareMathOperator{\Hom}{Hom}
\DeclareMathOperator{\sgn}{sgn}
\DeclareMathOperator{\Spin}{Spin}
\DeclareMathOperator{\SO}{SO}
\DeclareMathOperator{\SU}{SU}
\DeclareMathOperator{\SL}{SL}
\DeclareMathOperator{\U}{U}
\DeclareMathOperator{\temp}{temp}
 \DeclareMathOperator{\Ind}{Ind}
\begin{document}

\newcommand{\Spinc}{\Spin^c}

    \newcommand{\R}{\mathbb{R}}
    \newcommand{\C}{\mathbb{C}} 
    \newcommand{\N}{\mathbb{N}}
    \newcommand{\Z}{\mathbb{Z}} 
    \newcommand{\Q}{\mathbb{Q}}
    \newcommand{\bK}{\mathbb{K}}

\newcommand{\g}{\mathfrak{g}}
\newcommand{\ka}{\mathfrak{a}}
\newcommand{\km}{\mathfrak{m}}
\newcommand{\kn}{\mathfrak{n}}
\newcommand{\kg}{\mathfrak{g}} 
\newcommand{\kt}{\mathfrak{t}}
\newcommand{\kA}{\mathfrak{A}}
\newcommand{\XX}{\mathfrak{X}}
\newcommand{\kh}{\mathfrak{h}} 
\newcommand{\kp}{\mathfrak{p}}
\newcommand{\p}{\mathfrak{p}}
\newcommand{\kk}{\mathfrak{k}}
\newcommand{\ks}{\mathfrak{s}}

\newcommand{\cE}{\mathcal{E}}
\newcommand{\cA}{\mathcal{A}}
\newcommand{\calL}{\mathcal{L}}
\newcommand{\calH}{\mathcal{H}}
\newcommand{\cO}{\mathcal{O}}
\newcommand{\cB}{\mathcal{B}}
\newcommand{\cK}{\mathcal{K}}
\newcommand{\cP}{\mathcal{P}}
\newcommand{\calD}{\mathcal{D}}
\newcommand{\cF}{\mathcal{F}}
\newcommand{\calR}{\mathcal{R}}
\newcommand{\cX}{\mathcal{X}}
\newcommand{\calM}{\mathcal{M}}
\newcommand{\cS}{\mathcal{S}}
\newcommand{\cU}{\mathcal{U}}

\newcommand{\Sj}{ \sum_{j = 1}^{\dim G}}
\newcommand{\Sk}{ \sum_{k = 1}^{\dim M}}
\newcommand{\ii}{\sqrt{-1}}

\newcommand{\Bigwedge}{\textstyle{\bigwedge}}

\newcommand{\ddt}{\left. \frac{d}{dt}\right|_{t=0}}

\newcommand{\mattwo}[4]{
\left( \begin{array}{cc}
#1 & #2 \\ #3 & #4
\end{array}
\right)
}

\newcommand{\PM}{P}
\newcommand{\DM}{D}
\newcommand{\LM}{L}
\newcommand{\vM}{v}

\newcommand{\Wedge}{\lambda}

\newcommand{\specialin}{\hspace{-1mm} \in \hspace{1mm} }

\newcommand{\bspl}{\[ \begin{split}}
\newcommand{\espl}{\end{split} \]}

\newcommand{\Utilde}{\widetilde{U}}
\newcommand{\Xtilde}{\widetilde{X}}
\newcommand{\Dtilde}{\widetilde{D}}
\newcommand{\Etilde}{\widetilde{S}}
\newcommand{\wt}{\widetilde}
\newcommand{\pd}{\overline{\partial}}

\newcommand{\Rhat}{\widehat{R}}

\newcommand{\beq}[1]{\begin{equation} \label{#1}}
\newcommand{\eeq}{\end{equation}}

\newenvironment{proofof}[1]
{\noindent \emph{Proof of #1.}}{\hfill $\square$}

\title{A geometric realisation of tempered representations restricted to maximal compact subgroups}

\author{Peter Hochs, Yanli Song and Shilin Yu}
\date{\today}

\maketitle

\begin{abstract}
Let $G$ be a connected, linear, real reductive Lie group with compact centre. Let $K<G$ be maximal compact. For a tempered representation $\pi$ of $G$, we realise the restriction $\pi|_K$ as the $K$-equivariant index of a Dirac operator on a homogeneous space of the form $G/H$, for a Cartan subgroup $H<G$. (The result in fact applies to every standard representation.) Such a space can be identified with a coadjoint orbit of $G$, so that we obtain an explicit version of Kirillov's orbit method for $\pi|_K$. In a companion paper, we use this realisation of $\pi|_K$ to give a geometric expression for the multiplicities of the $K$-types of $\pi$, in the spirit of the quantisation commutes with reduction principle. This generalises work by Paradan for the discrete series to arbitrary tempered representations.
\end{abstract}

\tableofcontents


\section{Introduction}

Let $G$ be a connected, linear, real reductive Lie group with compact centre, and let $\kg$ be its Lie algebra. Let $K<G$ be a maximal compact subgroup. Harish--Chandra showed that a unitary irreducible representation $\pi$ of $G$ is determined by the corresponding actions by $K$ and $\kg$ on the $K$-finite vectors in the representation space of $\pi$. This means that, in a sense, half the information about $\pi$ is contained in the restriction $\pi|_K$. The explicit form of this information consists of the multiplicities of the irreducible representations of $K$ in $\pi|_K$; i.e.\ the multiplicities of the $K$-types of $\pi$.

In this paper, we consider tempered representations $\pi$, and realise $\pi|_K$ as the equivariant index of a Dirac operator on a homogeneous space of $G$. In \cite{PHYSSY2}, we use this realisation to obtain a geometric expression for the multiplicities of the $K$-types of $\pi$. This expression is the index of a Dirac operator on a compact orbifold. These orbifolds are \emph{reduced spaces} in the $\Spinc$-sense,  analogous to those in symplectic geometry. The expression for multiplicities of $K$-types is an instance of the \emph{quantisation commutes with reduction} principle. This has direct consequences to the behaviour of multiplicities of $K$-types, such as criteria for them to equal $0$ or $1$.

Paradan \cite{Paradan03} did all of this for the discrete series. This paper is inspired by his work, and extends it to general tempered representations.

\subsection{Background and motivation}

Kirillov's orbit method is the idea that there should be a correspondence between (some) unitary irreducible representations $\pi$ of a Lie group $G$, and (some) orbits $\cO$ of the coadjoint action by $G$ on the dual $\kg^*$ of its Lie algebra. See \cite{Vogan00} for an account of the orbit method for reductive groups. On an intuitive level, the correspondence between orbits $\cO$ and representations $\pi$ is that $\pi = Q(\cO)$, the \emph{geometric quantisation} of $\cO$. In the modern mathematical approach, $Q(\cO)$ should be the equivariant index of a Dirac operator on $\cO$. But defining this rigorously is a challenge if $G$ and $\cO$ are noncompact.

A rigorous construction is essential for many applications, however. While the orbit method is a powerful guiding principle, it needs to be made precise for specific classes of groups and representations to have explicit consequences. The scope for applications depends on the properties of the construction $\pi = Q(\cO)$. It is very useful if $Q(\cO)$ is the index of an elliptic differential operator, because this opens up the possibility to apply powerful techniques from index theory to study $\pi$. For example, if the index used satisfies a fixed point formula, then this can be used to compute the global character of $\pi$. See \cite{HochsWang17} for an application of this to discrete series representations. If the index satisfies the quantisation commutes with reduction principle, then this can be used to express the decomposition into irreducibles of the restriction of $\pi$ to closed subgroups of $G$ in terms of the geometry of $\cO$.

Atiyah--Schmid \cite{Atiyah77}, Parthasarathy \cite{Parthasarathy72} and Schmid \cite{Schmid76} realised discrete series representations as $L^2$-kernels of Dirac operators on homogeneous spaces of $G$. Schmid's result fits directly into the orbit method framework. However, since kernels of Dirac operators are used, rather than indices, one cannot apply index theory  results, such as fixed point formulas and the quantisation commutes with reduction principle, to these realisations to obtain information about the discrete series. Paradan \cite{Paradan03} realised restrictions of discrete series representations $\pi$ to $K$ as $K$-equivariant indices of Dirac operators on coadjoint orbits $\cO$. While his realisation only applies to the restriction of  $\pi$ to $K$, his approach has the important advantage that the index he used satisfies the quantisation commutes with reduction principle. This principle, proved by Paradan in this setting, implies a geometric expression for the multiplicities of the $K$-types of $\pi$, as indices of Dirac operators on compact orbifolds. These orbifolds are \emph{reduced spaces} for the action by $K$ on $\cO$. If $p\colon \kg^* \to \kk^*$ is the restriction map, then the reduced space at $\xi \in \kk^*$ is
\[
\cO_{\xi} = (p^{-1}(\Ad^*(K)\xi) \cap \cO)/K.
\]

For discrete series representations $\pi$, Blattner's formula, proved by Hecht--Schmid \cite{HechtSchmid75}, is an explicit combinatorial expression for multiplicities of $K$-types. This was in fact used by Paradan to obtain his realisation of $\pi|_K$, and the resulting geometric multiplicity formula. But this geometric multiplicity formula has the advantage that it allows one to draw conclusions about the $K$-types of $\pi$ from the geometry of the corresponding coadjoint orbit. For example, about the question when their multiplicities equal $0$ or $1$.

In this paper, we generalise Paradan's construction to arbitrary tempered representations. Tempered representations are those unitary irreducible representations whose $K$-finite matrix coefficients are in $L^{2+\varepsilon}(G)$, for all $\varepsilon > 0$. 
The set $\hat G_{\temp}$ of these representations occurs in the Plancherel decomposition
\[
L^2(G) = \int_{\hat G_{\temp}}^{\oplus} \pi \otimes \pi^*\, d\mu(\pi)
\]
of $L^2(G)$ as a representation of $G\times G$. Here $\mu$ is the Plancherel measure. For this reason, tempered representations are central to harmonic analysis. Furthermore, they feature in the Langlands classification of admissible irreducible representations.

For general tempered representations, a multiplicity formula for $K$-types is especially valuable, because no explicit formula multiplicity formula exists yet in this generality. There are algorithms to compute these multiplicities, see the ATLAS software package\footnote{See \texttt{http://www.liegroups.org/software/}.}  developed by du Cloux, van Leeuwen, Vogan and many others; see also \cite{ATLASdoc}. But it is a challenge to draw conclusions about the general behaviour of multiplicities of $K$-types from these algorithms. The main difficulty is that they involve representations of disconnected subgroups, which cannot be classified via Lie algebra techniques. Also, already for the discrete series,  cancellation of terms in Blattner's formula can make it nontrivial to evaluate it, and for example to see when multiplicities equal zero.
The geometric multiplicity formula we deduce from the main result of this paper in \cite{PHYSSY2} allows us to use the geometry of coadjoint orbits to study multiplicities of $K$-types. In that paper, we obtain applications to multiplicity-free restrictions, and for example show that tempered representations of $\SU(p,1)$, $\SO_0(p,1)$ and $\SO_0(2,2)$ with regular infinitesimal characters have multiplicity-free restrictions to maximal compact subgroups.

\subsection{The main result}

Let $\pi$ be a tempered representation of $G$. The main result in this paper is Theorem \ref{main theorem}, a realisation of $\pi|_K$ as the $K$-equivariant index of a Dirac operator on $G/H$, for a Cartan subgroup $H<G$. Since $G/H$ is noncompact in general, the kernel of such an operator is infinite-dimensional. This is desirable, since $\pi$ is infinite-dimensional, but it makes the definition of the index more involved than on a compact manifold. We use index theory developed by Braverman \cite{Braverman02}. 

Our construction involves a map
\[
\Phi\colon G/H \to \kk^*
\]
of the form $\Phi(gH) = (\Ad^*(g)\xi)|_K$, for a fixed $\xi \in \kg^*$, and where $g \in G$. This is a moment map in the sense of symplectic geometry, although our construction will take us into the more general almost complex or $\Spinc$-setting. After we identify $\kk^* \cong \kk$ via an $\Ad(K)$-invariant inner product, the map $\Phi$ induces a vector field $v^{\Phi}$ via the infinitesimal action by $\kk$ on $G/H$. We define a $K$-invariant almost complex structure $J$ on $G/H$. This induces a Clifford action $c$ by $T(G/H)$ on $\Bigwedge_J T(G/H)$. Here $\Bigwedge_J$ stands for the exterior algebra of complex vector spaces. Let $D$ be a Dirac operator on $\Bigwedge_J T(G/H)$. We write $\Bigwedge_J^{\pm} T(G/H)$ for the even and odd degree parts of this bundle, respectively. As a special case of Theorem 2.9 in \cite{Braverman02}, we find that the multiplicities $m_{\delta}^{\pm}$ of irreducible representations $\delta$ of $K$ in 
\[
\ker(D - ifc(v^{\Phi})) \cap L^2(\Bigwedge_J^{\pm} T(G/H)\otimes L_{\pi})
\]
are finite, for a function $f$ with suitable growth behaviour. Here $L_{\pi} \to G/H$ is a certain line bundle associated to $\pi$. This deformation of the Dirac operator goes back to Tian--Zhang \cite{Zhang98}, who used it to give a proof of  Guillemin--Sternberg's quantisation commutes with reduction conjecture. Furthermore, $m_{\delta}^+ - m^-_{\delta}$ is independent of $f$ and of the specific Dirac operator $D$ used. This allows us to consider the $K$-equivariant index
\[
\indx_K(\Bigwedge_J T(G/H)\otimes L_{\pi}, \Phi) := \bigoplus_{\delta \in \hat K} (m_{\delta}^+ - m_{\delta}^-)\delta.
\]
This index defines an element of the completion $\Hom_{\Z}(R(K), \Z)$ of the representation ring $R(K)$ of $K$. It equals indices defined and used by Paradan--Vergne \cite{Paradan03, Paradan11, Vergne06} and Ma--Zhang \cite{Zhang14}.

The main result in this paper, Theorem \ref{main theorem}, states that this index equals $\pi|_K$, up to a sign.
\begin{theorem}
\label{thm main result intro}
We have
\[
\pi|_K = \pm \indx_K(\Bigwedge_J T(G/H)\otimes L_{\pi}, \Phi).
\]
\end{theorem}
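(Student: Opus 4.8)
The plan is to reduce the statement to the case of a (limit of) discrete series representation of a Levi subgroup, where it is essentially Paradan's theorem \cite{Paradan03}, and then to lift that computation to $G$ by a geometric form of parabolic induction. Write $\pi$ in Langlands--Knapp--Zuckerman form: fix a cuspidal parabolic $P = MAN < G$ with $H = TA$, where $T<M$ is a compact Cartan subgroup, a (limit of) discrete series representation $\sigma$ of $M$ with parameter $\lambda \in i\kt^*$, and $\nu \in \ka^*$, so that $\pi = \Ind_P^G(\sigma\otimes e^{i\nu}\otimes 1)$ and $\xi = \lambda + \nu \in i\kt^* \oplus \ka^* \subset \kh^*$. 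Since $G = KP$ with $K\cap P = K\cap M$, the twists by $e^{i\nu}$ and by $N$ are invisible to $K$, so one has the elementary identity
\[
\pi|_K \;\cong\; \Ind_{K\cap M}^K\bigl(\sigma|_{K\cap M}\bigr).
\]
It therefore suffices to (a) identify $\sigma|_{K\cap M}$ with the analogous Dirac index for $M$ on $M/T$, and (b) show that the $K$-index on $G/H$ is the $K$-induction of that $(K\cap M)$-index on $M/T$.

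For step (b) one uses the Iwasawa decomposition to produce a $K$-equivariant diffeomorphism
\[
G/H \;\cong\; K\times_{K\cap M}(P/TA), \qquad P/TA \;\cong\; M\times_T \kn,
\]
exhibiting $G/H$ as a $K$-equivariant fibre bundle over $K/(K\cap M)$ whose fibre $P/TA$ is the total space of the $M$-equivariant vector bundle $M\times_T\kn \to M/T$. The $K$-equivariant index over a space of the form $K\times_{K\cap M}Y$ equals $\Ind_{K\cap M}^K$ of the $(K\cap M)$-equivariant index over $Y$; and the $(K\cap M)$-equivariant index over $P/TA$ reduces to that over $M/T$ because the Tian--Zhang/Braverman deformation $D - ifc(v^{\Phi})$ is radial and proper in the $\kn$-fibre directions, hence localises onto the zero section $M/T$. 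The Dirac operator along the $\kn\oplus\theta\kn$-directions then contributes a one-dimensional factor carrying exactly the character by which $L_\pi$ and $L_\sigma$ differ, namely the $\rho_{\kn}$-shift built into the definition of $L_\pi$. Assembling these, one obtains
\[
\indx_K\bigl(\Bigwedge_J T(G/H)\otimes L_\pi, \Phi\bigr) \;=\; \Ind_{K\cap M}^K\,\indx_{K\cap M}\bigl(\Bigwedge_{J_M} T(M/T)\otimes L_\sigma, \Phi_M\bigr).
\]

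For step (a): when $\sigma$ is a discrete series of $M$, the group $M$ is of equal rank and $M/T$ with its invariant complex structure is an elliptic coadjoint orbit, and the right-hand index above equals $\pm\,\sigma|_{K\cap M}$ by Paradan's theorem \cite{Paradan03}. When $\lambda$ is singular, i.e.\ $\sigma$ is a limit of discrete series, one moves $\xi$ to a nearby regular parameter and argues either by continuity of the $(K\cap M)$-equivariant index in the parameter, or by rerunning Paradan's transversally elliptic localisation directly, the outcome being governed by the limiting Blattner formula of Hecht--Schmid \cite{HechtSchmid75}. Feeding (a) and (b) back into the identity for $\pi|_K$ proves the theorem, with the sign $\pm$ the one occurring in (a), made explicit by the orientation and metaplectic conventions (it depends only on $\dim(M/T) \bmod 4$ and on $\dim \kn$).

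The main obstacle is carrying out step (b) honestly in the noncompact category: one must show that Braverman's index on $G/H$ is insensitive to the noncompact fibre directions $\kn$ (and, via $e^{i\nu}$, to $\ka$), which requires controlling the taming vector field $v^{\Phi}$ away from the compact core $K\times_{K\cap M}M/T \subset G/H$ and checking that the deformed $L^2$-kernels behave well under the fibration; and one must track the various $\rho$-shifts and metaplectic corrections so that $L_\pi$ genuinely decomposes as $L_\sigma$ tensored with the $\kn$-contribution along the fibres. An alternative to (a)+(b) is to compute the distributional character $k\mapsto \chi\bigl(\indx_K(\cdots)\bigr)(k)$ via a fixed-point formula for Braverman's index and match it on $K_{\reg}$ with the restriction of Harish--Chandra's character formula for the standard representation $\pi$; since $\pi|_K$ is determined by its character this again yields the theorem, at the cost of an equally careful treatment of the noncompact fixed-point contributions.
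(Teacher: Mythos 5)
Your overall strategy is sound and close in spirit to the paper's, but the organisation is genuinely different. The paper does not keep $M/T$ intact and quote Paradan's theorem for $M$ as a black box: it linearises \emph{both} the $\ks_M$- and the $\kn$-directions at once, replacing $G/H$ by $E = K\times_{H_M}(\ks_M\oplus\kn)$ (Proposition \ref{prop linearise}), applies a single fibred-product index theorem over the compact base $K/H_M$ with \emph{vector-space} fibre (Proposition \ref{prop htp}), computes the fibre index as a product of Atiyah's index on $\ks_M$ and a Bott-type index on $\kn$ (Proposition \ref{prop decomp index}, Lemmas \ref{lem index p} and \ref{lem index n}), and matches the outcome with Paradan's reformulation of Blattner's formula for $\pi|_K$ (Proposition \ref{prop rewrite piK}). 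Your two-stage version --- induct over $K/(K\cap M)$ with fibre $M\times_{H_M}\kn$, localise onto the zero section, then invoke Paradan for $M$ --- would let you cite Paradan's theorem directly at the cost of an extra induction in stages; the paper's version reduces everything to explicit harmonic-oscillator computations on vector spaces and needs only Blattner's formula (in Paradan's reformulation) as representation-theoretic input. Both routes rest on the same two pillars (a fibred-product index theorem in the deformed setting, and Blattner), so the difference is one of packaging rather than of substance.

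That said, three points in your sketch are genuine gaps. First, $M$ is disconnected in general and its Cartan subgroup is $H_M = T_M Z_M$, not the connected torus; Paradan's theorem is stated for connected semisimple groups, so you need its extension to disconnected $M$ --- this is Proposition \ref{prop Blatt disconn} in the paper, and it is where the character $\chi_M$ of $Z_M$ enters. Second, ``radial and proper in the $\kn$-fibre directions, hence localises onto the zero section'' is not enough: one must show that the $\kn$-contribution is \emph{exactly} the trivial one-dimensional representation of $H_M$ in even degree, i.e.\ that the deformation restricted to $\kn$ is homotopic to a Bott element. The relevant deformation term $c\circ J_{\zeta}$ is \emph{not} Clifford multiplication by a vector field on $\kn$ (since $J_{\zeta}Y\in\kn^-$ is not tangent to $\kn$), so Braverman's cobordism invariance does not apply and the homotopy invariance must be proved by hand (Proposition \ref{prop indices mun} and Subsection \ref{sec index n}); moreover the taming map on $G/H$ is not literally of product form along the fibres, so preliminary homotopies (Proposition \ref{prop mu E}, Lemma \ref{lem muEt}) are needed before any fibre-wise argument can start --- this is precisely the ``honest step (b)'' you flag but do not carry out. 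Third, for limits of discrete series the proposal to ``move to a nearby regular parameter and argue by continuity'' is unsafe: the representation $\sigma$ jumps at walls, so continuity of the index does not identify the singular-parameter index with $\sigma|_{K\cap M}$. The correct route, taken in Lemma \ref{lem Blatt Par}, is that Blattner's formula itself holds for limits of discrete series by Hecht--Schmid, so Paradan's argument applies verbatim. Finally, a small correction of bookkeeping: with the paper's normalisation $L_{\pi}=L_{\lambda-\rho^M,\chi_M}$ there is no residual $\rho_{\kn}$-shift carried by the $\kn$-factor --- its index is exactly trivial --- and the sign is $(-1)^{\dim(M/K_M)/2}$, coming entirely from the $\ks_M$-direction.
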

See Section \ref{sec 3 def Dirac} for precise definitions of the sign $\pm$, and of $\Phi$, $J$ and $L_{\pi}$. In Subsection \ref{sec SL2}, we illustrate this result by working out what it means for $G = \SL(2,\R)$.

\subsection{Relation with geometric quantisation of coadjoint orbits}

If the infinitesimal character $\chi$ of $\pi$ is a regular element of $i\kh^*$, then Theorem \ref{thm main result intro} is a direct realisation of $\pi|_K$ as the $K$-equivariant geometric quantisation of a coadjoint orbit of $G$, as in the orbit method. Indeed, we may then take the map $\Phi$ to be the composition
\beq{eq Phi intro}
\Phi\colon G/H \xrightarrow{\cong} \Ad^*(G)(\chi+ \tilde \rho) \hookrightarrow \kg^* \to \kk^*,
\eeq
for an element $\tilde \rho \in i\kh^*$  defined in terms of half sums of positive roots. Then $\Phi$ is the natural moment map in the symplectic sense for the action by $K$ on the coadjoint orbit $\Ad^*(G)(\chi+ \tilde \rho)$. The line bundle $L_{\pi}$ is now such that   $\Bigwedge_J T(G/H)\otimes L_{\pi}$ is the spinor bundle of a $\Spinc$-structure with determinant line bundle
\[
G \times_H \C_{2(\chi + \tilde \rho)} \to G/H = \Ad^*(G)(\chi+ \tilde \rho),
\]
twisted by a one-dimensional representation of a finite Cartesian factor of $H$. In fact, $\Phi$ is a moment map in the $\Spinc$-sense \cite{PV14} for this $\Spinc$-structure. Therefore, Theorem \ref{thm main result intro} now states that $\pi|_K$ is the $K$-equivariant $\Spinc$-quantisation \cite{HS16, Paradan03, PV14} of $\Ad^*(G)(\chi + \tilde \rho)$.

Index theory of Dirac operators deformed by the vector field $v^{\Phi}$, for a moment map $\Phi$, appears frequently and naturally in geometric quantisation. (There are at least two other, but equivalent, definitions to the one we use here, used in \cite{Paradan01, Paradan11, Vergne06} and \cite{Zhang14}, respectively.) In the compact case, such deformations were used to prove {quantisation commutes with reduction} results \cite{Paradan01, PV14, Zhang98}. In the noncompact case, they are also used to define geometric quantisation \cite{ Mathai14, Mathai13,HS16, Zhang14, Paradan11, Vergne06}. So the use of deformed Dirac operators in Theorem \ref{thm main result intro} is natural from the point of view of geometric quantisation. A concrete consequence of this is that it allows us to apply the {quantisation commutes with reduction} principle
to compute the multiplicities of the $K$-types of $\pi$, as we do in \cite{PHYSSY2}.

Paradan  \cite{Paradan03} has pointed out  that $\Spinc$-quantisation is the relevant notion of geometric quantisation here; i.e.\ one should view $\Ad^*(G)(\chi + \tilde \rho)$ as a $\Spinc$-manifold rather than as a symplectic manifold. More specifically, the $\Spinc$-version of the quantisation commutes with reduction principle applies here, which we use in \cite{PHYSSY2} to deduce a geometric expression for the multiplicities of the $K$-types of $\pi$ from Theorem \ref{thm main result intro}. Paradan and Vergne \cite{PV14} showed that that principle has a natural generalisation from the symplectic setting to the $\Spinc$-setting. The version we use in \cite{PHYSSY2} is the result for noncompact $\Spinc$-manifolds proved in \cite{HS16}.

If $\chi$ is singular, then in the orbit method, $\pi$ is associated to a nilpotent orbit (which need not be the orbit through $\chi+ \tilde \rho$). In this case, the first map in \eqref{eq Phi intro} is a fibre bundle. By using $G/H$ rather than this nilpotent orbit, we do not directly deal with the problem of quantising nilpotent orbits, but we are able to use Theorem \ref{thm main result intro} to obtain a multiplicity formula for $K$-types for all tempered representations in \cite{PHYSSY2}.

\subsection{Ingredients of the proof}

There are several challenges in generalising Theorem \ref{thm main result intro} from discrete series representations 
 to arbitrary tempered representations.
\begin{enumerate}
\item
The space $G/H$ does not have a naturally defined $G$-invariant almost complex structure.
\item We do not have an explicit result like Blattner's formula to base the construction on.
\item If $T<K$ is a maximal torus and $\kg = \kk \oplus \ks$ is a Cartan decomposition, then in the discrete series case, we have a $K$-equivariant diffeomorphism $G/T = K\times_T \ks$. Such a ``partial linearisation" is more complicated in the general case.
\end{enumerate}
The last of these points takes most work to solve. 

To deduce a multiplicity formula for $K$-types from Theorem \ref{thm main result intro}, there are two main challenges.
\begin{enumerate}
\item Paradan showed in \cite{Paradan03} that one needs a version of the quantisation commutes with reduction principle for noncompact $\Spinc$-manifolds. He proved such a result in the setting relevant to discrete series representations, but one needs a more general version for arbitrary tempered representations.
\item It is unclear what coadjoint orbits, or what maps $\Phi$, one should use in general, for example for limits of the discrete series.
\end{enumerate}
The first of these points was solved in \cite{HS16}. There a general version of the quantisation commutes with reduction principle was proved  for noncompact $\Spinc$-manifolds. This was based on Paradan--Vergne's result for compact $\Spinc$-manifolds in \cite{Paradan14a, PV14}. The result in   \cite{HS16} is an analogue of the result by Ma--Zhang \cite{Zhang14} for noncompact symplectic manifolds in the more general $\Spinc$-setting. The second point will be solved in   \cite{PHYSSY2}.

The proof of Theorem \ref{thm main result intro} in this paper consists of 3 steps.
\begin{enumerate}
\item Prove that the right hand side of the equality in Theorem \ref{thm main result intro} equals an index on a ``partially linearised" space $E$ that is $K$-equivariantly diffeomorphic to $G/H$. This is done in Section \ref{sec linearise}; see Proposition \ref{prop linearise}.
\item Compute this index on $E$ explicitly. This is done in Sections \ref{sec ind fibred} and \ref{sec linearised index}; see Proposition \ref{prop lin index}.
\item Use that explicit expression to prove that the index on $E$ equals $\pi|_K$. This is done in Section \ref{sec pi K}; see Proposition \ref{prop lin comp}.
\end{enumerate}
The second step is the most elaborate. One reason for this is that the arguments we use involve deformations of Dirac operators that do not fit into the index theory developed by Braverman. That means that we have to use homotopy arguments specifically tailored to our situation, rather than the general cobordism invariance property of Braverman's index.

The representation theoretic input to our proof of Theorem \ref{thm main result intro} is:
\begin{itemize}
\item the part of Knapp and Zuckerman's classification of tempered representation that states that every tempered representation is basic (Corollary 8.8 in \cite{KZ1});
\item Blattner's formula for multiplicities of $K$-types of (limits of) discrete series representations \cite{HS75}.
\end{itemize}
In fact, to be precise, Theorem \ref{thm main result intro} applies to all basic (or standard) representations $\pi$ (see Remark \ref{rem basic}). The first of the above two ingredients is not necessary for the proof of the result in that formulation. 

\subsection*{Acknowledgements}

The authors are grateful to Maxim Braverman, Paul-\'Emile Paradan and David Vogan for their hospitality and inspiring discussions at various stages.

The first author was partially supported by the European Union, through Marie Curie fellowship PIOF-GA-2011-299300. He thanks Dartmouth College for funding a visit there in 2016. The third author was supported by the Direct Grants and Research Fellowship Scheme from the Chinese University of Hong Kong. 

\subsection*{Notation}

The Lie algebra of a Lie group is denoted by the corresponding lower case Gothic letter. We denote complexifications by superscripts $\C$.  The unitary dual of a group $H$ will be denoted by $\hat H$. If $H$ is an abelian Lie group and $\xi \in \kh^*$ satisfies the appropriate integrality condition, then we write $\C_{\xi}$ for the one-dimensional representation of $H$ with weight $\xi$. 

In Subsections
\ref{sec deformed Dirac} and \ref{sec prop index} and in Section
\ref{sec ind fibred}, the letter $M$ will denote a manifold. In the rest of this paper, $M$ is a subgroup of the group $G$. In Section \ref{sec ind fibred}, $N$ is another manifold, whereas $N$ denotes a subgroup of $G$ in the rest of this paper. (There is little risk of confusion, because the group $G$ does not play a role at all in the sections where $M$ and $N$ are manifolds.) The Levi--Civita connection on a Riemannian manifold $M$ will be denoted by $\nabla^{TM}$.


\section{Tempered representations}\label{sec tempered}

We start by reviewing the basic properties of tempered representations that we will need, including a part of their classification by Knapp and Zuckerman \cite{KZ1, KZ2, KZ3}.

Throughout this paper, except in Subsection \ref{sec limit ds},
$G$ will denote a connected, linear, real reductive Lie group with compact centre $Z_G$. (This is the class of groups  for which tempered representations were classified in \cite{KZ1, KZ2, KZ3}.) 
Let $K<G$ be a maximal compact subgroup. Let $\theta$ be the corresponding Cartan involution, with Cartan decomposition $\kg = \kk \oplus \ks$. Let $(\relbar, \relbar)$ be the $K$-invariant inner product on $\kg$ defined by the Killing form and $\theta$. We transfer this inner product to the dual spaces $\kg^*$ and $i\kg^*$ where necessary.

A unitary irreducible representation $\pi \in \hat G$  is \emph{tempered} if all of its $K$-finite matrix coefficients belong to $L^{2_+ \varepsilon}(G)$ for all $\varepsilon > 0$. If $\hat G_{\temp}$ is the set of tempered representations of $G$, then 
we have the Plancherel decomposition
\beq{eq Plancherel}
L^2(G) = \int^{\oplus}_{\hat G_{\temp}} \pi \otimes \pi^*\, d\mu(\pi),
\eeq
as representations of $G \times G$, where $\mu$ is the Plancherel measure. Tempered representations are important 
\begin{enumerate}
\item to harmonic analysis, because of \eqref{eq Plancherel};
\item because they are used in the Langlands classification of all admissible representations \cite{Langlands89}; see also e.g.\ Section VIII.15 in \cite{KnappBook}.
\end{enumerate}

\subsection{Limits of discrete series}\label{sec limit ds}

In this subsection and the next, we make different assumptions on $G$ than in the rest of this paper. (The contents of these subsections will later be applied to the subgroup $M<G$ in the Langlands decomposition $P = MAN$ of a cuspidal parabolic subgroup $P<G$.) Suppose $G$ is a real linear Lie group, not necessarily connected. Let $G_0 <G$ be the connected component of the identity element. We now assume that 
\begin{enumerate}
\item $\kg$ is reductive;
\item $G_0$ has compact centre;
\item $G$ has finitely many connected components;
\item if $G^{\C}$ is the analytic linear Lie group with Lie algebra $\kg^{\C}$, and if $Z(G)$ is the centraliser  of $G$ in the full general linear matrix group containing $G$, then $G \subset G^{\C}Z(G)$.
\end{enumerate}
These assumptions imply those used by Harish--Chandra in \cite{HC75}, see Section 1 of \cite{Knapp82}.

In addition to the above assumptions, we suppose that $G$ has a compact Cartan subgroup $T<K$. Then it has  discrete series and limits of discrete series representations. We recall the classification of those representations, taking into account the fact that $G$ may be disconnected. We refer to Section 1 of \cite{KZ1} for details. See also Sections IX.7 and XII.7 in \cite{KnappBook} for the connected case.

Let $R_G = R(\kg^{\C}, \kt^{\C})$ be the root system of $(\kg^{\C}, \kt^{\C})$.
For a regular element $\lambda \in i\kt^*$, let $\rho_{\lambda}$ be half the sum of the elements of $R_G$ with positive inner products with $\lambda$. Then the discrete series of $G_0$ is parametrised by the set of regular elements of  $\lambda \in i\kt^*$ for which $\lambda - \rho_{\lambda}$ is integral (i.e.\ lifts to a homomorphsm $e^{\lambda - \rho_{\lambda}}\colon T \to \U(1)$). For such an element $\lambda$, let $\pi^{G_0}_{\lambda}$ be the corresponding discrete series representation. For another such element $\lambda'$, we have $\pi^{G_0}_{\lambda} \cong \pi^{G_0}_{\lambda'}$ if and only if there is an element $w$ of the Weyl group $N_{G_0}(T)/Z_{G_0}(T)$ such that $\lambda' = w\lambda$.

Let $\lambda \in i\kt^*$ be as above. 
Let $\chi \in \hat Z_G$ be such that
\begin{equation}\label{eq chi}
\chi|_{T \cap Z_G} = e^{\lambda - \rho_{\lambda}}|_{T \cap Z_G} .
\end{equation}
Then we have the well-defined representation $\pi^{G_0}_{\lambda} \boxtimes \chi$ of $G_0 Z_G$, given by
\[
(\pi^{G_0}_{\lambda} \boxtimes \chi) (gz) =   \pi^{G_0}_{\lambda} (g) \chi(z),
\]
for $g \in G_0$ and $z \in Z_G$. Write
\[
\pi^G_{\lambda, \chi} := \Ind_{G_0 Z_G}^G(\pi^{G_0}_{\lambda} \boxtimes \chi)
\]
(Here and in the rest of this paper, $\Ind$ denotes normalised induction.)
This is a discrete series representation of $G$, and
all discrete series representations of $G$ are of this form. Two such representations $\pi^G_{\lambda, \chi}$ and $\pi^G_{\lambda', \chi'} $ are equivalent if and only if $\chi' = \chi$ and there is an element $w \in N_{G}(T)/Z_{G}(T)$ such that $\lambda' = w\lambda$.

Now let $\lambda \in i\kt^*$ be possibly singular, and choose a (non-unique) positive root system $R^+_G \subset R_G$ of roots having nonnegative inner products with $\lambda$. Let $\rho$ be half the sum of the elements of $R^+_G$, and suppose that $\lambda - \rho$ is integral.

 Let $\nu \in i\kt^*$ be regular and dominant with respect to $R^+_G$. Let $F_{-\nu}$ be the finite-dimensional irreducible representation of $\kg^{\C}$ with lowest weight $-\nu$. Set
\beq{eq lim ds}
\pi^{G_0}_{\lambda, R^+_G} := p_{\lambda}(\pi^{G_0}_{\lambda + \nu} \otimes F_{-\nu}),
\eeq
where $p_{\lambda}$ denotes projection onto the subspace with infinitesimal character $\lambda$. 
 If $\lambda$ is regular, then $\pi^{G_0}_{\lambda, R^+_G} = \pi^{G_0}_{\lambda}$. And even if $\lambda$ is singular, the right hand side of \eqref{eq lim ds} is independent of $\nu$. For singular $\lambda$, the representation $\pi^{G_0}_{\lambda, R^+_G}$ is a limit of discrete series representation of $G_0$.

Let $\lambda$ and $R^+_G$ be as above. Let $\chi$ be a unitary irreducible representation of $Z_G$, such that \eqref{eq chi} holds.
Write
\beq{eq lds G}
\pi^G_{\lambda, R^+_G, \chi} := \Ind_{G_0Z_G}^G(\pi^{G_0}_{\lambda, R^+_G} \boxtimes \chi).
\eeq
The following result is Theorem 1.1 in \cite{KZ1}.
\begin{theorem}
For $\lambda$, $R^+_G$ and $\chi$ as above, the representation $\pi^G_{\lambda, R^+_G, \chi}$ is
\begin{itemize}
\item nonzero if and only if $(\lambda, \alpha) \not= 0$ for all simple (with respect to $R^+_G$) compact roots $\alpha$;
\item  irreducible and tempered in that case.
\end{itemize}
If two such  representations  $\pi^G_{\lambda, R^+_G, \chi}$ and $\pi^G_{\lambda', (R^+_G)', \chi'}$ are nonzero, they are equivalent if and only if $\chi' = \chi$, and there is an element $w \in N_G(T)/Z_G(T)$ such that $\lambda' = w\lambda$ and $(R^+_G)' = wR^+_G$.
\end{theorem}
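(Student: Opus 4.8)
The plan is to reduce to the connected group $G_0$ (keeping track of the central character $\chi$) and then to study limits of discrete series of $G_0$ by combining Zuckerman translation functors with the known structure of the discrete series. Since $G_0 Z_G$ is a normal subgroup of $G$ of finite index, the structure of $\pi^G_{\lambda, R^+_G, \chi} = \Ind_{G_0 Z_G}^G(\pi^{G_0}_{\lambda, R^+_G} \boxtimes \chi)$ is governed by Clifford theory for the finite group $G/G_0 Z_G$. Non-vanishing of an induced representation is equivalent to non-vanishing of the inducing one, so that clause is a statement about $G_0$. Temperedness is preserved under induction from a finite-index subgroup, since the $K$-finite matrix coefficients of the induced representation are finite sums of left-and-right translates of those of $\pi^{G_0}_{\lambda, R^+_G}$. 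Irreducibility requires pinning down the stabiliser of $\pi^{G_0}_{\lambda, R^+_G} \boxtimes \chi$ in $G$: conjugation by $N_G(T)$ permutes the pairs $(\lambda, R^+_G)$, and the (limit of) discrete series classification for $G_0$ quoted above, which says such a representation is determined by $(\lambda, R^+_G)$ up to $N_{G_0}(T)/Z_{G_0}(T)$, identifies this stabiliser and shows the induced module stays irreducible. So it suffices to treat $G_0$.

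For $G_0$, the module $\pi^{G_0}_{\lambda, R^+_G} = p_\lambda(\pi^{G_0}_{\lambda+\nu} \otimes F_{-\nu})$ is the translation of the discrete series $\pi^{G_0}_{\lambda+\nu}$ to the (possibly singular) infinitesimal character $\lambda$; independence of $\nu$ is already recorded above. Passing to global characters on the compact Cartan, coherent continuation turns Harish-Chandra's formula $\Theta^{G_0}_{\lambda+\nu}|_{T^{\mathrm{reg}}} = \pm \frac{\sum_{w \in W_K} \det(w)\, e^{w(\lambda+\nu)}}{\prod_{\alpha \in R^+_G}(e^{\alpha/2}-e^{-\alpha/2})}$ into $\Theta^{G_0}_{\lambda, R^+_G}|_{T^{\mathrm{reg}}} = \pm \frac{\sum_{w \in W_K} \det(w)\, e^{w\lambda}}{\prod_{\alpha \in R^+_G}(e^{\alpha/2}-e^{-\alpha/2})}$. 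As a distribution this is identically zero precisely when the numerator is, i.e.\ when $w\lambda = \lambda$ for some $w \neq e$ in $W_K$; with $\lambda$ dominant for $R^+_G$ this happens exactly when $(\lambda, \alpha) = 0$ for some simple compact root $\alpha$. This yields the non-vanishing criterion, and the same bookkeeping, read through Blattner's formula (which survives translation to a wall, by Hecht--Schmid), shows that when $\pi^{G_0}_{\lambda,R^+_G} \neq 0$ its minimal $K$-type occurs with multiplicity one, at highest weight $\lambda + \rho - 2\rho_c$.

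When nonzero, $\pi^{G_0}_{\lambda, R^+_G}$ is irreducible, tempered and unitary. I would obtain this by identifying it with the cohomologically induced module $A_{\mathfrak{b}}(\lambda - \rho)$ for a $\theta$-stable Borel $\mathfrak{b} \subset \kg^{\C}$ determined by $R^+_G$: under the non-vanishing hypothesis the Vogan--Zuckerman theory supplies vanishing of the higher derived functors, irreducibility, unitarity, and exactly the minimal $K$-type found above, while temperedness follows from the Casselman--Mili\v{c}i\'c description of the leading exponents, which are controlled by $\lambda$ lying in the closed dual cone of $R^+_G$ on $i\kt^*$, so that Harish-Chandra's weak inequality holds and the $K$-finite matrix coefficients lie in $L^{2+\varepsilon}(G_0)$ for all $\varepsilon > 0$. (Alternatively one realises $\pi^{G_0}_{\lambda, R^+_G}$ as a full, irreducible, parabolically induced representation from a cuspidal parabolic whose Levi factor carries a regular-parameter limit of discrete series; such a representation is manifestly tempered, and one matches minimal $K$-types.) For the equivalence statement: if $\pi^{G_0}_{\lambda, R^+_G} \cong \pi^{G_0}_{\lambda', (R^+_G)'}$ then their characters on $T^{\mathrm{reg}}$ agree, so the multisets of exponents $\{w\lambda : w \in W_K\}$ and $\{w\lambda' : w \in W_K\}$ coincide and the Weyl denominators match up to sign; since $N_{G_0}(T)/Z_{G_0}(T)$ acts on $i\kt^*$ as the full compact Weyl group $W_K$, this forces $\lambda' = w\lambda$ and $(R^+_G)' = w R^+_G$ for a common $w \in N_{G_0}(T)/Z_{G_0}(T)$. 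Pushing this back through the Clifford-theory reduction adds the condition $\chi' = \chi$ and enlarges the group to $N_G(T)/Z_G(T)$.

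The step I expect to be the main obstacle is the irreducibility-and-temperedness claim at singular $\lambda$: translation to a wall does not in general preserve irreducibility, so one genuinely has to invoke the Vogan--Zuckerman vanishing and unitarisability machinery (or carry out a delicate leading-exponent analysis), and the hypothesis $(\lambda, \alpha) \neq 0$ for simple compact roots must be fed in at precisely the point where it simultaneously rules out vanishing and reducibility. By contrast, the reduction from $G$ to $G_0$ and the final equivalence bookkeeping are comparatively routine once the connected, possibly singular case is settled.
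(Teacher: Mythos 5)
First, a point of order: the paper does not prove this statement at all --- it is quoted verbatim as Theorem 1.1 of Knapp--Zuckerman \cite{KZ1}, so there is no internal proof to compare against. Your sketch follows the standard route of the literature (Clifford/Mackey theory for the finite-index normal subgroup $G_0Z_G$, Zuckerman translation to the wall, character theory on the compact Cartan, and Vogan--Zuckerman or cuspidal-parabolic realisations for irreducibility, unitarity and temperedness), and at that level the architecture is right. But two of the steps you treat as bookkeeping would fail as written. (i) For the vanishing direction, the identity $\Theta^{G_0}_{\lambda,R^+_G}|_{T^{\reg}}=0$ does not imply $\pi^{G_0}_{\lambda,R^+_G}=0$: an invariant eigendistribution is determined by its restriction to the regular set of \emph{all} conjugacy classes of Cartan subgroups, and you have only exhibited the coherently continued formula on the compact one. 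One must either invoke the Hecht--Schmid character identities on the noncompact Cartans or argue directly on the module (e.g.\ that translation across a compact wall kills the would-be lowest $K$-type $\lambda+\rho-2\rho_c$). (ii) More seriously, the equivalence clause cannot be extracted from characters on $T^{\reg}$ alone. For fixed singular $\lambda$ the numerator $\sum_{w}\det(w)e^{w\lambda}$ does not see $R^+_G$ at all, and the Weyl denominators attached to two positive systems both making $\lambda$ dominant agree up to a sign; so $\Theta_{\lambda,R^+_G}$ and $\Theta_{\lambda,(R^+_G)'}$ coincide on $T^{\reg}$ (up to a sign you have absorbed into ``$\pm$'') even when the representations are inequivalent --- already for $G_0=\SL(2,\R)$ the two limits $D_0^{\pm}$ satisfy $\Theta_{D_0^-}=-\Theta_{D_0^+}$ there. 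The actual proof separates these representations by their lowest $K$-types $\lambda+\rho-2\rho_c$ (which do depend on $R^+_G$ and occur with multiplicity one) or by the global character on the noncompact Cartans.

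The Clifford-theoretic reduction also hides a genuine structural input. Irreducibility of $\Ind_{G_0Z_G}^G(\pi^{G_0}_{\lambda,R^+_G}\boxtimes\chi)$ is equivalent to the stabiliser of the class of $\pi^{G_0}_{\lambda,R^+_G}\boxtimes\chi$ in $G$ being exactly $G_0Z_G$. The classification for $G_0$ only tells you \emph{when} a coset representative $g$ fixes the class (namely when its action on $(\lambda,R^+_G)$ lies in $N_{G_0}(T)/Z_{G_0}(T)$ and it fixes $\chi$); it does not tell you that such a $g$ must already lie in $G_0Z_G$. That last fact is where the standing hypotheses on $G$ (linearity, $G\subset G^{\C}Z(G)$, hence control of the component group via elements normalising $T$) enter, and it is precisely what Knapp and Zuckerman establish in Section 1 of \cite{KZ1}. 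You correctly isolate the singular-parameter irreducibility and temperedness as the analytic core --- and the $A_{\mathfrak{b}}(\lambda-\rho)$ or cuspidal-parabolic routes you propose are both viable there --- but the non-vanishing, disjointness, and disconnectedness steps need the repairs above before the argument closes.
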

The limits of discrete series representations are the nonzero representations occurring in the above theorem.

%
%

\subsection{The Knapp--Zuckerman classification} \label{sec KZ}

We now return to the setting described at the start of this section. In particular, $G$ is connected, linear, real reductive, with compact centre. We state the part that we need of  Knapp and Zuckerman's classification of tempered representations of $G$, in terms of limits of discrete series representations of subgroups of $G$. At the same time, we fix notation that will be used in the rest of this paper.

Let $\kh \subset \kg$ be a $\theta$-stable Cartan subalgebra. Set
\begin{itemize}
\item $H := Z_G(\kh)$;
\item $\ka := \kh \cap \ks$;
\item $A :=$ the analytic subgroup of $G$ with Lie algebra $\ka$;
\item $\km :=$ the orthogonal complement to $\ka$ in $Z_{\kg}(\ka)$;
\item $M_0 :=$ the analytic subgroup of $G$ with Lie algebra $\km$;
\item $M := Z_K(\ka)M_0$.
\end{itemize}
The subgroup $M$ may be disconnected. But importantly, it satisfies the assumptions made on the group $G$ in Subsection \ref{sec limit ds}.

For $\beta \in \ka^*$, set
\[
\kg_{\beta} := \{X \in \kg; \text{for all $Y \in \ka$, $[Y,X] = \langle \beta, Y\rangle X$}\}.
\]
Consider the restricted root system
\[
\Sigma := \Sigma(\kg, \ka) := \{\beta \in \ka^* \setminus \{0\}; \kg_{\beta} \not= \{0\} \}.
\]
Fix a positive system $\Sigma^+ \subset \Sigma$. Consider the nilpotent subalgebras
\[
\kn^{\pm} := \bigoplus_{\beta \in \Sigma^+} \kg_{\pm \beta}.
\]
of $\kg$.
We will write $\kn := \kn^+$. Let $N$ be the analytic subgroup of $G$ with Lie algebra $\kn$. Then $P := MAN$ is a parabolic subgroup of $G$.

Let $T<K$ be a maximal torus. Set
\begin{itemize}
\item $K_M := K \cap M$;
\item $\kt_{M} := \kk_M \cap \kt$;
\item $T_M := \exp(\kt_M)$.
\end{itemize}
Then $\kt_M \subset \km$ is a Cartan subalgebra, so $M$ has discrete series and limits of discrete series representations. That is to say, $P$ is a cuspidal parabolic subgroup. In fact, all cuspidal parabolic subgroups occur in this way. 

If we write $\ks_{M} := \km \cap \ks$, then we obtain the Cartan decomposition $\km = \kk_{M} \oplus \ks_{M}$. Set $H_M := H \cap M$. We have $T_M < H_M$. The converse inclusion does not hold in general, since $T_M$ is connected, whereas $H_M$ may be disconnected. More explicitly, Corollary 7.111 in \cite{Knapp02} implies that
\beq{eq HM}
H_M = T_MZ_M.
\eeq
In fact, $H_M = T_MZ_M'$ for a finite subgroup $Z_M' < Z_M$. So
 the Lie algebra of $H_M$ is $\kt_M$.

Let $\lambda \in i\kt_M^*$, $R^+_M \subset R(\km^{\C}, \kt_M^{\C})$, and $\chi_M \in \hat Z_M$ be as in Subsection \ref{sec limit ds}, with $G$ replaced by $M$ and $T$ by $H_M$. Then we have the limit of discrete series representation $\pi^{M}_{\lambda, R^+_M, \chi_M}$ of $M$. Let $\nu \in i\ka^*$. A \emph{basic representation} of $G$ is a representation of the form
\[
\Ind_P^G(\pi^{M}_{\lambda, R^+_M, \chi_M} \otimes e^{\nu} \otimes 1_N),
\]
where $1_N$ is the trivial representation of $N$.

\begin{theorem}[Knapp--Zuckerman]\label{thm KZ}
Every tempered representation of $G$ is basic. 
\end{theorem}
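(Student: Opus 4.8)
The statement is a deep result of Knapp and Zuckerman; in the present paper it will simply be quoted (it is Corollary 8.8 of \cite{KZ1}). For completeness I sketch the shape of the argument one would give.

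The plan is to start from the analytic characterisation of temperedness. By work of Harish-Chandra, refined by Casselman and Mili\v{c}i\'c, a unitary irreducible representation $\pi$ has all $K$-finite matrix coefficients in $L^{2+\varepsilon}(G)$ for every $\varepsilon>0$ precisely when the real part of every leading exponent of $\pi$ along a minimal parabolic subgroup lies in the closed negative Weyl chamber --- the ``weak inequality''. The first step is thus to translate the $L^{2+\varepsilon}$-condition in the definition of temperedness into this statement about the asymptotics of matrix coefficients.

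The second, and main, step is to combine this with Casselman's subrepresentation theorem, which embeds $\pi$ into a principal series representation induced from a minimal parabolic, and then to promote the embedding to one into $\Ind_P^G(\sigma\otimes e^{\nu}\otimes 1_N)$ for a cuspidal parabolic $P=MAN$. Here the weak inequality is exactly what forces the inducing data to sit ``on the boundary'': the $A$-part must be the unitary character $e^{\nu}$ with $\nu\in i\ka^*$, and the $M$-part $\sigma$ must itself be tempered with the exponents along the split directions of $M$ vanishing, which on the group $M$ --- which has a compact Cartan subgroup by cuspidality --- means exactly a discrete series or limit of discrete series $\pi^{M}_{\lambda,R^{+}_{M},\chi_M}$ as in Subsection \ref{sec limit ds}. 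Controlling the constant term and the leading exponents precisely enough to pin down the data is the technical heart of the proof, and it rests on Harish-Chandra's Plancherel theory and the theory of the constant term.

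The final step is to analyse the induced representation itself: by the Knapp--Stein theory of intertwining operators it is unitary and decomposes, with multiplicity one, into irreducible tempered pieces indexed by the characters of the $R$-group of the inducing datum, and conversely every irreducible tempered representation arises as such a piece. One then checks that, after adjusting the choice of positive system $R^{+}_{M}$ on $\km^{\C}$ (subject to the non-vanishing condition on simple compact roots from Theorem 1.1) and absorbing the $R$-group into that choice, the induced representation can be taken irreducible and equal to $\pi$, that is, $\pi$ is basic. I expect the main obstacle throughout to be this interplay between the exponent estimates (used both to obtain the embedding and to identify the inducing data) and the $R$-group bookkeeping (needed to reach the clean ``is basic'' conclusion rather than merely ``is a constituent of a basic representation''); both are carried out in full in \cite{KZ1, KZ2}, which is why here we only cite the result.
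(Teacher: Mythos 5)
Your proposal matches the paper exactly in the only respect that matters: the paper gives no proof of Theorem \ref{thm KZ} and simply cites Corollary 8.8 of \cite{KZ1}, which is what you do. Your accompanying sketch of the Knapp--Zuckerman argument is a reasonable overview (with the minor caveat that absorbing the $R$-group generally requires passing to a more split Cartan subgroup and a different cuspidal parabolic, not merely adjusting $R^+_M$), but since you explicitly defer the details to \cite{KZ1, KZ2}, as the paper does, nothing further is needed.
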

This is Corollary 8.8 in \cite{KZ1}. In Theorem 14.2 in \cite{KZ2}, Knapp and Zuckerman complete the classification of tempered representations by showing  which basic representations are irreducible and tempered. (These are the ones with nondegenerate data and trivial $R$-groups; see Sections 8 and 12 in \cite{KZ2} for details on these conditions). 

The main result of this paper, Theorem \ref{main theorem}, is formulated for tempered representations, but in fact applies more generally to all basic representations (see Remark \ref{rem basic}). The result is formulated for tempered representations, because of the special relevance of those representations.


\section{Indices of deformed Dirac operators} \label{sec 3 def Dirac}

The main result in this paper is Theorem \ref{main theorem}, which states that the restriction to $K$ of a tempered representation of $G$ can be realised as the equivariant index of a deformed Dirac operator on $G/H$, for a Cartan subgroup $H<G$. In this section, we review the index theory we will use, and state the main result.

\subsection{Deformed Dirac operators}\label{sec deformed Dirac}

Braverman \cite{Braverman02} developed equivariant index theory for the deformations of Dirac operators on noncompact manifolds that we briefly discuss in this subsection. His index is the same as the indices defined by Paradan and Vergne \cite{Paradan11, Vergne06} (see Theorem 5.5 in \cite{Braverman02}) and Ma--Zhang \cite{Zhang09} (see Theorem 1.5 in \cite{Zhang09}). Its main applications have so far been to geometric quantisation and representation theory, see e.g.\ \cite{Paradan03}. The deformation of Dirac operators \eqref{eq deformed Dirac} used by Braverman was introduced by Tian and Zhang \cite{Zhang98} in their analytic proof of Guillemin and Sternberg's \emph{quantisation commutes with reduction} problem (which was first proved by Meinrenken \cite{Meinrenken98} and Meinrenken--Sjamaar \cite{Meinrenken99}; another proof was given by Paradan \cite{Paradan01}).

In this subsection, we consider a complete Riemannian manifold $M$, on which a compact Lie group $K$ acts isometrically. (In this subsection and the next, $M$ does not denote a subgroup of $G$; in fact $G$ does not play a role at all here.) Let $\cS \to M$ be a $\Z_2$-graded, Hermitian, $K$-equivariant complex vector bundle. Let $c \colon TM \to \End(\cS)$ be a $K$-equivariant vector bundle homomorphism, called the \emph{Clifford action}, such that for all $v \in TM$,
\[
c(v)^2 = -\|v\|^2.
\]
(Here $K$ acts on $\End(\cS)$ by conjugation.) Then $\cS$ is called a $K$-equivariant \emph{Clifford module}.
\begin{example} \label{ex S J}
In the setting we consider in the rest of this paper, $M$ will have a $K$-equivariant almost complex structure $J$, and we will use  $\cS = \Bigwedge_J TM \otimes L$, where $L\to M$ is a line bundle and  $ \Bigwedge_J TM$ is the complex exterior algebra bundle of $TM$ with respect to $J$. 
This has a natural Clifford action, given by
\[
c(v)x = v\wedge x - v^* \lrcorner x,
\]
where $v \in T_mM$ for some $m \in M$, $x \in \Bigwedge_J T_mM \otimes L_m$, $v^* \in T^*_mM$ is dual to $v$ with respect to the Hermitian metric defined by $J$ and the Riemannian metric, and $\lrcorner$ denotes contraction (see e.g.\ page 395 in \cite{Lawson89}). When dealing with vector bundles of the form $\Bigwedge_J TM \otimes L$, we will always use this Clifford action.
\end{example}
Let $\nabla$ be a $K$-invariant, Hermitian connection on $\cS$ such that for all vector fields $v$ and $w$ on $M$,
\[
[\nabla_v, c(w)] = c(\nabla^{TM}_v w),
\]
where $\nabla^{TM}$ is the Levi--Civita connection on $TM$. If we identify $T^*M \cong TM$ via the Riemannian metric, we can view $c$ as a vector bundle homomorphism
\[
c\colon T^*M \otimes \cS \to \cS.
\]
The \emph{Dirac operator} $D$ associated to $\nabla$ is the composition
\[
D\colon \Gamma^{\infty}(\cS) \xrightarrow{\nabla} \Gamma^{\infty}(T^*M \otimes \cS) \xrightarrow{c} \Gamma^{\infty}(\cS).
\]
It is odd with respect to the grading on $\cS$; we denote its restrictions to even and odd sections by $D^+$ and $D^-$, respectively.

If $M$ is compact, then the elliptic operator $D$ has finite-dimensional kernel. So it has a well-defined equivariant index
\[
\indx_K(D) := [\ker(D^+)] - [\ker(D^-)] \quad \in R(K),
\]
where $R(K)$ is the representation ring of $K$, and square brackets denote equivalence classes of representations of $K$. If $M$ is noncompact, one can still define an equivariant index, using a \emph{taming map}.

Let $\psi\colon M \to \kk$ be an equivariant smooth map (with respect to the adjoint action by $K$ on $\kk$). Let $v^{\psi}$ be the vector field on $M$ defined by
\[
v^{\psi}(m) = \ddt \exp(-t\psi(m))\cdot m,
\]
for $m \in M$. The map $\psi$ is called a taming map if the set of zeroes of the vector field $v^{\psi}$ is compact.
The \emph{Dirac operator deformed by $\psi$} is the operator
\beq{eq deformed Dirac}
D_{\psi} := D - ic(v^{\psi})
\eeq
on $\Gamma^{\infty}(\cS)$. As for the undeformed operator, we denote the restrictions of $D_{\psi}$ to even and odd sections by $D^+_{\psi}$
 and $D^-_{\psi}$, respectively.
 
To obtain a well-defined index, one needs to rescale the map  $\psi$ by a function with suitable growth behaviour. Let 
$
\Phi^{\cS} \in \End(\cS) \otimes \kk^*
$
be given by
\beq{eq def mu S}
\langle \Phi^{\cS}, Z\rangle = \nabla_{Z^M} - \calL_{Z},
\eeq
for $Z \in \kk$. Here $Z^M$ is the vector field on $M$ induced by $Z$ via the infinitesimal action.  Let $\nabla^{TM}$ be the Levi--Civita connection on $TM$. Consider the positive, $K$-invariant function
\beq{eq def h}
h := \|v^{\psi}\| + \|\nabla^{TM}v^{\psi}\| +\|\langle \Phi^{\cS}, \psi\rangle\| + \|\psi\| + 1
\eeq
on $M$. A nonnegative function $f \in C^{\infty}(M)^K$ is said to be \emph{admissible} (for $\psi$ and $\nabla$) if
\[
\frac{f^2\|v^{\psi}\|^2}{\|df\| \|v^{\psi}\| + fh + 1}(m) \to \infty
\]
as $m \to \infty$ in $M$. It is shown in 
 Lemma 2.7 in \cite{Braverman02} that admissible functions always exist.
\begin{theorem}[Braverman]\label{thm Braverman}
Suppose $\psi$ is taming. Then for all admissible functions $f \in C^{\infty}(M)^K$, and all $\delta \in \hat K$, the multiplicity $m^{\pm}_{\delta}$ of $\delta$ in 
\[
\ker(D^{\pm}_{f\psi}) \cap L^2(\cS)
\]
is finite. The difference $m^+_{\delta} - m^{-}_{\delta}$ is independent of $f$ and $\nabla$.
\end{theorem}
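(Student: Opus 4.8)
The plan is to follow the strategy of \cite{Braverman02}: reduce everything to a single $K$-isotypic component, on which the square of the deformed operator becomes a Schr\"odinger-type operator with a confining potential supported away from the compact zero set of $v^{\psi}$.

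First I would record the Weitzenb\"ock-type identity. Abbreviating $w := v^{\psi}$ and using $c(w)^2 = -\|w\|^2$, one gets
\[
D_{f\psi}^2 \;=\; D^2 + f^2\|w\|^2 \;-\; i\bigl(D\circ fc(w) + fc(w)\circ D\bigr),
\]
in which the anticommutator term is a first-order differential operator whose pointwise norm is bounded by a constant times $\|df\|\,\|w\| + f\bigl(\|\nabla^{TM}w\| + \|\langle \Phi^{\cS},\psi\rangle\|\bigr)$ --- precisely the combination occurring in the function $h$ of \eqref{eq def h}. Hence, modulo the nonnegative term $D^2$, the deformed Laplacian $D_{f\psi}^2$ dominates $f^2\|w\|^2 - C\bigl(\|df\|\,\|w\| + fh\bigr)$, and the admissibility condition on $f$ is exactly what makes this lower bound tend to $+\infty$ as one leaves every compact subset of $M$. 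Completeness of $M$ guarantees that $D$, and therefore $D_{f\psi}$, is essentially self-adjoint on compactly supported smooth sections.

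Second, for the finiteness statement, I would fix $\delta \in \hat K$ and restrict all operators to the closed $K$-invariant subspace $L^2(\cS)_{\delta} \subset L^2(\cS)$ of sections generating a $\delta$-isotypic representation; this is legitimate because $D_{f\psi}$ is $K$-equivariant. A section in $\ker(D_{f\psi}) \cap L^2(\cS)_{\delta}$ is smooth by elliptic regularity and, by an Agmon-type estimate coming from the growing potential $f^2\|w\|^2$, decays rapidly (at least exponentially) outside a neighbourhood of $\{w = 0\}$. Combining this decay with the Rellich lemma on a compact exhaustion shows that $\ker(D_{f\psi}) \cap L^2(\cS)_{\delta}$ is finite-dimensional; the same applies to its even and odd parts separately, giving the finite numbers $m^{\pm}_{\delta}$. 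Equivalently, the restriction of $D_{f\psi}^2$ to $L^2(\cS)_{\delta}$ has compact resolvent, so $D^{\pm}_{f\psi}$ restricted to $\delta$-isotypic sections is Fredholm.

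Third, for independence of $f$ and $\nabla$ (note that $\psi$, hence $w$ and its zero set, stay fixed), I would work on each $L^2(\cS)_{\delta}$ and use that $m^+_{\delta} - m^-_{\delta}$ is the Fredholm index of $D^+_{f\psi}$ there. The set of admissible connections on $\cS$ is affine, hence path-connected, and along a linear path $\nabla^t$ one can choose admissible functions $f_t$ depending continuously on $t$ (adjusting the size of $f_t$ near infinity), so that $D^+_{f_t\psi}$ is a continuous family of Fredholm operators (in the norm-resolvent sense) whose index is locally, hence globally, constant. The genuinely delicate point is that the set of admissible functions is not convex --- an arbitrary convex combination or upper bound of admissible functions need not be admissible, because of the $\|df\|$ term --- so constructing the path $f_t$ legitimately, uniformly over all isotypes at once, requires care; following \cite{Braverman02} one reduces to comparing $D_{f\psi}$ with $D_{tf\psi}$ for $t \to \infty$, along which the confinement only strengthens, together with a cofinality argument among admissible functions. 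This uniform-homotopy step is the main obstacle; everything else is standard elliptic analysis on complete Riemannian manifolds.
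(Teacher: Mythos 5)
First, note that the paper does not prove this statement itself: it is quoted verbatim from Braverman (Theorem 2.9 of \cite{Braverman02}), with the independence of $f$ and $\nabla$ attributed to the cobordism invariance theorem (Theorem 3.7 there). So your proposal should be judged as a reconstruction of Braverman's argument, and in outline it is the right strategy. There is, however, one concrete flaw in your first step. The anticommutator $D\circ fc(v^{\psi})+fc(v^{\psi})\circ D$ is \emph{not} pointwise bounded by $\|df\|\,\|v^{\psi}\|+f\bigl(\|\nabla^{TM}v^{\psi}\|+\|\langle\Phi^{\cS},\psi\rangle\|\bigr)$: the Lichnerowicz-type computation (which the paper reproduces later, in the proof of Lemma \ref{lem est comm}) gives
\[
Dc(v^{\psi})+c(v^{\psi})D \;=\; i\sum_j c(e_j)c(\nabla^{TM}_{e_j}v^{\psi}) \;-\;2i\bigl(\calL_{\psi}+\langle\Phi^{\cS},\psi\rangle\bigr),
\]
and the term $f\calL_{\psi}$ is a genuinely unbounded first-order operator on $L^2(\cS)$, not a bundle endomorphism. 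It becomes bounded only after restricting to a single isotypic component $L^2(\cS)_{\delta}$, where $\calL_Z$ has norm $\leq C_{\delta}\|Z\|$; this is exactly why the function $h$ in \eqref{eq def h} contains the extra summand $\|\psi\|$, which your list omits, and why the constant in the lower bound $D_{f\psi}^2\geq D^2+f^2\|v^{\psi}\|^2-C_{\delta}(\|df\|\,\|v^{\psi}\|+fh)$ depends on $\delta$. The order of operations must therefore be reversed: fix $\delta$, restrict to $L^2(\cS)_{\delta}$, and only then obtain the coercive estimate. This is not cosmetic --- if the potential were uniformly confining on all of $L^2(\cS)$ the whole kernel would be finite-dimensional, which is false (in the applications the kernel is $\pi|_K$, infinite-dimensional); the theorem is about multiplicities precisely because the bound is only isotype-wise.

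For the third step, your direct homotopy argument (continuity of the Fredholm index of $D^+_{f\psi}|_{L^2(\cS)_{\delta}}$ along paths of connections and admissible functions) is a viable alternative to the route the paper cites, which deduces independence of $f$ and $\nabla$ from cobordism invariance applied to $M\times[0,1]$. You correctly identify the delicate point --- admissible functions do not form a convex set --- but you leave it as a plan rather than a proof. One way to close it: the sum of two admissible functions is admissible (by the mediant inequality $\tfrac{a+b}{c+d}\geq\min(a/c,\,b/d)$ applied to the defining quotient), so admissible functions are directed under $\leq$, and it suffices to compare $f$ with $f+f'$ for admissible $f'$; alternatively one invokes Braverman's homotopy/cobordism machinery directly, as the paper does via Theorem \ref{thm htp}. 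As written, this step is incomplete but repairable.
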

This is Theorem {2.9} in \cite{Braverman02}. The fact that $m^+_{\delta} - m^{-}_{\delta}$ is independent of $f$ and $\nabla$ is a consequence of Braverman's cobordism invariance result, Theorem 3.7 in \cite{Braverman02}. This cobordism invariance property also implies that the index is independent of the $K$-invariant, complete Riemannian metric on $M$. 

Let $\hat R(K)$ be the abelian group
\[
\hat R(K) := \Bigl\{\sum_{\delta \in \hat K}m_{\delta} \delta; m_{\delta} \in \Z \Bigr\}.
\]
It contains the representation ring $R(K)$ as the subgroup for which only finitely many of the coefficients $m_{\delta}$ are nonzero.
\begin{definition}
In the setting of Theorem \ref{thm Braverman}, the \emph{equivariant index} of the pair $(\cS, \psi)$ is
\[
\indx_K(\cS,\psi) := \sum_{\delta \in \hat K} (m^{+}_{\delta} - m^-_{\delta}) \delta \quad \in \hat R(K).
\]
\end{definition}
This index was generalised to proper actions by noncompact groups in Theorem 3.12 in \cite{HochsSong15-2}. Earlier, this was done for sections invariant under the group action in \cite{Braverman14, Mathai13}.

\subsection{Properties of the index}\label{sec prop index}

As mentioned above, independence of the coefficients  $m^+_{\delta} - m^{-}_{\delta}$ of the choices of $f$ and $\nabla$ follows from a cobordism invariance property. We will use the special case of this result that we describe now. For $j = 1,2$, let  $\cS_j \to M$ be a Clifford module, and let $\psi_j\colon M \to \kk$ be a taming map.
\begin{definition}\label{def htp}
A \emph{homotopy} between $(\cS_1, \psi_1)$ and $(\cS_2, \psi_2)$ is a pair $(\cS, \psi)$, where 
\begin{itemize}
\item $\cS \to M\times [0,1]$ is a Clifford module, such that 
\begin{itemize}
\item[$\circ$] $\cS|_{M \times [0,1/3[} = \cS_1 \times [0,1/3[$, including the Clifford actions by $TM$;
\item[$\circ$] $\cS|_{M \times ]2/3,1]} = \cS_2 \times ]2/3, 1]$, including the Clifford actions by $TM$.
\end{itemize}
Let $\partial_t$ be the unit vector field of the component $\R$ in $T(M\times {]0,1[}) = TM \times \R \times {]0,1[}$. Let $c\colon T(M\times {]0,1[})\to \End(\cS|_{M\times {]0,1[}})$ be the Clifford action. Then
\begin{itemize}
\item[$\circ$] $c(\partial_t)|_{M \times [0,1/3[} = \ii$;
\item[$\circ$] $c(\partial_t)|_{M \times ]2/3, 1[} = -\ii$.
\end{itemize}
\item $\psi \colon M \times [0,1] \to \kk$ is a taming map, such that for all $m \in M$ and $t \in [0,1]$, 
\[
\psi(m, t) = \left\{ \begin{array}{ll}
\psi_1(m) & \text{if $t<1/3$;} \\
\psi_2(m) & \text{if $t >2/3$.}
 \end{array}\right.
\]
 \end{itemize}
\end{definition}
\begin{theorem}[Homotopy invariance]\label{thm htp}
If $(\cS_1, \psi_1)$ and $(\cS_2, \psi_2)$ are homotopic, then
\[
\indx_K(\cS_1, \psi_1) = \indx_K(\cS_2, \psi_2).
\]
\end{theorem}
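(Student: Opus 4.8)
\textbf{Proof strategy for Theorem \ref{thm htp} (Homotopy invariance).}

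The plan is to deduce homotopy invariance directly from Braverman's cobordism invariance theorem (Theorem 3.7 in \cite{Braverman02}), by exhibiting the data of a homotopy $(\cS, \psi)$ on $M \times [0,1]$ as a cobordism in Braverman's sense between the two pairs $(\cS_1, \psi_1)$ and $(\cS_2, \psi_2)$ living on the boundary. First I would recall precisely what Braverman's notion of cobordism requires: a Clifford module on a manifold with boundary (or, in the version used here, on $M \times [0,1]$ with the product being cobordism-trivial near the two ends), together with a taming map whose zero set, intersected with the cobordant region, is compact, and such that the induced Clifford action of the collar normal direction $\partial_t$ is $\pm i$ near the two boundary components. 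The definition of homotopy in Definition \ref{def htp} has been set up precisely so that these conditions hold: near $M \times [0,1/3[$ the module is $\cS_1 \times [0,1/3[$ with $c(\partial_t) = i$, and near $M \times ]2/3,1]$ it is $\cS_2 \times ]2/3,1]$ with $c(\partial_t) = -i$, and $\psi$ restricts to $\psi_1$ and $\psi_2$ respectively on these collars.

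The key steps, in order, would be: (1) Equip $M \times [0,1]$ with a complete $K$-invariant Riemannian metric that is a product metric near $t = 0$ and $t = 1$; this is possible since $M$ is complete and $[0,1]$ is compact, and the resulting metric is complete. (2) Observe that $\psi$, viewed as a map $M \times [0,1] \to \kk$ which is $K$-equivariant for the $K$-action trivial on the $[0,1]$ factor, is taming: its associated vector field $v^\psi$ on $M \times [0,1]$ vanishes exactly where $v^{\psi(\cdot,t)}$ on $M$ vanishes (the $\partial_t$ component is never hit by the $K$-action), so $\Zeroes(v^\psi) \subset \Zeroes(v^\psi) \cap (M \times [0,1])$ is closed; compactness is part of the hypothesis that $\psi$ is a taming map on $M \times [0,1]$. (3) Check that the collar structure near the two ends matches Braverman's requirements — this is immediate from Definition \ref{def htp}, including the sign conventions $c(\partial_t) = \pm i$, which are exactly the orientation/sign data Braverman uses to identify the two boundary pieces with opposite orientations. (4) Apply Theorem 3.7 of \cite{Braverman02} to conclude that the contributions of the two ends cancel, i.e.\ $\indx_K(\cS_1, \psi_1) - \indx_K(\cS_2, \psi_2) = 0$ in $\hat R(K)$, where the sign difference comes precisely from the opposite values of $c(\partial_t)$.

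The main obstacle, and the only point requiring genuine care rather than bookkeeping, is verifying that the hypotheses of Braverman's cobordism theorem are met in the noncompact setting — in particular, that the ``admissibility'' rescaling and the growth conditions are compatible across the cobordism, and that the zero set of $v^\psi$ on $M \times [0,1]$ is genuinely compact (not merely closed). Since $[0,1]$ is compact, compactness of $\Zeroes(v^\psi)$ follows once we know it is contained in $\Zeroes(v^{\psi_1}) \times [0,1/3] \cup (\text{compact middle part}) \cup \Zeroes(v^{\psi_2}) \times [2/3,1]$, but one must confirm that the taming hypothesis on $\psi$ in Definition \ref{def htp} already packages this. A secondary technical point is that one may need to replace $\psi$ by $f\psi$ for an admissible $f \in C^\infty(M \times [0,1])^K$, and to check that $f$ can be chosen to restrict to admissible functions for $\psi_1$ and $\psi_2$ on the collars; by the independence statement in Theorem \ref{thm Braverman}, the indices on the ends do not depend on this choice, so this causes no difficulty. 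Everything else is a direct citation of \cite{Braverman02}.
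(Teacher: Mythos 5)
Your proposal is correct and follows exactly the route the paper takes: the paper's entire proof is the citation ``See Theorem 3.7 in \cite{Braverman02}'', i.e.\ Braverman's cobordism invariance, and Definition \ref{def htp} is designed precisely so that a homotopy is a cobordism in Braverman's sense. Your additional verifications (product metric near the ends, taming of $\psi$ on $M\times[0,1]$ with compact zero set as part of the hypothesis, the $c(\partial_t)=\pm i$ sign conventions, and compatibility of admissible rescalings) are exactly the bookkeeping the paper leaves implicit.
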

See Theorem 3.7 in \cite{Braverman02}. We will only apply this result in cases where either $\cS_1 = \cS_2$ or $\psi_1 = \psi_2$.  If $(\cS_1, \psi_1)$ and $(\cS_2, \psi_2)$ are homotopic, then if $\cS_1 = \cS_2$, we say that $\psi_1$ and $\psi_2$ are homotopic. Similarly, if $\psi_1 = \psi_2$, then  we say that $\cS_1$ and $\cS_2$ are homotopic.

\begin{corollary} \label{cor pos inner prod}
Suppose that $\cS_1 = \cS_2$, and that
\[
(v^{\psi_1}, v^{\psi_2}) \geq 0.
\]
Then $\psi_1$ and $\psi_2$ are homotopic, so that
\[
\indx_K(\cS_1, \psi_1) = \indx_K(\cS_2, \psi_2).
\]
\end{corollary}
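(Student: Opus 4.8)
The plan is to deduce Corollary \ref{cor pos inner prod} from the homotopy invariance property of Theorem \ref{thm htp} by constructing an explicit homotopy between $(\cS_1, \psi_1)$ and $(\cS_2, \psi_2) = (\cS_1, \psi_2)$. Since $\cS_1 = \cS_2$, the constraints in Definition \ref{def htp} on the Clifford module $\cS \to M \times [0,1]$ are easy to satisfy: take $\cS := \cS_1 \times [0,1]$, with the Clifford action by $TM$ unchanged, and with $c(\partial_t)$ defined by choosing a smooth function $\beta\colon [0,1] \to \{\pm 1\}$-interpolating data — more precisely, set $c(\partial_t) = \ii$ on $M \times [0,1/3[$ and $c(\partial_t) = -\ii$ on $M \times ]2/3, 1]$, and extend in between in any way compatible with $c(\partial_t)^2 = -1$ (for instance, rotating through the Clifford algebra of one extra dimension, or simply using the product structure off the prescribed regions since the definition only pins down $c(\partial_t)$ near the two ends). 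So the entire content is the construction of the taming map $\psi\colon M \times [0,1] \to \kk$.

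The natural candidate is a straight-line homotopy suitably flattened at the ends: pick a smooth nondecreasing function $\chi\colon [0,1] \to [0,1]$ with $\chi \equiv 0$ on $[0,1/3]$ and $\chi \equiv 1$ on $[2/3,1]$, and set
\[
\psi(m,t) := (1 - \chi(t))\,\psi_1(m) + \chi(t)\,\psi_2(m).
\]
This is $K$-equivariant and smooth, and by construction it equals $\psi_1$ for $t < 1/3$ and $\psi_2$ for $t > 2/3$, as required. The one nontrivial thing to check is that $\psi$ is \emph{taming}, i.e.\ that the zero set of the induced vector field $v^{\psi}$ on $M \times [0,1]$ is compact. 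Here is where the hypothesis $(v^{\psi_1}, v^{\psi_2}) \geq 0$ enters. At a point $(m,t)$, the vector field $v^{\psi}(m,t)$ has no $\partial_t$-component (since $\psi$ takes values in $\kk$, which acts only on the $M$-factor), so $v^{\psi}(m,t) = 0$ forces $v^{\psi(\cdot,t)}(m) = 0$. Because the infinitesimal action $Z \mapsto Z^M$ is linear, $v^{\psi(\cdot,t)}(m) = (1-\chi(t))\, v^{\psi_1}(m) + \chi(t)\, v^{\psi_2}(m)$, and its squared norm is
\[
\|v^{\psi}(m,t)\|^2 = (1-\chi(t))^2\|v^{\psi_1}(m)\|^2 + 2(1-\chi(t))\chi(t)\,(v^{\psi_1}(m), v^{\psi_2}(m)) + \chi(t)^2\|v^{\psi_2}(m)\|^2.
\]
Since $0 \le \chi(t) \le 1$ and the cross term is $\geq 0$ by hypothesis, this is bounded below by $(1-\chi(t))^2\|v^{\psi_1}(m)\|^2 + \chi(t)^2\|v^{\psi_2}(m)\|^2$, which vanishes only if: $\chi(t) \ne 1 \Rightarrow v^{\psi_1}(m) = 0$, and $\chi(t) \ne 0 \Rightarrow v^{\psi_2}(m) = 0$. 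Hence the zero set of $v^{\psi}$ is contained in $(\Zeroes(v^{\psi_1}) \cup \Zeroes(v^{\psi_2})) \times [0,1]$, which is compact because $\psi_1$ and $\psi_2$ are taming. (One should note the zero set is closed, being the preimage of $0$ under a continuous map, so a closed subset of a compact set is compact.) Therefore $\psi$ is taming, $(\cS, \psi)$ is a homotopy between $(\cS_1, \psi_1)$ and $(\cS_1, \psi_2)$ in the sense of Definition \ref{def htp}, and Theorem \ref{thm htp} gives $\indx_K(\cS_1, \psi_1) = \indx_K(\cS_2, \psi_2)$.

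The main (and essentially only) obstacle is the taming condition, and the pointwise inequality above is precisely the device that handles it: without the hypothesis $(v^{\psi_1}, v^{\psi_2}) \geq 0$, the interpolation could produce spurious zeroes where $v^{\psi_1}$ and $v^{\psi_2}$ nearly cancel, and these could form a noncompact set. Everything else is routine: the Clifford module part of the homotopy is the trivial product, and equivariance and smoothness of $\psi$ are immediate. I would present this as a short argument, stating the construction of $\psi$ and $\cS$ and then verifying the zero-set inclusion with the displayed norm computation.
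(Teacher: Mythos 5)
Your proposal is correct and follows essentially the same route as the paper: the paper's proof is exactly the straight-line interpolation $\psi(m,t) = (1-\chi(t))\psi_1(m) + \chi(t)\psi_2(m)$ with the same norm estimate using the nonnegative cross term, followed by an appeal to Theorem \ref{thm htp}. Your additional remarks on the Clifford-module part of the homotopy and on the closedness of the zero set are details the paper leaves implicit, but they do not change the argument.
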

\begin{proof}
Let $\chi\colon \R \to [0,1]$ be a smooth function such that $\chi(t) = 0$ if $t<1/3$ and $\chi(t) = 1$ if $t>2/3$. For $t \in [0,1]$ and $m \in M$, set
\[
\psi(m,t) := (1-\chi(t))\psi_1(m) + \chi(t)\psi_2(m).
\]
Then
\begin{multline*}
\|v^{\psi_t}(m)\|^2 = (1-\chi(t))^2 \|v^{\psi_1}(m)\|^2 + \chi(t)^2 \|v^{\psi_2}(m)\|^2 + 2(1-\chi(t))\chi(t) (v^{\psi_1}, v^{\psi_2}) \\
 \geq  (1-\chi(t))^2 \|v^{\psi_1}(m)\|^2 + \chi(t)^2 \|v^{\psi_2}(m)\|^2.
\end{multline*}
This can only vanish if $v^{\psi_1}(m) = 0$ or $v^{\psi_2}(m) = 0$, so that $\psi$ is a taming map. Hence the claim follows from Theorem \ref{thm htp}. 
\end{proof}

Another property of the index that we will use is excision. 
\begin{proposition}[Excision]\label{prop excision}
Suppose that $\psi$ is taming, and let $U \subset M$ be a relatively compact, $K$-invariant open subset, with a smooth boundary, outside which $v^{\psi}$ does not vanish. Consider a $K$-invariant Riemannian metric on $TU$ for which $U$ is complete, and which coincides with the Riemannian metric on $TM$ in a neighbourhood of the zeroes of $v^{\psi}$. Also consider a compatible Clifford action by  $TU$ on $\cS|_U$. Then
\[
\indx_K(\cS|_U, \psi|_U) = \indx_K(\cS, \psi).
\]
\end{proposition}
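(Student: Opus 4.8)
The plan is to reduce the excision statement to the homotopy invariance Theorem \ref{thm htp}, exactly as one does for the Callias-type / Braverman index. First I would fix an admissible function $f$ on $M$ for the pair $(\psi, \nabla)$, and arrange (shrinking the region where we need control) that the zero set $Z := \Zeroes(v^{\psi})$ is contained in $U$. The key point is that both indices $\indx_K(\cS, \psi)$ and $\indx_K(\cS|_U, \psi|_U)$ depend only on data near $Z$: this is precisely what Braverman's localisation result gives, and it is already implicit in the fact (stated after Theorem \ref{thm Braverman}) that the index is independent of the $K$-invariant complete Riemannian metric and of the connection $\nabla$. So I would first invoke metric- and connection-independence to replace the metric on $M$ by one that agrees with the given metric on $U$ near $Z$ and is a product metric near $\partial U$, and similarly for the Clifford module structure, so that $M$ and $U$ literally agree as Clifford manifolds on a neighbourhood $V$ of $Z$ with $\overline V \subset U$.

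The main construction is then a cobordism. I would build a Clifford module $\cS$ over $M \times [0,1]$ and a taming map $\psi\colon M\times[0,1]\to\kk$ interpolating between $(\cS_M, \psi_M)$ on $M\times[0,1/3[$ and the ``pushforward'' of $(\cS|_U, \psi|_U)$ on $M\times{]2/3,1]}$. Concretely, near $Z$ nothing changes with $t$; away from $Z$, on $M\times{]2/3,1]}$ I want the manifold to behave like $U$ — the standard trick is to let the metric degenerate, or equivalently to cap off $M\setminus U$, so that the relevant part of $M\times{]2/3,1]}$ is cobordant (in Braverman's sense) to $U$. Since $v^{\psi}$ is nonvanishing on $M\setminus V$, I can choose $\psi$ on the cobordism so that $v^{\psi_t}(m)=0$ only for $m\in V$ and all $t$: on the ``product'' part this is immediate, and on the interpolating region I use the analogue of the estimate in the proof of Corollary \ref{cor pos inner prod}, namely that a convex-type combination of the two taming maps has nonvanishing induced vector field outside $V$ because both summands are controlled there. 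This makes $\psi$ a taming map on $M\times[0,1]$, and Theorem \ref{thm htp} then yields the equality of indices, after identifying $\indx_K$ of the capped-off model with $\indx_K(\cS|_U,\psi|_U)$ — the latter identification again by metric-independence applied on $U$.

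I expect the main obstacle to be the ``capping off'' step: turning the non-compact complement $M\setminus U$ into something that does not contribute to the index, in a way compatible with Braverman's framework. The cleanest route is probably to avoid capping entirely and instead choose, on $M\times{]2/3,1]}$, a single taming map whose induced vector field is the restriction of $v^{\psi}$ and a metric that makes $U$ complete and is ``long'' in the collar of $\partial U$ — since $v^{\psi}\ne 0$ on the collar, stretching the collar to infinite length exhibits $(M,\psi)$ and $(U,\psi|_U)$ as having the same index by the same mechanism that proves metric-independence. In other words, excision is a special case of metric-independence where one allows the metric to be ``complete on a smaller manifold'', and I would phrase the proof that way: construct a one-parameter family of $K$-invariant complete metrics on the (fixed) open set $U$ interpolating between the restriction of the $M$-metric and one for which the embedding $U\hookrightarrow M$ is an isometry onto a tubular-neighbourhood-free piece, check tamedness is preserved throughout because the zero set stays inside the fixed compact $\overline V$, and conclude by Theorem \ref{thm htp}. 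Checking completeness and the growth hypotheses defining admissibility are uniform along this family is the routine-but-careful part I would not spell out in full.
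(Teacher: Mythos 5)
The paper does not prove this proposition itself; it cites Lemma 3.12 and Corollary 4.7 of Braverman's paper, whose argument rests on cobordism invariance in its \emph{full} form, where the two boundary components of the cobordism are allowed to be \emph{different} manifolds (here $M$ and $U$), together with a vanishing result for pairs whose taming vector field has no zeros. Your proposal tries to get by with only the special case stated as Theorem \ref{thm htp}, and this is where the gap lies: Definition \ref{def htp} and Theorem \ref{thm htp} only compare two Clifford modules and taming maps over the \emph{same} underlying manifold $M$ (the homotopy lives on $M\times[0,1]$). No homotopy of this kind can ever produce, at $t=1$, an index computed over $U$ rather than over $M$; at best it replaces $(\cS,\psi)$ by another pair on $M$. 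The step you flag as the obstacle --- ``identifying $\indx_K$ of the capped-off model with $\indx_K(\cS|_U,\psi|_U)$'' --- is therefore not a routine application of metric-independence: it is exactly the excision statement you are trying to prove, restated.

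Your fallback of stretching a collar of $\partial U$ does not repair this. Metric-independence (itself a consequence of the same restricted homotopy invariance) again only applies to a fixed manifold, and the family of complete $K$-invariant metrics on $U$ you propose cannot have as an endpoint ``the metric for which $U\hookrightarrow M$ is an isometry'': the restriction to $U$ of the metric of $M$ is \emph{incomplete} (since $U$ is relatively compact with boundary), so that endpoint lies outside the framework, and even if it did not, you would still be comparing an index over $U$ with an index over $U$, never with the index over $M$. To make the argument work you need one of two genuinely additional inputs: either Braverman's general cobordism invariance, applied to the cobordism obtained by cutting $M\times[0,1]$ along a hypersurface near $\partial U\times\{1\}$ where $v^{\psi}\neq 0$; or a gluing formula $\indx_K(\cS,\psi)=\indx_K(\cS|_U,\psi|_U)+\indx_K(\cS|_{M\setminus \overline U},\psi|_{M\setminus \overline U})$ combined with the vanishing of the second term because $v^{\psi}$ has no zeros there. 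Neither is available from the statements quoted in the paper, which is why the authors defer to Braverman rather than deducing excision from Theorem \ref{thm htp}.
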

For a proof, see Lemma 3.12 and Corollary 4.7 in \cite{Braverman02}.

\subsection{The discrete series case}

We return to the setting described at the start of Section \ref{sec tempered}.
One application of the index theory of deformed Dirac operators was Paradan's realisation in \cite{Paradan03} of restrictions to $K$ of discrete series representations of $G$. The main result in this paper, Theorem \ref{main theorem}, is a generalisation of Paradan's result to arbitrary tempered representations. The main advantage of Paradan' result is that it implies a geometric formula for multiplicities of $K$-types of discrete series representations, Theorem 1.5 in \cite{Paradan03}. We will use Theorem \ref{main theorem} to obtain a generalisation of this multiplicity formula to arbitrary tempered representations in a forthcoming paper  \cite{PHYSSY2}.

Let us first state Paradan's result. Suppose $G$ is semisimple, and has a compact Cartan subgroup $T<K$. Let $\lambda \in i\kt^*$ be a regular element for which $\lambda - \rho_{\lambda}$ is integral, and let $\pi^G_{\lambda}$ be the corresponding discrete series representation. The coadjoint orbit
\[
\Ad^*(G) \lambda \cong G/T
\]
has a $G$-invariant complex structure $J$ such that
\beq{eq complex gt}
T_{eT}(G/T) = \kg/\kt = \bigoplus_{\alpha \in R^+_G} \kg^{\C}_{\alpha}
\eeq
as complex vector spaces. Let 
\[
 \Bigwedge_{J} T(G/T) \to G/T
\]
be the corresponding Clifford module, as in Example \ref{ex S J}.

Let $\C_{\lambda - \rho_{\lambda}}$ be the one-dimensional representation of $T$ with weight $\lambda - \rho_{\lambda}$, and consider the line bundle
\[
L_{\lambda - \rho_{\lambda}} := G\times_{T} \C_{\lambda - \rho_{\lambda}} \to G/T.
\]
Consider map
\[
\Phi\colon G/T = \Ad^*(G)\lambda \hookrightarrow \kg^* \to \kk^* \cong \kk,
\]
where the map $\kg^* \to \kk^*$ is the restriction map, and the identification $\kk^* \cong \kk$ is made via the $K$-invariant inner product chosen earlier. This map is taming by Proposition 2.1 in \cite{Paradan99}.
\begin{theorem}[Paradan]\label{thm Paradan}
We have
\beq{eq Paradan}
\pi^G_{\lambda}|_K = (-1)^{\dim(G/K)/2}
\indx( \Bigwedge_{J} T(G/T) \otimes L_{\lambda - \rho_{\lambda}}, \Phi).
\eeq
\end{theorem}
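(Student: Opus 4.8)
\textbf{Proof proposal for Theorem \ref{thm Paradan}.}

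The plan is to reduce the computation of the right-hand side of \eqref{eq Paradan} to an index on the ``partially linearised'' space $K\times_T \ks$, using the $K$-equivariant diffeomorphism $G/T \cong K\times_T \ks$ coming from the Cartan decomposition (the polar map $(k,X)\mapsto k\exp(X)T$), and then to identify that linearised index with the character of $\pi^G_\lambda|_K$ via a localisation argument at $T$. First I would transport the Clifford module $\Bigwedge_J T(G/T)\otimes L_{\lambda-\rho_\lambda}$ and the taming map $\Phi$ along this diffeomorphism to $E:=K\times_T\ks$, obtaining a Clifford module $\cS_E$ and a taming map $\Phi_E$; since Braverman's index is a $K$-equivariant diffeomorphism invariant (it depends only on the $K$-equivariant $K$-homology class together with the taming data), this step changes nothing about the index but gives us a space on which the fixed-point set $(G/T)^T = K/T\cdot(\text{pt})$ and the normal geometry are transparent.

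Next I would compute the index on $E$ by a combination of the homotopy and excision properties (Corollary \ref{cor pos inner prod} and Proposition \ref{prop excision}). The zero set of $v^{\Phi}$ consists of the points where $\Ad^*(g)\lambda$ is fixed by $K$-infinitesimal motion, which — because $\lambda$ is regular and the inner product is $K$-invariant — is exactly the compact submanifold $K/T\subset E$ sitting as the zero section. Excision then replaces $E$ by a tubular neighbourhood of $K/T$, i.e.\ by $K\times_T \ks$ itself with the linear fibre directions, and on this model the deformed Dirac operator decouples into a ``base'' Dirac operator along $K/T$ twisted by the line bundle $L_{\lambda-\rho_\lambda}$, and a harmonic-oscillator-type operator along the fibre $\ks$ deformed by the quadratic moment map. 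The Witten/Tian--Zhang analysis of the fibrewise oscillator (here I would invoke, or re-derive in this special case, the standard computation: the kernel of the deformed Dirac operator on a Hermitian vector space with the quadratic moment map is one-dimensional, concentrated in a definite $\Bigwedge$-degree with a definite weight) contributes a shift by $\rho_n$ (half the sum of the non-compact positive roots, coming from $\Bigwedge^{\mathrm{top}}$ of the non-compact part of $\kg^\C/\kt^\C$) and an overall sign $(-1)^{\dim(G/K)/2}$ from the odd-degree part of $\Bigwedge \ks$. After this fibre reduction the problem becomes the index of a genuine elliptic operator on the compact space $K/T$, namely the $K$-equivariant Dolbeault--Dirac operator on $K/T$ twisted by the line bundle with weight $(\lambda-\rho_\lambda)+\rho_n+\rho_c = \lambda$ (after carefully bookkeeping $\rho_c$ from $\Bigwedge_J$ of the compact directions). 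By the Borel--Weil--Bott theorem this index is $\pm[V_{\lambda-\rho_c}]$ or, more invariantly, $(-1)^{\dim(G/K)/2}$ times a virtual $K$-representation whose character is the alternating sum over $W_K$ of $e^{w\lambda}$ divided by the $K$-Weyl denominator.

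Finally I would match this with $\pi^G_\lambda|_K$. The character of $\pi^G_\lambda|_K$ is governed by Blattner's formula (Hecht--Schmid \cite{HechtSchmid75}), which expresses $\sum_\mu m_\mu e^\mu$ as $\sum_{w\in W_K}\sgn(w)\, e^{w\lambda - \rho_c}$ divided by $\prod_{\alpha\in R^+_G}(1-e^{-\alpha})$ restricted appropriately — equivalently, the multiplicity of a $K$-type $\mu$ is an alternating count of partitions of $w\lambda-\rho-\mu$ into non-compact positive roots. The key point is that the fibre contribution from $\ks$ in the index computation produces exactly the generating function $\prod_{\alpha\in R^+_n}(1-e^{-\alpha})^{-1}$ that appears in Blattner's formula, while the base contribution on $K/T$ produces the numerator $\sum_{w\in W_K}\sgn(w)e^{w\lambda}$. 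Assembling these gives the claimed equality, with the sign $(-1)^{\dim(G/K)/2}$ accounted for by the parity of $\dim_{\mathbb C}(\ks)=\dim(G/K)$.

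\textbf{Main obstacle.} The hard part will be the fibrewise analysis along $\ks$: one must show that the deformed Dirac operator on the non-compact fibre, with the \emph{quadratic} taming map induced by $\Phi$ (which is not linear, so this does not literally fit Braverman's cobordism toolbox for the fibre alone), has a one-dimensional kernel in a controlled degree and weight, and that this decoupling is compatible with the $K$-action and with the admissible rescaling function $f$. Making the homotopy from the true $\Phi_E$ to a product-type taming map rigorous — verifying that $v^{\psi_1}$ and the model vector field have non-negative inner product so that Corollary \ref{cor pos inner prod} applies, and that the intermediate taming maps remain taming with zero set still equal to $K/T$ — is where most of the technical care is needed, and in the general (tempered) case this is precisely the ``partial linearisation'' difficulty flagged in the introduction.
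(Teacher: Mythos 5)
Your overall architecture is the right one, and it is essentially the paper's: the paper does not reprove this statement (it cites Theorem 5.1 of \cite{Paradan03}, noting that Paradan identifies the index with the right-hand side of Blattner's formula), but the proof of the general Theorem \ref{main theorem}, specialised to $M=G$, $H=T$, $\kn=0$, is exactly your outline --- linearise $G/T\cong K\times_T\ks$, homotope the Clifford data and the taming map to product form, split off the fibre index over $\ks$, and match the result against Paradan's rewriting of Blattner's formula (Lemma \ref{lem Blatt Par}). Your ``main obstacle'' paragraph correctly identifies where the technical work lies (Lemmas \ref{lem lin J}--\ref{lem muEt} in the paper).

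There is, however, a concrete error in your fibre computation, and it contradicts your own final paragraph. You assert that ``the kernel of the deformed Dirac operator on a Hermitian vector space with the quadratic moment map is one-dimensional, concentrated in a definite degree with a definite weight,'' and you then feed a one-dimensional fibre contribution into Borel--Weil--Bott on $K/T$ to get $\pm[V_{\lambda-\rho_c}]$. That cannot be right: $\pi^G_\lambda|_K$ contains infinitely many $K$-types, so the fibre index over $\ks$ must be infinite-dimensional. The one-dimensional (Gaussian ground state) computation is the model $D^{\R^n}-ic\circ i$ of Lemma \ref{lem index Rn}, i.e.\ the Bott-element deformation by Clifford multiplication with the base point; in the paper this is what handles the $\kn$-directions in the general tempered case, and it plays no role when $\pi$ is in the discrete series. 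For the $\ks$-fibre the (homotoped) taming map is the \emph{constant} map with value $\lambda$, the induced vector field is the linear rotation field $X\mapsto -[\lambda,X]$, and the correct $T$-equivariant $L^2$-index is the infinite virtual sum
\[
(-1)^{\dim(G/K)/2}\,\C_{2\rho_n}\otimes\bigotimes_{\alpha\in R^+_n}\bigoplus_{k\geq 0}\C_{k\alpha},
\]
i.e.\ Atiyah's Lemma 6.4 / Paradan's Lemma 5.7, quoted in the paper as Lemma \ref{lem index p} (compare the $\SL(2,\R)$ computation in Subsection \ref{sec SL2}, where the index on $\C$ is $-\bigoplus_{j\geq 0}\C_{\pm(n+2j+1)}$). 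This infinite expansion is precisely the generating function $\prod_{\alpha\in R^+_n}(1-e^{-\alpha})^{-1}$ that your last paragraph invokes to match Blattner's formula; so you should delete the one-dimensional-kernel claim and the Borel--Weil--Bott step, and replace them by this expansion. With that correction the assembly $\bigl(L^2(K)\otimes(\Bigwedge_{J_{\ks}}\ks)^{-1}\otimes\Bigwedge_{J_{\kk/\kt}}\kk/\kt\otimes\C_{\lambda-\rho_c+\rho_n}\bigr)^T$ is exactly Lemma \ref{lem Blatt Par}, and the proof closes as you intend.
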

This is Theorem 5.1 in \cite{Paradan03}. In fact, Paradan proves that the right hand side of \eqref{eq Paradan} equals $(-1)^{\dim(G/K)/2}$ times the right hand side of Blattner's formula. The latter theorem therefore implies Theorem \ref{thm Paradan}.

\subsection{An almost complex structure} \label{sec ac str}

Now we drop the assumption that $G$ is semisimple and has a compact Cartan subgroup. We consider a general tempered representation $\pi$ of $G$, and write
\[
\pi = \Ind_P^G(\pi^{M}_{\lambda, R^+_M, \chi_M} \otimes e^{\nu} \otimes 1_N)
\]
as in Theorem \ref{thm KZ}. Let $H<G$ be the Cartan subgroup as in Subsection \ref{sec KZ}; we also use the other notation from that subsection. We will realise the restriction $\pi|_K$ as the $K$-equivariant index of an deformed Dirac operator on $G/H$ (up to a sign). To do this, we will use a $K$-equivariant almost complex structure on $G/H$. (If $\pi$ belongs to the discrete series, this is the $G$-invariant complex structure defined by \eqref{eq complex gt}, but in general it is only $K$-invariant, and need not be integrable.) We have
\[
\kg = \kk \oplus \ks_M \oplus \ka \oplus \kn.
\]
Therefore, we have an $H_M$-invariant decomposition
\beq{eq decomp gh 1}
\kg/\kh = \kk/\kt_M \oplus \ks_M \oplus \kn = \kk/\kk_M \oplus \kk_M/\kt_M \oplus \ks_M \oplus \kn.
\eeq
The map from $\kn^-$ to $\kk/\kk_M$, given by $X \mapsto \frac{1}{2}(X+\theta X)$ is an $H_M$-equivariant linear isomorphism. Using this, we find that, as a representation space of $H_M$, 
\beq{eq decomp gh}
\kg/\kh = \km/\kt_M \oplus \kn^- \oplus \kn^+.
\eeq
As in the discrete series case, the positive root system $R^+_M$ for $(\km^{\C}, \kt_M^{\C})$ determines an $\Ad(H_M)$-invariant complex structure $J_{\km/\kt_M}$ on $\km/\kt_M$ such that, as complex vector spaces,
\[
\km/\kt_M = \bigoplus_{\alpha \in R^+_M} \km^{\C}_{\alpha}.
\]

Once and for all, we fix an element $\zeta \in \ka$ such that $\langle \beta, \zeta \rangle > 0$ for all $\beta \in \Sigma^+$. Then the map
\[
\ad(\zeta) \colon\kn^- \oplus \kn^+ \to \kn^- \oplus \kn^+
\]
is invertible, with real eigenvalues. Set
\[
J_{\zeta} := \theta |\ad(\zeta)|^{-1} \ad(\zeta)\colon \kn^- \oplus \kn^+ \to \kn^- \oplus \kn^+.
\]
\begin{lemma}
The map $J_{\zeta}$ is an $H_M$-invariant complex structure.
\end{lemma}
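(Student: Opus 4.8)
The plan is to verify directly that $J_\zeta$ squares to $-1$, commutes with the $\Ad(H_M)$-action, and is real-linear on $\kn^- \oplus \kn^+$; all three will follow from the structure of the Cartan involution $\theta$ together with the $\ad(\zeta)$-decomposition.

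First I would set up the relevant decomposition. Since $\zeta \in \ka$, the operator $\ad(\zeta)$ is symmetric with respect to the inner product $(\relbar,\relbar)$ (because $\ka \subset \ks$ and the inner product is $\theta$-twisted Killing), hence diagonalisable with real eigenvalues; its nonzero eigenspaces on $\kn^-\oplus\kn^+$ are exactly the restricted root spaces $\kg_{\pm\beta}$ for $\beta \in \Sigma^+$, with eigenvalue $\pm\langle\beta,\zeta\rangle$. Because $\langle\beta,\zeta\rangle>0$ for all $\beta \in \Sigma^+$, $\ad(\zeta)$ is indeed invertible on $\kn^-\oplus\kn^+$, so $|\ad(\zeta)| := (\ad(\zeta)^2)^{1/2}$ makes sense and is invertible, and $S := |\ad(\zeta)|^{-1}\ad(\zeta)$ is the ``sign'' of $\ad(\zeta)$: it acts as $+1$ on $\kn^+$ and $-1$ on $\kn^-$. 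In particular $S^2 = 1$ and $S$ is symmetric. The key compatibility is that $\theta$ interchanges $\kg_{\beta}$ and $\kg_{-\beta}$, so $\theta S = -S\theta$ on $\kn^-\oplus\kn^+$; equivalently $\theta$ anticommutes with $S$. Also $\theta^2 = 1$, and $\theta$ preserves the inner product.

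Next I would compute $J_\zeta^2 = (\theta S)(\theta S) = \theta(S\theta)S = \theta(-\theta S)S = -\theta^2 S^2 = -1$, using the anticommutation and the involutivity of $\theta$ and $S$. This shows $J_\zeta$ is an almost complex structure on the vector space $\kn^-\oplus\kn^+$. (If one wants, one checks it is compatible with the metric: $J_\zeta$ is orthogonal since $\theta$ and $S$ are, so $J_\zeta$ defines a genuine complex structure making $\kn^-\oplus\kn^+$ a Hermitian space.) For $H_M$-invariance: $H_M = Z_K(\ka)M_0$ centralises $\ka$, hence commutes with $\ad(\zeta)$, hence with $|\ad(\zeta)|$ and with $S$; and $H_M < K$ commutes with $\theta$. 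Therefore $\Ad(h)$ commutes with $J_\zeta = \theta S$ for every $h \in H_M$, which is the asserted $H_M$-invariance. Real-linearity is immediate since $\theta$, $\ad(\zeta)$, $|\ad(\zeta)|^{-1}$ are all real-linear endomorphisms of the real vector space $\kn^-\oplus\kn^+$.

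The only mild subtlety — and the step I would be most careful about — is the claim $\theta S = -S\theta$, i.e.\ that $\theta$ really swaps the $\pm\beta$ eigenspaces of $\ad(\zeta)$: this is the statement that $\theta\kg_\beta = \kg_{-\beta}$, which holds because $\theta|_{\ka} = -\mathrm{id}$ (as $\ka \subset \ks$), so for $X \in \kg_\beta$ and $Y \in \ka$ one has $[Y,\theta X] = \theta[\theta Y, X] = \theta[-Y,X] = -\langle\beta,Y\rangle\,\theta X$. Everything else is formal manipulation of commuting/anticommuting involutions, so I expect no real obstacle beyond bookkeeping. I would present the proof in the order: (1) spectral description of $\ad(\zeta)$ on $\kn^-\oplus\kn^+$ and definition of $S$; (2) $\theta$ anticommutes with $S$; (3) $J_\zeta^2 = -1$; (4) $H_M$-invariance via centralising $\ka$.
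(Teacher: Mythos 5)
Your proof is correct and follows essentially the same route as the paper's: the paper also deduces $J_\zeta^2=-1$ from the facts that $\ad(\zeta)$ preserves each $\kg_{\pm\beta}$ while $\theta$ interchanges them (you package this as the anticommutation $\theta S=-S\theta$ and compute formally rather than on individual root vectors), and it establishes $H_M$-invariance exactly as you do, from $H_M<K$ commuting with $\theta$ and $H_M<Z_G(\ka)$ commuting with $\ad(\zeta)$. Your extra verifications (symmetry of $\ad(\zeta)$, $\theta\kg_\beta=\kg_{-\beta}$) are standard facts the paper takes for granted, so there is no substantive difference.
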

\begin{proof}
The adjoint action by $H_M$ commutes with $\theta$ because $H_M < K$. It commutes with $\ad(\zeta)$, because $H_M <M < Z_G(\ka)$.
So the map $J_{\zeta}$ is $H_M$-equivariant. Let $\beta \in \Sigma^+$. The map $\ad(\zeta)$ preserves the spaces $\kg_{\pm \beta}$, while the Cartan involution $\theta$ interchanges them. Hence, if $X_{\beta} \in \kg_{\beta}$,
\[
J_{\zeta}^2(X_{\beta}) = J_{\zeta}(\theta X_{\beta}) = -X_{\beta}.
\]
Similarly, if $X_{-\beta} \in \kg_{-\beta}$, then
\[
J_{\zeta}^2(X_{-\beta}) = -J_{\zeta}(\theta X_{-\beta}) = -X_{-\beta}.
\]
\end{proof}

Let 
\beq{eq def Jgh}
J_{\kg/\kh} :=  J_{\km/\kt_M} \oplus J_{\zeta}
\eeq
be the $H_M$-invariant complex structure on $\kg/\kh$ defined by $J_{\km/\kt_M}$ and $J_{\zeta}$ via the isomorphism \eqref{eq decomp gh}. 

Since $J_{\kg/\kh}$ is not necessarily $H$-invariant, it does not extend to a $G$-invariant almost complex structure on $G/H$ in general. However, it does extend to a $K$-invariant almost complex structure in a natural way.
\begin{lemma}\label{lem J}
There is a unique $K$-invariant almost complex structure $J$ on $G/H$, such that for all $k \in K$, $X \in \ks_M$ and $Y \in \kn$, the following diagram commutes:
\beq{eq diag J}
\xymatrix{
T_{k\exp(X)\exp(Y)H}G/H \ar[r]^-{J} & T_{k\exp(X)\exp(Y)H}G/H  \\
T_{eH}G/H = \kg/\kh \ar[u]^-{T_{eH}k\exp(X)\exp(Y)} \ar[r]_-{J_{\kg/\kh}} & \kg/\kh  = T_{eH}G/H \ar[u]_-{T_{eH}k\exp(X)\exp(Y)}.
}
\eeq
\end{lemma}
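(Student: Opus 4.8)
The plan is to define $J$ pointwise using the diagram \eqref{eq diag J} as a prescription, and then to check that this is well-defined (independent of the choice of representative $g = k\exp(X)\exp(Y)$) and $K$-invariant. The first task is to observe that the map
\[
K \times \ks_M \times \kn \to G, \qquad (k, X, Y) \mapsto k\exp(X)\exp(Y),
\]
is a diffeomorphism onto $G$ — this is the global Cartan-type decomposition of $G$ compatible with the Iwasawa data, together with the fact that $\exp\colon \ks_M \to \exp(\ks_M)$ and $\exp\colon \kn \to N$ are diffeomorphisms. Hence every point of $G/H$ has a representative in $G$ of the form $k\exp(X)\exp(Y)$, and two such representatives $k\exp(X)\exp(Y)$ and $k'\exp(X')\exp(Y')$ define the same point of $G/H$ precisely when $k'\exp(X')\exp(Y') = k\exp(X)\exp(Y)h$ for some $h \in H$.

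The key step, which is the main obstacle, is to show that the almost complex structure on $T_{gH}(G/H)$ defined by transporting $J_{\kg/\kh}$ along $T_{eH}(k\exp(X)\exp(Y))$ does not depend on the chosen representative. Write $g = k\exp(X)\exp(Y)$ and $g' = k'\exp(X')\exp(Y') = gh$. The two transported structures agree if and only if the linear automorphism $T_{eH}(g^{-1}g') = \Ad(h^{-1})$ of $\kg/\kh$ (acting via the identification $T_{eH}(G/H) = \kg/\kh$ and the natural right action of $H$) commutes with $J_{\kg/\kh}$. Now $h \in H = Z_G(\kh)$, and by \eqref{eq HM} applied to $G$ itself one has $H = T_M Z_G' A$ for a finite group $Z_G'$, so $h = h_M a$ with $h_M \in H_M$ and $a \in A$. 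Since $A$ is connected with Lie algebra $\ka \subset \kh$, the element $a$ acts trivially on $\kg/\kh$, so $\Ad(h^{-1})$ acts as $\Ad(h_M^{-1})$. Finally $J_{\kg/\kh} = J_{\km/\kt_M} \oplus J_{\zeta}$ is $H_M$-invariant: $J_{\km/\kt_M}$ is $\Ad(H_M)$-invariant by construction, and $J_{\zeta}$ is $H_M$-invariant by the Lemma just proved. Hence $\Ad(h_M^{-1})$ commutes with $J_{\kg/\kh}$, and the definition is consistent. Uniqueness is immediate, since the diagram \eqref{eq diag J} determines $J$ on every tangent space. $K$-invariance follows by the same diagram: for $k_0 \in K$, left translation by $k_0$ sends the representative $k\exp(X)\exp(Y)$ to $(k_0 k)\exp(X)\exp(Y)$, which is again of the prescribed form, and the diagram for the new point factors through the diagram for the old one via $T(k_0\cdot)$, so $T(k_0\cdot) \circ J = J \circ T(k_0\cdot)$. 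Smoothness of $J$ is clear because it is built from the smooth trivialisation $K \times \ks_M \times \kn \xrightarrow{\cong} G \to G/H$ and a fixed linear map on the typical fibre $\kg/\kh$.
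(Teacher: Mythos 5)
There is a genuine gap in the well-definedness step. You reduce everything to the claim that, writing $h = h_M a$ with $h_M \in H_M$ and $a \in A$, the element $a$ ``acts trivially on $\kg/\kh$''. This is false: the inclusion $\ka \subset \kh$ only says that the vector fields on $G/H$ induced by $\ka$ vanish at $eH$; the isotropy action of $a$ on $T_{eH}(G/H) = \kg/\kh$ is the map induced by $\Ad(a)$, which scales each restricted root space $\kg_{\beta}$, $\beta \in \Sigma$, by $e^{\langle \beta, \log a\rangle} \neq 1$. Worse, $\Ad(a)$ does not even commute with $J_{\kg/\kh}$: the map $J_{\zeta} = \theta\,|\ad(\zeta)|^{-1}\ad(\zeta)$ interchanges $\kg_{\beta}$ and $\kg_{-\beta}$, on which $\Ad(a)$ acts by reciprocal scalars, so $\Ad(a)\,J_{\zeta} = J_{\zeta}\,\Ad(a)^{-1}$ on $\kn^-\oplus\kn^+$. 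This is precisely why the paper emphasises that $J_{\kg/\kh}$ is not $H$-invariant and that $J$ is only $K$-invariant rather than $G$-invariant; your argument, if it worked, would produce a $G$-invariant almost complex structure on $G/H$.

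The correct repair, which is the paper's argument, is to show that the element of $H$ relating two representatives in the normal form $k\exp(X)\exp(Y)$ automatically lies in $H_M$, i.e.\ that $a = e$. One writes
\[
k'\exp(X')\exp(Y') = k\exp(X)\exp(Y)h_M a = kh_M\exp(\Ad(h_M^{-1})X)\exp(\Ad(h_M^{-1})Y)\,a
\]
and invokes the injectivity of the multiplication map \eqref{eq mult map} (a diffeomorphism $K\times\exp(\ks_M)\times N\times A\to G$) to conclude $a=e$; then $H_M$-invariance of $J_{\kg/\kh}$ finishes the well-definedness exactly as in your last step. Relatedly, your opening claim that $(k,X,Y)\mapsto k\exp(X)\exp(Y)$ is a diffeomorphism of $K\times\ks_M\times\kn$ onto $G$ cannot be right by a dimension count (it omits the factor $A$); the correct global decomposition $G=K\exp(\ks_M)NA$ is the tool you need both for surjectivity onto $G/H$ and for forcing $a=e$.
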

\begin{proof}
If $k, k' \in K$, $X, X' \in \ks_M$ and $Y, Y' \in \kn$ such that
\[
k\exp(X)\exp(Y)H = k'\exp(X')\exp(Y')H,
\]
then there are $h \in H_M$ and $a \in A$ such that
\[
k'\exp(X')\exp(Y') = k\exp(X)\exp(Y)ha = kh \exp(\Ad(h^{-1})X)\exp(\Ad(h^{-1})Y)a.
\]
Since the multiplication map 
\beq{eq mult map}
K\times \exp(\ks_M) \times N \times A \to G
\eeq
is injective (indeed, a diffeomorphism), we must have $a = e$. So
\[
T_{eH} k'\exp(X')\exp(Y') = T_{eH} k\exp(X)\exp(Y) \circ T_{eH}h.
\]
Since $T_{eH}h\colon \kg/\kh \to \kg/\kh$ is induced by $\Ad(h)$, it commutes with $J_{\kg/\kh}$. Since \eqref{eq mult map} is a diffeomorphism, every element of $G/H$ is of the form $k\exp(X)\exp(Y)H$ for some $k \in K$, $X \in \ks_M$ and $Y \in \kn$. Hence the map $J$ is well-defined by commutativity of \eqref{eq diag J}. This defining property directly implies that $J$ is $K$-invariant.
\end{proof}

\subsection{Tempered representations; the main result} \label{sec result}

In the notation of the previous subsection, we have the vector bundle
\[
\Bigwedge_{J} T(G/H) \to G/H,
\]
with $J$ as in Lemma \ref{lem J}.
As in the discrete series case, this vector bundle has the natural $K$-equivariant Clifford action of Example \ref{ex S J}. Let $\rho^M$ be half the sum of the roots in $R^+_M$.
Consider the one-dimensional representation
\[
\C_{\lambda - \rho^M} \boxtimes \chi_M
\]
of $H_M$. We extend it to a representation of $H$ by letting $A$ act trivially.
This induces the line bundle
\[
L_{\lambda- \rho^M, \chi_M} := G\times_{H} (\C_{\lambda - \rho^M} \boxtimes \chi_M) \to G/H.
\]

Let $\xi \in \kt^*_M$ be any regular element that is dominant with respect to $R^+_M$. We may, and will, assume that the elements $\zeta \in \ka$ and $\xi \in \kt^{*}_M \cong \kt_M$ are chosen so that the element $\xi + \zeta \in \kh$ is regular for the roots of $(\kg^{\C}, \kh^{\C})$.
Define the map 
\[
\Phi\colon G/H \to \kk^* = \kk
\]
by
\[
\Phi(gH) = \Ad^*(g)(\xi + \zeta)|_{\kk}.
\]
(Here we identify $\zeta \in \ka$ with the dual element in $\ka^*$ via the chosen inner product.) The map $\Phi$ is the moment map in the sense of symplectic geometry for the action by $K$ on the coadjoint orbit $\Ad^*(G)(\xi + \zeta)$. 
This map is taming for that action, by Proposition 2.1 in \cite{Paradan99}. (This is true because $\xi + \zeta$ is a regular element, so $\Ad^*(G)(\xi + \zeta)$ is a closed coadjoint orbit.)

Our main result is the following realisation of $\pi|_K$. Recall that
\beq{eq pi basic}
\pi = \Ind_P^G(\pi^{M}_{\lambda, R^+_M, \chi_M} \otimes e^{\nu} \otimes 1_N).
\eeq
\begin{theorem} \label{main theorem}
We have
\[
 \pi|_K = (-1)^{\dim(M/K_M)/2}
\indx_K\bigl(\Bigwedge_{J} T(G/H) \otimes L_{\lambda- \rho^M, \chi_M}, \Phi \bigr). 
\]
\end{theorem}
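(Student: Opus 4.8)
The plan is to follow the three-step strategy outlined in the introduction. \emph{Step 1 (linearisation).} First I would replace $G/H$ by a $K$-equivariantly diffeomorphic ``partially linearised'' model $E$, and show the right-hand side index is unchanged. The natural candidate, dictated by the multiplication diffeomorphism $K\times\exp(\ks_M)\times N\times A\to G$ and by the decomposition \eqref{eq decomp gh}, is a bundle of the form $E = K\times_{H_M}(\ks_M\oplus \kn^-\oplus\kn^+)$, or rather its twist incorporating the $\km/\kt_M$ and $\kn$ directions, fibred in a way that on the $M$-part reduces to the classical $M_0/T_M \cong K_M\times_{T_M}\ks_M$ picture used by Paradan. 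Transporting $J$, $L_{\lambda-\rho^M,\chi_M}$ and $\Phi$ through this diffeomorphism, and checking that the deformed Dirac operator on $E$ is taming with the same zero set (this uses that $\xi+\zeta$ is regular, so $\Phi$ is proper on the zero set), Corollary~\ref{cor pos inner prod} and Proposition~\ref{prop excision} let me deform the moment map and the Clifford module between the two descriptions; this is Proposition~\ref{prop linearise}.

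\emph{Step 2 (computation on $E$).} Next I would compute $\indx_K$ on $E$ explicitly. Since $E$ is a $K$-vector bundle over the compact base $K/H_M$ (or over a $K_M$-bundle thereof), I would use a Thom-isomorphism / induction-in-stages argument: the fibrewise index over the $\ks_M\oplus\kn^-\oplus\kn^+$ directions, deformed by the radial part of $v^\Phi$, picks out a Bott-type generator, and what remains is an index over the compact homogeneous space $K/T_M$, i.e.\ essentially a Borel--Weil--Bott computation twisted by $\C_{\lambda-\rho^M}\boxtimes\chi_M$. Here I would need to track the almost complex structure $J_\zeta=\theta|\ad(\zeta)|^{-1}\ad(\zeta)$ carefully: on $\kn^-\oplus\kn^+$ the deformation by $v^\Phi$ (whose $\ka$-component is $\zeta$) has a nondegenerate quadratic zero at the origin, and the sign/orientation contribution is exactly what produces the $(-1)^{\dim(M/K_M)/2}$ factor (the $\ks_M$-directions), the rest of $\kn$ contributing trivially after cancellation. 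Because these deformations fall outside Braverman's framework, this step uses the bespoke homotopy arguments of Sections~\ref{sec ind fibred} and~\ref{sec linearised index}, yielding Proposition~\ref{prop lin index}: an explicit formula for the index as an induced representation of the form $\Ind_{K_M}^K(\text{something built from Blattner data on }M)$.

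\emph{Step 3 (identification with $\pi|_K$).} Finally I would match the explicit expression from Step 2 with $\pi|_K$. Since $\pi=\Ind_P^G(\pi^M_{\lambda,R^+_M,\chi_M}\otimes e^\nu\otimes 1_N)$, by Frobenius reciprocity / the $K$-picture of induced representations one has $\pi|_K \cong \Ind_{K_M}^K(\pi^M_{\lambda,R^+_M,\chi_M}|_{K_M})$, independently of $\nu$. So it suffices to prove the statement for $G=M$, i.e.\ for limits of discrete series, which is where Blattner's formula for (limits of) discrete series \cite{HS75} enters: by the same mechanism as in Paradan's Theorem~\ref{thm Paradan}, the $K_M$-equivariant index on $M/H_M$ (or $M_0/T_M$, handling the disconnectedness of $H_M$ via \eqref{eq HM} and a further induction $T_MZ_M'\to H_M$) equals $(-1)^{\dim(M/K_M)/2}$ times the Blattner formula, hence $(-1)^{\dim(M/K_M)/2}\pi^M_{\lambda,R^+_M,\chi_M}|_{K_M}$. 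Inducing up to $K$ gives Proposition~\ref{prop lin comp} and the theorem.

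\emph{Main obstacle.} The hard part is Step~2, specifically controlling the deformed Dirac operator on the non-compact fibre directions $\kn^-\oplus\kn^+$ with the almost complex structure $J_\zeta$ rather than a $G$-invariant complex structure: Braverman's cobordism invariance does not directly apply to the fibrewise deformation one wants to use, so one must build explicit homotopies that are taming at every stage and verify the relevant $L^2$-kernels behave correctly, while simultaneously keeping the bookkeeping of signs (orientations of $\ks_M$ versus $\kn$, the role of $\rho^M$ versus $\rho_\lambda$, and the regularity hypothesis on $\xi+\zeta$) consistent so that exactly the factor $(-1)^{\dim(M/K_M)/2}$ survives and no spurious contribution from the $A$- or $N$-directions appears.
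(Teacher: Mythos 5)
Your proposal is correct and follows essentially the same route as the paper: linearisation to the bundle $E$ over $K/H_M$, a fibrewise index computation in which the $\ks_M$-directions produce the sign $(-1)^{\dim(M/K_M)/2}$ and the $(\Bigwedge_{J_{\ks_M}}\ks_M)^{-1}$ factor while the $\kn$-directions contribute a Bott-type trivial class via ad hoc homotopies outside Braverman's framework, and a final matching with Paradan's reformulation of Blattner's formula after reducing $\pi|_K$ to $\Ind_{K_M}^K(\pi^M_{\lambda,R^+_M,\chi_M}|_{K_M})$. You also correctly identify the main technical obstacle (the deformation $c\circ J_\zeta$ on $\kn$ is not Clifford multiplication by a vector field, so cobordism invariance does not apply directly); the only minor slip is that the fibre of $E$ is $\ks_M\oplus\kn$ rather than $\ks_M\oplus\kn^-\oplus\kn^+$, with $\kn^-\oplus\kn^+$ appearing only in the tangent-space identification used to define $J^E$.
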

Note that if $\pi$ belongs to the discrete series or limits of discrete series,  then $M = G$, $H = T$ and $\kn^- = \kn^+ = \{0\}$. So if we take $\xi = \lambda$, then Theorem \ref{main theorem} reduces to Theorem \ref{thm Paradan}. The case of limits of discrete series was not treated in \cite{Paradan03}, because the focus there was to obtain a multiplicity formula for $K$-types of the discrete series. But the techniques used there apply directly to this case.

\begin{remark} \label{rem basic}
The only property of the representation $\pi$ that we will use to prove Theorem \ref{main theorem} is that it is of the form \eqref{eq pi basic}. This is true if $\pi$ is tempered
 by Theorem \ref{thm KZ}. Since the restriction to $K$ of the representation on the right hand side of \eqref{eq pi basic} is independent of the parameter $\nu \in (\ka^{\C})^*$, even for non-imaginary $\nu$, Theorem  \ref{main theorem} in fact applies to every representation $\pi$ equal to the right hand side of \eqref{eq pi basic}, for $\nu \in (\ka^{\C})^*$; i.e.\ to every standard representation. By the Langlands classification of admissible representations (see for example Theorem 14.92 in \cite{KnappBook}), this implies that the restriction to $K$ of every admissible representation is a quotient of an index as in Theorem \ref{main theorem}.
%
\end{remark}

\begin{remark}
The sign $(-1)^{\dim(G/K)/2}$ in Theorem \ref{main theorem} can be absorbed into the Clifford module used. For example, under regularity conditions on $\lambda$ and $\nu$, one can choose the $\Spinc$-structure on $G/H$ whose determinant line bundle is a prequantum line bundle for a relevant coadjoint orbit. However, we prefer the explicit Clifford module in Theorem \ref{main theorem}. This  also avoids references to symplectic geometry, since $\Spinc$-geometry is more relevant here.
We discuss this in more detail in  Subsection \ref{sec orbit}.
\end{remark}

\subsection{Relation with the orbit method and geometric quantisation} \label{sec orbit}

In a companion paper  \cite{PHYSSY2}, we use Theorem \ref{main theorem} together with a \emph{$\Spinc$-quantisation commutes with reduction} result, Theorem 3.10 in \cite{HS16}, to obtain a geometric formula for the multiplicities of the $K$-types of $\pi$. Note that in Theorem \ref{main theorem}, there is a large amount of freedom in choosing the elements $\xi \in \kt^*_M$ and $\zeta \in \ka$. To obtain the multiplicity formula in  \cite{PHYSSY2}, we will need to make more specific choices. 

Let $\rho^G$ be half the sum of the positive roots compatible with $R_M^+$ and $\Sigma^+$. Consider the element
\[
\eta := (\lambda + \nu + \rho^G|_{\kt_M} - \rho^M)/i \quad \in \kh^*.
\]
Then, modulo a factor $i$, $\eta$ is the infinitesimal character of $\pi$, shifted by $\rho^G|_{\kt_M} - \rho^M$.
Suppose that $\eta$ is a regular element of $\kh$. We will see in Section 2.2 of \cite{PHYSSY2} that a natural choice is to take $\xi = \lambda +  \rho^G|_{\kt_M} - \rho^M$ and $\zeta$ dual to $\nu/i$. Then $\Phi$ is the map
\beq{eq phi coadj}
G/H \xrightarrow{\cong} \Ad^*(G)\eta \hookrightarrow \kg^* \to \kk^*.
\eeq
This is a moment map in the symplectic sense for the action by $K$ on the coadjoint orbit $\Ad^*(G)\eta$. 

However, to obtain a multiplicity formula for the $K$-types of $\pi$ from Theorem \ref{main theorem} via the {quantisation commutes with reduction} principle, it is better to use a $\Spinc$-approach to geometric quantisation. Indeed, already in the case of the discrete series, the natural complex structure on $\Ad^*(G)\eta = \Ad^*(G)\lambda = G/T$ is not compatible with the natural symplectic form, and the line bundle $L_{\lambda - \rho^M, \chi_M} = G\times_T \C_{\lambda - \rho}$ is not a prequantum line bundle for that symplectic structure. See Section 1.5 of \cite{Paradan03}. For this reason, it is useful that, for these choices of $\xi$ and $\zeta$, the map $\Phi$ is a moment map in the $\Spinc$-sense, for the $\Spinc$-structure with the spinor bundle $\Bigwedge_{J} T(G/H) \otimes L_{\lambda- \rho^M, \chi_M}$ in Theorem \ref{main theorem}. This is shown in Proposition 2.4 and Lemma 4.5 in \cite{PHYSSY2}. The quantisation commutes with reduction principle was shown to have a natural place in $\Spinc$-geometry by Paradan and Vergne \cite{Paradan14a}.

The Clifford module $\Bigwedge_{J} T(G/H) \otimes L_{\lambda- \rho^M, \chi_M}$ used in Theorem \ref{main theorem} corresponds to a $K$-equivariant $\Spinc$-structure. The manifold $G/H$ does not admit a $G$-equivariant $\Spinc$-stucture in general, because the action by $G$ on $G/H$ is not proper if $H$ is noncompact. Indeed, the manifold then does not even admit a $G$-invariant Riemannian metric. In the context of the orbit method, the $\Spinc$-structure with spinor bundle 
\[
\Bigwedge_{J} T(G/H) \otimes L_{\lambda- \rho^M, \chi_M} \otimes (G\times_H \C_{\nu})
\]
on $G/H \cong \Ad^*(G)\eta$
is a natural one to use. This is $K$-equivariantly isomorphic to the $\Spinc$-structure with spinor bundle $\Bigwedge_{J} T(G/H) \otimes L_{\lambda- \rho^M, \chi_M}$ in Theorem \ref{main theorem}, and we leave out the factor $(G\times_H \C_{\nu})$ to simplify our arguments. See Lemma 4.5 in \cite{PHYSSY2}. (Including that factor would lead to a factor $\C_{\nu}$ in Proposition \ref{prop rewrite piK} below, which is trivial as a representation of $H_M$ so would not change that result.)

The use of index theory of Dirac operators deformed by vector fields induced by moment maps (known as Kirwan vector fields), as in Theorem \ref{main theorem}, is quite common and natural in geometric quantisation. Already in the compact case, such deformations were used to prove that quantisation commutes with reduction in \cite{Paradan01, Zhang98}. In the noncompact case, such deformed Dirac operators are not just used to prove this result, but also to define geometric quantisation. This was done in Vergne's conjecture \cite{Vergne06} for actions by compact groups on noncompact symplectic manifolds, and its generalisation and proof by Ma and Zhang \cite{Zhang14} and later by Paradan \cite{Paradan11} (and even later in \cite{HS17}). In Vergne's conjecture and Paradan's proof, and in Ma and Zhang's proof, two different but equivalent definitions of the equivariant index of deformed Dirac operators were used to the one we use here. The same deformed Dirac operators were used to prove that quantisation commutes with reduction for actions by compact groups on noncompact $\Spinc$-manifolds in \cite{HS16}, and for proper actions by noncompact groups in \cite{Mathai14, Mathai13,  HS18, Mathai10}.

If $\eta$ is not regular, then the map \eqref{eq phi coadj} is still a moment map in the $\Spinc$-sense, but the first arrow is a fibre bundle rather than a diffeomorphism. In this case, the map \eqref{eq phi coadj} does not have the properties that the map $\Phi$ needs to have for Theorem \ref{main theorem} to hold. In  \cite{PHYSSY2}, we will deform the map $\Phi$ in Theorem \ref{main theorem} to a taming, proper $\Spinc$-moment map to obtain a geometric expression for multiplicities of $K$-types of $\pi$.

In representation theory, such representations with singular parameters are viewed as being associated to nilpotent coadjoint orbits. Via the deformation we use in the singular case, we lose the link with this aspect of the orbit method, but we gain a geometric formula for multiplicities of $K$-types.

\subsection{Example: $G = \SL(2,\R)$}\label{sec SL2}

To illustrate Theorem \ref{main theorem}, we work out the case where $G = \SL(2,\R)$ and $K = \SO(2)$. This group $G$ has three kinds of tempered representations: the (holomorphic and antiholomorphic) discrete series, the limits of the discrete series, and the (spherical and nonspherical) unitary principal series.

\subsubsection{The discrete series}

To realise the restrictions of the  {discrete series} to $K$, we take $H = T = \SO(2)$, so $M = G$. Let $\alpha$ be the root that maps $\mattwo{0}{-1}{1}{0}$ to $2i$. 
Let $n \in \{1, 2, 3, \ldots\}$, and let $D_n^{\pm}$ be the discrete series representation with Harish--Chandra parameter 
$\lambda = \pm n\alpha/2$. Then $\rho^M = \rho_{\lambda} = \pm \alpha/2$. We take $\xi = \lambda$. In this case $\ka = \{0\}$, so $\zeta = 0$. Then the map $\Phi$ is defined by $\Phi(gT) = \Ad^*(g)\lambda|_{\kk}$, for $g \in G$. This is the identification of $G/T$ with the elliptic orbit $\Ad^*(G)\lambda$, followed by restriction to $\kk$. We may use the orbit through any positive multiple of $\lambda$ here, but this choice will turn out to be the relevant one for the computation of multiplicities of $K$-types.

Then the complex structure $J_{\kg/\kt}$ is determined by the isomorphism
$
\kg/\kt \cong \kg^{\C}_{\pm\alpha} = \C E_{\pm\alpha},
$
where
\[
E_{\alpha} = \frac{1}{2}\mattwo{1}{-i}{-i}{-1}; \quad E_{-\alpha} = \frac{1}{2}\mattwo{1}{i}{i}{-1}.
\]
The torus $T$ acts on this space with weight $\pm\alpha$. We will write $\C_l$ for the unitary irreducible representation of the circle with weight $l \in \Z$. Then
  $\kg/\kh = \C_{\pm2}$, and $\Bigwedge_{J_{\kg/\kh}}\kg/\kh = \C_0 \oplus \C_{\pm 2}$.

We have $Z_M = Z_G = \{\pm I\} \subset T$, so we have to take $\chi_M = e^{\lambda - \rho^M}|_{Z_M}$. Then
\[
\C_{\lambda - \rho^M} \boxtimes \chi_M \cong \C_{\lambda-\rho^M} = \C_{\pm(n-1)},
\]
via the map $z_1 \otimes z_2 \mapsto z_1z_2$.
So the line bundle $L_{\lambda - \rho^M, \chi_M}$ 
equals
\[
L_{\lambda - \rho^M, \chi_M} =  G\times_T \C_{\pm(n-1)}.
\]
Theorem \ref{main theorem} now states that
\[
D_n^{\pm}|_K = -\indx_K\bigl(G\times_T (\C_{\pm(n-1)} \oplus \C_{\pm(n+1)}), \Phi \bigr).
\]

\subsubsection{Limits of discrete series}

For the {limits of the discrete series} $D_0^{\pm}$, we also have $H = T = \SO(2)$ and $M = G$. Now $\lambda = 0$ and $R^+_M = \{\pm \alpha\}$. We cannot take $\xi = \lambda = 0$, so we take $\xi = \pm \alpha/2$ (we can replace this element by  any positive multiple). As before, we have $\zeta = 0$, so for all $g \in G$, we have $\Phi(gT) = \Ad^*(g)\xi$. This is the identification of $G/T$ with the elliptic orbit $\pm\Ad^*(G)\alpha/2$, followed by restriction to $\kk$. Note that this is the same orbit as the one used for $D^{\pm}_1$; this shift will be important in \cite{PHYSSY2}. 
Now we have $\rho^M = \pm \alpha/2$, so, analogously to the discrete series case,
\[
L_{\lambda - \rho^M, \chi_M} = G\times_T \C_{\mp 1}.
\]
Theorem \ref{main theorem} therefore yields
\[
D_0^{\pm}|_K = -\indx_K\bigl(G\times_T (\C_{\mp 1} \oplus \C_{\pm 1}), \Phi \bigr).
\]

\subsubsection{The principal series}

For the {unitary principal series} $P^{\pm}_{i\nu}$, where $\nu \geq 0$ for the spherical principal series $P^+_{i\nu}$ and $\nu > 0$ for the nonspherical principal series $P^-_{i\nu}$, we have 
\[
H =  \Bigl\{\mattwo{x}{0}{0}{x^{-1}}; x\not=0 \Bigr\}.
\]
Then $M = H_M = \{\pm I\}$. Now
\[
\kg/\kh \cong \kn^- \oplus \kn^+,
\]
where $\kn^{\pm} = \R E^{\pm}$, with 
\[
E^- = \mattwo{0}{0}{1}{0}; \quad  E^+ = \mattwo{0}{1}{0}{0}.
\]
Now $\kt_M = 0$, so
 $\xi = 0$. We take $\zeta = \mattwo{1}{0}{0}{-1}$. Then for all $g \in G$,
$
\Phi(gH) = \Ad^*(g)\zeta|_{\kk}.
$
This is the identification of $G/H$ with the hyperbolic orbit $\Ad^*(G)\zeta$, followed by restriction to $\kk$. Note that we use the same orbit for all principal series representations.

Furhermore,
\[
\ad(\zeta)E^{\pm} = \pm 2E^{\pm}
\]
and $\theta E^{\pm} = -E^{\mp}$. So with respect to the basis $\{E^+, E^-\}$, the complex structure $J_{\zeta}$ has the matrix
\[
J_{\zeta} = \mattwo{0}{1}{-1}{0}.
\]
The almost complex structure $J$ on $G/H$ is given by
\[
J(T_{eH}l_{k\exp(Y)} X) = T_{eH}l_{k\exp(Y)} J_{\zeta}X,
\]
for $k \in \SO(2)$, $Y \in \kn^+$ and $X \in \kn^- \oplus \kn^+$.

Let $\chi_{\pm}$ be the representations of $H_M$ on $\C$ defined by $\chi_{\pm}(-I) = \pm 1$. We now have $\lambda = \rho^M = 0$, so
\[
L_{\lambda - \rho^M, \chi_{\pm}} = G\times_H \chi_{\pm},
\]
where $H$ acts on $\chi_{\pm}$ via 
\[
\mattwo{x}{0}{0}{x^{-1}} \mapsto \chi_{\pm}(\sgn(x)I). 
\]
This number equals $1$ for $\chi_+$, and $\sgn(x)$ for $\chi_-$.
By Theorem \ref{main theorem}, we have
\[
P^{\pm}_{i\nu}|_K = \indx_K(\Bigwedge_{J} T(G/H) \otimes (G\times_H \chi_{\pm}), \Phi).
\]

\subsubsection{Explicit computation of the indices for $\SL(2,\R)$}

The proof of Theorem \ref{main theorem} simplifies considerably in the example $G = \SL(2,\R)$. Some important points are still present, however: a partial linearisation of the space $G/H$, and the role of differential operators on vector spaces with index $1$. We give a brief outline of the proof for $\SL(2,\R)$ to illustrate these points. In particular, we give explicit computations of the relevant indices in this case.

In the case of (limits of) discrete series representations, the proof of Theorem \ref{main theorem} is a variation on the proof of Theorem 5.1 in \cite{Paradan03}.
For the representation $D_n^+$, with $n \in \{0,1,2, \ldots\}$, the first step in this proof 
 is to show that 
\beq{eq index ds}
\indx_K(\Bigwedge_{J}G/T \otimes L_{\lambda - \rho^M, \chi_M}, \Phi) = \indx_K(\Bigwedge_{\C} T\C \otimes\C_{\pm(n-1)},\Phi^E),
\eeq
where $K$ acts on $\C$ by rotation with weight $2$, and $\Phi^E\colon \C \to \mathfrak{so}(2) \cong \R$ is the constant map with value $\pm 1$. Here we use that $G/T$ is isomorphic to $\C$ as a complex $K$-manifold, where $K$ acts on $\C$ by rotation with weight $2$. The index on the right hand side of \eqref{eq index ds} was computed in Lemma 6.4 in \cite{Atiyah74} (see also Lemma 5.7 in \cite{Paradan01} and the bottom of page 841 in \cite{Paradan03}), and equals
\[
-\bigoplus_{j=0}^{\infty} \C_{\pm(n+2j+1)}.
\]
Apart from the minus sign, this is the well-known decomposition of $D^+_n|_K$. The minus sign is the factor $(-1)^{\dim(M/K_M)/2}$ in this case.


For the principal series representation $P^{\pm}_{i\nu}$, the space $G/H$ is isomorphic to the cylinder $\C/\Z$ as a complex $K$-manifold, where $K$ acts on $\C/\Z$ by double rotations of the cylinder in the direction $\R/\Z$. 
We will see that 
\[
\indx_K(\Bigwedge_{J} T(G/H) \otimes L_{\lambda - \rho^M, \chi_M}, \Phi) = \indx_K(\Bigwedge_{\C}T(\C/\Z) \otimes \chi_{\pm}, \tilde \Phi),
\]
where for $x, y \in \R$, we have $\tilde \Phi(x+\Z + iy) = y \in \mathfrak{so}(2)$. As a special case of Proposition \ref{prop htp}, we will see that the index on the right hand side equals the $L^2$-kernel of the operator
\[
\mattwo{0}{-\frac{\partial}{\partial y} + y}{\frac{\partial}{\partial y} + y}{0}
\]
on $\bigl(L^2(\C/\Z) \otimes\C^2 \otimes \chi_{\pm}\bigr)^{\{\pm I\}}$. 
This kernel equals
\[
\begin{split}
\bigl( L^2(\R/\Z) \otimes \C\cdot (y \mapsto e^{-y^2/2}) \otimes \chi_{\pm}\bigr)^{\{\pm I\}} 
&\cong (L^2(K) \otimes \chi_{\pm})^{K_M} \\
&= \Ind_{MAN}^G(\chi_{\pm} \otimes e^{i\nu} \otimes 1_N)|_K \\
&= P^{\pm}_{i\nu}|_K.
\end{split}
\]

Since $K$ acts on $\R/\Z$ by rotations with weight $2$, one also finds directly that 
\[\bigl( L^2(\R/\Z) \otimes \C\cdot (y \mapsto e^{-y^2/2}) \otimes \chi_{\pm}\bigr)^{\{ \pm I\}} = 
\left\{ \begin{array}{ll}
\bigoplus_{j \in \Z} \C_{2j} & \text{for $\chi_+$};\\
\bigoplus_{j \in \Z} \C_{2j+1} & \text{for $\chi_-$}.
\end{array}
\right.
\]
This is the usual decomposition of $P^{\pm}_{i\nu}|_K$. (Note that now $(-1)^{\dim(M/K_M)/2} = 1$.)


\section{Linearising the index} \label{sec linearise}

In the rest of this paper, we prove Theorem \ref{main theorem}.
The first main step in this proof  is to show that the index that appears there equals the index of an operator on a partially linearised version of the space $G/H$, see Proposition \ref{prop linearise} below. This partially linearised space $E$ is the total space of a vector bundle over the compact space $K/H_M$. It will then be shown that the relevant index on $E$ equals $\pi|_K$, up to a sign, in Sections \ref{sec ind fibred}--\ref{sec pi K}, see Proposition \ref{prop lin comp}.
That will finish the proof of Theorem \ref{main theorem}.

We continue using the notation and assumptions of Subsections \ref{sec ac str} and \ref{sec result}.

\subsection{The linearised index}

Consider the action by $H_M$ on $K\times (\ks_M \oplus \kn)$ given by
\[
h\cdot (k, X + Y) = (kh^{-1}, \Ad(h)(X+Y)),
\]
for $h \in H_M$, $k \in K$, $X \in \ks_M$ and $Y \in \kn$. Let
\[
E := K\times_{H_M} (\ks_M \oplus \kn)
\]
be the quotient space of this action. This is the partial linearisation of $G/H$ that we will use. (We wil see in Lemma \ref{lem lin GH} that $E$ is $K$-equivariantly diffeomorphic to $G/H$.)

For $X \in \ks_M$ and $Y \in \kn$, we have
\[
T_{[e, X+Y]}E = \kk/\kt_M \oplus \ks_M \oplus \kn,
\]
via the map
\beq{eq def phi0}
(U+\kt_M, V+W) \mapsto \ddt[\exp(tU), X+Y+t(V+W)],
\eeq
for $U \in \kk$, $V \in \ks_M$ and $W \in \kn$. (We spell this out explicitly here, because another identification will also be used in Subsection \ref{sec lin J}.) Using this identification and the first equality in \eqref{eq decomp gh 1}, we obtain
\beq{eq TE}
T_{[e, X+Y]}E = \kg/\kh.
\eeq
On this space, we defined the $H_M$-invariant complex structure $J_{\kg/\kh}$ in \eqref{eq def Jgh}. We will write $J^E$ for the $K$-invariant almost complex structure on $E$ that corresponds to $J_{\kg/\kh}$ via the identification \eqref{eq TE}.

Consider the map $\Phi^E\colon E \to \kk$ given by
\[
\Phi^E[k, X+Y] = \Ad(k)\bigl( \xi -|\ad(\zeta)|^{-1}\ad(\zeta)Y_{\ks} \bigr),
\]
where $k \in K$, $X \in \ks_M$, $Y \in \kn$,  $\zeta$ is as in Subsection \ref{sec ac str}, $\xi$ is as in Subsection \ref{sec result}, and $Y_{\ks}:= \frac{1}{2}(Y - \theta Y)$ is the component of $Y$ in $\ks$. We will see in Lemma \ref{lem muEt} that this map is taming,

Let $V$ be a finite-dimensional representation space of $H_M$. 
 (Later, we will take $V = \C_{\lambda - \rho^M} \boxtimes \chi_M$.) We extend it to a representation of $H$ by letting $A$ act trivially. Then we obtain the associated vector bundles
\[
\begin{split}
L_V &:= G\times_H V \to G/H;\\
L_V^E &:= K\times_{H_M}(\ks_{M} \oplus \kn \times V) \to E.
\end{split}
\]
The linearisation result for the index in Theorem \ref{main theorem} is the following.
\begin{proposition} \label{prop linearise}
We have
\[
\indx_K(\Bigwedge_J T(G/H) \otimes L_V, \Phi) = \indx_K\bigl(\Bigwedge_{J^E} TE \otimes L_V^E,  \Phi^E).
\]
\end{proposition}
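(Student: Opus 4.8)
The plan is to prove Proposition \ref{prop linearise} by constructing an explicit $K$-equivariant diffeomorphism $\varphi \colon E \xrightarrow{\cong} G/H$, transporting the Clifford module and the taming map along it, and then checking that (i) the pushforward of $(\Bigwedge_{J^E} TE \otimes L_V^E)$ agrees with $(\Bigwedge_J T(G/H) \otimes L_V)$ as a $K$-equivariant Clifford module, and (ii) the pushforward taming map $\varphi_* \Phi^E$ is homotopic to $\Phi$ through taming maps, so that Theorem \ref{thm htp} (via Corollary \ref{cor pos inner prod}) gives the equality of indices. Since the index is a $K$-equivariant diffeomorphism invariant of the pair $(\cS,\psi)$ (it only depends on the Clifford action, not on the metric, by the cobordism/excision properties recalled in Subsection \ref{sec prop index}), once (i) and (ii) are in place the proposition follows.

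First I would make precise the diffeomorphism, which is the content of the promised Lemma \ref{lem lin GH}: using that the multiplication map $K \times \exp(\ks_M) \times N \times A \to G$ in \eqref{eq mult map} is a diffeomorphism, one sends $[k, X+Y] \in E = K \times_{H_M}(\ks_M \oplus \kn)$ to $k\exp(X)\exp(Y)H \in G/H$. This is well-defined because $A$ acts trivially in the relevant quotients and $H = H_M A$ (up to the triviality of the $A$-factor recorded after \eqref{eq HM}), and it is $K$-equivariant by construction. Next I would compute its differential at a point $[e, X+Y]$: comparing the parametrisation \eqref{eq def phi0} of $T_{[e,X+Y]}E$ with the differential of $k\exp(X)\exp(Y)$ at $eH$ as used in Lemma \ref{lem J}, one sees that under the identification \eqref{eq TE} the differential $T\varphi$ intertwines $J^E$ with $J$ — this is essentially tautological, since both almost complex structures were \emph{defined} by transporting the same fixed $J_{\kg/\kh}$ on $\kg/\kh$ via $K$-translates, one using $G/H$-coordinates and one using $E$-coordinates. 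The same bookkeeping identifies the line bundles: $L_V^E = K \times_{H_M}(\ks_M \oplus \kn \times V)$ is exactly the pullback along $\varphi$ of $L_V = G \times_H V$, because the fibre over $[k,X+Y]$ is $V$ with the $H_M$-action, matching the fibre of $L_V$ over $k\exp(X)\exp(Y)H$. So $\varphi_*(\Bigwedge_{J^E} TE \otimes L_V^E) \cong \Bigwedge_J T(G/H) \otimes L_V$ as $K$-equivariant Clifford modules, giving (i).

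For (ii) I would compare $\Phi$ and $\varphi_* \Phi^E$ directly. On $G/H$, $\Phi(k\exp(X)\exp(Y)H) = \Ad^*(k\exp(X)\exp(Y))(\xi+\zeta)|_{\kk}$; on $E$, $\Phi^E[k,X+Y] = \Ad(k)(\xi - |\ad(\zeta)|^{-1}\ad(\zeta)Y_{\ks})$. Both are $K$-equivariant maps that restrict, over the zero section $K/H_M$, to $k H_M \mapsto \Ad(k)\xi$; the point of the $\exp(Y)$-twist in $\Phi$ versus the explicit $\ad(\zeta)$-correction in $\Phi^E$ is that they define the \emph{same leading behaviour} transversally to $K/H_M$. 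I would show their induced Kirwan vector fields satisfy $(v^{\Phi}, v^{\varphi_*\Phi^E}) \ge 0$ — or, if that pointwise inequality is delicate, that a straight-line homotopy between them has compact zero set — and invoke Corollary \ref{cor pos inner prod}. The key auxiliary fact here is that $\Phi$ is taming (noted after \eqref{eq pi basic}, via \cite{Paradan99}, since $\xi+\zeta$ is regular) and that $\Phi^E$ is taming (the promised Lemma \ref{lem muEt}); both zero sets live inside the compact $K/H_M$, which is what makes the homotopy taming.

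The main obstacle I expect is step (ii), and specifically the precise matching of $\Phi$ with $\varphi_* \Phi^E$: the map $\Phi$ lives naturally on the coadjoint orbit picture $G/H \hookrightarrow \kg^* \to \kk^*$, and pulling it back through the non-canonical linearisation $\varphi$ (which uses the Iwasawa-type decomposition \eqref{eq mult map}) produces a fairly messy expression in which the $\ks_M$- and $\kn$-directions are mixed by $\Ad(\exp(Y))$. Disentangling this to see that the $\kn$-part of $\Phi$ deforms, along $v^{\Phi}$, to the $-|\ad(\zeta)|^{-1}\ad(\zeta)Y_{\ks}$ term in $\Phi^E$ while keeping control of the zero set is the computational heart of the argument; it is exactly the ``partial linearisation'' referred to in the introduction as the step that takes the most work. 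Everything else — well-definedness of $\varphi$, $K$-equivariance, the identification of complex structures and line bundles — is bookkeeping that follows from how the objects were set up in Subsections \ref{sec ac str}, \ref{sec result} and \ref{sec linearise}.
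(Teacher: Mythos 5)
Your overall strategy (linearise via the $K$-equivariant diffeomorphism $K\times_{H_M}(\ks_M\oplus\kn)\xrightarrow{\cong} G/H$, match the Clifford data, then homotope the taming maps) is the paper's, and your identification of the line bundles is correct. But there are two genuine gaps. First, the claim in step (i) that $T\varphi$ intertwines $J^E$ with $J$ ``tautologically'' is false. The two almost complex structures correspond to the \emph{same} $J_{\kg/\kh}$ under two \emph{different} identifications of $T_{[e,X+Y]}E$ with $\kg/\kh$: the linear parametrisation \eqref{eq def phi0} for $J^E$, versus $T_{eH}(\exp(X)\exp(Y))$ transported through the diffeomorphism for $\Psi^*J$. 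These disagree away from the zero section, because the derivative of the diffeomorphism along the curve $t\mapsto[\exp(tU),\,X+Y+t(V+W)]$ involves the differential of $\exp$ at $X$ and at $Y$ (terms of the form $\tfrac{1-e^{-\ad X}}{\ad X}$), not plain left translation. So $\Psi^*J\neq J^E$ in general, and one needs an additional homotopy of Clifford modules — the paper's Lemma \ref{lem lin J}, which interpolates between the two identifications by a family $\varphi^{X+Y}_t$ and invokes Theorem \ref{thm htp} — before one may replace $\Bigwedge_{\Psi^*J}TE$ by $\Bigwedge_{J^E}TE$.

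Second, in step (ii) neither of your two fallback options is justified as stated. The inequality $(v^{\Psi^*\Phi},v^{\Phi^E})\geq 0$ is only accessible \emph{near} the common zero set $K\times_{H_M}\{0\}$, where $\Psi^*\Phi$ agrees with an explicit linearised map up to $\mathcal{O}(\|X+Y\|^2)$ while the Kirwan vector fields admit a lower bound $C\|X+Y\|$; away from the zero set the $\Ad(\exp(X)\exp(Y))$-corrections are uncontrolled, and for a global straight-line homotopy one would have to prove compactness of its zero set, which is precisely what is not obvious. The paper closes this by introducing the intermediate family $\Phi_1^t$ of \eqref{eq def mu1t}, proving linear lower bounds on $\|v^{\Phi_1^t}\|$ so that its zeroes are exactly $K\times_{H_M}\{0\}$, using \emph{excision} (Proposition \ref{prop excision}) to localise the index to a small neighbourhood of that set, applying Corollary \ref{cor pos inner prod} there to pass from $\Psi^*\Phi$ to $\Phi_1^1$, and finally homotoping $\Phi_1^1\to\Phi_1^0\to\Phi^E$. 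Without the localisation by excision, your plan does not close.
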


\subsection{Linearising $G/H$}

Consider the map
\[
\tilde \Psi\colon K\times (\ks_M \oplus \kn) \to G/H
\]
defined by
\[
\tilde \Psi(k, X+Y) = k\exp(X) \exp(Y)H,
\]
for $k \in K$, $X \in \ks_M$ and $Y \in \kn$.
\begin{lemma}\label{lem lin GH}
The map $\tilde \Psi$ descends to a well-defined, $K$-equivariant diffeomorphism
\[
\Psi\colon E \xrightarrow{\cong} G/H.
\]
\end{lemma}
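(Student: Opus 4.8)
The plan is to show that $\tilde\Psi$ factors through the quotient $E = K\times_{H_M}(\ks_M\oplus\kn)$ and that the induced map $\Psi$ is a $K$-equivariant bijection which is a local diffeomorphism, hence a diffeomorphism. First I would check that $\tilde\Psi$ is $H_M$-invariant for the given $H_M$-action: for $h\in H_M$, $k\exp(\Ad(h^{-1})X)\exp(\Ad(h^{-1})Y) = h^{-1}(k)\cdot(\text{stuff})$, more precisely $\tilde\Psi(kh^{-1},\Ad(h)(X+Y)) = kh^{-1}\exp(\Ad(h)X)\exp(\Ad(h)Y)H = kh^{-1}\cdot h\exp(X)\exp(Y)h^{-1}H = k\exp(X)\exp(Y)H$, using $h^{-1}H = H$ since $h\in H_M\subset H$. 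So $\tilde\Psi$ descends to $\Psi\colon E\to G/H$. The $K$-equivariance is immediate from the formula, since $K$ acts on the left factor of $E$ and by left translation on $G/H$.

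Next I would establish bijectivity. This is exactly the content that the multiplication map \eqref{eq mult map}, namely $K\times\exp(\ks_M)\times N\times A\to G$, is a diffeomorphism — a fact already invoked in the proof of Lemma \ref{lem J}, and which itself follows from the Iwasawa-type decomposition associated to the cuspidal parabolic $P = MAN$ (more precisely, from the decomposition $G = K\exp(\ks_M)NA$ together with the identification of $\exp\colon\ks_M\to\exp(\ks_M)$ as a diffeomorphism, $\exp\colon\kn\to N$ as a diffeomorphism, and $H = H_M A$ with $H_M\subset K$). Surjectivity of $\Psi$: given $gH\in G/H$, write $g = k\exp(X)\exp(Y)a$ with $k\in K$, $X\in\ks_M$, $Y\in\kn$, $a\in A$; then $gH = k\exp(X)\exp(Y)H = \Psi([k,X+Y])$ since $a\in A\subset H$. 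Injectivity: if $\Psi([k,X+Y]) = \Psi([k',X'+Y'])$, then $k\exp(X)\exp(Y)$ and $k'\exp(X')\exp(Y')$ differ on the right by an element of $H = H_MA$, and (as in Lemma \ref{lem J}) writing that element as $ha$ with $h\in H_M$, $a\in A$ and pushing $h$ through using $h\exp(X)\exp(Y) = h\,(\text{conjugate})\,h$... more cleanly: $k'\exp(X')\exp(Y') = k\exp(X)\exp(Y)ha = (kh)\exp(\Ad(h^{-1})X)\exp(\Ad(h^{-1})Y)a$; injectivity of \eqref{eq mult map} forces $a = e$, $k' = kh$, $X' = \Ad(h^{-1})X$, $Y' = \Ad(h^{-1})Y$, i.e.\ $[k',X'+Y'] = h^{-1}\cdot[k,X+Y] = [k,X+Y]$ in $E$.

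Finally I would argue smoothness and the diffeomorphism property. Since $\tilde\Psi$ is smooth and $K\times(\ks_M\oplus\kn)\to E$ is a smooth submersion (it is the quotient by a free proper action of the compact-times-finite... rather, by the action of $H_M$, which is proper since $H_M$ is contained in the compact group $K$ up to the finite factor $Z_M'$, and the action is free because the left $K$-action is free on itself), the induced map $\Psi$ is smooth. To see $\Psi$ is a diffeomorphism it then suffices to note that $\tilde\Psi$ itself is a composition of the diffeomorphism $K\times\ks_M\times\kn\xrightarrow{\cong}K\times\exp(\ks_M)\times N$ (using $\exp$ on each factor), the diffeomorphism $K\times\exp(\ks_M)\times N\xrightarrow{\cong}K\exp(\ks_M)N$ onto its image (a consequence of \eqref{eq mult map}), and the open embedding $K\exp(\ks_M)N\hookrightarrow G$ composed with the submersion $G\to G/H$; combined with the already-proven bijectivity of $\Psi$ and an $H_M$-orbit-counting argument (each fibre of $G\to G/H$ meets $K\exp(\ks_M)N$ in exactly one $H_M$-orbit), $\Psi$ is a bijective local diffeomorphism, hence a diffeomorphism. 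I expect the main obstacle to be the bookkeeping in the injectivity/diffeomorphism step — keeping track of how elements of $H = H_MA$ act and why the $A$-component must be trivial — but this is essentially a repeat of the argument already carried out in the proof of Lemma \ref{lem J}, so it should be routine.
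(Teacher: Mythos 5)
Your proposal is correct and follows essentially the same route as the paper: well-definedness and injectivity via the conjugation identity $\exp(\Ad(h)X)=h\exp(X)h^{-1}$, and surjectivity plus injectivity both reduced to the fact that the multiplication map \eqref{eq mult map} is a diffeomorphism. The only difference is that you spell out the smoothness of $\Psi$ and of its inverse, which the paper leaves implicit; your argument for that step is sound (and could be streamlined by noting that the inverse of \eqref{eq mult map} followed by the quotient map $K\times(\ks_M\oplus\kn)\to E$ descends to a smooth inverse of $\Psi$).
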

\begin{proof}
One checks directly that for $k \in K$, $X \in \ks_M$, $Y \in \kn$ and $h \in H_M$,
\[
\tilde \Psi(kh^{-1}, \Ad(h)(X+Y)) = \tilde \Psi(k, X+Y).
\]
So $\tilde \Psi$ indeed descends to a map $\Psi\colon E \to G/H$. That map is $K$-equivariant by definition. To see that $\Psi$ is surjective, we use the fact that the multiplication map \eqref{eq mult map} is a diffeomorphism. If $k \in K$, $X \in \ks_M$, $Y \in \kn$ and $a \in A$, then
\[
k\exp(X)\exp(Y)aH = \Psi[k, X+Y],
\] 
so $\Psi$ is indeed surjective. If $k, k' \in K$, $X, X' \in \ks_M$ and $Y, Y' \in \kn$ satisfy
\[
\Psi[k, X+Y] = \Psi[k', X'+Y'],
\]
then there are $h \in H_M$ and $a \in A$ such that
\[
k'\exp(X')\exp(Y') = k\exp(X)\exp(Y)ha = kh\exp(\Ad(h^{-1})X)\exp(\Ad(h^{-1})Y)a,
\]
so we must have $a = e$, and $[k', X'+Y'] = [k, X+Y]$. Hence $\Psi$ is injective.
\end{proof}

\subsection{Linearising almost complex structures} \label{sec lin J}

The $K$-invariant almost complex structures $J^E$ and $\Psi^*J$ on $E$ are different, but we will show that they are homotopic in a suitable sense. 

Let $X \in \ks_M$ and $Y \in \kn$. The complex stuctures on $T_{[e, X+Y]}E$ defined by $\Psi^*J$ and $J^E$ correspond to $J_{\kg/\kh}$ via two different identifications
\beq{eq phi01}
\varphi^{X+Y}_0, \varphi^{X+Y}_1\colon \kg/\kh = T_{[e,0]}E  \xrightarrow{\cong} T_{[e, X+Y]}E.
\eeq
The map $\varphi^{X+Y}_0$ is the inverse of the map \eqref{eq def phi0}, whereas $\varphi^{X+Y}_1$ is defined by commutativity of the diagram
\[
\xymatrix{
T_{[e,0]}E \ar[rrr]^-{\varphi^{X+Y}_1} \ar[d]_-{T_{[e,0]}\Psi} & & & T_{[e,X+Y]}E \ar[d]^-{T_{[e,0]}\Psi} \\
T_{eH}G/H \ar[rrr]_-{T_{eH}\exp(X)\exp(Y)} & & &T_{\exp(X)\exp(Y)H} G/H.
}
\]
With respect to these two maps, we have
\beq{eq two ac str}
\begin{split}
J^E_{[e, X+Y]} &= \varphi^{X+Y}_0 J_{\kg/\kh} (\varphi^{X+Y}_0)^{-1}; \\
(\Psi^*J)_{[e, X+Y]} &= \varphi^{X+Y}_1 J_{\kg/\kh} (\varphi^{X+Y}_1)^{-1}. 
\end{split}
\eeq

\begin{lemma} \label{lem lin J}
We have
\[
\indx_K\bigl(\Psi^*(\Bigwedge_J T(G/H) \otimes L_V), \Psi^*\Phi \bigr) = \indx_K(\Bigwedge_{J^E} TE \otimes L_V^E, \Psi^*\Phi).
\]
\end{lemma}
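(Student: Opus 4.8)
\textbf{Proof proposal for Lemma \ref{lem lin J}.}

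The plan is to apply the homotopy invariance property, Theorem \ref{thm htp}, to connect the two almost complex structures $J^E$ and $\Psi^* J$ on $E$ while keeping the taming map $\Psi^*\Phi$ fixed. The key observation is that both complex structures on each tangent space $T_{[e,X+Y]}E$ arise by transporting the \emph{same} complex structure $J_{\kg/\kh}$ on $\kg/\kh$ via the two identifications $\varphi^{X+Y}_0$ and $\varphi^{X+Y}_1$ of \eqref{eq phi01}, as recorded in \eqref{eq two ac str}. So what I really need is a path of bundle automorphisms of $TE$ interpolating between $\varphi_0$ and $\varphi_1$ (thought of as identifications $\Psi^* TG/H \to TE$ at the level of the fibrewise data over $E$), which would conjugate $J_{\kg/\kh}$ to a path of almost complex structures joining the two.

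First I would check that the two maps $\varphi^{X+Y}_0$ and $\varphi^{X+Y}_1$ depend smoothly on $(X,Y) \in \ks_M \oplus \kn$ and are $H_M$-equivariant in the appropriate sense, so that $(\varphi_1)^{-1}\varphi_0$ is a well-defined smooth $K$-equivariant bundle automorphism $g$ of $TE$ (equivalently, after pulling everything to $K \times (\ks_M \oplus \kn)$, an $H_M$-equivariant $\GL(\kg/\kh)$-valued function). Crucially, at the origin $X = Y = 0$ one has $\varphi^{0}_0 = \varphi^0_1$ (both are the canonical identification $\kg/\kh = T_{[e,0]}E$), so $g$ equals the identity over the zero section $K/H_M \hookrightarrow E$. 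Since $\GL(\kg/\kh)$ is connected and $g$ is the identity near the zero section, I would like to build a smooth $K$-equivariant path $g_s$, $s \in [0,1]$, of bundle automorphisms of $TE$ with $g_0 = \mathrm{id}$, $g_1 = g$, and $g_s = \mathrm{id}$ over (a neighbourhood of) the zero section for all $s$ — for instance by using the fibrewise-linear structure to scale, or by a straight-line homotopy in $\End(\kg/\kh)$ combined with a cutoff, arranging that each $g_s$ stays invertible. Conjugating gives a path $J_s := \varphi_s J_{\kg/\kh} \varphi_s^{-1}$ of $K$-invariant almost complex structures on $E$ with $J_0 = J^E$ and $J_1 = \Psi^* J$, agreeing with $J^E$ near the zero section.

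Then I would package this path into an honest homotopy in the sense of Definition \ref{def htp}: take $\cS = \bigwedge_{J_s} TE \otimes L_V^E$ over $E \times [0,1]$ (reparametrising $s$ so the structure is constant on $[0,1/3[$ and $]2/3,1]$), with the Clifford action of $TE$ determined by $J_s$ in the standard way of Example \ref{ex S J}, the Clifford action of $\partial_t$ chosen to be $\pm\ii$ at the two ends as required, and the taming map $\psi(m,t) := \Psi^*\Phi(m)$ independent of $t$. Since $\Psi^*\Phi$ is taming on $E$ (this is Lemma \ref{lem muEt}, which I may assume), the pair $(\cS,\psi)$ is a valid homotopy, and Theorem \ref{thm htp} yields the claimed equality of indices. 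The main obstacle I anticipate is the bookkeeping in the second step: one must verify that the interpolating automorphisms $g_s$ are genuinely $K$-equivariant and $H_M$-equivariant (so that $J_s$ descends to $E$), that each $g_s$ is invertible (so $J_s^2 = -\mathrm{id}$ really holds — a straight-line homotopy in $\GL$ need not stay in $\GL$, so one may instead homotope through the polar decomposition or exploit that $J_{\kg/\kh}$-compatible complex structures form a contractible space), and that the Clifford module data glues smoothly across $E \times [0,1]$; none of this is deep, but it is where the care is needed.
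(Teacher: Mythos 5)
Your proposal is correct and follows essentially the same route as the paper: the paper also interpolates the two identifications of $T_{[e,X+Y]}E$ with $\kg/\kh$ and conjugates $J_{\kg/\kh}$ to get a path of $K$-invariant almost complex structures from $J^E$ to $\Psi^*J$, then applies Theorem \ref{thm htp} with the taming map $\Psi^*\Phi$ held fixed. The invertibility issue you rightly flag is resolved exactly by the ``fibrewise scaling'' option you mention: the paper sets $\varphi^{X+Y}_t := \varphi^{X+Y}_0 \circ (\varphi^{t(X+Y)}_0)^{-1} \circ \varphi^{t(X+Y)}_1$, which is a composition of isomorphisms and hence automatically invertible (and note that tameness of $\Psi^*\Phi$ comes from tameness of $\Phi$ via the diffeomorphism $\Psi$, not from Lemma \ref{lem muEt}, which concerns $\Phi^E$).
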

\begin{proof}
For $X \in \ks_M$, $Y \in \kn$ and $t \in [0,1]$, we
define the linear isomorphism
\[
\varphi^{X+Y}_t\colon T_{[e,0]}E \xrightarrow{\cong} T_{[e,X+Y]}E
\]
by commutativity of the diagram
\[
\xymatrix{
T_{[e,0]}E \ar[r]^-{\varphi^{X+Y}_t} \ar[d]_-{\varphi^{t(X+Y)}_1}  & T_{[e,X+Y]}E  \\
T_{[e,t(X+Y)]}E & T_{[e,0]}E. \ar[u]_-{\varphi^{X+Y}_0} \ar[l]^-{\varphi^{t(X+Y)}_0}
}
\]
(Note that for $t=0,1$, this definition agrees with the definitions of $\varphi^{X+Y}_0$ and $\varphi^{X+Y}_1$ above.) Define the $K$-invariant almost complex structure $J_t$ on $E$ by the property that
\[
(J_t)_{[e, X+Y]} = \varphi^{X+Y}_t J_{\kg/\kh} (\varphi^{X+Y}_t)^{-1}.
\]
Then by \eqref{eq two ac str}, we have $J_0 = J^E$ and $J_1 = \Psi^* J$. Using this family of almost complex structures, we obtain a homotopy in the sense of Definition \ref{def htp} between
\[
(\Bigwedge_{\Psi^*J} TE \otimes L_V^E, \Psi^*\Phi) \quad \text{and} \quad (\Bigwedge_{J^E} TE \otimes L_V^E, \Psi^*\Phi).
\]
Since $\Psi^*L_V = L_V^E$ and 
\[
\Psi^*\Bigwedge_J T(G/H) = \Bigwedge_{\Psi^*J} TE,
\]
the claim follows from Theorem \ref{thm htp}.
\end{proof}

\subsection{Linearising taming maps} 

To prove Proposition \ref{prop linearise}, it remains to construct a homotopy between the taming maps $\Psi^* \Phi$ and $\Phi^E$. We will do this in a number of stages.

First, for $t \in [0,1]$, we consider the map $\Phi_1^t\colon E \to \kk$, defined by
\beq{eq def mu1t}
\Phi_1^t[k, X+Y] = \Ad(k)(\xi + t[Y_{\kk}, \xi] + [Y_{\ks}, \zeta]),
\eeq
where $k \in K$, $X \in \ks_M$, $Y \in \kn$, $Y_{\kk} = \frac{1}{2}(Y+\theta Y) \in \kk$, and $Y_{\ks} = \frac{1}{2}(Y-\theta Y) \in \ks$.
For $Z \in \kk$, let  $Z^E$ be the vector field on $E$ given by the infinitesimal action by $Z$. In several places, we will use the fact that 
 for all such $X$ and $Y$,
\beq{eq ZE}
Z^E([e, X+Y]) = (Z_{\kt_M^{\perp}} + \kt_M, -[Z_{\kt_M}, X+Y]) \quad \in \kk/\kt_M \oplus \ks_M \oplus \kn = T_{[e,0]}E \cong T_{[e, X+Y]}E,
\eeq
where the last identification is made via the map $\varphi^{X+Y}_0$ in \eqref{eq phi01}. Furthermore, we write $Z = Z_{\kt_M} + Z_{\kt_M^{\perp}}$, where $Z_{\kt_M} \in \kt_M$ and $Z_{\kt_M^{\perp}} \in \kt_M^{\perp}$.
\begin{proposition} \label{prop mu E}
The vector field $v^{\Phi_1^0}$ on $E$  vanishes at the same points as the vector field $\Psi^*v^{\Phi}$. In a neighbourhood of the set of these points, the taming maps $\Phi_1^0$ and $\Psi^*\Phi$ are homotopic in the sense of Definition \ref{def htp}.
\end{proposition}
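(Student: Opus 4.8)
The plan is to prove the two assertions in order: first that $v^{\Phi_1^0}$ and $\Psi^* v^\Phi$ have the same zero set, and then that the two taming maps are homotopic near that set. For the first part, I would compute $\Psi^* v^\Phi$ explicitly at a point $[e, X+Y] \in E$. By definition $\Phi(gH) = \Ad^*(g)(\xi+\zeta)|_{\kk}$, so at the point $\exp(X)\exp(Y)H = \Psi[e,X+Y]$ the moment map value is $\Ad^*(\exp(X)\exp(Y))(\xi+\zeta)|_\kk$. Expanding $\Ad(\exp(X)\exp(Y)) = e^{\ad X} e^{\ad Y}$ and projecting to $\kk$, and comparing with the corresponding expansion for $\Phi_1^0$ (which by \eqref{eq def mu1t} is $\Ad(k)(\xi + [Y_\ks,\zeta])$ at $k=e$, i.e. the first-order term in $Y$ together with $\xi$), one sees that $\Phi_1^0$ is designed to capture exactly the linear-in-$Y$, zeroth-order-in-$X$ part of $\Psi^*\Phi$. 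The zero set of the induced vector field $v^\psi$ for a taming map $\psi\colon E \to \kk$ is $\{p : \psi(p) \cdot p = 0\}$, i.e. where $\psi(p)$ lies in the stabiliser algebra of $p$; by Proposition 2.1 in \cite{Paradan99} applied to the closed orbit $\Ad^*(G)(\xi+\zeta)$, this set is compact, and one checks by a direct computation (using the structure of the decomposition \eqref{eq decomp gh} and the fact that $\xi$ is $R^+_M$-regular while $\zeta$ pairs positively with all of $\Sigma^+$) that the conditions "$\Psi^*\Phi(p)$ stabilises $p$" and "$\Phi_1^0(p)$ stabilises $p$" cut out the same locus. This will reduce to showing that the higher-order-in-$X,Y$ correction terms in $\Psi^*\Phi$ do not contribute to the stabiliser condition at points where the leading term already stabilises.

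For the homotopy part, I would invoke Corollary \ref{cor pos inner prod}: it suffices to show that in a $K$-invariant neighbourhood $U$ of the common zero set, one has $(v^{\Phi_1^0}, v^{\Psi^*\Phi}) \geq 0$, or at least that the straight-line family $\psi_s = (1-s)\Phi_1^0 + s\,\Psi^*\Phi$ has $v^{\psi_s}$ vanishing only where both $v^{\Phi_1^0}$ and $v^{\Psi^*\Phi}$ vanish. Near the zero set, $\Psi^*\Phi$ is a small perturbation of $\Phi_1^0$ (it agrees to the relevant order and the difference is controlled by higher powers of $X$ and $Y$, which are bounded on the relatively compact neighbourhood), so the inner product of the two induced vector fields is positive by a continuity/openness argument: $\|v^{\Phi_1^0}\|$ is bounded below away from a shrinking neighbourhood of its zeros, and the perturbation is small there. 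One then uses the excision principle (Proposition \ref{prop excision}) implicitly to reduce the homotopy statement to a neighbourhood of the zero set; the precise formulation in the Proposition only claims the homotopy "in a neighbourhood", so strictly I only need the inner-product positivity to hold after possibly shrinking $U$. The homotopy is then the one from Corollary \ref{cor pos inner prod}, with $\cS_1 = \cS_2 = \Bigwedge_{J^E} TE \otimes L_V^E$ held fixed.

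The main obstacle I anticipate is the explicit computation of $\Psi^* v^\Phi$ and the verification that its zero set coincides with that of $v^{\Phi_1^0}$: this requires carefully unwinding the diffeomorphism $\Psi$ (via the multiplication map \eqref{eq mult map} being a diffeomorphism), computing the differential $T\Psi$ in terms of the identification \eqref{eq TE}, and then tracking how the adjoint-orbit moment map transforms — in particular handling the fact that $\exp(X)$ with $X \in \ks_M$ and $\exp(Y)$ with $Y \in \kn$ do not preserve $\kk$, so the projection to $\kk^*$ mixes components. The regularity hypotheses on $\xi$ and $\xi+\zeta$ are exactly what guarantee the stabiliser computation is clean, so the bookkeeping, while lengthy, should go through. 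The homotopy step itself is routine once the zero sets are identified, since it is a direct application of the already-established Corollary \ref{cor pos inner prod}. I expect the authors split off the purely computational estimates (e.g. the explicit form of $\Psi^* v^\Phi$) into auxiliary lemmas, which is the natural way to keep this argument readable.
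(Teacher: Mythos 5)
There is a genuine gap in both halves of your argument. For the homotopy step, your key premise is false: $\Phi_1^0$ is \emph{not} the linearisation of $\Psi^*\Phi$ at the zero section. By the expansion in Lemma \ref{lem Phimu mu11}, the first-order part of $\Psi^*\Phi$ in $Y$ is $[Y,\xi+\zeta]|_{\kk} = [Y_{\kk},\xi]+[Y_{\ks},\zeta]$, which is $\Phi_1^1$, whereas $\Phi_1^0$ discards the term $[Y_{\kk},\xi]$ (compare \eqref{eq def mu1t} at $t=0$ and $t=1$). Hence $v^{\Psi^*\Phi}-v^{\Phi_1^0}$ is of order $\|Y\|$, the \emph{same} order as the lower bound $\|v^{\Phi_1^0}\|\geq C\|X+Y\|$, so the positivity $(v^{\Phi_1^0},v^{\Psi^*\Phi})\geq 0$ is not a soft consequence of ``smallness of the perturbation'' and your continuity argument does not establish it. It does happen to hold near the zero section, but only because the components $[Y_{\kk},\xi]$ and $[Y_{\ks},\zeta]$ are orthogonal (one comes from a purely imaginary weight $\langle\delta,\xi\rangle$, the other from a real weight $\langle\beta,\zeta\rangle$; this is the computation in Lemma \ref{lem est v mu1t Y}). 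The paper handles this by inserting the intermediate map $\Phi_1^1$: one shows $\Psi^*\Phi=\Phi_1^1+\mathcal{O}(\|X+Y\|^2)$, deduces $(v^{\Phi_1^1},v^{\Psi^*\Phi})\geq 0$ near the zero section from the genuinely second-order error, applies Corollary \ref{cor pos inner prod}, and then connects $\Phi_1^1$ to $\Phi_1^0$ by the explicit taming family $\Phi_1^t$, whose vector fields are uniformly bounded below by Lemmas \ref{lem est v mu1t X} and \ref{lem est v mu1t Y}. Your proposal never mentions $\Phi_1^1$ or the family $\Phi_1^t$, which is exactly the missing ingredient.

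For the zero-set identification, your plan (``one checks by a direct computation that the stabiliser conditions cut out the same locus, the higher-order corrections not contributing where the leading term already stabilises'') only controls what happens near $X+Y=0$; it does not rule out zeros of $v^{\Psi^*\Phi}$ at points with $\|X+Y\|$ large, where the higher-order terms in $\Ad(\exp(X)\exp(Y))$ dominate and no perturbative argument applies. The paper avoids this global issue entirely: it identifies $\Zeroes(v^{\Phi_1^0})=K\times_{H_M}\{0\}$ from the global lower bounds of Lemmas \ref{lem est v mu1t X} and \ref{lem est v mu1t Y}, and identifies $\Zeroes(v^{\Phi})=\{kH;\,k\in K\}$ by observing that $v^{\Phi}$ is the Hamiltonian vector field of $\tfrac12\|\Phi\|^2$ on the closed coadjoint orbit $\Ad^*(G)(\xi+\zeta)$ and invoking Proposition 2.1 of \cite{Paradan03}, which locates the critical set of $\|\Phi\|^2$ exactly. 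You cite a compactness statement for the zero set, but compactness alone does not give you the equality of the two zero sets, and the direct computation you defer is precisely the hard global step your sketch does not supply.
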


The proof of this proposition is based on Lemmas \ref{lem vmu1t}--\ref{lem Phimu mu11}. 
In the arguments below, we will use the $K$-invariant Riemannian metric on $E$ induced by the inner product on $\kg$ and the identification $\varphi^{X+Y}_0$ in \eqref{eq phi01}. As noted in Subsection \ref{sec deformed Dirac}, Braverman's index is independent of the Riemannian metric used, as long as it is complete and $K$-invariant. So we are free to use the Riemannian metric most suited to our purposes.


\begin{lemma}\label{lem vmu1t}
For all $X \in \ks_M$ and $Y \in \kn$,
\[
v^{\Phi_1^t}([e,X+Y]) = \bigl( t[Y_{\kk}, \xi] + [Y_{\ks}, \zeta] + \kt_M, [X+Y, \xi]  \bigr).
\]
\end{lemma}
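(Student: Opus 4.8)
The plan is to compute the vector field $v^{\Phi_1^t}$ at the point $[e, X+Y] \in E$ directly from its definition $v^{\psi}(m) = \ddt \exp(-s\psi(m))\cdot m$, using the explicit formula \eqref{eq ZE} for the infinitesimal action of $\kk$ on $E$ in the chart around $[e,0]$, together with the identification $\varphi^{X+Y}_0$ of $T_{[e,X+Y]}E$ with $\kk/\kt_M \oplus \ks_M \oplus \kn$. First I would evaluate $\Phi_1^t$ at $[e, X+Y]$ itself: by \eqref{eq def mu1t} with $k = e$,
\[
Z := \Phi_1^t[e, X+Y] = \xi + t[Y_{\kk}, \xi] + [Y_{\ks}, \zeta] \in \kk,
\]
and then I would plug this particular $Z$ into \eqref{eq ZE}, which gives $v^{\Phi_1^t}([e,X+Y]) = Z^E([e,X+Y]) = (Z_{\kt_M^{\perp}} + \kt_M, -[Z_{\kt_M}, X+Y])$.

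The second step is to simplify the two components. For the $\kk/\kt_M$ component, note $\xi \in \kt_M$ contributes nothing, so $Z_{\kt_M^{\perp}} + \kt_M = t[Y_{\kk},\xi] + [Y_{\ks},\zeta] + \kt_M$ (the brackets $[Y_{\kk},\xi]$ and $[Y_{\ks},\zeta]$ already lie in $\kk$, and we only need their images modulo $\kt_M$, so this is exactly the first entry in the claimed formula). For the $\ks_M \oplus \kn$ component, I need $-[Z_{\kt_M}, X+Y]$ where $Z_{\kt_M}$ is the $\kt_M$-component of $Z$. Here I would argue that $[Y_{\kk},\xi]$ and $[Y_{\ks},\zeta]$ both lie in $\kt_M^{\perp}$ — indeed $\xi, \zeta$ act trivially on $\kt_M$ by centrality considerations, and the relevant bracket of $\kn$-type elements with $\kt_M$-centralising elements lands in $\ks_M \oplus \kn$, away from $\kt_M$ — so that $Z_{\kt_M} = \xi$. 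Hence $-[Z_{\kt_M}, X+Y] = -[\xi, X+Y] = [X+Y, \xi]$, matching the second entry of the claimed formula.

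I would then assemble these two computations to obtain
\[
v^{\Phi_1^t}([e,X+Y]) = \bigl(t[Y_{\kk},\xi] + [Y_{\ks},\zeta] + \kt_M,\ [X+Y,\xi]\bigr),
\]
which is exactly the assertion of Lemma \ref{lem vmu1t}. By $K$-equivariance of everything in sight, it suffices to verify the formula at points of the form $[e, X+Y]$, which is what the lemma states.

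The main obstacle I anticipate is the bookkeeping in the second step: correctly identifying which pieces of $Z = \xi + t[Y_{\kk},\xi] + [Y_{\ks},\zeta]$ lie in $\kt_M$ versus $\kt_M^{\perp}$, so that \eqref{eq ZE} can be applied with the right decomposition. This requires being careful that $[Y_{\kk},\xi] \in \kt_M^{\perp}$ and $[Y_{\ks},\zeta] \in \kt_M^{\perp}$ — using that $\xi \in \kt_M$ is central in $\km$ modulo root spaces and that $\zeta \in \ka$ commutes with $\kt_M$ while $Y_{\kk}, Y_{\ks}$ are built from $\kn$, so their brackets with $\xi, \zeta$ stay in the $\ks_M \oplus \kn$ directions and have no $\kt_M$-part. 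Once that orthogonality is pinned down, the rest is a direct substitution into \eqref{eq ZE}.
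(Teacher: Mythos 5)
Your proposal is correct and follows the paper's own argument: evaluate $\Phi_1^t$ at $[e,X+Y]$, observe that $[Y_{\kk},\xi]$ and $[Y_{\ks},\zeta]$ lie in $\kt_M^{\perp}$ so that the $\kt_M$-component of $\Phi_1^t([e,X+Y])$ is exactly $\xi$, and substitute into \eqref{eq ZE}. The paper justifies the orthogonality to $\kt_M$ a bit more crisply via $\ad$-invariance of the Killing form ($-B([Y_{\ks},\zeta],X) = B(Y_{\ks},[X,\zeta]) = 0$ for $X \in \kt_M$, since $\kt_M$ and $\ka$ commute), whereas your phrase that the brackets ``stay in the $\ks_M\oplus\kn$ directions'' is imprecise --- they are elements of $\kk\cap(\kn^-\oplus\kn^+)$, not of $\ks_M\oplus\kn$ --- but the conclusion you actually use, namely that they have no $\kt_M$-part, is right.
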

\begin{proof}
By $\ad$-invariance of the Killing form $B$, we have for all $X \in \kt_M$,
\[
-B([Y_{\ks}, \zeta], X) = B(Y_{\ks}, [X,\zeta])=0,
\]
so $[Y_{\ks}, \zeta] \in \kt_M^{\perp}$.
We also have $[Y_{\kk}, \xi] \in \kt_M^{\perp}$. So the claim follows from  \eqref{eq def mu1t} and \eqref{eq ZE}.
\end{proof}

\begin{lemma}\label{lem est v mu1t X}
There is a constant $C>0$ such that for all $t \in [0,1]$, $X \in \ks_M$ and $Y \in \kn$,
\[
\|v^{\Phi_1^t}([e,X+Y])\| \geq C\|X\|.
\]
\end{lemma}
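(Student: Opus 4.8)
By Lemma \ref{lem vmu1t}, for $X \in \ks_M$ and $Y \in \kn$ we have
\[
v^{\Phi_1^t}([e,X+Y]) = \bigl( t[Y_{\kk}, \xi] + [Y_{\ks}, \zeta] + \kt_M, [X+Y, \xi] \bigr) \quad \in \kk/\kt_M \oplus \ks_M \oplus \kn.
\]
The plan is to bound the norm of the $\ks_M \oplus \kn$-component, $[X+Y,\xi]$, from below by a constant times $\|X\|$, uniformly in $t$ (in fact this component does not depend on $t$). First I would decompose $[X+Y,\xi] = [X,\xi] + [Y,\xi]$ and observe that $\xi \in \kt_M$ acts on $\km$ preserving the Cartan decomposition $\km = \kk_M \oplus \ks_M$, so $[X,\xi] \in \ks_M$; meanwhile $[Y,\xi]$ with $Y \in \kn$ lies in $\kn$, since $H_M$ normalises $\kn$ and $\xi \in \kt_M \subset \km$. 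Hence $[X,\xi]$ and $[Y,\xi]$ lie in the orthogonal subspaces $\ks_M$ and $\kn$, so $\|[X+Y,\xi]\|^2 = \|[X,\xi]\|^2 + \|[Y,\xi]\|^2 \geq \|[X,\xi]\|^2$, and it suffices to show $\|[X,\xi]\| \geq C\|X\|$ for all $X \in \ks_M$.

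\textbf{The key point.} The map $\ad(\xi)\colon \ks_M \to \ks_M$ is an isomorphism. Indeed, $\xi \in i\kt_M^*$ is regular with respect to $R_M^+$, hence with respect to all of $R(\km^{\C},\kt_M^{\C})$; decomposing $\km^{\C} = \kt_M^{\C} \oplus \bigoplus_{\alpha} \km^{\C}_\alpha$, the operator $\ad(\xi)$ acts on $\km^{\C}_\alpha$ by the nonzero scalar $\langle \alpha, \xi\rangle$. Since $\xi$ is regular and $\ks_M$ contains no nonzero $\ad(\kt_M)$-fixed vectors (the centraliser of $\kt_M$ in $\km$ is $\kt_M \subset \kk_M$), the real operator $\ad(\xi)|_{\ks_M}$ has trivial kernel and is therefore invertible. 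Consequently $C := \|(\ad(\xi)|_{\ks_M})^{-1}\|^{-1} > 0$ satisfies $\|[X,\xi]\| = \|\ad(\xi)X\| \geq C\|X\|$ for all $X \in \ks_M$. Combining with the previous paragraph gives $\|v^{\Phi_1^t}([e,X+Y])\| \geq \|[X+Y,\xi]\| \geq \|[X,\xi]\| \geq C\|X\|$, uniformly in $t \in [0,1]$ and $Y \in \kn$, as required. (Here I am using the $K$-invariant Riemannian metric on $E$ fixed before Lemma \ref{lem vmu1t}, with respect to which the identification $T_{[e,X+Y]}E \cong \kk/\kt_M \oplus \ks_M \oplus \kn$ is orthogonal.)

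\textbf{Main obstacle.} The only subtlety is bookkeeping: one must make sure that $[X,\xi]$ and $[Y,\xi]$ really do land in mutually orthogonal summands of the chosen orthogonal decomposition of $T_{[e,X+Y]}E$, which comes down to checking that $\ad(\xi)$ preserves $\ks_M$ and preserves $\kn$ separately — both of which follow from $\xi \in \kt_M$ together with the structure theory already set up in Subsection \ref{sec KZ} ($\kt_M$-invariance of the Cartan decomposition of $\km$, and $\km$ normalising $\kn$). Everything else is elementary linear algebra about the invertible operator $\ad(\xi)|_{\ks_M}$, and the uniformity in $t$ is immediate because the relevant component of $v^{\Phi_1^t}$ is $t$-independent.
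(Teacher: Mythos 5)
Your proof is correct and follows essentially the same approach as the paper's: both reduce to the $t$-independent component $[X+Y,\xi]$, split it into the pieces $[X,\xi]\in\ks_M$ and $[Y,\xi]\in\kn$ lying in complementary subspaces, and then bound $\ad(\xi)$ from below on $\ks_M$ using a regularity hypothesis. The only (harmless) variation is in that last step: you invoke regularity of $\xi$ for $R(\km^{\C},\kt_M^{\C})$ via the weight-space decomposition of $\ks_M^{\C}$, whereas the paper uses $[X,\zeta]=0$ to replace $\xi$ by $\xi+\zeta$ and then invokes regularity of $\xi+\zeta$ for $R(\kg^{\C},\kh^{\C})$ --- both hypotheses are standing assumptions, and yours is the more economical one for this particular lemma.
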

\begin{proof}
Note that $[X,\xi] \in \ks_M$. Furthermore, since the adjoint action by $H_M$ commutes with $A$, it preserves the spaces $\kg_{\beta}$, and hence $\kn$. So $[Y,\xi] \in \kn$. Since the elements $[X,\xi]$ and $[Y,\xi]$ lie in different subspaces of $\kg$, Lemma \ref{lem vmu1t} implies that there is a constant $C_1 > 0$ such that for all $X$ and $Y$ as above,
\beq{eq est X xi}
\|v^{\Phi_1^t}([e,X+Y])\| \geq \|[X+Y, \xi] \| \geq C_1 \|[X,\xi]\|.
\eeq

Now $X \in \ks_{M} \perp \ka$ and also $X \in \ks  \perp \kt_M$. So $X \perp \kh$, which means that it lies in the sum of the root spaces of $(\kg^{\C}, \kh^{\C})$. Write
\[
X = \sum_{\alpha \in R(\kg^{\C}, \kh^{\C})} X_{\alpha},
\]
where $X_{\alpha} \in \kg^{\C}_{\alpha}$. Since $[X,\zeta]=0$, we have
\[
\begin{split}
\| [X, \xi] \|^2 &=  \| [X,\xi + \zeta] \|^2 \\
&=  \sum_{\alpha \in R(\kg^{\C}, \kh^{\C})} | \langle \alpha, \xi + \zeta \rangle|^2 \| X_{\alpha} \|^2 \\
&\geq \min_{\alpha \in R(\kg^{\C}, \kh^{\C})} |\langle \alpha, \xi + \zeta \rangle|^2 \sum_{\alpha \in R(\kg^{\C}, \kh^{\C})}\|X_{\alpha}\|^2 \\	
&= \min_{\alpha \in R(\kg^{\C}, \kh^{\C})} |\langle \alpha, \xi + \zeta \rangle|^2 \|X\|^2.
\end{split}
\]
Since the element $\xi + \zeta \in \kh$ is regular, the factor $\min_{\alpha \in R(\kg^{\C}, \kh^{\C})} |\langle \alpha, \xi + \zeta \rangle|^2$ is positive. Together with \eqref{eq est X xi}, this implies the claim.
\end{proof}

\begin{lemma}\label{lem est v mu1t Y}
There is a constant $C>0$ such that for all $t \in [0,1]$, $X \in \ks_M$ and $Y \in \kn$,
\[
\|v^{\Phi_1^t}([e,X+Y])\| \geq C\|Y\|.
\]
\end{lemma}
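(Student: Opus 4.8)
The statement is the counterpart of Lemma~\ref{lem est v mu1t X} with the roles of $X\in\ks_M$ and $Y\in\kn$ interchanged, so the plan is to mimic that proof, extracting now a lower bound in terms of $\|Y\|$ rather than $\|X\|$. By Lemma~\ref{lem vmu1t}, the $\kk/\kt_M$-component of $v^{\Phi_1^t}([e,X+Y])$ is $t[Y_\kk,\xi]+[Y_\ks,\zeta]+\kt_M$ and the $\ks_M\oplus\kn$-component is $[X+Y,\xi]$. As in the proof of Lemma~\ref{lem est v mu1t X}, $[X,\xi]\in\ks_M$ and $[Y,\xi]\in\kn$ lie in complementary subspaces, so
\[
\|v^{\Phi_1^t}([e,X+Y])\|\ \geq\ \|[X+Y,\xi]\|\ \geq\ C_1\|[Y,\xi]\|
\]
for some $C_1>0$. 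Thus it suffices to bound $\|[Y,\xi]\|$ below by a multiple of $\|Y\|$, uniformly in $t$.

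\textbf{Key step: the lower bound $\|[Y,\xi]\|\geq c\|Y\|$.} Decompose $Y=\sum_{\beta\in\Sigma^+}Y_\beta$ with $Y_\beta\in\kg_\beta$, and further decompose each $Y_\beta$ into root spaces of $(\kg^\C,\kh^\C)$: since $\ka\subset\kh$ acts on $\kg_\beta$ by the scalar $\beta$, every root $\alpha$ of $(\kg^\C,\kh^\C)$ appearing in $\kg_\beta$ restricts to $\beta$ on $\ka$, and in particular $\langle\alpha,\zeta\rangle=\langle\beta,\zeta\rangle>0$ is bounded away from $0$. Writing $Y=\sum_{\alpha}Y_\alpha$ over all such roots (those with $\alpha|_\ka\in\Sigma^+$), and using $[Y,\zeta]$ together with $[Y,\xi]$ as in Lemma~\ref{lem est v mu1t X}, one computes
\[
\|[Y,\xi+\zeta]\|^2=\sum_\alpha |\langle\alpha,\xi+\zeta\rangle|^2\,\|Y_\alpha\|^2\ \geq\ \min_\alpha|\langle\alpha,\xi+\zeta\rangle|^2\,\|Y\|^2,
\]
the minimum taken over roots $\alpha$ of $(\kg^\C,\kh^\C)$ with $\alpha|_\ka\neq 0$, which is positive because $\xi+\zeta$ is regular. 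Now, however, unlike the $X$ case we do not have $[Y,\zeta]=0$; instead $[Y,\zeta]\in\kn$ lies in the same subspace as $[Y,\xi]$, so we must argue slightly differently: either estimate $\|[Y,\xi]\|$ directly from the eigenvalue computation (using that $\langle\alpha,\xi\rangle\neq 0$ for the roots $\alpha$ appearing in $\kn$ — true because such $\alpha$ are not roots of $(\km^\C,\kt_M^\C)$, whence their restriction to $\kt_M$ cannot make $\langle\alpha,\xi+\zeta\rangle$ vanish unless... ), or, more robustly, bound $\|[X+Y,\xi]\|$ from below by the $\kn$-component $\|[Y,\xi]\|$ and then apply the displayed inequality with $\xi$ in place of $\xi+\zeta$ only on the root spaces inside $\kn$. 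The cleanest route is: since $[Y_\ks,\zeta]\in\kt_M^\perp$ and is a nondegenerate multiple (eigenvalue $\langle\beta,\zeta\rangle$ times the projection) of $Y$, combine the $\kk/\kt_M$-component $[Y_\ks,\zeta]$ (for the $t$-independent part) with the $\kn$-component $[Y,\xi]$ to get $\|v^{\Phi_1^t}([e,X+Y])\|^2\geq \|[Y_\ks,\zeta]\|^2 + C_1^2\|[Y,\xi]\|^2$, and observe that these cannot both be small: $[Y_\ks,\zeta]=0$ forces $Y_\ks=0$ by invertibility of $\ad(\zeta)$ on $\kn^-\oplus\kn^+$, and then $[Y,\xi]$ controls $Y_\kk$... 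This needs care.

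\textbf{Main obstacle.} The essential difficulty, and the point where this lemma genuinely differs from Lemma~\ref{lem est v mu1t X}, is that $Y$ need not be orthogonal to $\kh$: the $\ka$-directions are absent from $\kn$, but $Y$ can have components in root spaces $\kg^\C_\alpha$ with $\langle\alpha,\xi\rangle$ possibly zero (roots trivial on $\kt_M$). One must therefore not rely on $\|[Y,\xi]\|$ alone, but genuinely exploit the $\kk/\kt_M$-component $[Y_\ks,\zeta]$ of $v^{\Phi_1^t}$, which detects exactly those directions via the invertible operator $\ad(\zeta)|_{\kn^-\oplus\kn^+}$. Concretely, the plan is to show $\|v^{\Phi_1^t}([e,X+Y])\|^2 \geq \|\tfrac12[Y-\theta Y,\zeta]\|^2$ (the $\kk/\kt_M$-component when $t=0$; for $t\in(0,1]$ the extra $t[Y_\kk,\xi]$ term is handled by noting $[Y_\kk,\xi]$ and $[Y_\ks,\zeta]$ can also be separated, or by a compactness/continuity argument over $t\in[0,1]$ since the vanishing locus is $t$-independent by Lemma~\ref{lem vmu1t}), and then that $Y\mapsto[Y_\ks,\zeta]$ together with $Y\mapsto[Y,\xi]$ has trivial common kernel on $\kn$, hence by finite-dimensionality gives a uniform lower bound $C\|Y\|$. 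Verifying that common kernel is trivial — i.e. that $[Y_\ks,\zeta]=0$ and $[Y,\xi]=0$ force $Y=0$ for $Y\in\kn$ — is the crux: the first equation gives $Y_\ks\in\ker\ad(\zeta)=0$ on $\kn^-\oplus\kn^+$... but $Y_\ks\in\ks$ whereas $\ad(\zeta)$ above acts on $\kn^-\oplus\kn^+$, so one instead writes $Y\in\kn^+$, $\theta Y\in\kn^-$, and $\ad(\zeta)$ invertible on $\kn^+\oplus\kn^-$ forces $Y-\theta Y=0$, hence $Y=\theta Y\in\kn^+\cap\kn^-=0$. That disposes of the obstacle and the lemma follows.
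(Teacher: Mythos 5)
Your final strategy (bound the $\kk/\kt_M$-component from below, show the relevant linear maps have trivial common kernel on $\kn$, conclude by homogeneity/compactness) can be made to work, and your crux computation that $[Y_\ks,\zeta]=0$ forces $Y-\theta Y\in\ker\ad(\zeta)|_{\kn^-\oplus\kn^+}=0$, hence $Y\in\kn^+\cap\kn^-=0$, is correct. But there is a genuine gap for $t\in(0,1]$: by Lemma \ref{lem vmu1t} the $\kk/\kt_M$-component is $t[Y_\kk,\xi]+[Y_\ks,\zeta]$, and nothing in your argument rules out cancellation between the two terms. Injectivity of $Y\mapsto[Y_\ks,\zeta]$ alone only handles $t=0$. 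Of your two proposed fixes, neither is carried out: the claim that $[Y_\kk,\xi]$ and $[Y_\ks,\zeta]$ "can be separated" is precisely the nontrivial point and is asserted without proof, and the claim that "the vanishing locus is $t$-independent by Lemma \ref{lem vmu1t}" is not justified -- that lemma only gives the formula for the vector field; $t$-independence of its zero set is exactly what has to be proved (and is circular if you mean the conclusion of the two estimate lemmas). The missing step is small but essential: either observe that $[Y,\xi]=[Y_\kk,\xi]+[Y_\ks,\xi]$ splits into its $\kk$- and $\ks$-components, so that vanishing of the $\ks_M\oplus\kn$-component $[X+Y,\xi]$ forces $[Y_\kk,\xi]=0$ and reduces you to the $t=0$ case; or prove the orthogonality directly.

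The paper's proof does the latter, and only ever uses the $\kk/\kt_M$-component: decomposing $\kg^\C$ into simultaneous weight spaces $\kg^\C_{\beta,\delta}$ for $\ka$ and $\kt_M$, one gets $t[Y_\kk,\xi]+[Y_\ks,\zeta]=-\tfrac12\sum_{\beta,\delta}\bigl(t\langle\delta,\xi\rangle+\langle\beta,\zeta\rangle\bigr)(Y_{\beta,\delta}+\theta Y_{\beta,\delta})$, and since $\langle\delta,\xi\rangle\in i\R$ while $\langle\beta,\zeta\rangle\in\R\setminus\{0\}$, the coefficient satisfies $|t\langle\delta,\xi\rangle+\langle\beta,\zeta\rangle|\geq|\langle\beta,\zeta\rangle|>0$ uniformly in $t$. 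This yields $\|v^{\Phi_1^t}\|\geq C_1\|Y_\kk\|\geq C\|Y\|$ directly (the last step by injectivity of $\tfrac12(1+\theta)\colon\kn\to\kk$), with an explicit constant and no compactness argument. Your first "Key step" paragraph, which tries to run the proof of Lemma \ref{lem est v mu1t X} on $[Y,\xi]$, indeed fails for the reason you yourself identify -- $\langle\alpha,\xi\rangle$ can vanish on root spaces inside $\kn$ -- so that portion should be deleted rather than left as an abandoned attempt.
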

\begin{proof}
By Lemma \ref{lem vmu1t}, we have
\beq{eq est vmu1t Y}
\|v^{\Phi_1^t}([e,X+Y])\| \geq \| t[Y_{\kk}, \xi] + [Y_{\ks}, \zeta]  \|.
\eeq
Since $\kt_M$ and $\ka$ commute, there is a simultaneous weight space decomposition of $\kg^{\C}$ for the adjoint action by these algebras. (This is just a different way of writing the root space decomposition of $\kg^{\C}$ with respect to $\kh = \kt_M \oplus \ka$.) The set of nonzero weights of the action by $\ka$ is $\Sigma$. Let $\Xi \subset i\kt_M^*$ be the set of weights of $\kt_M$. For $\beta \in \Sigma$ and $\delta \in \Xi$, let $\kg^{\C}_{\beta, \delta}$ be the corresponding weight space. Write
\[
Y = \sum_{\beta \in \Sigma^+,\,  \delta \in \Xi} Y_{\beta, \delta},
\]
with $Y_{\beta, \delta} \in \kg^{\C}_{\beta, \delta}$. Then, since $\theta \kg^{\C}_{\beta, \delta} \subset \kg^{\C}_{-\beta, \delta}$, we have
\[
[Y_{\kk}, \xi] = \frac{1}{2}  \sum_{\beta \in \Sigma^+,\,  \delta \in \Xi} [Y_{\beta, \delta}+\theta Y_{\beta, \delta}, \xi] = -\frac{1}{2}  \sum_{\beta \in \Sigma^+,\,  \delta \in \Xi} \langle \delta, \xi \rangle (Y_{\beta, \delta}+\theta Y_{\beta, \delta}).
\]
Similarly,
\[
[Y_{\ks}, \zeta] = \frac{1}{2}  \sum_{\beta \in \Sigma^+,\,  \delta \in \Xi} [Y_{\beta, \delta}-\theta Y_{\beta, \delta}, \zeta] = -\frac{1}{2}  \sum_{\beta \in \Sigma^+,\,  \delta \in \Xi} \langle \beta, \zeta \rangle (Y_{\beta, \delta}+\theta Y_{\beta, \delta}).
\]
So for all $t \in [0,1]$,
\[
\begin{split}
\| t[Y_{\kk}, \xi] + [Y_{\ks}, \zeta]  \|^2 &=  \sum_{\beta \in \Sigma^+,\,  \delta \in \Xi} |t\langle \delta, \xi \rangle+\langle \beta, \zeta \rangle|^2 \Bigl\|\frac{1}{2} (Y_{\beta, \delta}+\theta Y_{\beta, \delta}) \Bigr\|^2 \\
&\geq \min_{\beta \in \Sigma^+,\,  \delta \in \Xi} |t\langle \delta, \xi \rangle+\langle \beta, \zeta \rangle|^2  \sum_{\beta \in \Sigma^+,\,  \delta \in \Xi}  \Bigl\|\frac{1}{2} (Y_{\beta, \delta}+\theta Y_{\beta, \delta}) \Bigr\|^2 \\
&= \min_{\beta \in \Sigma^+,\,  \delta \in \Xi} |t\langle \delta, \xi \rangle+\langle \beta, \zeta \rangle|^2 \|Y_{\kk}\|^2.
\end{split}
\]
Now for all $\beta$ and $\delta$ as above, we have $\langle \delta, \xi \rangle \in i\R$, while $\langle \beta, \zeta \rangle \in \R$. So 
\[
|t\langle \delta, \xi \rangle+\langle \beta, \zeta \rangle| \geq |\langle \beta, \zeta \rangle| > 0,
\]
by assumption on $\zeta$. Therefore, \eqref{eq est vmu1t Y} implies that there is a constant $C_1 > 0$ such that for all $t \in [0,1]$, $X \in \ks_M$ and $Y \in \kn$,
\[
\|v^{\Phi_1^t}([e,X+Y])\| \geq C_1 \|Y_{\kk}\|.
\]

The projection map $\frac{1}{2}(1+\theta)\colon \kn \to \kk$ is injective. So there is a constant $C_2 > 0$ such that for all $Y \in \kn$, $\|Y_{\kk}\| \geq C_2 \|Y\|$. This completes the proof. 
\end{proof}

\begin{lemma}\label{lem est tilde ZE}
There is a constant $C>0$ such that for all $Z \in \kk$, $X \in \ks_M$ and $Y \in \kn$,
\[
\|Z^E([e, X+Y])\| \leq C(1 + \|X+Y\|) \|Z\|.
\]
\end{lemma}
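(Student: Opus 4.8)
The plan is to read off $Z^E([e,X+Y])$ from the explicit formula \eqref{eq ZE} and bound its two components separately. Recall that, with respect to the decomposition $T_{[e,X+Y]}E \cong \kk/\kt_M \oplus \ks_M \oplus \kn$ coming from the identification $\varphi^{X+Y}_0$ of \eqref{eq phi01}, equation \eqref{eq ZE} gives
\[
Z^E([e,X+Y]) = \bigl(Z_{\kt_M^{\perp}} + \kt_M,\ -[Z_{\kt_M}, X+Y]\bigr),
\]
and the Riemannian metric on $E$ is by construction the one transported from the fixed inner product on $\kg/\kh$ via $\varphi^{X+Y}_0$. So it suffices to estimate each summand in this fixed inner product.

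First I would bound the $\kk/\kt_M$-component: under the orthogonal quotient norm this coset has norm $\|Z_{\kt_M^{\perp}}\|$, which is at most $\|Z\|$. Next, for the $\ks_M \oplus \kn$-component $-[Z_{\kt_M}, X+Y]$, I would use that $\ad\colon \kg\times\kg\to\kg$ is a fixed bilinear map, so there is a constant $C'>0$, depending only on $\kg$ and the chosen inner product, with $\|[A,B]\| \le C'\|A\|\,\|B\|$ for all $A,B \in \kg$; applied with $A = Z_{\kt_M}$ and $B = X+Y$ this gives $\|[Z_{\kt_M}, X+Y]\| \le C'\|Z\|\,\|X+Y\|$. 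Adding the two estimates yields
\[
\|Z^E([e,X+Y])\| \le \|Z\| + C'\|Z\|\,\|X+Y\| \le C(1+\|X+Y\|)\|Z\|
\]
with $C := \max(1,C')$.

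There is no genuine obstacle here: the lemma is an immediate consequence of \eqref{eq ZE} together with boundedness of the Lie bracket. The only point to keep track of is that the norm on $T_{[e,X+Y]}E$ is the one fixed just before the lemma — the inner product on $\kg/\kh$ pulled back by $\varphi^{X+Y}_0$ — so that the $\kk/\kt_M$-summand carries the orthogonal quotient norm and no metric-dependent factors (growing with $X+Y$) are hidden in the identification of tangent spaces.
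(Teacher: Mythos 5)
Your argument is correct and is essentially the paper's own proof: both read off the two components of $Z^E([e,X+Y])$ from \eqref{eq ZE}, bound the $\kk/\kt_M$-part by $\|Z\|$ and the bracket part by bilinearity of $\ad$, and combine (the paper works with squared norms, you with the triangle inequality, which is immaterial). No gaps.
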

\begin{proof}
By \eqref{eq ZE}, we have
\[
\|Z^E([e, X+Y])\|^2 = \|Z_{\kt_M^{\perp}}\|^2 + \|[Z_{\kt_M}, X+Y]\|^2. 
\]
For a constant $C_1>0$, this is at most equal to
\[
\|Z_{\kt_M^{\perp}}\|^2 + C_1\|Z_{\kt_M}\|^2 \|X+Y\|^2 \leq (1+C_1)(1+ \|X+Y\|^2)\|Z\|^2.
\]
\end{proof}

\begin{lemma}\label{lem Phimu mu11}
We have
\[
\Psi^*\Phi([e,X+Y]) = \Phi_1^1([e,X+Y]) +  \mathcal{O}(\|X+Y\|^2).
\]
as $X+Y\in \ks_M \oplus \kn$ goes to  $0$.
\end{lemma}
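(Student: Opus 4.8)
The plan is to unwind all the definitions and carry out a first-order Taylor expansion at the origin of $\ks_M \oplus \kn$. By definition of $\Psi$ in Lemma \ref{lem lin GH} and of $\Phi$ in Subsection \ref{sec result}, and using the identifications $\kg^* \cong \kg$ and $\kk^* \cong \kk$ via the chosen invariant inner product (under which $\Ad^*(g)$ becomes $\Ad(g)$, and restriction of functionals to $\kk$ becomes the orthogonal projection $\mathrm{pr}_{\kk}\colon \kg \to \kk$), one has
\[
\Psi^*\Phi([e,X+Y]) = \Phi(\exp(X)\exp(Y)H) = \mathrm{pr}_{\kk}\bigl( \Ad(\exp X)\,\Ad(\exp Y)(\xi+\zeta) \bigr),
\]
where $\xi + \zeta \in \kh = \kt_M \oplus \ka$. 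Since $(X,Y)\mapsto \Psi^*\Phi([e,X+Y])$ is smooth, Taylor's theorem reduces the claim to checking that its terms of order $0$ and $1$ in $\|X+Y\|$ coincide with $\Phi_1^1([e,X+Y]) = \xi + [Y_{\kk},\xi] + [Y_{\ks},\zeta]$, which is the value of \eqref{eq def mu1t} at $k=e$, $t=1$.

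Next I would expand $\Ad(\exp X)\Ad(\exp Y) = e^{\ad X}e^{\ad Y} = 1 + \ad X + \ad Y + \mathcal{O}(\|X+Y\|^2)$, apply it to $\xi+\zeta$, and compute $\mathrm{pr}_{\kk}$ of each resulting term. The order-$0$ term is $\mathrm{pr}_{\kk}(\xi+\zeta) = \xi$, since $\xi \in \kt_M \subset \kk$ and $\zeta \in \ka \subset \ks$. The order-$1$ term is $\mathrm{pr}_{\kk}\bigl([X,\xi+\zeta] + [Y,\xi+\zeta]\bigr)$, which I evaluate using three structural facts: first, $[X,\zeta]=0$ for $X \in \ks_M$, because $\ks_M \subset \km \subset Z_{\kg}(\ka)$ and $\zeta \in \ka$; second, $[X,\xi] \in [\ks,\kk]\subset\ks$, hence $\mathrm{pr}_{\kk}[X,\xi]=0$; third, writing $Y = Y_{\kk}+Y_{\ks}$ with $Y_{\kk}=\tfrac12(Y+\theta Y)\in\kk$ and $Y_{\ks}=\tfrac12(Y-\theta Y)\in\ks$, the Cartan relations $[\kk,\kk]\subset\kk$, $[\ks,\ks]\subset\kk$, $[\kk,\ks]\subset\ks$ give $\mathrm{pr}_{\kk}[Y,\xi] = [Y_{\kk},\xi]$ (as $\xi \in \kk$) and $\mathrm{pr}_{\kk}[Y,\zeta] = [Y_{\ks},\zeta]$ (as $\zeta \in \ks$). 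Summing, the part of $\Psi^*\Phi([e,X+Y])$ of order $\le 1$ equals $\xi + [Y_{\kk},\xi] + [Y_{\ks},\zeta] = \Phi_1^1([e,X+Y])$, so the difference is $\mathcal{O}(\|X+Y\|^2)$, as claimed.

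This argument has no genuine analytic content — it is a single-term Taylor expansion — so the only place requiring care is the bookkeeping: correctly translating $\Ad^*$ and the restriction map through the two identifications, and tracking which iterated brackets land in $\kk$ versus $\ks$. Everything hinges on $[\km,\ka]=0$ and the Cartan bracket relations, both of which are available from the setup in Subsection \ref{sec KZ}.
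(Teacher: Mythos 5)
Your proposal is correct and follows essentially the same route as the paper's proof: a first-order expansion of $\Ad(\exp X)\Ad(\exp Y)(\xi+\zeta)$ followed by projection onto $\kk$, using $[\ks_M,\ka]=0$ and the Cartan bracket relations to identify the zeroth- and first-order terms with $\xi + [Y_{\kk},\xi] + [Y_{\ks},\zeta]$. The only difference is that you spell out the case analysis of which brackets land in $\kk$ versus $\ks$ slightly more explicitly than the paper does.
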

\begin{proof}
We have
\[
\begin{split}
\Psi^*\Phi ([e,X+Y]) &= \bigl(\exp(X)\exp(Y)(\xi+ \zeta)\bigr)|_{\kk} \\
	&= \sum_{l, m = 0}^{\infty} \frac{1}{l! m!} (\ad(X)^l \ad(Y)^m (\xi+ \zeta))|_{\kk} \\
	&= (\xi+ \zeta)|_{\kk} + [X,\xi+\zeta]|_{\kk} + [Y,\xi+ \zeta]|_{\kk} +  \mathcal{O}(\|X+Y\|^2).
\end{split}
\]
Now $(\xi+ \zeta)|_{\kk} = \xi$, $[X, \xi + \zeta]|_{\kk} = [X,\zeta]=0$, and 
\[
[Y,\xi+ \zeta]|_{\kk} = [Y_{\kk}, \xi] + [Y_{\ks}, \zeta].
\]
\end{proof}

\noindent \emph{Proof of Proposition \ref{prop mu E}.}
By Lemmas \ref{lem est v mu1t X} and \ref{lem est v mu1t Y}, the vector field $v^{\Phi^t_1}$ vanishes precisely at the set $K\times_{H_M} \{0\} \subset E$. The map $\Phi$ is a moment map for the action by $K$ on $G/H = \Ad^*(G)(\xi + \zeta)$ with respect to the Kirillov--Kostant symplectic form. Therefore, the vector field $v^{\Phi}$ is the Hamiltonian vector field of the function $\frac{1}{2}\|\Phi\|^2$. By Proposition 2.1 in \cite{Paradan03}, this vector field therefore vanishes precisely at the set $\{kH; k \in K\} \subset G/H$. So $v^{\Psi^*\Phi} = \Psi^*v^{\Phi}$ vanishes at the set
\[
\Psi^{-1}(\{kH; k \in K\} ) = K\times_{H_M}\{0\}.
\]

Let $C_1>0$ be as the constant $C$ in Lemma \ref{lem est tilde ZE}.  Then by Lemma \ref{lem Phimu mu11}, we have for all $X \in \ks_M$ and $Y \in \kn$,
\beq{eq est mu11}
\begin{split}
\|v^{\Phi_1^1}([e,X+Y]) - v^{\Psi^*\Phi} [e, X+Y]\| &\leq C_1(1+ \|X+Y\|) \|\Phi_1^1([e,X+Y]) - {\Psi^*\Phi} ([e, X+Y])   \| \\
&= \mathcal{O}(\|X+Y\|^2),
\end{split}
\eeq
as $X+Y \to 0$. By Lemmas \ref{lem est v mu1t X} and \ref{lem est v mu1t Y}, there is a constant $C_2 > 0$ such that for all $t \in [0,1]$, $X \in \ks_M$ and $Y \in \kn$, 
\[
\|v^{\Phi_1^t}([e,X+Y])\| \geq C_2 \|X+Y\|.
\]
Together with \eqref{eq est mu11}, this implies that in a small enough neighbourhood of $K\times_{H_M}\{0\}$ in $E$, we have
\[
(v^{\Phi^1_1}, v^{\Psi^*\Phi}) \geq 0.
\]
Corollary \ref{cor pos inner prod} implies that $\Phi_1^1$ and $\Psi^*\Phi$ are homotopic in this neighbourhood. Lemmas \ref{lem est v mu1t X} and \ref{lem est v mu1t Y} imply that $\Phi_1^0$ and $\Phi_1^1$ are homotopic, so the claim follows. 
\hfill $\square$

\subsection{Proof of Proposition \ref{prop linearise}} \label{sec proof lin}

We prove Proposition \ref{prop linearise} by combining the earlier results in this section with a last homotopy of taming maps.
\begin{lemma}\label{lem muEt}
The map $\Phi^E$ is taming, and homotopic to the map $\Phi_1^0$ defined in \eqref{eq def mu1t}.
\end{lemma}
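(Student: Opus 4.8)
The plan is to read off an explicit formula for the vector field $v^{\Phi^E}$ and then deduce both claims exactly as Proposition \ref{prop mu E} was deduced from Lemmas \ref{lem est v mu1t X}--\ref{lem Phimu mu11}, via Corollary \ref{cor pos inner prod}.

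The key computation is the following. Writing $Y = \sum_{\beta \in \Sigma^+} Y_{\beta}$ with $Y_{\beta} \in \kg_{\beta}$, the operator $|\ad(\zeta)|^{-1}\ad(\zeta)$ acts as $+1$ on each $\kg_{\beta}$ and as $-1$ on each $\kg_{-\beta}$, because $\langle \beta, \zeta\rangle > 0$ for $\beta \in \Sigma^+$. Hence
\[
|\ad(\zeta)|^{-1}\ad(\zeta)Y_{\ks} = \frac{1}{2} \sum_{\beta \in \Sigma^+}\bigl(Y_{\beta} + \theta Y_{\beta}\bigr) = Y_{\kk},
\]
so that $\Phi^E[e, X+Y] = \xi - Y_{\kk}$. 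Since $Y_{\kk} \in \kk \cap (\kn^{-} \oplus \kn^{+})$, which is orthogonal to $\kt_M$ (as $\kt_M \subset Z_{\kg}(\ka) \perp \kn^{\pm}$), formula \eqref{eq ZE}, applied exactly as in the proof of Lemma \ref{lem vmu1t}, gives
\[
v^{\Phi^E}([e, X+Y]) = \bigl(-Y_{\kk} + \kt_M,\ [X+Y, \xi]\bigr) \quad \in \kk/\kt_M \oplus \ks_M \oplus \kn .
\]

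With this formula in hand, tamingness is immediate: if $v^{\Phi^E}([e,X+Y]) = 0$ then $Y_{\kk} = 0$, hence $Y = 0$ by injectivity of $\frac{1}{2}(1+\theta)\colon \kn \to \kk$, and then $[X,\xi] = 0$; since $[X,\zeta] = 0$ and $\xi+\zeta$ is regular this forces $X \in \kh \cap \ks_M = \{0\}$, just as in the proof of Lemma \ref{lem est v mu1t X}. Thus $v^{\Phi^E}$ vanishes precisely on the compact set $K\times_{H_M}\{0\}$, so $\Phi^E$ is taming. For the homotopy, Lemma \ref{lem vmu1t} with $t = 0$ gives $v^{\Phi_1^0}([e,X+Y]) = \bigl([Y_{\ks},\zeta] + \kt_M,\ [X+Y,\xi]\bigr)$, and $[Y_{\ks},\zeta] = -\ad(\zeta)Y_{\ks} = -\frac{1}{2}\sum_{\beta \in \Sigma^+}\langle \beta, \zeta\rangle\bigl(Y_{\beta} + \theta Y_{\beta}\bigr)$. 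The vectors $Y_{\beta} + \theta Y_{\beta} \in \kk$ are mutually orthogonal for distinct $\beta \in \Sigma^+$, with $\|Y_{\beta} + \theta Y_{\beta}\|^2 = 2\|Y_{\beta}\|^2$, so with respect to the $K$-invariant metric on $E$ fixed above,
\[
\bigl(v^{\Phi^E},\, v^{\Phi_1^0}\bigr)([e,X+Y]) = \bigl(-Y_{\kk},\, [Y_{\ks},\zeta]\bigr) + \|[X+Y,\xi]\|^2 = \frac{1}{2}\sum_{\beta \in \Sigma^+}\langle \beta, \zeta\rangle\|Y_{\beta}\|^2 + \|[X+Y,\xi]\|^2 \geq 0 .
\]
By $K$-invariance this holds at every point of $E$. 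Since $\Phi_1^0$ is taming by Lemmas \ref{lem est v mu1t X} and \ref{lem est v mu1t Y}, and $\Phi^E$ is taming by the above, Corollary \ref{cor pos inner prod} applies and yields the asserted homotopy between $\Phi^E$ and $\Phi_1^0$.

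The only step beyond bookkeeping is the identity $|\ad(\zeta)|^{-1}\ad(\zeta)Y_{\ks} = Y_{\kk}$ in the first display: it is precisely what links the map $\Phi^E$, built from the complex structure $J_{\zeta} = \theta|\ad(\zeta)|^{-1}\ad(\zeta)$, to the Lie-theoretic map $\Phi_1^0$, and it is also what produces the favourable sign $\langle \beta, \zeta\rangle > 0$ in the inner product. Everything else simply repeats the estimates already carried out for the family $\Phi_1^t$.
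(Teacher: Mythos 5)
Your proof is correct, and it takes a genuinely different (and arguably cleaner) route than the paper. The paper proves Lemma \ref{lem muEt} by introducing the explicit interpolating family $\Phi^E_t([k,X+Y]) = \Ad(k)(\xi - |\ad(\zeta)|^{-t}\ad(\zeta)Y_{\ks})$, which connects $\Phi_1^0$ (at $t=0$) to $\Phi^E$ (at $t=1$), and then reruns the estimates of Lemmas \ref{lem est v mu1t X} and \ref{lem est v mu1t Y} uniformly in $t$, using the weight-space decomposition to bound $\||\ad(\zeta)|^{-t}\ad(\zeta)Y_{\ks}\|$ below by a multiple of $\|Y_{\kk}\|$; the conclusion is that every $\Phi^E_t$ is taming with the same zero set $K\times_{H_M}\{0\}$, so the family itself is the homotopy. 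You instead isolate the identity $|\ad(\zeta)|^{-1}\ad(\zeta)Y_{\ks} = Y_{\kk}$, which collapses $\Phi^E$ to $[k,X+Y]\mapsto \Ad(k)(\xi - Y_{\kk})$; tamingness then follows at once from the arguments already used for $\Phi_1^t$, and the homotopy comes from the general-purpose Corollary \ref{cor pos inner prod} via the pointwise positivity $(v^{\Phi^E}, v^{\Phi_1^0})\geq 0$, whose verification is exactly the orthogonality computation in the paper's Lemma \ref{lem est v mu1t Y}. What your route buys is transparency: the identity $|\ad(\zeta)|^{-1}\ad(\zeta)Y_{\ks} = Y_{\kk}$ (which the paper only reaches implicitly, in Lemma \ref{lem JY}) explains directly why $\Phi^E$ and $\Phi_1^0$ point in compatible directions, and it avoids constructing a bespoke family. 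What the paper's route buys is that it never needs the inner-product criterion at all and its estimates are the same ones reused in Proposition \ref{prop f = 1}. The one point worth making explicit in your write-up is that the metric on $E$ is the direct-sum metric coming from $\varphi_0^{X+Y}$, so that the inner product of the two Kirwan vector fields really does split as the sum of the $\kk/\kt_M$- and $(\ks_M\oplus\kn)$-contributions; this is the same convention the paper uses in its own norm estimates.
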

\begin{proof}
For $t \in [0,1]$, consider the map $\Phi^E_t\colon E\to \kk$ defined by
\[
\Phi^E_t([k, X+Y]) = \Ad(k)(\xi - |\ad(\zeta)|^{-t}\ad(\zeta)Y_{\ks}).
\]
Then $\Phi^E_0 = \Phi_1^0$ and $\Phi^E_1 = \Phi^E$.

Since $|\ad(\zeta)|^{-t}\ad(\zeta)Y_{\ks} \perp \kt_M$ for all $t$, we have by \eqref{eq ZE},
\[
\begin{split}
\|v^{\Phi^E_t}([e, X+Y]) \|^2 &= \bigl\|  \bigl(|\ad(\zeta)|^{-t}\ad(\zeta)Y_{\ks} + \kt_M, [X+Y, \xi] \bigr)  \bigr\|^2\\
	&\geq \||\ad(\zeta)|^{-t}\ad(\zeta)Y_{\ks}\|^2.
\end{split}
\]
If we write $Y = \sum_{\beta \in \Sigma^+} Y_{\beta}$, with $Y_{\beta} \in \kg_{\beta}$, then
\[
\||\ad(\zeta)|^{-t}\ad(\zeta)Y_{\ks}\|^2 = \sum_{\beta \in \Sigma^+} \frac{|\langle \beta, \zeta \rangle|^2}{|\langle \beta, \zeta \rangle|^{2t}} \bigl\| \frac{1}{2}(Y_{\beta}+\theta Y_{\beta}) \bigr\|^2
\geq \min_{\beta \in \Sigma^+} \frac{|\langle \beta, \zeta \rangle|^2}{|\langle \beta, \zeta \rangle|^{2t}}  \|Y_{\kk}\|^2.
\]
As in the proof of Lemma \ref{lem est v mu1t Y}, we conclude that there is a constant $C_1>0$ such that for all $X$ and $Y$ as above,
\[
\|v^{\Phi^E_t}([e, X+Y]) \| \geq C_1\|Y\|.
\]
Exactly as in the proof of Lemma \ref{lem est v mu1t X}, we also find a constant $C_2>0$ such that for all such $X$ and $Y$,
\[
\|v^{\Phi^E_t}([e, X+Y]) \| \geq C_2\|X\|.
\]
Hence the vector field $v^{\Phi^E_t}$ vanishes precisely at the set $K\times_{H_M}\{0\}$. So $\Phi^E_t$ is taming for all $t$, and the claim follows.
\end{proof}

\noindent \emph{Proof of Proposition  \ref{prop linearise}.}
Successively applying Lemma \ref{lem lin GH}, Lemma \ref{lem lin J}, Propositions \ref{prop excision} and \ref{prop mu E}, and finally Lemma \ref{lem muEt}, we find that
\[
\begin{split}
\indx_K(\Bigwedge_J T(G/H) \otimes L_V, \Phi) &= \indx_K\bigl(\Psi^*(\Bigwedge_J T(G/H) \otimes L_V), \Psi^*\Phi\bigr)\\
	&= \indx_K(\Bigwedge_{J^E} TE \otimes L_V^E, \Psi^*\Phi)\\
	&= \indx_K(\Bigwedge_{J^E} TE \otimes L_V^E, \Phi_1^0)\\
&=\indx_K\bigl(\Bigwedge_{J^E} TE \otimes L_V^E,  \Phi^E).
\end{split}
\]
\hfill $\square$


\section{Indices on fibred products} \label{sec ind fibred}

In Section \ref{sec linearised index}, we will explicitly compute the right hand side of the equality in Proposition \ref{prop linearise}, see
Proposition \ref{prop lin index}. This involves a general result about 
indices of deformed Dirac operators on certain fibred product spaces, Proposition \ref{prop htp}.  Our goal in this section is to prove this result.
In this section and the next, we will need to consider more general deformations of Dirac operators than in \eqref{eq deformed Dirac} in some places.

\subsection{The result}

Let $H<K$ be a closed subgroup. Let $N$ be a complete Riemannian manifold
with an isometric action by $H$.
Let  $M := K\times_H N$. Let $\cS \to M$ be a $K$-equivariant Clifford module. Let $\psi\colon M \to \kk$ be a taming map for the action by $K$ on $M$. Then we have the index
\[
\indx_K(\cS, \psi) \in \hat R(K).
\]
We assume that the \label{page Riem metric}Riemannian metric on $M$ is induced by an $H$-invariant Riemannian metric on $N$ and an $H$-invariant inner product on $\kk$. As noted below Theorem \ref{thm Braverman}, the choice of the Riemannian metric does not influence the index.


Let $\nabla^N$ be an $H$-invariant Hermitian Clifford connection on $\cS|_N \to N$. Let $D^N$ be the Dirac operator
\beq{eq DN}
D^N\colon \Gamma^{\infty}(\cS|_N)\xrightarrow{\nabla^N} \Gamma^{\infty}(T^*N\otimes \cS|_N ) \xrightarrow{c} \Gamma^{\infty}(\cS|_N).
\eeq
The vector field $v^{\psi}$ restricts to a section of $TM|_N$, so we have the endomorphism $c(v^{\psi})|_{N}$ of $\cS|_N$. For any $H$-invariant, nonnegative function $f \in C^{\infty}(N)^H$, consider the operator
\beq{eq DN def}
D^N_{f\psi} := D^N -ifc(v^{\psi})|_N
\eeq
on $\Gamma^{\infty}(\cS|_N)$.
\begin{proposition}\label{prop htp} If $f$ is admissible, then
the operator $D^N_{f\psi}$ is Fredholm on every $H$-isotypical component of $L^2(\cS|_N)$. So it has a well-defined index
\[
\indx_H(D^N_{f\psi}) \in \hat R(H).
\]
Furthermore, for any ${\delta} \in \hat K$,
the multiplicities $m_{\delta}^{\pm}$ of ${\delta}$ in the spaces $(L^2K \otimes \ker_{L^2}(D^N_{f\psi})^{\pm} )^H$ are finite, and we have
\[
\indx_K(\cS, \psi) = (L^2(K) \otimes \indx_H(D^N_{f\psi}))^H.
\]
\end{proposition}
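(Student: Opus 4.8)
The plan is to reduce the index on $M = K \times_H N$ to an index on the fibre $N$ by decomposing $L^2(\cS)$ according to the induction picture. Concretely, since $M = K \times_H N$, a section of $\cS \to M$ is a map $\sigma \colon K \times N \to \cS$ that is $H$-equivariant in the appropriate sense, and this gives a $K$-equivariant isometry $L^2(\cS) \cong (L^2(K) \otimes L^2(\cS|_N))^H$, where $K$ acts by left translation on $L^2(K)$ and $H$ acts diagonally by right translation on $L^2(K)$ and by the given action on $L^2(\cS|_N)$. First I would make this identification precise, and also record how the Clifford action, the connection $\nabla$, and the taming map $\psi$ translate under it. The key point is that $TM = K \times_H (\kk/\kh \oplus TN)$ (using the $H$-invariant inner product on $\kk$ to split off $\kh$), so the Clifford module $\cS$ over $M$ restricts to a Clifford module over $N$ with an extra Clifford action by $\kk/\kh$; choosing $\nabla^N$ to be the restriction of $\nabla$ (adjusted by the connection form), the Dirac operator $D$ on $M$ becomes, on the $H$-fixed subspace of $L^2(K) \otimes L^2(\cS|_N)$, the operator $1 \otimes D^N$ plus a zeroth-order term coming from the action of $\kk/\kh$, which is absorbed into the deformation term $c(v^\psi)$ as the vector field $v^\psi$ has components in both the $\kk/\kh$-direction and the $TN$-direction.

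Next I would invoke Braverman's analytic results (Theorem \ref{thm Braverman} and the Fredholmness statements it rests on) for the operator $D^N_{f\psi}$ on $N$ with the $H$-action. Here one needs that $f$ admissible on $M$ restricts to (or can be taken of the form pulled back from) an admissible function on $N$; since admissibility is a growth condition near infinity and $M \to K/H$ is a fibre bundle with compact base, the function $h$ in \eqref{eq def h} on $M$ and its analogue on $N$ are comparable, so admissibility passes between the two. This gives that $D^N_{f\psi}$ is Fredholm on each $H$-isotypical component of $L^2(\cS|_N)$, hence has a well-defined index $\indx_H(D^N_{f\psi}) \in \hat R(H)$, and that the kernels $\ker_{L^2}(D^N_{f\psi})^\pm$ decompose into finite-dimensional $H$-isotypical pieces. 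Then $(L^2(K) \otimes \ker_{L^2}(D^N_{f\psi})^\pm)^H$ decomposes over $\hat K$ with multiplicities $m_\delta^\pm = \sum_{\tau \in \hat H} [\tau : \delta|_H]\, m_\tau^\pm(D^N_{f\psi})$, which is a finite sum of finite numbers by Frobenius reciprocity and finiteness of $[\tau:\delta|_H]$; this gives the finiteness claim.

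Finally, the identity $\indx_K(\cS,\psi) = (L^2(K) \otimes \indx_H(D^N_{f\psi}))^H$ follows by matching kernels: under the isometry $L^2(\cS) \cong (L^2(K) \otimes L^2(\cS|_N))^H$, the deformed Dirac operator $D_{f\psi}$ on $M$ corresponds to the operator induced by $1 \otimes D^N_{f\psi}$, so $\ker_{L^2}(D^\pm_{f\psi}) \cong (L^2(K) \otimes \ker_{L^2}(D^N_{f\psi})^\pm)^H$ as $K$-representations, and taking the formal difference of even and odd parts in $\hat R(K)$ yields the claim. The main obstacle I anticipate is the second step: verifying carefully that the deformed Dirac operator on $M$, after the unitary identification, really is the operator associated to $1 \otimes D^N_{f\psi}$ — i.e.\ controlling the connection-form and curvature-of-the-bundle $K \to K/H$ terms that appear when one writes $D$ on $K \times_H N$ in terms of $D^N$, and checking that the $\kk/\kh$-directions of $v^\psi$ combine with these terms in exactly the right way. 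One must also make sure the admissibility/growth hypotheses are genuinely transported between $M$ and $N$ rather than merely plausibly so, since the whole Fredholm package of Braverman depends on them. Because the statement only asserts an equality in $\hat R(K)$ (a difference of multiplicities), any lower-order discrepancies that are symmetric in the $\Z_2$-grading will cancel, which should make the bookkeeping manageable.
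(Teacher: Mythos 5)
Your overall framing (decompose $L^2(\cS)\cong(L^2(K)\otimes L^2(\cS|_N))^H$ and match the two sides) is the right starting point, and you correctly flag where the danger lies, but the step you propose to get past it does not work, and this is exactly the step the paper has to labour over. Under the identification $\Gamma^\infty(\cS)=(C^\infty(K)\otimes\Gamma^\infty(\cS|_N))^H$, with the natural induced connection, the difference between the Dirac operator on $M$ and $1\otimes D^N$ is
\[
D^M-1\otimes D^N=\sum_{j=1}^l c(X_j^M)\,\calL_{X_j},
\]
for an orthonormal basis $\{X_j\}$ of $\kh^\perp$. This is a genuine first-order transversal Dirac operator in the $K/H$-directions, not ``a zeroth-order term coming from the action of $\kk/\kh$'', and it cannot be absorbed into the deformation $c(v^\psi)$ (which is order zero). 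Consequently your final step fails: $D_{f\psi}$ on $M$ does \emph{not} correspond to $1\otimes D^N_{f\psi}$, and the $L^2$-kernels of the two operators are in general different. Only the indices agree, and proving that requires an argument: the paper runs the linear homotopy $D_{t,f\psi}=D^M_{f\psi}+t(1\otimes D^N-D^M)$ and shows, via the anticommutator estimate of Lemma \ref{lem est comm} combined with Braverman's lower bound for $(D^M_{f\psi})^2$, that each $D_{t,f\psi}^2$ has discrete spectrum on every $K$-isotypical component (Lemma \ref{lem discr spec}); since $D^M-1\otimes D^N$ is bounded on each isotypical component, the path of Cayley-type transforms is norm-continuous and Fredholm, so the endpoints have equal index. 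The appeal to the $\Z_2$-grading cancelling ``lower-order discrepancies'' is not a substitute for this: the discrepancy is not lower order, and equality of indices of two distinct Fredholm operators needs a homotopy, not just symmetry.

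A second, smaller gap: you propose to ``invoke Braverman's analytic results for the operator $D^N_{f\psi}$ on $N$ with the $H$-action'', with admissibility transported from $M$ to $N$. But $v^\psi|_N$ is a section of $TM|_N$, not of $TN$, and $\psi$ takes values in $\kk$, not $\kh$; so $D^N_{f\psi}=D^N-ifc(v^\psi)|_N$ is not a deformed Dirac operator of the type covered by Theorem \ref{thm Braverman} for the pair $(N,H)$, and its Fredholmness cannot be obtained by citing that theorem. In the paper the Fredholmness of $D^N_{f\psi}$ on $H$-isotypical components is instead \emph{deduced} from the Fredholmness of $D_{1,f\psi}=1\otimes D^N_{f\psi}$ on $K$-isotypical components of $L^2(\cS)$, using $L^2(\cS)_\delta\cong\delta\otimes(\delta^*\otimes L^2(\cS|_N))^H$ and the fact that every $\delta'\in\hat H$ occurs in some $\delta|_H$. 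You would need to supply either that deduction or an independent Fredholm estimate for $D^N_{f\psi}$.
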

\begin{remark}
Roughly speaking, Proposition \ref{prop htp} plays the role in this paper that Theorem 4.1 in \cite{Atiyah74} plays in \cite{Paradan03}.
\end{remark}


\subsection{A connection}

To prove Proposition \ref{prop htp}, we consider the connection $\nabla^M$ on $\cS$ defined by the properties that it is $K$-invariant, and for all $X \in \kh^{\perp}\subset \kk$, $n \in N$, $v \in T_nN$ and $s \in \Gamma^{\infty}(\cS)$,
\beq{eq def nabla M}
(\nabla^M_{X^M_n + v}s)[e, n] := (\calL_X s)([e,n]) + (\nabla^N_v s|_N)(n).
\eeq
Consider the projections
\[
\begin{split}
p_{K/H}&\colon K\times N \to K/H;\\
p_{N}&\colon K\times N \to N;\\
p_M&\colon K\times N \to M.
\end{split}
\]
Let $\nabla^{T(K/H)}$ and $\nabla^{TN}$ be the Levi--Civita connections on $T(K/H)$ and $TN$, respectively. Then the Levi--Civita connection $\nabla^{TM}$ on $TM$ satisfies
\beq{eq decomp LC}
p_M^* \nabla^{TM} = p_{K/H}^*\nabla^{T(K/H)} + p_{N}^*\nabla^{TN}.
\eeq
(See Lemma 3.4 in \cite{HS16-3}.) 
\begin{lemma}
The relation \eqref{eq def nabla M} indeed defines a well-defined, $K$-invariant, Hermitian Clifford connection $\nabla^M$ on $\cS$.
\end{lemma}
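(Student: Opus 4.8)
The plan is to verify the three required properties of $\nabla^M$ as defined by \eqref{eq def nabla M} in turn: well-definedness, $K$-invariance and Hermitian-ness, and the Clifford compatibility with $\nabla^{TM}$. For well-definedness, the point is that \eqref{eq def nabla M} prescribes $\nabla^M$ along the tangent vectors $X^M_n + v$ at points of the form $[e,n]$, and these span $T_{[e,n]}M$ since $X$ ranges over $\kh^{\perp}$ and $v$ over $T_nN$; there is a genuine choice only in that we must check the prescription is consistent under the $H$-action used to form $M = K\times_H N$, but since we only use the distinguished points $[e,n]$ and then transport by $K$, the issue reduces to the fact that $\calL_X s$ and $\nabla^N_v (s|_N)$ are themselves well-defined. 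So the first step is short: observe that $X \mapsto X^M_{[e,n]}$ identifies $\kh^{\perp}$ with the tangent space to the $K$-orbit direction, and that the formula then extends uniquely to a connection on all of $M$ by $K$-invariance. $K$-invariance is built in by construction; that $\nabla^M$ is Hermitian follows because $\calL_X$ preserves the Hermitian metric on $\cS$ (the metric being $K$-invariant) and $\nabla^N$ is Hermitian by hypothesis.

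The substantive step is the Clifford compatibility $[\nabla^M_u, c(w)] = c(\nabla^{TM}_u w)$ for vector fields $u, w$ on $M$. Here I would use the decomposition \eqref{eq decomp LC} of the Levi--Civita connection, $p_M^*\nabla^{TM} = p_{K/H}^*\nabla^{T(K/H)} + p_N^*\nabla^{TN}$, and match it against the corresponding decomposition of $\nabla^M$ coming from \eqref{eq def nabla M}. Along the $N$-directions $v \in T_nN$, the identity $[\nabla^M_v, c(w)] = c(\nabla^{TM}_v w)$ reduces at the point $[e,n]$ to the Clifford compatibility of $\nabla^N$ with $\nabla^{TN}$, which holds by hypothesis, provided one checks that the $T(K/H)$-component of $\nabla^{TM}_v w$ vanishes in the $N$-directions — and this is exactly what \eqref{eq decomp LC} gives. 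Along the $K$-orbit directions $X^M$ with $X \in \kh^{\perp}$, the term $\calL_X$ in \eqref{eq def nabla M} is the Lie derivative on sections, which satisfies $[\calL_X, c(w)] = c(\calL_X w)$ by naturality of the Clifford action under the $K$-action; one then has to compare $\calL_X w$ with $\nabla^{TM}_{X^M} w$, and \eqref{eq decomp LC} again controls the discrepancy, since $\nabla^{T(K/H)}$ is the Levi--Civita connection of the $K$-invariant metric on $K/H$ and the difference between $\nabla^{T(K/H)}_{X^M}$ and $\calL_X$ on $K/H$ is a zeroth-order term that Clifford-commutes appropriately. Finally, one checks linearity and the Leibniz rule in the remaining mixed case $u = X^M_n + v$ by additivity of both sides.

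The main obstacle I anticipate is bookkeeping in the $K/H$-direction: while $\calL_X$ is the natural operation on sections of the pullback bundle, it is not literally a covariant derivative, so the identity $[\calL_X, c(w)] = c(\nabla^{TM}_{X^M} w)$ is not immediate and requires invoking that on the reductive homogeneous space $K/H$ the canonical connection (whose covariant derivative along Killing fields at the base point agrees with $\calL$ up to the isotropy action) is metric and that its torsion contributes a term annihilated by the bracket with $c(w)$ — or, more in the spirit of the cited Lemma 3.4 in \cite{HS16-3}, simply quoting that decomposition and reducing everything to the two factors where compatibility is known. I would carry out the $N$-direction verification first (it is essentially formal given the hypothesis on $\nabla^N$), then the orbit-direction verification, and close with the observation that a connection is determined by its action in these complementary directions together with $K$-invariance, so the formula \eqref{eq def nabla M} does define $\nabla^M$ uniquely with all the stated properties.
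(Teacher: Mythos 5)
Your proposal is correct and takes essentially the same route as the paper: well-definedness comes down to checking $H$-equivariance of the local formula (via $\calL_{\Ad(h)X}=h\calL_Xh^{-1}$ and $H$-invariance of $\nabla^N$, not merely well-definedness of $\calL_Xs$ and $\nabla^N_v(s|_N)$), Hermitian-ness is as you say, and the Clifford identity is verified through the decomposition $p_M^*\nabla^{TM}=p_{K/H}^*\nabla^{T(K/H)}+p_N^*\nabla^{TN}$. The obstacle you anticipate in the $K/H$-direction is resolved in the paper exactly along the lines of your second suggested option: since $c$ is tensorial in its vector-field argument, it suffices to test against the spanning fields $Y^M+p_N^*w$ with $Y\in\kh^{\perp}$, for which $[\calL_X,c(Y^M)]=c([X,Y]^M)=c(\nabla^{TM}_{X^M}Y^M)$ and $\nabla^{TM}_{X^M}(p_N^*w)=\nabla^{TM}_{v}Y^M=0$, so no comparison of $\calL_X$ with $\nabla^{TM}_{X^M}$ on general vector fields is needed.
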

\begin{proof}
Every tangent vector in $T_{[e,n]}M$ can be represented in a unique way as $X^M_n + v$, with $X$ and $v$ as above. To show that the $K$-invariant extension is well-defined, we note that $\nabla^M$ as defined above is $H$-invariant. Indeed, $\nabla^N$ is $H$-invariant by assumption, while for all $h \in H$, $T_nh(X^M_n) = (\Ad(h)X)^M_{hn}$, and
\[
\calL_{\Ad(h)X} = h\calL_X h^{-1}.
\]

To verify the Leibniz rule, note that for all $\varphi \in C^{\infty}(M)$,
\begin{multline*}
(\calL_X \varphi s)([e,n]) + (\nabla^N_v \varphi s|_N)(n) \\= \varphi(n)\bigl( (\calL_X s)([e,n]) + (\nabla^N_v  s|_N)(n) \bigr) + (X^M(f)([e,n]) + v(f)(n))s([e,n]).
\end{multline*}
The property that $\nabla^M$ is Hermitian follows from the facts that $\nabla^N$ is, and that the metric on $\cS$ is $K$-invariant.

It remains to show that $\nabla^M$ is a Clifford connection. Let $X, Y \in \kh^{\perp}$, and let $v$ and $w$ be vector fields on $N$. The space of vector fields on $M$ is isomorphic to the subspace
\[
\Gamma^{\infty}(K\times N, p_N^*TN \oplus p_{K/H}^*T(K/H))^H \subset \Gamma^{\infty}(K\times N, p_N^*TN \oplus p_{K/H}^*T(K/H)).
\]
We work in the larger space on the right hand side, where we have the elements $p_N^*v$ and $p_N^*w$. Connections on vector bundles on $M$ define operators on this larger space via the pullback along $p_M$.
Let  $s \in \Gamma^{\infty}(\cS)$. Since $\calL_X$ commutes with $c(p_N^*w)$ and $p_N^*\nabla^N_{v}$ commutes with $c(Y^M)$, we have
\beq{eq comm conn}
\bigl(\bigl[\nabla^M_{X^M+p_N^*v}, c(Y^M+p_N^*w)\bigr]s\bigr)|_N 
	=  ([\calL_X, c(Y^M)]s)|_N +  [\nabla^N_v,  c(w)](s|_N).
\eeq
Now $\nabla^N$ is a Clifford connection, so by \eqref{eq decomp LC},
 $[\nabla^N_v,  c(w)] = c(\nabla^{TM}_v w)$. Furthermore, equivariance of the Clifford action implies that
\[
[\calL_X, c(Y^M)] = c([X,Y]^M).
\]
Now 
 by \eqref{eq decomp LC}, $[X,Y]^M = \nabla^{TM}_{X^M}Y^M$, so we conclude that the right hand side of \eqref{eq comm conn} equals
\[
(c(\nabla^{TM}_{X^M}Y^M)s)|_N + c(\nabla^{TM}_v w)(s|_N) = \bigl(c(\nabla^{TM}_{X^M + v} (Y^M + p_N^*w))s \bigr)|_N.
\]
Here we used that $\nabla^{TM}_{X^M}w = \nabla^{TM}_{v}Y^M = 0$.
\end{proof}

\subsection{Estimates for Dirac operators}

Let $D^M$ be the Dirac operator on $\Gamma^{\infty}(\cS)$ associated to $\nabla^M$. Write
\[
D^M_{f\psi} := D^M -ifc(v^{\psi}),
\]
for a nonnegative function $f \in C^{\infty}(M)^K = C^{\infty}(N)^H$.
Note that
\beq{eq decomp SM}
\cS = K\times_H (\cS|_N),
\eeq
via the map $[k, x] \mapsto k\cdot x$ for all $k \in K$ and $x \in \cS|_N$.
So
\beq{eq decomp Gamma S}
\Gamma^{\infty}(\cS) = (C^{\infty}(K) \otimes \Gamma^{\infty}(\cS|_N))^H.
\eeq
Let $\{X_1, \ldots, X_l\}$ be an orthonormal basis of $\kh^{\perp}$. Then, with respect to the decomposition \eqref{eq decomp Gamma S}, we have
\beq{eq DM minus DN}
D^M - 1\otimes D^N = \sum_{j=1}^l c(X_j^M)\calL_{X_j}.
\eeq

For ${\delta} \in \hat K$, we denote the ${\delta}$-isotypical subspace of $L^2(\cS)$ by $L^2(\cS)_{\delta}$.
\begin{lemma}\label{lem est comm}
For every ${\delta} \in \hat K$, 
\begin{enumerate}
\item[(a)] the operator $D^M - 1\otimes D^N$ is bounded on $L^2(\cS)_{\delta}$ (with norm depending on $\delta$);
\item[(b)] there is a constant $C^{\delta}>0$ such that
for all $f \in C^{\infty}(M)^K$, we have on $L^2(\cS)_{\delta}$,
\begin{multline*}
-C^{\delta} (1+f\|v^{\psi}\|) \leq \\ 
D^M_{f\psi} (D^M - 1\otimes D^N)+ (D^M - 1\otimes D^N)D^M_{f\psi}  \\
\leq C^{\delta} (1+f\|v^{\psi}\|).
\end{multline*}
\end{enumerate}
\end{lemma}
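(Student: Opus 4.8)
The plan is to work entirely within a fixed isotypical component $L^2(\cS)_{\delta}$ and exploit formula \eqref{eq DM minus DN}, namely $D^M - 1\otimes D^N = \sum_{j=1}^l c(X_j^M)\calL_{X_j}$. For part (a), I would note that on $L^2(\cS)_{\delta}$ the operators $\calL_{X_j}$ (the derived action of $\kk$) act through a finite-dimensional representation, hence are bounded with norm controlled by $\delta$; and $c(X_j^M)$ is a pointwise endomorphism of $\cS$ whose operator norm is $\|X_j^M(m)\|$, which on $M = K\times_H N$ with the chosen metric is bounded uniformly (the $K$-component of $X_j^M$ has length at most $\|X_j\|$, and there is no $N$-component contribution to its length since $X_j \in \kh^\perp$). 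So each summand $c(X_j^M)\calL_{X_j}$ is bounded on $L^2(\cS)_{\delta}$, and the finite sum is too; the bound depends on $\delta$ through the $\calL_{X_j}$ factors.

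For part (b), write $B := D^M - 1\otimes D^N$, which by (a) is bounded on $L^2(\cS)_{\delta}$ with some norm $C_1^\delta$, and decompose $D^M_{f\psi}B + B D^M_{f\psi} = (1\otimes D^N)B + B(1\otimes D^N) + BB + BB - ifc(v^\psi)B - ifBc(v^\psi)$ — more carefully, $D^M_{f\psi} = (1\otimes D^N) + B - ifc(v^\psi)$, so the anticommutator expands as $\{1\otimes D^N, B\} + 2B^2 - if\{c(v^\psi), B\}$. The last term $-if\{c(v^\psi), B\}$ is bounded in norm by $2 f\|v^\psi\| \, C_1^\delta$ since $\|c(v^\psi)\| = \|v^\psi\|$ pointwise; this produces the $f\|v^\psi\|$ contribution. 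The middle term $2B^2$ is bounded by $2(C_1^\delta)^2$. The genuinely substantive term is the anticommutator $\{1\otimes D^N, B\}$: using $B = \sum_j c(X_j^M)\calL_{X_j}$ and that $1\otimes D^N$ acts only in the $N$-variable (so it commutes with $\calL_{X_j}$, which differentiates in the $K$-variable on $C^\infty(K)$), this anticommutator reduces to $\sum_j \{1\otimes D^N, c(X_j^M)\}\calL_{X_j}$. Now $\{D^N, c(X_j^M)\}$ is a \emph{zeroth-order} operator: in general $\{D, c(w)\} = 2\nabla_w + (\text{bounded symbol terms})$ when $w$ is a vector field, but here one must check that the apparent first-order part $\nabla^N_{X_j^M|_N}$ is absent or bounded, using that $X_j^M|_N$ restricted along $N$ lands in $TN$ only through $[\kh^\perp,\cdot]$-type brackets — concretely, the components of $X_j^M$ tangent to $N$ vanish at points of $N$ (they come from the $\Ad(h)$-twisting and are first order in the fibre coordinate), so $\{D^N, c(X_j^M)\}|_N$ is an order-zero, pointwise-bounded endomorphism. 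Combined with boundedness of $\calL_{X_j}$ on $L^2(\cS)_{\delta}$, this term is bounded by a $\delta$-dependent constant. Summing the three contributions gives the two-sided bound with a single constant $C^\delta$ absorbing the $f\|v^\psi\|$-independent pieces into the $+1$.

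The main obstacle I anticipate is the careful bookkeeping in the anticommutator $\{1\otimes D^N, B\}$: one has to verify that no uncontrolled first-order operator survives, i.e.\ that the would-be term $c(X_j^M)(1\otimes D^N) + (1\otimes D^N)c(X_j^M)$ is genuinely of order zero when restricted to act on the $N$-factor. This rests on the precise structure of the connection $\nabla^M$ built in the previous subsection (formula \eqref{eq def nabla M}) and on $X_j \in \kh^\perp$, which forces the $TN$-component of the Killing vector field $X_j^M$ to vanish along $N\subset M$. Once that is established, everything else is a uniform-norm estimate on $M = K\times_H N$ using that the $K$-action is through a finite-dimensional representation on each $L^2(\cS)_{\delta}$, together with $\|c(v)\| = \|v\|$.
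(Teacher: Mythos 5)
Your part (a) and your treatment of the $-if\{c(v^{\psi}),B\}$ term are essentially the paper's argument. For part (b), however, you take a genuinely different decomposition (splitting off $1\otimes D^N$ instead of anticommuting the full $D^M_{f\psi}$ past each $c(X_j^M)$), and the decisive step fails. You claim that $\{1\otimes D^N, c(X_j^M)\}$ is of order zero because ``the components of $X_j^M$ tangent to $N$ vanish at points of $N$''. That vanishing holds only over the identity coset: at a general point $[k,n]$ of $K\times_H N$, the component of $X_j^M$ tangent to the fibre is the vector field on $N$ generated by $(\Ad(k^{-1})X_j)_{\kh} \in \kh$, which is nonzero as soon as $\Ad(k^{-1})X_j \notin \kh^{\perp}$. (Indeed, the $K$-orbit of $[e,n]$ is $K$ modulo the stabiliser of $n$ in $H$, which is strictly larger than $K/H$ whenever $H$ does not fix $n$, so the generating vector fields must have fibre components away from $k=e$.) Consequently $\{1\otimes D^N, c(X_j^M)\}$ contains the genuinely first-order operator $-2\nabla^N$ along this fibre component, and your assertion that ``no uncontrolled first-order operator survives'' is exactly the point that is not established.

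The gap is repairable, but only by invoking the mechanism your argument tries to avoid: write $\nabla^N_{W^N} = \calL_W + \langle \Phi^{\cS|_N}, W\rangle$ for $W = (\Ad(k^{-1})X_j)_{\kh} \in \kh$, observe that $W$ stays in a bounded subset of $\kh$ as $k$ ranges over the compact group $K$, that $\calL_W$ is bounded on $L^2(\cS)_{\delta}$ because $\delta|_H$ contains only finitely many $H$-types, and that the moment-type endomorphism is bounded. This is precisely what the paper's route delivers automatically: it uses $K$-invariance of $D^M_{f\psi}$ to write the anticommutator as $\sum_j \bigl(D^M_{f\psi}c(X_j^M)+c(X_j^M)D^M_{f\psi}\bigr)\calL_{X_j}$ and then applies the Tian--Zhang/Braverman identity $D^Mc(X^M)+c(X^M)D^M = i\sum_k c(e_k)c(\nabla^{TM}_{e_k}X^M) - 2i(\calL_X + \langle\Phi^{\cS},X\rangle)$, which converts the \emph{entire} first-order part of the anticommutator into a Lie derivative (tamed on isotypical components) plus zeroth-order terms. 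Without either that identity or the explicit $H$-Lie-derivative argument above, your bound on $\{1\otimes D^N, B\}$ does not go through.
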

\begin{proof}
Let ${\delta} \in \hat K$. 
We use \eqref{eq DM minus DN}.
Note that $\calL_{X_j}$ is bounded on $L^2(\cS)_{\delta}$, with norm depending on ${\delta}$. For each $j$, because $X_j \in \kh^{\perp}$, and the Riemannian metric is of the form mentioned on page \pageref{page Riem metric}, we have
\[
\|c(X_j^M)\| = \|X_j^M\| = \|(X_j, 0)\| = \|X_j\|,
\]
which is constant.
So part (a) follows.

For part (b), we note that
 $K$-invariance of $D^M_{f\psi}$ implies that
\beq{eq DM DN 1}
D^M_{f\psi} (D^M - 1\otimes D^N)+ (D^M - 1\otimes D^N)D^M_{f\psi} = \sum_{j=1}^l \bigl(D^M_{f\psi}c(X_j^M)+c(X_j^M) D^M_{f\psi}\bigr)\calL_{X_j}.
\eeq
Now for every $j$, $\calL_{X_j}$ is bounded on $L^2(\cS)_{\delta}$, while
\[
D^M_{f\psi}c(X_j^M)+c(X_j^M)D^M_{f\psi} = D^Mc(X_j^M)+c(X_j^M)D^M -2i f(v^{\psi}, X_j^M).
\]
By a straightforward computation (see (1.26) in \cite{Zhang98} and Lemma 9.2 in \cite{Braverman02}), we have for any local orthonormal frame $\{e_1, \ldots, e_{\dim(M)}\}$ of $TM$, and any $j$,
\[
D^Mc(X_j^M)+c(X_j^M)D^M = i\sum_j c(e_j)c(\nabla^{TM}_{e_j}X_j^M) -2i (\calL_{X_j} + \langle \Phi^{\cS}, X_j\rangle), 
\]
with $\Phi^{\cS}$ as in \eqref{eq def mu S}.

Now for every $j$, we have $\langle \Phi^{\cS}, X_j\rangle = 0$ by definition of $\nabla^M$ in \eqref{eq def nabla M}. And
\[
|(v^{\psi}, X_j^M)| \leq \|v^{\psi}\|
\]
by definition of the Riemannian metric used. Finally, we claim that $\|\nabla^{TM}X_j^M\|$ is bounded. Indeed, recall the form \eqref{eq decomp LC} of the Levi--Civita connection on a fibred product space like $M$.
Let $\Phi^{T(K/H)} \in \End(T(K/H)) \otimes \kk^*$ be such that
for all $Z \in \kk$, 
\beq{eq def mu KH}
 \langle \Phi^{T({K/H})}, Z\rangle =
\nabla^{T(K/H)}_{Z+\kh} - \calL_Z. 
\eeq
We have for all $k \in K$ and $n \in N$,
\[
X_j^M(kn) = (\Ad(k)X_j, 0) \quad \in \kh^{\perp} \oplus T_nN \cong T_{kn}M.
\]
So for all $w \in TN$, we have 
\[
\nabla^{TM}_w X_j^M = (p_{N}^*\nabla^{TN})_w X_j^M = 0.
\]
And for all $Z \in \kk$, and $n \in N$,
\[
(\nabla^{TM}_{Z^M} X_j^M)(n) = \bigl((\calL_Z + \langle \Phi^{\cS}, Z\rangle)X_j^M\bigr)(n) = ([Z, X_j] +  \langle \Phi^{\cS}, Z\rangle)X_j, 0).
\]
This is constant in $n$, so we find that $\|\nabla^{TM}X_j^M\|$ is indeed bounded.

By combining the above arguments, we find that the claim in part (b) is true.
\end{proof}

\subsection{Proof of Proposition \ref{prop htp}}

For $t \in [0,1]$, consider the operator
\[
D_{t, f\psi} := 
D^M_{f\psi} + t(1\otimes D^N - D^M). 
\]
on $\Gamma^{\infty}(\cS)$. We view it as an unbounded operator on $L^2(\cS)$.
\begin{lemma}\label{lem discr spec}
If $f \in C^{\infty}(M)^K$ is admissible, then
for all $t \in [0,1]$, the operator $D_{t, f\psi}^2$ has discrete spectrum on every $K$-isotypical subspace of $L^2(\cS)$.
\end{lemma}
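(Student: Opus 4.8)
The plan is to fix $\delta \in \hat K$ and work on the $\delta$-isotypical subspace $L^2(\cS)_{\delta}$ — which makes sense because $D^M$, $fc(v^{\psi})$ and $B := 1\otimes D^N - D^M$ are all $K$-invariant, hence so is $D_{t,f\psi}$. The goal is to show that the domain of $D_{t,f\psi}$ is compactly included in $L^2(\cS)_{\delta}$, equivalently that $(D_{t,f\psi}^2+1)^{-1}$ is compact there. Since $N$ is complete and $K/H$ is compact, $M$ is complete, so each of $D^M_{f\psi}$, $B$ and $D_{t,f\psi}$ is a symmetric first-order operator with bounded principal symbol, hence essentially self-adjoint on compactly supported smooth sections by Chernoff's theorem. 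The heart of the argument will be to bound $D_{t,f\psi}^2$ below, on $L^2(\cS)_{\delta}$, by a Schr\"odinger-type operator with a confining potential.

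Concretely, write $D_{t,f\psi} = D^M_{f\psi} + tB$ and expand the square: using that $D^M_{f\psi}$ and $B$ are formally self-adjoint,
\[
\|D_{t,f\psi}u\|^2 = \|D^M_{f\psi}u\|^2 + t\langle (D^M_{f\psi}B + BD^M_{f\psi})u,\,u\rangle + t^2\|Bu\|^2 .
\]
Dropping the last term, using $t \le 1$, and applying Lemma \ref{lem est comm}(b) (note $B = -(D^M-1\otimes D^N)$, so the anticommutator is bounded in absolute value by $C^{\delta}(1+f\|v^{\psi}\|)$ on $L^2(\cS)_{\delta}$), we get
\[
\|D_{t,f\psi}u\|^2 \ \geq\ \|D^M_{f\psi}u\|^2 - C^{\delta}\big\langle (1+f\|v^{\psi}\|)u,\,u\big\rangle .
\]
Next I invoke the Bochner--Lichnerowicz estimate underlying Braverman's Theorem \ref{thm Braverman} (see the proof of Theorem 2.9 and Lemma 9.2 in \cite{Braverman02}, and (1.26) in \cite{Zhang98}): there is a constant $c_0>0$, depending only on the geometry of $M$ and $\cS$ and in particular not on $f$, $t$ or $\delta$, with
\[
\|D^M_{f\psi}u\|^2 \ \geq\ \|\nabla^M u\|^2 + \big\langle\big(f^2\|v^{\psi}\|^2 - c_0(1 + \|df\|\,\|v^{\psi}\| + fh)\big)u,\,u\big\rangle ,
\]
where $h$ is the function \eqref{eq def h}; the potential term $\langle \Phi^{\cS},X_j\rangle$ does not occur here because $\langle \Phi^{\cS},X\rangle = 0$ for $X \in \kh^{\perp}$ by \eqref{eq def nabla M}. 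Since $h \geq \|v^{\psi}\|+1$, we have $1+f\|v^{\psi}\| \leq 1+fh$, so the error $C^{\delta}(1+f\|v^{\psi}\|)$ is absorbed and we arrive at
\[
\|D_{t,f\psi}u\|^2 \ \geq\ \|\nabla^M u\|^2 + \langle W_{\delta}\,u,\,u\rangle, \qquad W_{\delta} := f^2\|v^{\psi}\|^2 - c_{\delta}\big(1 + \|df\|\,\|v^{\psi}\| + fh\big),
\]
for a constant $c_{\delta}>0$.

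Admissibility of $f$ says precisely that $f^2\|v^{\psi}\|^2 / (\|df\|\,\|v^{\psi}\| + fh + 1) \to \infty$ as $m \to \infty$ in $M$ (i.e.\ as the $N$-coordinate leaves every compact set), hence $W_{\delta}(m) \to +\infty$ there; in particular $W_{\delta}$ is bounded below. Choose $C$ with $W_{\delta}+C \geq 1$. Then on $L^2(\cS)_{\delta}$ we have $D_{t,f\psi}^2 + C \geq (\nabla^M)^*\nabla^M + (W_{\delta}+C)$ as quadratic forms, and $\mathrm{dom}(D_{t,f\psi})$ includes continuously into the form domain of the right-hand side. That form domain embeds compactly into $L^2(\cS)_{\delta}$: a bound on $\|\nabla^M u\|^2+\|u\|^2$ gives local $H^1$-control (Rellich), a bound on $\int_M W_{\delta}|u|^2$ together with $W_{\delta}\to\infty$ gives tightness at infinity, and the usual diagonal argument extracts $L^2$-convergent subsequences. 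Hence $(D_{t,f\psi}^2+1)^{-1}$ is compact on $L^2(\cS)_{\delta}$, i.e.\ $D_{t,f\psi}^2$ has discrete spectrum there; since $\delta$ and $t$ were arbitrary, the lemma follows. The step I expect to be the main obstacle is making the lower bound for $\|D^M_{f\psi}u\|^2$ precise with a constant that is genuinely independent of $f$ (and of $t$): this is where Braverman's analysis is used, and one must verify that it applies verbatim in the present fibred situation, the relevant curvature terms and $\|\nabla^{TM}X_j^M\|$ being bounded exactly as in the proof of Lemma \ref{lem est comm}. Everything else is the standard passage from a Weitzenb\"ock-type lower bound with confining potential to compactness of the resolvent, together with the bookkeeping that absorbs the $tB$-perturbation via Lemma \ref{lem est comm}.
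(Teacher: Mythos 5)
Your argument is correct and follows essentially the same route as the paper: fix $\delta\in\hat K$, expand $D_{t,f\psi}^2$, absorb the cross term with $1\otimes D^N - D^M$ via Lemma \ref{lem est comm}, invoke Braverman's lower bound for $(D^M_{f\psi})^2$, and conclude from admissibility that the effective potential is confining (you merely spell out the final ``confining potential $\Rightarrow$ compact resolvent'' step that the paper leaves implicit). The only inaccuracy is your claim that the constant in Braverman's estimate is independent of $\delta$ — it is not, since it involves bounding Lie derivatives on the isotypical component — but this is harmless because you work on a fixed $L^2(\cS)_{\delta}$ throughout and your final constant is allowed to depend on $\delta$.
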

\begin{proof}
We have
\[
D_{t, f\psi}^2 = (D^M_{f\psi})^2 + t^2 (1\otimes D^N - D^M)^2 - t \bigl( D^M_{f\psi} (D^M - 1\otimes D^N)+ (D^M - 1\otimes D^N)D^M_{f\psi} \bigr). 
\]
Let ${\delta} \in \hat K$. Braverman shows in the proof of Theorem 2.9, on page 22 of \cite{Braverman02}, 
there is a constant $C_1^{\delta} > 0$  such that for all $f \in C^{\infty}(M)^K$, we have on $L^2(\cS)_{\delta}$,
\[
(D^M_{f\psi})^2 \geq (D^M)^2 + f^2 \|v^{\psi}\|^2 -C_1^{\delta}(\|df\| \|v^{\psi}\| + fh),
\]
with $h$ as in \eqref{eq def h}. Using Lemma \ref{lem est comm}, we let $C_2^{\delta}$ be the norm of $D^M - 1\otimes D^N$ on $L^2(\cS)_{\delta}$, and $C_3^{\delta}$ as the constant $C^{\delta}$ in part (b) of that lemma.
 Then we find that on $L^2(\cS)_{\delta}$, all $f \in C^{\infty}(M)^K$, and all $t \in [0,1]$,
\begin{multline*}
D_{t, f\psi}^2 \geq (D^M)^2 + f^2 \|v^{\psi}\|^2 -C_1^{\delta}(\|df\| \|v^{\psi}\| + fh) - C_2^{\delta} - C_3^{\delta}(1+f\|v^{\psi}\|) \\
\geq (D^M)^2 + f^2 \|v^{\psi}\|^2 -C^{\delta}(\|df\| \|v^{\psi}\| + fh + 1),
\end{multline*}
for  $C^{\delta} := C_1^{\delta} + C_2^{\delta} + C_3^{\delta}$.
If $f$ is admissible, then the function
\[
 f^2 \|v^{\psi}\|^2 -C^{\delta}(\|df\| \|v^{\psi}\| + fh+1)
\]
goes to infinity as its argument goes to infinity in $M$. This implies the claim.
\end{proof}

\noindent \emph{Proof of Proposition \ref{prop htp}.}
Fix ${\delta} \in \hat K$.  Suppose $f \in C^{\infty}(M)^K$ is admissible. Then by Lemma \ref{lem discr spec}, the operator 
\[
\frac{D_{t, f\psi}}{D_{t, f\psi} + i} 
\]
defines a Fredholm operator on $L^2(\cS)_{\delta}$ for every $t \in [0,1]$. This path of bounded operators is continuous in the operator norm, because \eqref{eq DM minus DN} is bounded on $L^2(\cS)_{\delta}$.
So the operators $D^M_{f\psi} = D_{0, f\psi}$ and $D_{1, f\psi}$ on $L^2(\cS)_{\delta}$ have finite-dimensional kernels, and the same index. 

Because of the form of the isomorphism \eqref{eq decomp SM} and $K$-equivariance of $\psi$ and the Clifford action, the operator $c({v^{\psi}})$ on $\Gamma^{\infty}(\cS)$ corresponds to the operator $1\otimes c(v^{\psi})|_N$ on $\bigl(C^{\infty}(K) \otimes \Gamma^{\infty}(\cS|_N) \bigr)^H$. So $D_{1, f\psi} = 1\otimes D^N_{f\psi}$. 
This operator is Fredholm on
\[
L^2(\cS)_{\delta} \cong {\delta} \otimes ({\delta}^* \otimes L^2(\cS|_N))^H
\]
and equals  $1_{\delta} \otimes 1_{{\delta}^*} \otimes D^N_{f\psi}$  on this space. Now
\[
({\delta}^* \otimes L^2(\cS|_N))^H = \bigoplus_{\delta' \in \hat H} [{\delta}|_H:\delta'] L^2(\cS|_N)_{\delta'}.
\]
So the operator $D^N_{f\psi}$ is Fredholm on $L^2(\cS|_N)_{\delta'}$, and therefore has a well-defined equivarant index in $\hat R(H)$.
Furthermore,
\[
\indx_K(D_{1, f\psi}) = \bigl(L^2(K) \otimes \indx_H(D^N_{f\psi}) \bigr)^H.
\]
\hfill $\square$

\begin{remark}
A variation on this proof of Proposition \ref{prop htp} is to note that Lemma \ref{lem discr spec} implies that the operator $D_{t, f\psi}$ defines a class in the $K$-homology of the group $C^*$-algebra $C^*K$, for all $t \in [0,1]$. 

If $N$ is compact (so we may take $f = 0$), then Proposition \ref{prop htp} is a consequence of homotopy invariance of the index of transversally elliptic operators.
\end{remark}


\section{Computing the linearised index} \label{sec linearised index}

In this section, we prove an explicit expression for the right hand side of the equality in Proposition \ref{prop linearise}.

Let $R_n^+ \subset R^+_M$ be the set of noncompact positive roots of $(\km^{\C}, \kt_M^{\C})$. Then, as complex vector spaces, we have
\[
\ks_M = \bigoplus_{\alpha \in R_n^+} \C_{\alpha}.
\]
Define
\[
\bigl(\Bigwedge_{J_{\ks_M}}\ks_M\bigr)^{-1} := \bigotimes_{\alpha \in R_n^+} \bigoplus_{n=0}^{\infty} \C_{n \alpha} \quad \in \hat R(H_M).
\]
This notation is motivated by the equality
\[
\bigl(\Bigwedge_{J_{\ks_M}}\ks_M\bigr)^{-1} \otimes \Bigwedge_{J_{\ks_M}}\ks_M = \C,
\]
the trivial representation of $H_M$. Let $\rho^M_n$ be half the sum of the roots in $R_n^+$.
\begin{proposition}\label{prop lin index}
We have
\begin{multline*}
\indx_K\bigl(\Bigwedge_{J^E} TE \otimes L_V^E,  \Phi^E\bigr)
 = \\
  (-1)^{\dim(M/K_M)/2}
 \bigl(L^2(K) \otimes\ \C_{2\rho^M_n} \otimes \bigl(\Bigwedge_{J_{\ks_M}}\ks_M\bigr)^{-1} \otimes \Bigwedge_{J_{\kk_M/\kt_M}} \kk_M/\kt_M \otimes V\bigr)^{H_M}.
\end{multline*}
\end{proposition}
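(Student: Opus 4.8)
The plan is to use the product structure $E = K\times_{H_M}N$, where $N := \ks_M\oplus\kn$, to reduce the computation to an $H_M$-equivariant index on the vector space $N$. Since the Lie algebra of $H_M$ is $\kt_M$, Proposition \ref{prop htp} applies with $H = H_M$ and $\psi = \Phi^E$, so that for an admissible $f$,
\[
\indx_K\bigl(\Bigwedge_{J^E} TE \otimes L_V^E,\, \Phi^E\bigr) = \bigl(L^2(K) \otimes \indx_{H_M}(D^N_{f\Phi^E})\bigr)^{H_M}.
\]
On $N$ the Clifford module is the trivial bundle $N\times W$ with fibre
\[
W = \Bigwedge_{J_{\km/\kt_M}}(\km/\kt_M)\otimes\Bigwedge_{J_\zeta}(\kn^-\oplus\kn^+)\otimes V,
\]
where $\Bigwedge_{J_{\km/\kt_M}}(\km/\kt_M) = \Bigwedge_{J_{\kk_M/\kt_M}}(\kk_M/\kt_M)\otimes\Bigwedge_{J_{\ks_M}}(\ks_M)$; since the index is insensitive to the connection (Theorem \ref{thm Braverman}), $D^N$ may be taken to be the flat, constant-coefficient Dolbeault--Dirac operator in the directions $\ks_M\oplus\kn = \ks_M\oplus\kn^+$ of $N$. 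A computation in the style of Lemmas \ref{lem vmu1t} and \ref{lem muEt} shows that at $[e,X+Y]$ the vector field $v^{\Phi^E}$ has, under \eqref{eq TE}, vanishing component in $\kk_M/\kt_M$, component $Y_{\kk}$ (up to sign) in $\kk/\kk_M\cong\kn^-$, component $[\xi,X]$ in $\ks_M$, and component $[\xi,Y]$ in $\kn^+$; the $\ks_M$-component depends only on $X$ and the $\kn^\pm$-components only on $Y$.

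Next I would split the index. Because $v^{\Phi^E}$ has no $\kk_M/\kt_M$-component and $D^N$ does not differentiate in those directions, the factor $\Bigwedge_{J_{\kk_M/\kt_M}}(\kk_M/\kt_M)$ of $W$ is inert and factors out of $\indx_{H_M}(D^N_{f\Phi^E})$ as the corresponding element of $\hat R(H_M)$. Using Corollary \ref{cor pos inner prod} I would homotope away the lower-order $\kn^+$-component $[\xi,Y]$: the resulting field still vanishes exactly on $K\times_{H_M}\{0\}$ — by the estimates in the proofs of Lemmas \ref{lem est v mu1t X} and \ref{lem muEt}, using that $\xi$ is regular dominant and $\xi+\zeta$ regular — so tamingness is preserved, and with $f=1$ (admissible, the deformation now being linear) the operator decouples, along $N = \ks_M\times\kn$, into the graded tensor product of (i) a deformed Dolbeault--Dirac operator on $\ks_M = \bigoplus_{\alpha\in R^+_n}\C_\alpha$ twisted by $V$, with linear deformation $X\mapsto[\xi,X]$, and (ii) a deformed Dirac operator on $\kn$ whose deformation uses Clifford multiplication by the $\kn^-$-directions, linear in $Y$. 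By a K\"unneth-type product formula the index of $D^N_{f\Phi^E}$ is the product of $\Bigwedge_{J_{\kk_M/\kt_M}}(\kk_M/\kt_M)$ and the indices of (i) and (ii).

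For (i), since $\xi$ is regular and dominant for $R^+_M$ the operator $\ad(\xi)$ acts on each $\C_\alpha$ with a nonzero eigenvalue of constant sign, and Atiyah's localisation computation (Lemma 6.4 in \cite{Atiyah74}; see also Lemma 5.7 in \cite{Paradan01}) gives the index $(-1)^{\dim(M/K_M)/2}\,\C_{2\rho^M_n}\otimes(\Bigwedge_{J_{\ks_M}}\ks_M)^{-1}\otimes V$, where $(-1)^{\dim(M/K_M)/2} = (-1)^{\dim_\C\ks_M}$, $\C_{2\rho^M_n} = \Bigwedge^{\mathrm{top}}_\C\ks_M$, and $(\Bigwedge_{J_{\ks_M}}\ks_M)^{-1} = \bigotimes_{\alpha\in R^+_n}\bigoplus_{k\ge0}\C_{k\alpha}$ is the symmetric algebra. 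For (ii), pairing each root space $\kg_\beta$ ($\beta\in\Sigma^+$) with $\theta\kg_\beta$ writes the operator as a product of one-variable harmonic-oscillator Dirac operators $\left(\begin{smallmatrix} 0 & -\partial+y\\ \partial+y & 0\end{smallmatrix}\right)$, exactly as in the $\SL(2,\R)$ example; each has one-dimensional $L^2$-kernel sitting in even degree, so (ii) contributes the trivial representation $\C$, $H_M$-equivariantly. Multiplying the three factors and substituting into Proposition \ref{prop htp} yields the asserted formula.

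The main obstacle is step (ii) together with the decoupling: the model operators produced by the homotopies fall outside Braverman's framework, so cobordism invariance is unavailable, and one must instead use homotopies and $L^2$-estimates (harmonic-oscillator spectral analysis) tailored to these vector-space situations, checking that tamingness, admissibility, and completeness of the metric survive every step. Establishing the K\"unneth-type product behaviour of the index on $\ks_M\oplus\kn$, and that the $\kn$-contribution is exactly the trivial line rather than merely one-dimensional, is where most of the work in this section lies; this is the role played in the $\SL(2,\R)$ case by the observation that the relevant operators on vector spaces have index $1$.
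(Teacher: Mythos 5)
Your overall route is the paper's: reduce to an $H_M$-equivariant index on $N=\ks_M\oplus\kn$ via Proposition \ref{prop htp} with a flat connection and $f=1$, decouple into a $\ks_M$-factor and a $\kn$-factor with the $\kk_M/\kt_M$- and $V$-factors inert, evaluate the $\ks_M$-factor by Atiyah's computation and the $\kn$-factor by the harmonic oscillator, and multiply. The two model computations and the final answer are correct.

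The genuine gap is the step where you ``homotope away the $\kn^+$-component $[\xi,Y]$'' by appealing to Corollary \ref{cor pos inner prod}. That corollary (and Theorem \ref{thm htp}) only covers deformations $c(v^{\psi})$ for taming maps $\psi$ into $\kk$; but by \eqref{eq ZE} any $\psi\colon E\to\kk$ produces the fibre component $-[\psi_{\kt_M},X+Y]$, which acts on $X$ and $Y$ simultaneously, so no family of taming maps realises an interpolation that keeps $-[\xi,X]$ while switching off $-[\xi,Y]$. Moreover the surviving deformation $c\circ J_{\zeta}$ is Clifford multiplication by a section of $TE|_N$ valued in the $\kn^-$-directions, not by a vector field on $N$, so the entire family sits outside Braverman's framework and homotopy invariance is not available off the shelf. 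You flag this difficulty in your closing paragraph but do not supply the substitute argument, and that substitute is precisely where the work lies (Proposition \ref{prop indices mun} in the paper). The correct order is: decouple first --- this works with the full $v^{\kn}(Y)=-[\xi,Y]+|\ad(\zeta)|^{-1}\ad(\zeta)Y_{\ks}$, since $c(v^{\Phi^E})$ already splits as a sum of terms acting on the two tensor factors --- and then, on the $\kn$-factor alone, prove that $t\mapsto D^{\kn}-ic(v^t)$ is a norm-continuous family of Fredholm operators on each isotypical component. The needed inputs are: the pointwise orthogonality $[\xi,Y]\perp J_{\zeta}Y$ (a root-space computation), which kills the anticommutator cross-terms in $(D^{\kn}-ic(v^t))^2$; the oscillator bound $(D^{\kn}-ic\circ J_{\zeta})^2\geq (D^{\kn})^2+\|\cdot\|_{\kn}^2-\dim(\kn)$; the resulting uniform coercivity on isotypical components, which makes each $D_t$ Fredholm and makes $c(\xi^{\kn})(D_t+iC_{\delta})^{-1}$ uniformly bounded; and hence norm-continuity of $t\mapsto D_t(D_t+iC_{\delta})^{-1}$. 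Without this (or an equivalent estimate-based argument), the identification of the $\kn$-contribution with the trivial representation is unjustified.
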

In Section \ref{sec pi K}, we will use this proposition to complete the proof of Theorem \ref{main theorem}.


\subsection{A constant admissible function}\label{sec f = 1}

We will apply Proposition \ref{prop htp} and then use a further decomposition of the resulting index to prove Proposition \ref{prop lin index}. This decomposition is possible because the constant function $1$ turns out to be admissible in our setting. We prove this in the current subsection.

Let $W_1$ and $W_2$ be finite-dimensional representation spaces of a group $H$. Consider the $H$-equivariant vector bundle
\[
W_1 \times W_2 \to W_1.
\]
Let $\nabla^{W_2}$ be the connection on this bundle defined by 
\[
\nabla^{W_2}_v s = Ts(v)
\]
for all vector fields $v$ on $W_1$ and all $s \in C^{\infty}(W_1, W_2)$. 
Here $Ts$ denotes the derivative of $s$: at a vector $w_1 \in W_1$, we have
\[
(Ts(v))(w_1) = T_{w_1}s(v(w_1)) \quad \in T_{s(w_1)}W_2 = W_2.
\]
By a straightforward computation, this connection is $H$-invariant. 

Now we take
\[
\begin{split}
W_1 &= \ks_M \oplus \kn;\\
W_2 &= \Bigwedge_{J^E}T_{[e,0]}E \otimes V,
\end{split}
\]
and consider the $H$-invariant connection $\nabla^{\bigwedge_{J^E}T_{[e,0]}E \otimes V}$ on
\[
(\ks_M \oplus \kn) \times \Bigwedge_{J^E}T_{[e,0]}E \otimes V = \bigl(\Bigwedge_{J^E}TE \otimes L_V^E\bigr)|_{\ks_M \oplus \kn} \to \ks_M \oplus \kn,
\]
defined as above. 
(We identify $\ks_M \oplus \kn$ with $\{[e, X+Y]; X \in \ks_M, Y \in \kn\} \subset E$.)
Let $\nabla^E$ be the connection on $\Bigwedge_{J^E}TE \otimes L_V^E$ induced by $\nabla^{\bigwedge_{J^E}T_{[e,0]}E \otimes V}$ as in \eqref{eq def nabla M}.

\begin{proposition} \label{prop f = 1}
The constant function $1$ on $E$ is admissible for the connection $\nabla^E$ and the taming map $\Phi^E$.
\end{proposition}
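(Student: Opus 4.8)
The plan is to verify the admissibility condition for $f \equiv 1$ directly from its definition, which requires showing that
\[
\frac{\|v^{\Phi^E}\|^2}{\|\nabla^{TE}v^{\Phi^E}\| + \|\langle \Phi^{\cS},\Phi^E\rangle\| + \|\Phi^E\| + 1}([e,X+Y]) \to \infty
\]
as $X+Y \to \infty$ in $\ks_M \oplus \kn$ (the quantity $\|d f\|$ vanishes for $f\equiv 1$, so $h$ with $f=1$ reduces to $\|v^{\Phi^E}\| + \|\nabla^{TE}v^{\Phi^E}\| + \|\langle \Phi^{\cS},\Phi^E\rangle\| + \|\Phi^E\| + 1$). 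Since $K$ is compact and acts transitively on the $K/H_M$ factor, it suffices to estimate everything along the fibre $\ks_M \oplus \kn \subset E$ through $[e,0]$. So the whole proof is a collection of growth estimates in $X \in \ks_M$ and $Y \in \kn$.

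First I would collect the lower bound on the numerator. Combining Lemmas \ref{lem est v mu1t X} and \ref{lem est v mu1t Y} (or rather their analogues for $\Phi^E$, which are the $t=1$ endpoint estimates established inside the proof of Lemma \ref{lem muEt}), we get a constant $C>0$ with $\|v^{\Phi^E}([e,X+Y])\| \geq C\|X+Y\|$, hence $\|v^{\Phi^E}\|^2 \geq C^2\|X+Y\|^2$ grows quadratically. Next I would bound each term in the denominator linearly in $\|X+Y\|$. The term $\|\Phi^E[k,X+Y]\| = \|\xi - |\ad(\zeta)|^{-1}\ad(\zeta)Y_{\ks}\|$ is clearly $O(1+\|Y\|) = O(1+\|X+Y\|)$ since $|\ad(\zeta)|^{-1}\ad(\zeta)$ is a fixed bounded operator. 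For $\langle \Phi^{\cS},\Phi^E\rangle$, I would use the defining formula \eqref{eq def mu S}: with $\nabla^E$ defined as in this subsection via the flat connection on the fibre pushed forward by \eqref{eq def nabla M}, one has $\langle \Phi^{\cS}, Z\rangle = 0$ for $Z \in \kt_M^\perp$ exactly as in the proof of Lemma \ref{lem est comm}, so only the $\kt_M$-component of $\Phi^E$ contributes, and $\langle \Phi^{\cS},\Phi^E\rangle = \langle \Phi^{\cS}, \xi\rangle$ is the action of the fixed element $\xi \in \kt_M$ on the fibre $\Bigwedge_{J^E}T_{[e,0]}E \otimes V$ — a constant (in $X,Y$) endomorphism, hence bounded. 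The one genuinely new estimate is $\|\nabla^{TE}v^{\Phi^E}\|$: using the product form \eqref{eq decomp LC} of the Levi--Civita connection on the fibred space $E = K\times_{H_M}(\ks_M\oplus\kn)$ and the explicit formula for $\Phi^E$, the vector field $v^{\Phi^E}$ along the fibre has the form given by Lemma \ref{lem vmu1t} at $t=1$, namely $([Y_{\kk},\xi]+[Y_{\ks},\zeta]+\kt_M, [X+Y,\xi])$, which depends \emph{linearly} on $X+Y$; its derivative $\nabla^{TE}v^{\Phi^E}$ is therefore bounded (a constant-coefficient linear vector field has bounded covariant derivative with respect to the flat fibre metric, and the extra curvature terms from the $K/H_M$ directions are bounded because $K/H_M$ is compact). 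So the denominator is $O(1+\|X+Y\|)$.

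Putting the two together, the fraction is bounded below by $C^2\|X+Y\|^2/(C'(1+\|X+Y\|))$, which tends to $\infty$, proving admissibility. I expect the main obstacle to be the bound on $\|\nabla^{TE}v^{\Phi^E}\|$: one must be careful that $E$ is noncompact, so "the metric is $K$-invariant" is not enough on its own — one needs the explicit $H_M$-equivariant product structure of the metric on the fibre together with the linearity of $v^{\Phi^E}$ in the fibre coordinate, and one must check that the cross terms in \eqref{eq decomp LC} between the $K/H_M$ and fibre directions (which involve $\Phi^{T(K/H_M)}$) contribute only bounded quantities. This is essentially the same computation as the boundedness of $\|\nabla^{TM}X_j^M\|$ in the proof of Lemma \ref{lem est comm}, adapted to the vector field $v^{\Phi^E}$ rather than a Killing field, so I would model the argument on that lemma.
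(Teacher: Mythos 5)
Your proposal is correct and follows essentially the same route as the paper: the paper's proof is exactly the combination of the quadratic lower bound $\|v^{\Phi^E}\|\geq C\|X+Y\|$ (carried over from Lemmas \ref{lem est v mu1t X} and \ref{lem est v mu1t Y} to $\Phi^E$), the linear bound on $\|\Phi^E\|$, Lemma \ref{lem est muS} showing $\langle \Phi^{\bigwedge_{J^E}TE\otimes L^E_V},\Phi^E\rangle=\ad(\xi)$ is constant, and Lemma \ref{lem est LC} controlling $\|\nabla^{TE}v^{\Phi^E}\|$ via the product form \eqref{eq decomp LC} and the linearity of $v^{\Phi^E}$ in the fibre coordinate. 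One small inaccuracy: the $K/H_M$-direction contribution to $\nabla^{TE}v^{\Phi^E}$ is $\langle\Phi^{T(K/H_M)},Z\rangle v^{\Phi^E}$, which is $\cO(\|X+Y\|)$ rather than bounded (the uniformly bounded operator is applied to a linearly growing vector field), but this is harmless since your final denominator estimate $O(1+\|X+Y\|)$ still holds.
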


The claim is that the function
\beq{eq cond adm muE}
\frac{\|v^{\Phi^E}\|^2}{\|\Phi^E \| + \|v^{\Phi^E}\| + \|\nabla^{TE} v^{\Phi^E}\| + \| \langle \Phi^{{\bigwedge_{J^E} TE \otimes L^E_V}}, \Phi^E\rangle \| + 1}
\eeq
goes to infinity as its argument goes to infinity in $M$. Here $\Phi^{{\bigwedge_{J^E} TE \otimes L^E_V}}$ is as in \eqref{eq def mu S}.
By \eqref{eq ZE} and the definition of $\Phi^E$, we have for all $X \in \ks_M$ and $Y \in \kn$,
\beq{eq v mu E}
v^{\Phi^E}([e, X+Y]) = \bigl(|\ad(\zeta)|^{-1}\ad(\zeta)Y_{\ks} + \kt_M, -[\xi, X+Y] \bigr). 
\eeq

\begin{lemma}\label{lem est LC}
We have
\[
\|(\nabla^{TE} v^{\Phi^E})([k, X+Y])\| = \cO(\|X+Y\|)
\]
as $X+Y \to \infty$ in $\ks_M \oplus \kn$.
\end{lemma}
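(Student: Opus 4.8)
\emph{Proof proposal.} The plan is to reduce the estimate to the ``slice'' $\{[e,X+Y]\}$ by equivariance, and then to bound the covariant derivative of $v^{\Phi^E}$ one fibred-product component at a time. Since $\Phi^E$ is $K$-equivariant, $v^{\Phi^E}$ is a $K$-invariant vector field, and since the Riemannian metric on $E$ (the one coming from the inner product on $\kg$ via the identification $\varphi^{X+Y}_0$, which is of the fibred-product form of Section~\ref{sec ind fibred}) is $K$-invariant, so is $\nabla^{TE}$. Hence $\nabla^{TE}v^{\Phi^E}$ is a $K$-invariant section of $T^*E\otimes TE$, and its pointwise operator norm is a $K$-invariant function on $E$; as every point of $E=K\times_{H_M}(\ks_M\oplus\kn)$ is $K$-conjugate to one of the form $[e,X+Y]$, it suffices to bound $\|(\nabla^{TE}v^{\Phi^E})([e,X+Y])\|$.

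Next I would invoke \eqref{eq decomp LC} (with the ambient manifold $E$, subgroup $H_M$, and $N:=\ks_M\oplus\kn$ carrying its flat Euclidean structure): over $K\times(\ks_M\oplus\kn)$ one has $p_E^*TE=p_{K/H_M}^*T(K/H_M)\oplus p_N^*TN$, and $\nabla^{TE}$ is the direct sum of the pullback of the Levi--Civita connection on the \emph{compact} space $K/H_M$ and of the flat connection on $N$. Write $p_E^*v^{\Phi^E}=(v_1,v_2)$ accordingly. By \eqref{eq v mu E}, together with the identity $|\ad(\zeta)|^{-1}\ad(\zeta)Y_{\ks}=Y_{\kk}$ (which holds because $|\ad(\zeta)|^{-1}\ad(\zeta)$ acts as $\pm1$ on $\kg_{\pm\beta}$ while $\theta$ interchanges $\kg_{\beta}$ and $\kg_{-\beta}$), one reads off $v_2(k,X+Y)=-[\xi,X+Y]$, which is independent of $k\in K$ and linear in $X+Y$, and $v_1(k,X+Y)=(T_{eH_M}l_k)(Y_{\kk}+\kt_M)$, the left $k$-translate of the tangent vector $Y_{\kk}+\kt_M$ (here $l_k$ is left translation on $K/H_M$), which is linear in $Y$.

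The estimate then breaks into three routine pieces. (i) Differentiating $v_2$: since $v_2$ is pulled back from $N$, $\nabla^{TN}v_2$ annihilates directions tangent to the $K$-factor, and on $\eta\in T_{X+Y}N$ it equals $-[\xi,\eta]$, so $\|\nabla^{TN}v_2\|$ is a constant. (ii) The fibre-direction part of $\nabla^{T(K/H_M)}v_1$: the pullback connection is flat along the $N$-fibres, so this is $\frac{d}{ds}\big|_{0}v_1(e,X+Y+s\eta)=\eta_{\kk}+\kt_M$, again uniformly bounded in $\eta$. (iii) The $K$-direction part of $\nabla^{T(K/H_M)}v_1$: for $U\in\kt_M^{\perp}$ this is the covariant derivative along $t\mapsto\exp(tU)H_M$ of the vector field $t\mapsto(T_{eH_M}l_{\exp(tU)})(Y_{\kk}+\kt_M)$, which—$K/H_M$ being compact with a fixed metric—is bounded by $C\|U\|\,\|Y_{\kk}\|\le C'\|U\|\,\|X+Y\|$. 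Summing (i)--(iii) gives $\|(\nabla^{TE}v^{\Phi^E})([e,X+Y])\|\le C''(1+\|X+Y\|)$, and by the first paragraph the same bound holds at every point, so $\|(\nabla^{TE}v^{\Phi^E})([k,X+Y])\|=\cO(\|X+Y\|)$. The main obstacle is bookkeeping rather than a genuine difficulty: one must see that the only term growing linearly is (iii), coming from the curvature of $K/H_M$, and in particular that the orbit-direction derivative of the fibre component $v_2$ contributes nothing. Differentiating the bare formula $v^{\Phi^E}=-(\Phi^E)^E$ directly and using $\|Z^E([e,X+Y])\|\le C(1+\|X+Y\|)\|Z\|$ from \eqref{eq ZE} would instead produce a spurious $\cO(\|X+Y\|^2)$ bound, which collapses only after the cancellations encoded in the Levi--Civita connection; decomposing via \eqref{eq decomp LC} is exactly what makes those cancellations automatic.
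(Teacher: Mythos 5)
Your proposal is correct and follows essentially the same route as the paper: both decompose $\nabla^{TE}$ via \eqref{eq decomp LC} with $N=\ks_M\oplus\kn$, use $K$-invariance of $v^{\Phi^E}$ to control the orbit directions (the paper packages your step (iii) as $\nabla_{Z^E}v^{\Phi^E}=\langle\Phi^{T(K/H)},Z\rangle v^{\Phi^E}=\cO(\|v^{\Phi^E}\|)$ via \eqref{eq def mu KH}), and use linearity of \eqref{eq v mu E} to see that the fibre-direction derivatives are bounded. Your explicit identification $|\ad(\zeta)|^{-1}\ad(\zeta)Y_{\ks}=Y_{\kk}$ and the componentwise bookkeeping are correct but amount to the same computation.
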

\begin{proof}
Recall the expression \eqref{eq decomp LC} for $\nabla^{TE}$, which we now apply with $H = H_M$ and $N = \ks_M \oplus \kn$. Also recall
 the definition \eqref{eq def mu KH} of $\Phi^{T(K/H)} \in \End(T(K/H_M))\otimes \kk^*$. 
Since $v^{\Phi^E}$ is $K$-invariant, we have for all $Z \in \kk$, and all $X \in \ks_M$ and $Y \in \kn$,
\[
\bigl(p_{K/H_M}^*\nabla^{K/H_M}_{Z+\kt_M}v^{\Phi^E}\bigr)([e,X+Y])=\langle \Phi^{T{(K/H)}}, Z\rangle v^{\Phi^E}([e,X+Y]).
\]
Hence
\[
\|\bigl(p_{K/H_M}^*\nabla^{K/H_M}v^{\Phi^E}\bigr)([e,X+Y])\| = \cO(\|v^{\Phi^E}([e,X+Y])\|) = \cO(\|X+Y\|)
\]
as $X+Y \to \infty$ in $\ks_M \oplus \kn$.

For $X \in \ks_M$ and $Y \in \kn$, we have
\[
\nabla^{\ks_M \oplus \kn}_{X+Y} = X+Y,
\]
where on the right hand side, $X$ and $Y$ are viewed as differential operators on $C^{\infty}(\ks_M \oplus \kn)$. Since \eqref{eq v mu E} is linear in $X$ and $Y$, this implies that $(p_{{\ks_M \oplus \kn}}^*\nabla^{\ks_M \oplus \kn} v^{\Phi^E})([e, X+Y])$ is linear in $X$ and $Y$.
\end{proof}

\begin{lemma} \label{lem est muS}
The function $\| \langle \Phi^{{\bigwedge_{J^E} TE \otimes L^E_V}}, \Phi^E\rangle \|$ is constant.
\end{lemma}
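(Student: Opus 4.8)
The plan is to compute the bundle endomorphism $\langle \Phi^{\cS}, \Phi^E\rangle$ at an arbitrary point of $E$, where $\cS := \Bigwedge_{J^E} TE \otimes L^E_V$, and to read off that its operator norm is point‑independent. Abbreviate $W_2 := \Bigwedge_{J^E} T_{[e,0]}E \otimes V$. By the construction in Subsection \ref{sec f = 1}, the restriction $\cS|_{\ks_M \oplus \kn}$ is the trivial bundle $(\ks_M \oplus \kn)\times W_2$, the connection $\nabla^E$ restricts over $\ks_M\oplus\kn$ to the trivial connection $\nabla^N$, and $H_M$ acts on the fibre $W_2$ by the fixed representation $\rho_{W_2}$ (the tensor product of $\Ad$ on $\kg/\kh \cong T_{[e,0]}E$ with the given action on $V$).

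First I would observe that $\langle \Phi^{\cS}, Z\rangle_{[e,X+Y]}$ depends on $Z \in \kk$ only through its component $Z_{\kt_M}\in\kt_M$. Indeed, for $X \in \kt_M^\perp \subset \kk$ the vector field $X^E$ is, at $[e,X+Y]$, purely in the $K/H_M$-direction by \eqref{eq ZE}, so \eqref{eq def nabla M} (with its tangent vector $v=0$) gives $\nabla^E_{X^E} = \calL_X$ there, whence $\langle\Phi^{\cS},X\rangle_{[e,X+Y]} = 0$. For $Z_0 \in \kt_M$, the vector field $Z_0^E$ is tangent to the fibre $N := \ks_M\oplus\kn$ over $eH_M$, with $N$-component $-[Z_0,X+Y]$, so by \eqref{eq def nabla M} again (now with "$X$"$=0$) and the fact that $\calL_{Z_0}$ preserves sections along $N$ (since $\exp(tZ_0)\in H_M$),
\[
\langle\Phi^{\cS},Z_0\rangle_{[e,X+Y]} = \nabla^N_{-[Z_0,X+Y]} - \calL^{\cS|_N}_{Z_0} = -\,d\rho_{W_2}(Z_0),
\]
because $\nabla^N$ is the trivial connection, so the two first-order terms cancel. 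In particular this endomorphism does not depend on $X$ or $Y$.

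Next I would use equivariance to reduce to the base fibre. The connection $\nabla^E$ is $K$-invariant and $\cS$ is Hermitian with $K$ acting unitarily, so $\Phi^{\cS}$ is $K$-equivariant; writing a general point of $E$ as $p = k\cdot[e,X+Y]$ we get
\[
\langle\Phi^{\cS},\Phi^E(p)\rangle_p = \langle\Phi^{\cS},\Ad(k)\bigl(\xi - |\ad(\zeta)|^{-1}\ad(\zeta)Y_{\ks}\bigr)\rangle_{k\cdot[e,X+Y]} = k\,\langle\Phi^{\cS},\xi - |\ad(\zeta)|^{-1}\ad(\zeta)Y_{\ks}\rangle_{[e,X+Y]}\,k^{-1},
\]
so $\|\langle\Phi^{\cS},\Phi^E(p)\rangle_p\| = \|\langle\Phi^{\cS},\xi - |\ad(\zeta)|^{-1}\ad(\zeta)Y_{\ks}\rangle_{[e,X+Y]}\|$. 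Now $\xi\in\kt_M$, while $|\ad(\zeta)|^{-1}\ad(\zeta)Y_{\ks}$ lies in $\bigoplus_{\beta\in\Sigma^+}(\kg_\beta\oplus\kg_{-\beta})$, which is orthogonal to $\kg_0\supset\kt_M$; hence the $\kt_M$-component of $\xi - |\ad(\zeta)|^{-1}\ad(\zeta)Y_{\ks}$ equals $\xi$. Combining with the previous paragraph,
\[
\|\langle\Phi^{\cS},\Phi^E(p)\rangle_p\| = \|d\rho_{W_2}(\xi)\|,
\]
which is independent of $p$, proving the lemma.

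The only genuinely technical point is the identity $\langle\Phi^{\cS},Z_0\rangle_{[e,X+Y]} = -d\rho_{W_2}(Z_0)$ for $Z_0\in\kt_M$: this is the standard computation of the $\kk^*$-valued "moment" \eqref{eq def mu S} of the canonical invariant connection on an induced bundle, and carrying it out is a careful but routine unwinding of \eqref{eq def nabla M}, \eqref{eq def mu S} and \eqref{eq ZE}, the essential feature being that the chosen connection $\nabla^N$ on $(\ks_M\oplus\kn)\times W_2$ is trivial, so no curvature term contributes. (Only the norm of $d\rho_{W_2}(Z_0)$ matters, so the precise sign is irrelevant.) Everything else in the argument is formal.
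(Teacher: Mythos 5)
Your proof is correct and follows essentially the same route as the paper: show $\langle \Phi^{\cS}, Z\rangle$ vanishes for $Z \in \kt_M^{\perp}$ and equals (up to sign) the constant fibre action $d\rho_{W_2}(Z)$ for $Z \in \kt_M$, then observe that the $\kt_M$-component of $\Phi^E$ along the slice $\ks_M \oplus \kn$ is the constant $\xi$. Your sign $-d\rho_{W_2}(Z_0)$ is in fact the careful one (the paper writes $\ad(Z)$), but as you note only the norm matters.
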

\begin{proof}
If $Z \in \kt_M$, then for all $s \in \Gamma^{\infty}({\bigwedge_{J^E} TE \otimes L^E_V})$,
\[
\bigl(\bigl(\nabla^{\bigwedge_{J^E} TE \otimes L^E_V}_{Z^E} - \calL_Z \bigr)s\bigr)|_{\ks_M \oplus \kn} = 
\nabla_{Z^{\ks_M \oplus \kn}}(s|_{\ks_M \oplus \kn}) - \calL_Z(s|_{\ks_M \oplus \kn}).
\]
Now for $X \in \ks_M$ and $Y \in \kn$, we have
\[
\begin{split}
\calL_Zs([e,X+Y]) &= \ddt \exp(tZ)s([e,\exp(-tZ) (X+Y)]) \\
	&= \ad(Z)(s([e,X+Y])) +  \bigl(T(s|_{\ks_M \oplus \kn})(Z^{\ks_M \oplus \kn})\bigr)(X+Y).
\end{split}
\]
So, for $Z \in \kt_M$, 
\[
\langle \Phi^{\bigwedge_{J^E} TE \otimes L^E_V}, Z\rangle = \ad(Z).
\]
If $Z \in \kt_M^{\perp}$, then the left hand side of this equality equals $0$.
We conclude that for all $X \in \ks_M$ and $Y \in \kn$,
\[
\langle \Phi^{\bigwedge_{J^E} TE \otimes L^E_V}, \Phi^E\rangle ([e,X+Y]) = \ad(\xi),
\]
which is constant in $X$ and $Y$.
\end{proof}

\begin{proofof}{Proposition \ref{prop f = 1}}
By Lemmas \ref{lem est v mu1t X} and \ref{lem est v mu1t Y}, there is a constant $C>0$ such that for all $X \in \ks_M$ and $Y \in \kn$,
\[
\|v^{\Phi^0_1}([e, X+Y])\| \geq C(\|X\| + \|Y\|).
\]
The proofs of these lemmas also directly show that this estimate holds with $\Phi_1^0$ replaced by $\Phi^E$. Furthermore, we have
\[
\| \Phi^E([e,X+Y]) \| = \cO(\|X+Y\|)
\]
as $X+Y \to \infty$ in $\ks_M \oplus \kn$. Together with Lemmas \ref{lem est LC} and \ref{lem est muS}, these estimates imply that \eqref{eq cond adm muE} goes to infinity as its argument goes to infinity.
\end{proofof}

\subsection{Indices on vector spaces}

The starting point of the proof of Proposition \ref{prop lin index} is an application of Proposition \ref{prop htp}. Note that $\ks_M$ is a complex subspace of $\km/\kt_M$ with respect to the complex structure $J_{\km/\kt_M}$; let $J_{\ks_M}$ be the restriction of $J_{\km/\kt_M}$ to $\ks_M$.
Consider the trivial vector bundle
\[
\ks_{M} \times \Bigwedge_{J_{\ks_M}} \ks_M \to \ks_M.
\]
We view this bundle as a nontrivial $H_M$-vector bundle.
The constant map $\Phi_{\ks_M}\colon \ks_M \to \kt_M$ with value $\xi$ is a taming map for the action by $H_M$ on $\ks_M$. Therefore, we have the equivariant index
\[
\indx_{H_M}(\ks_{M} \times \Bigwedge_{J_{\ks_M}} \ks_M, \Phi_{\ks_M}) \quad \in \hat R(H_M).
\]

Next, consider the trivial vector bundle
\[
\kn \times \Bigwedge_{J_{\zeta}}(\kn^- \oplus \kn^+) \to \kn.
\]
(Recall that $\kn = \kn^+$; we use the notation $\kn^+$ where this space appears in the direct sum with $\kn^-$.) This bundle is also a nontrivial $H_M$-vector bundle.  We write $c \circ J_{\zeta}$ for the endomorphism of this bundle give by
\[
c \circ J_{\zeta}(Y) = c(J_{\zeta}Y),
\]
for $Y \in \kn$.
Here $c$ denotes the Clifford action by $\kn^- \oplus \kn^+$ on $\Bigwedge_{J_{\zeta}}(\kn^- \oplus \kn^+)$ as in Example \ref{ex S J}. Note that for $Y \in \kn$, the vector $J_{\zeta}Y \in \kn^-$ does not lie in the tangent space $T_Y \kn$. Hence $Y \mapsto J_{\zeta} Y$ is not a vector field on $\kn$, so the endomorphism $c\circ J_{\zeta}$ is not of the type of the deformation term in 
 \eqref{eq deformed Dirac}. Let $\{Y_1, \ldots, Y_s\}$ be an orthonormal basis of $\kn$. Consider the Dirac operator
 \[
 D^{\kn} := \sum_{j=1}^s Y_j \otimes c(Y_j)
 \]
on 
\[
\Gamma^{\infty}(\kn \times \Bigwedge_{J_{\zeta}}(\kn^- \oplus \kn^+)) = C^{\infty}(\kn) \otimes \Bigwedge_{J_{\zeta}}(\kn^- \oplus \kn^+).
\]
The operator
\beq{eq Dn}
D^{\kn} -ic\circ J_{\zeta}
\eeq
is not of the form \eqref{eq deformed Dirac}, but we will see in Lemma \ref{lem index n} that it has a well-defined $H_M$-equivariant index 
(which is actually just the trivial representation of $H_M$).
\begin{proposition}\label{prop decomp index}
The $L^2$-kernel of the operator \eqref{eq Dn} is finite-dimensional, and
\begin{multline*}
\indx_K\bigl(\Bigwedge_{J^E} TE \otimes L_V^E,  \Phi^E\bigr)
 = \\
 \bigl(L^2(K) \otimes \indx_{H_M}(\ks_{M} \times \Bigwedge_{J_{\ks_M}} \ks_M, \Phi_{\ks_M}) \otimes \indx_{H_M}(D^{\kn} -ic\circ J_{\zeta}) \otimes \Bigwedge_{J_{\kk_M/\kt_M}} \kk_M/\kt_M \otimes V\bigr)^{H_M},
\end{multline*}
where $J_{\kk_M/\kt_M}$ is the restriction of $J_{\km/\kt_M}$ to the complex subspace $\kk_M/\kt_M \subset \km/\kt_M$.
\end{proposition}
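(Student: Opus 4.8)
\emph{Sketch of the intended proof.} The starting point is Proposition~\ref{prop htp}, applied to the fibred product $M = E = K\times_{H_M}(\ks_M\oplus\kn)$ with $N = \ks_M\oplus\kn$, $H = H_M$, the Clifford module $\Bigwedge_{J^E}TE\otimes L_V^E$, and the taming map $\Phi^E$. One takes for $\nabla^N$ the flat connection $\nabla^{\bigwedge_{J^E}T_{[e,0]}E\otimes V}$ of Subsection~\ref{sec f = 1} on the trivial bundle $\cS|_N = N\times(\Bigwedge_{J^E}T_{[e,0]}E\otimes V)$, so that the induced connection $\nabla^M$ in \eqref{eq def nabla M} is precisely the connection $\nabla^E$. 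By Proposition~\ref{prop f = 1} the constant function $1$ is admissible for $\nabla^E$ and $\Phi^E$, so Proposition~\ref{prop htp} with $f = 1$ gives
\[
\indx_K\bigl(\Bigwedge_{J^E}TE\otimes L_V^E,\Phi^E\bigr) = \bigl(L^2(K)\otimes\indx_{H_M}(D^N_{\Phi^E})\bigr)^{H_M},
\]
where $D^N_{\Phi^E} = D^N - ic(v^{\Phi^E})|_N$, $D^N$ is the Dirac operator of the flat connection on the vector space $N$, and $c(v^{\Phi^E})|_N$ denotes Clifford multiplication by the restriction of $v^{\Phi^E}$ to a section of $TE|_N$, which need not be tangent to $N$.

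The next step is to make this operator explicit. Using $T_{[e,0]}E = \kg/\kh = \ks_M\oplus\kk_M/\kt_M\oplus\kn^-\oplus\kn^+$ and the decomposition $J^E = J_{\ks_M}\oplus J_{\kk_M/\kt_M}\oplus J_\zeta$ of \eqref{eq def Jgh}, the Clifford module splits as a graded tensor product of $\Bigwedge_{J_{\ks_M}}\ks_M$, $\Bigwedge_{J_\zeta}(\kn^-\oplus\kn^+)$, $\Bigwedge_{J_{\kk_M/\kt_M}}\kk_M/\kt_M$ and $V$, and $N = \ks_M\times\kn$. By \eqref{eq v mu E} the $\ks_M$-component of $v^{\Phi^E}|_N$ is the Kirwan vector field of the constant map $\xi$, its $\kk_M/\kt_M$-component vanishes, its $\kn^-$-component is $Y\mapsto J_\zeta Y$, and its $\kn^+$-component is $Y\mapsto-[\xi,Y]$; since the summands of $\kg/\kh$ are mutually orthogonal, the Clifford actions and the flat connection split accordingly, and one obtains
\[
D^N_{\Phi^E} = \bigl(D^{\ks_M}_{\Phi_{\ks_M}}\bigr)\,\hat\otimes\,1 \;+\; 1\,\hat\otimes\,\bigl(D^{\kn} - ic\circ J_\zeta + ic([\xi,\,\cdot\,])\bigr),
\]
with the inert factors $\Bigwedge_{J_{\kk_M/\kt_M}}\kk_M/\kt_M\otimes V$ tensored on, and with the last term $ic([\xi,\,\cdot\,])$ a Braverman-type deformation by the linear Kirwan field on $\kn$ of the constant map $\xi$.

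The main point, and the principal obstacle, is to remove this term $ic([\xi,\,\cdot\,])$ from the $\kn$-factor, i.e.\ to show that $\indx_{H_M}(D^{\kn} - ic\circ J_\zeta + ic([\xi,\,\cdot\,])) = \indx_{H_M}(D^{\kn} - ic\circ J_\zeta)$, both being well defined with finite-dimensional $L^2$-kernels. Neither operator is of Braverman's form \eqref{eq deformed Dirac} --- the term $c\circ J_\zeta$ is Clifford multiplication by $J_\zeta Y\in\kn^-$, which is not a vector field on $\kn$ --- so Theorem~\ref{thm htp} and Corollary~\ref{cor pos inner prod} do not apply, and one must argue directly. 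For $s\in[0,1]$ put $D_s := D^{\kn} - ic\circ J_\zeta + s\,ic([\xi,\,\cdot\,])$ and compute $D_s^2$ by the usual Bochner identity on the vector space $\kn$. The crucial cancellation is that the anticommutator of $c(J_\zeta Y)$ and $c([\xi,Y])$ vanishes, because $J_\zeta Y\in\kn^-$ is orthogonal to $[\xi,Y]\in\kn^+$; writing $D^{\kn}+s\,ic([\xi,\,\cdot\,])$ as a genuine deformed Dirac operator on $\kn$ and combining Braverman's estimate for its square with the confining term $\|J_\zeta Y\|^2$ from $c\circ J_\zeta$, one gets a bound $D_s^2\geq(\text{confining potential}) - C$ that is uniform in $s\in[0,1]$. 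Hence $D_s$ is a norm-continuous family of operators that are Fredholm on each $H_M$-isotypical component of $L^2(\cS|_{\kn})$, so its index is independent of $s$; the case $s=0$ is $D^{\kn} - ic\circ J_\zeta$, whose index and finite-dimensional kernel are determined in Lemma~\ref{lem index n}.

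Finally, one assembles the pieces. The deformed Dirac operator $D^{\ks_M}_{\Phi_{\ks_M}}$ is taming with the constant map $\xi$ (regular on $\ks_M$, since no root in $R_n^+$ vanishes on $\xi$), the constant function $1$ is admissible for it exactly as in Subsection~\ref{sec f = 1}, and so, by the estimates in the proof of Theorem~\ref{thm Braverman}, it has discrete spectrum --- in particular finite-dimensional $L^2$-kernel --- on each $H_M$-isotypical component. Together with the previous paragraph, both tensor factors of $D^N_{\Phi^E}$ have discrete spectrum on isotypical components, so a Künneth argument for $\Z_2$-graded kernels of the commuting self-adjoint operators $(D^{\ks_M}_{\Phi_{\ks_M}})^2\,\hat\otimes\,1$ and $1\,\hat\otimes\,(D^{\kn} - ic\circ J_\zeta + ic([\xi,\,\cdot\,]))^2$ identifies $\ker_{L^2}(D^N_{\Phi^E})$, as a $\Z_2$-graded $H_M$-representation, with
\[
\ker_{L^2}\bigl(D^{\ks_M}_{\Phi_{\ks_M}}\bigr)\,\hat\otimes\,\ker_{L^2}\bigl(D^{\kn} - ic\circ J_\zeta\bigr)\,\otimes\,\Bigwedge_{J_{\kk_M/\kt_M}}\kk_M/\kt_M\,\otimes\,V,
\]
the last factor entering as a virtual ($\Z_2$-graded) representation. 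Taking $K$-equivariant indices therefore multiplies the three contributions, and substituting back into the identity from Proposition~\ref{prop htp} yields the proposition; the finite-dimensionality of the $L^2$-kernel of \eqref{eq Dn} was obtained in the previous step.
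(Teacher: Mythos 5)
Your proposal is correct and follows essentially the same route as the paper: Proposition \ref{prop htp} with $f=1$ (admissible by Proposition \ref{prop f = 1}), the graded tensor-product splitting of the deformed Dirac operator on $\ks_M\oplus\kn$, a hand-made norm-continuous Fredholm homotopy removing the $c([\xi,\cdot])$ term on the $\kn$-factor via the orthogonality of $\kn^-$ and $\kn^+$ (the paper's Proposition \ref{prop indices mun} and Lemmas \ref{lem square Rn}--\ref{lem square munt}), and a graded K\"unneth identification of the $L^2$-kernels (the paper's Lemma \ref{lem decomp index}). The only difference is the order in which the last two steps are performed, which is immaterial.
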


We will prove this proposition in the rest of this section. But first, we show how it allows us to prove Proposition \ref{prop lin index}.
\begin{lemma}\label{lem index p}
We have
\[
 \indx_{H_M}(\ks_{M} \times \Bigwedge_{J_{\ks_M}} \ks_M, \Phi_{\ks_M}) = (-1)^{\dim(M/K_M)/2} \C_{2\rho^M_n} \otimes \bigl(\Bigwedge_{J_{\ks_M}}\ks_M\bigr)^{-1},
\]
where $\rho^M_n$ is half the sum of the elements of $R_n^+$.
\end{lemma}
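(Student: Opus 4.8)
Here $\Phi_{\ks_M}$ is the constant map $\xi$, and the plan is to reduce the computation to a product, over the noncompact positive roots, of one one-dimensional model index. First I would recall the $H_M$-equivariant decomposition into complex lines $\ks_M=\bigoplus_{\alpha\in R_n^+}\C_\alpha$ from the start of this section; it induces $\Bigwedge_{J_{\ks_M}}\ks_M=\bigotimes_{\alpha\in R_n^+}(\C_0\oplus\C_\alpha)$ as $H_M$-equivariant Clifford modules, with the Clifford action of Example \ref{ex S J} the graded tensor product of the Clifford actions of the factors $\C_\alpha$ on $\C_0\oplus\C_\alpha$. The constant taming map $\Phi_{\ks_M}\equiv\xi$ induces the linear vector field $v^{\Phi_{\ks_M}}(X)=-[\xi,X]$, which under this decomposition is the direct sum over $\alpha$ of the rotation vector fields $v_\alpha(X_\alpha)=-\langle\alpha,\xi\rangle X_\alpha$ on $\C_\alpha$; since $\xi$ is regular and dominant for $R_M^+\supset R_n^+$, we have $\langle\alpha,\xi\rangle\neq 0$ with the sign that makes $v_\alpha$ point toward the origin, so each $v_\alpha$ (and hence $v^{\Phi_{\ks_M}}$) vanishes only at $0$.

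Next I would argue that the index factors as a tensor product over $\alpha\in R_n^+$. The space, the Clifford module, the deforming vector field, and the $H_M$-action all split as (graded) tensor products over the lines $\C_\alpha$, so the associated Dirac operator is the external graded tensor product of the Dirac operators on the factors. The constant function $1$ is admissible here — by the argument of Proposition \ref{prop f = 1}, or by the classical vector-space computations in \cite{Atiyah74, Paradan01} — so after rescaling by $1$ each factor's deformed operator is a harmonic oscillator with discrete spectrum (Theorem \ref{thm Braverman} and its proof), and the $L^2$-kernel of the total operator is the graded tensor product of the factors' $L^2$-kernels. This is exactly the separation-of-variables phenomenon already used in the $\SL(2,\R)$ example. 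Hence, as $H_M$-representations,
\[
\indx_{H_M}\bigl(\ks_M\times\Bigwedge_{J_{\ks_M}}\ks_M,\Phi_{\ks_M}\bigr)=\bigotimes_{\alpha\in R_n^+}\indx_{H_M}\bigl(\C_\alpha\times(\C_0\oplus\C_\alpha),\xi\bigr).
\]

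Then I would invoke the standard model computation for Witten-deformed Dirac operators on a complex line — Lemma 6.4 in \cite{Atiyah74}, see also Lemma 5.7 in \cite{Paradan01} and the computation used for the discrete series in \cite{Paradan03} (cf.\ \eqref{eq index ds}) — which, because $\langle\alpha,\xi\rangle$ has the sign making $v_\alpha$ point inward, gives $\indx_{H_M}(\C_\alpha\times(\C_0\oplus\C_\alpha),\xi)=-\,\C_\alpha\otimes\bigoplus_{j=0}^\infty\C_{j\alpha}$, the $L^2$-kernel being concentrated in top (here: odd) degree. Multiplying over $\alpha\in R_n^+$ and using that $|R_n^+|=\dim_\C\ks_M=\tfrac12\dim(M/K_M)$ (since $T_{eK_M}(M/K_M)=\ks_M$), that $\bigotimes_{\alpha\in R_n^+}\C_\alpha=\C_{2\rho^M_n}$, and that $\bigotimes_{\alpha\in R_n^+}\bigoplus_{j\geq 0}\C_{j\alpha}=(\Bigwedge_{J_{\ks_M}}\ks_M)^{-1}$ by definition, yields the claimed identity.

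The only genuinely delicate points are bookkeeping: pinning down the overall sign $(-1)^{\dim(M/K_M)/2}$ and the direction of the weight tower (the factor $\C_{j\alpha}$ rather than $\C_{-j\alpha}$) in the single-line model, which is fixed by compatibility with the discrete-series computation of \cite{Paradan03} and with the $\SL(2,\R)$ example above; and checking that the admissibility/Fredholmness of Proposition \ref{prop f = 1} really does localize each factor, so that the factorization of the $L^2$-kernel as a graded tensor product is legitimate. Neither is hard, but both need care, and I expect the sign bookkeeping to be the main source of potential error.
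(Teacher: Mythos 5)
Your proposal is correct and follows essentially the same route as the paper: the paper's proof of Lemma \ref{lem index p} simply cites the bottom of page 841 of \cite{Paradan03}, Lemma 5.7 of \cite{Paradan01} and Proposition 6.2 of \cite{Atiyah74}, noting that the positive expansion of the inverse is used because $(\xi,\alpha)>0$ for all $\alpha \in R_n^+$, and your argument is exactly the content of those references (splitting $\ks_M$ into root lines and multiplying the resulting one-dimensional harmonic-oscillator indices). The only quibble is that $v_\alpha$ is a rotation vector field, not one ``pointing toward the origin''; what actually selects the tower $\bigoplus_{j\geq 0}\C_{j\alpha}$ and the sign $(-1)^{|R_n^+|}$ is the positivity of $(\xi,\alpha)$, which you do pin down correctly via the cited model computation and the $\SL(2,\R)$ example.
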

\begin{proof}
See the bottom of page 841 in \cite{Paradan03}, and also Lemma 5.7 in \cite{Paradan01}; these in turn are based on  Proposition 6.2 in \cite{Atiyah74}. In the notation of those references, we use the positive expansion of the inverse, because $(\xi, \alpha) > 0$ for all $\alpha \in R_n^+$. 
\end{proof}

In Subsection \ref{sec index n}, we will compute the index on $\kn$ that occurs in Proposition \ref{prop decomp index}, and reach the following conclusion.
\begin{lemma} \label{lem index n}
We have
\[
\indx_{H_M}(D^{\kn} -ic\circ J_{\zeta}) = \C,
\]
the trivial representation of $H_M$.
\end{lemma}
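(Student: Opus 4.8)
The plan is to compute the $L^2$-kernel of $D^{\kn}-ic\circ J_{\zeta}$ by squaring the operator and diagonalising the result. First I would record that $D^{\kn}-ic\circ J_{\zeta}$ is formally self-adjoint and odd: writing $D^{\kn}=\sum_{j=1}^s c(Y_j)\partial_{Y_j}$, each $\partial_{Y_j}$ and each constant operator $c(Y_j)$ is skew-adjoint, so $D^{\kn}$ is self-adjoint, and for $Y\in\kn$ the operator $c(J_{\zeta}Y)$ is skew-adjoint (Clifford multiplication by a real vector), so $-ic\circ J_{\zeta}$ is self-adjoint. Hence, once we know the operator is Fredholm on each $H_M$-isotypical component (which will follow from the computation of the square, and is in any case part of Proposition \ref{prop decomp index}), its $H_M$-equivariant index equals $[\ker^+_{L^2}]-[\ker^-_{L^2}]$, where $\ker^{\pm}_{L^2}$ denotes the $L^2$-kernel intersected with the even, resp.\ odd, part of $\Bigwedge_{J_{\zeta}}(\kn^-\oplus\kn^+)$, and $\ker^+_{L^2}\oplus\ker^-_{L^2}=\ker_{L^2}(D^{\kn}-ic\circ J_{\zeta})^2$.

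The key step is the computation of the square. Since $J_{\zeta}Y_j\in\theta\kn^+=\kn^-$ while $\kn^+\perp\kn^-$ (the inner product of $\kn^+$ with $\kn^-$ is $-B(\kn^+,\kn^+)=0$, because $[\kn^+,\kn^+]\subseteq\kn^+$), we have $(Y_i,J_{\zeta}Y_j)=0$ for all $i,j$; using this together with $\partial_{Y_i}\bigl(c(J_{\zeta}Y)\bigr)=c(J_{\zeta}Y_i)$, a direct manipulation gives
\[
(D^{\kn}-ic\circ J_{\zeta})^2 \;=\; -\Delta \;+\; \|Y\|^2 \;+\; A,\qquad A:=-i\sum_{i=1}^s c(Y_i)c(J_{\zeta}Y_i),
\]
where $\Delta:=\sum_{j=1}^s\partial_{Y_j}^2$; the orthogonality $(Y_i,J_{\zeta}Y)=0$ is exactly what makes the first-order cross terms cancel, so that $A$ is a constant (fibrewise) endomorphism of $\Bigwedge_{J_{\zeta}}(\kn^-\oplus\kn^+)$.

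Next I would identify $A$. The map $J_{\zeta}=\theta\,|\ad(\zeta)|^{-1}\ad(\zeta)$ is orthogonal ($\theta$ is orthogonal, and $\ad(\zeta)$ is symmetric for $\zeta\in\ks$, so $|\ad(\zeta)|^{-1}\ad(\zeta)$ is orthogonal), hence $\{Y_1,\dots,Y_s,J_{\zeta}Y_1,\dots,J_{\zeta}Y_s\}$ is an orthonormal basis of $\kn^-\oplus\kn^+$ adapted to $J_{\zeta}$, and for such a basis $A$ is the standard endomorphism attached to a complex structure: a short computation in the rank-one case (which, with the Clifford action of Example \ref{ex S J}, reduces to $(-\partial_y+y)(\partial_y+y)=-\partial_y^2+y^2-1$ on $\Bigwedge^0$ and $(\partial_y+y)(-\partial_y+y)=-\partial_y^2+y^2+1$ on $\Bigwedge^1$, exactly as in the $\SL(2,\R)$ example of Subsection \ref{sec SL2}), together with multiplicativity of $A$ over orthogonal direct summands, shows that $A$ acts on $\Bigwedge^p_{J_{\zeta}}(\kn^-\oplus\kn^+)$ by multiplication by $2p-s$; in particular its minimum eigenvalue $-s$ is attained exactly on $\Bigwedge^0_{J_{\zeta}}(\kn^-\oplus\kn^+)=\C$. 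Since the harmonic oscillator $-\Delta+\|Y\|^2$ on $L^2(\kn)$ has spectrum $\{s+2m:m\in\Z_{\geq0}\}$ with finite-dimensional eigenspaces, its ground value $s$ being attained exactly on $\C\cdot(Y\mapsto e^{-\|Y\|^2/2})$, the operator $(D^{\kn}-ic\circ J_{\zeta})^2=-\Delta+\|Y\|^2+A$ is non-negative with spectrum in $2\Z_{\geq0}$, and $0$ is attained precisely on the one-dimensional space $\C\cdot(e^{-\|Y\|^2/2})\otimes\Bigwedge^0_{J_{\zeta}}(\kn^-\oplus\kn^+)$. As this square has compact resolvent, $D^{\kn}-ic\circ J_{\zeta}$ is Fredholm, its $L^2$-kernel is that one-dimensional space, it sits in even degree, and it is $H_M$-invariant ($H_M$ acts trivially on $\Bigwedge^0$, and fixes the Gaussian because $H_M<K\cap Z_G(\ka)$ acts orthogonally on $\kn$). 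Hence $\ker^+_{L^2}=\C$, $\ker^-_{L^2}=0$, and $\indx_{H_M}(D^{\kn}-ic\circ J_{\zeta})=\C$.

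The main obstacle is the bookkeeping around the Clifford-action conventions of Example \ref{ex S J}: one must be careful enough to be sure that the first-order cross terms in the square genuinely cancel, and that $A$ attains its minimum on $\Bigwedge^0$ rather than on $\Bigwedge^s$, so that the kernel lands in even degree and the index comes out as $+\C$ and not $-\C$. Everything else is the standard supersymmetric harmonic-oscillator argument.
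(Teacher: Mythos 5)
Your proof is correct, but it is organised differently from the paper's. The paper proves the lemma by reducing to complex dimension one (Lemma \ref{lem index R}): with respect to the basis $\{e_0,e_1\}$ the equation $(D^{\R}-ic\circ i)s=0$ becomes the first-order system $s_0'+xs_0=0$, $s_1'-xs_1=0$, whose only $L^2$ solution is the degree-zero Gaussian; Lemma \ref{lem index Rn} then takes products and notes $H$-invariance of $x\mapsto e^{-\|x\|^2/2}$, and Lemma \ref{lem index n} follows via a complex-linear isomorphism $\kn^-\oplus\kn^+\cong\C^s$ sending $\kn^+$ to $\R^s$. You instead square the operator, check that the first-order cross terms cancel because $\kn^+\perp J_{\zeta}\kn^+=\kn^-$, and identify $(D^{\kn}-ic\circ J_{\zeta})^2=-\Delta+\|Y\|^2+A$ with $A=2p-s$ on $\Bigwedge^p$, so that the kernel of the square is the Gaussian tensored with $\Bigwedge^0$. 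Both routes hinge on the same rank-one computation (your $(-\partial_y+y)(\partial_y+y)$ factorisation is exactly the paper's ODE system in disguise, and your exact formula for the square sharpens the inequality the paper proves separately as Lemma \ref{lem square Rn} for use in Proposition \ref{prop indices mun}), but your version buys Fredholmness and discreteness of the spectrum for free from the compact resolvent of the harmonic oscillator, whereas the paper's is more elementary and defers those analytic points to Proposition \ref{prop decomp index}. All the steps you flag as delicate check out against the paper's conventions: with the Clifford action of Example \ref{ex S J} one indeed gets $A=-1$ on $\Bigwedge^0$ and $A=+1$ on $\Bigwedge^1$ in complex dimension one, so the kernel lands in even degree and the index is $+\C$.
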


Combining Proposition \ref{prop decomp index} and Lemmas \ref{lem index p} and \ref{lem index n}, we conclude that Proposition \ref{prop lin index} is true.

\subsection{Decomposing the index on $E$}

Let us prove Proposition \ref{prop decomp index}. In Proposition \ref{prop htp}, we take $H = H_M$, $N = \ks_M \oplus \kn$, $\cS = \Bigwedge_{J^E}TE$, and $\psi = \Phi^E$. By Proposition \ref{prop f = 1}, we may take $f = 1$ in Proposition \ref{prop htp}. That proposition then implies that
\beq{eq decomp ind E}
\indx_K\bigl(\Bigwedge_{J^E} TE \otimes L_V^E,  \Phi^E\bigr) = \bigl(L^2(K) \otimes \indx_{H_M}(D^{\ks_M \oplus \kn}_{\Phi^E}) \bigr)^{H_M},
\eeq
with $D^{\ks_M \oplus \kn}_{f\Phi^E}$ as in \eqref{eq DN def}, defined with respect to the connection $\nabla^{\bigwedge_{J^E}T_{[e,0]}E \otimes V}$ as in Subsection \ref{sec f = 1}.
 The operator $D^{\ks_M \oplus \kn}_{\Phi^E}$ acts on sections of the vector bundle
\[
\begin{split}
\bigl(\Bigwedge_{J^E} TE \otimes L_V^E \bigr)|_{\ks_M \oplus \kn} &= (\ks_M \oplus \kn) \times \bigl(\Bigwedge_{J_{\ks_M}} \ks_M \otimes \Bigwedge_{J_{\zeta}} (\kn^- \oplus \kn^+) \otimes \Bigwedge_{J_{\kk_M/\kt_M}} \kk_M/\kt_M \bigr) \otimes V\\
&=\bigl(\ks_M  \times  \Bigwedge_{J_{\ks_M}} \ks_M\bigr)\boxtimes\bigl(\kn \times \Bigwedge_{J_{\zeta}} (\kn^- \oplus \kn^+) \bigr)\otimes \Bigwedge_{J_{\kk_M/\kt_M}} \kk_M/\kt_M \otimes V.
\end{split}
\]
Here $\boxtimes$ denotes the exterior tensor product of vector bundles, we use graded tensor products everywhere, and, as before,  we identify 
\[
\ks_M \oplus \kn \cong \{[e,X+Y]; X \in \ks_M, Y \in \kn\} \subset E.
\]
 In terms of this decomposition, \eqref{eq v mu E} implies that for all $X \in \ks_M$ and $Y \in \kn$,
\begin{multline} \label{eq cvmuE}
c(v^{\Phi^E})(X+Y) =\\ c( -[\xi,X])\otimes 1_{\bigwedge_{J_{\zeta}} (\kn^- \oplus \kn^+) } \otimes 1_{\bigwedge_{J_{\kk_M/\kt_M}} \kk_M/\kt_M} \otimes 1_V \\
 + 
1_{\bigwedge_{J_{\ks_M}} \ks_M} \otimes 
\bigl(c(-[\xi,Y]) + c( |\ad(\zeta)|^{-1} \ad(\zeta) Y_{\ks}) \bigr)
\otimes 1_{\bigwedge_{J_{\kk_M/\kt_M}} \kk_M/\kt_M} \otimes 1_V.
\end{multline}
Here we used the fact that the element 
\[
|\ad(\zeta)|^{-1} \ad(\zeta) Y_{\ks} \quad \in \kt_M^{\perp}
\]
induces a term  in $\kk/\kt_M$ orthogonal to $\kk_M$, hence in the space $\kk/\kk_M$ that is identified with $\kn^-$. So the Clifford action by this vector only acts nontrivially on $\Bigwedge_{J_{\zeta}} (\kn^- \oplus \kn^+)$, and trivially on $\Bigwedge_{J_{\ks_M}} \ks_M $.

Define the 
map  $v^{\kn}\colon \kn \to \kn^- \oplus \kn^+$  
by
\[
v^{\kn}(Y) = -[\xi,Y] + |\ad(\zeta)|^{-1} \ad(\zeta) Y_{\ks},
\]
for $Y \in \kn$.  
Then \eqref{eq cvmuE} becomes
\begin{multline*}
c(v^{\Phi^E})(X+Y) = \\c( v^{\Phi_{\ks_M}}(X))\otimes 1_{\bigwedge_{J_{\zeta}} (\kn^- \oplus \kn^+) } \otimes 1_{\bigwedge_{J_{\kk_M/\kt_M}} \kk_M/\kt_M} \otimes 1_V\\
 + 
1_{\bigwedge_{J_{\ks_M}} \ks_M} \otimes 
 c(v^{\kn}(Y))
\otimes 1_{\bigwedge_{J_{\kk_M/\kt_M}} \kk_M/\kt_M} \otimes 1_V.
\end{multline*}
(Recall that $\Phi_{\ks_M}$ is the constant map with value $\xi$.)

Let $\{X_1, \ldots, X_r\}$ be an orthonormal basis of $\ks_{M}$. Let $D^{\ks_M}$ be the Dirac operator
\[
D^{\ks_M} := \sum_{j=1}^r X_j \otimes c(X_j)
\]
on 
\[
\Gamma^{\infty}(\ks_M \times \Bigwedge_{J_{\ks_M}}\ks_M) = C^{\infty}(\ks_M) \otimes \Bigwedge_{J_{\ks_M}}\ks_M.
\]
Then
 for the choice of the connection on $(\Bigwedge_{J^E}TE \otimes L_V^E)|_{\ks_M \oplus \kn}$ that we made at the start of Subsection \ref{sec f = 1}, the deformed Dirac operator $D^{\ks_M \oplus \kn}_{\Phi^E}$ as in \eqref{eq DN def} equals
\begin{multline} \label{eq decomp D s n mu E}
D^{\ks_M \oplus \kn}_{\Phi^E} = \\
\bigl(D^{\ks_M} -ic( v^{\Phi_{\ks_M}})\bigr) \otimes  1_{\bigwedge_{J_{\zeta}} (\kn^- \oplus \kn^+) } \otimes 1_{\bigwedge_{J_{\kk_M/\kt_M}} \kk_M/\kt_M} \otimes 1_V\\
+
1_{\bigwedge_{J_{\ks_M}} \ks_M} \otimes \bigl(D^{\kn} -i  c(v^{\kn})\bigr) \otimes 1_{\bigwedge_{J_{\kk_M/\kt_M}} \kk_M/\kt_M} \otimes 1_V.
\end{multline}

\begin{lemma}\label{lem decomp index}
The index $\indx_{H_M}(D^{\kn} -ic(v^{\kn}))$ is well-defined, so is its tensor product with $\indx_{H_M}(\ks_{M} \times \Bigwedge_{J_{\ks_M}} \ks_M, \Phi_{\ks_M})$, and we have
\begin{multline*}
\indx_{H_M}(D^{\ks_M \oplus \kn}_{\Phi^E}) =\\
\indx_{H_M}(D^{\kn} -ic(v^{\kn})) \otimes \indx_{H_M}(\ks_{M} \times \Bigwedge_{J_{\ks_M}} \ks_M, \Phi_{\ks_M})\otimes \Bigwedge_{J_{\kk_M/\kt_M}} \kk_M/\kt_M \otimes V.
\end{multline*}
\end{lemma}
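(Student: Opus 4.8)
The plan is to extract everything from the operator identity \eqref{eq decomp D s n mu E}. Write $D_1 := D^{\ks_M} - ic(v^{\Phi_{\ks_M}})$ for the deformed Dirac operator on $\ks_M \times \Bigwedge_{J_{\ks_M}}\ks_M \to \ks_M$, and $D_2 := D^{\kn} - ic(v^{\kn})$ for the operator on $\kn \times \Bigwedge_{J_\zeta}(\kn^-\oplus\kn^+)\to\kn$. Under the tensor-product factorisation of $(\Bigwedge_{J^E}TE\otimes L_V^E)|_{\ks_M\oplus\kn}$ displayed above, equation \eqref{eq decomp D s n mu E} says that $D^{\ks_M\oplus\kn}_{\Phi^E}$ is the graded tensor sum $D_1\otimes 1 + 1\otimes D_2$, further tensored with the identity on the inert finite-dimensional graded factor $\Bigwedge_{J_{\kk_M/\kt_M}}\kk_M/\kt_M\otimes V$. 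Both $D_1$ and $D_2$ are odd and formally self-adjoint, so $D_1\otimes1$ and $1\otimes D_2$ anticommute, whence $(D_1\otimes1 + 1\otimes D_2)^2 = D_1^2\otimes1 + 1\otimes D_2^2$, a sum of two commuting non-negative self-adjoint operators. Therefore the $L^2$-kernel of $D_1\otimes1+1\otimes D_2$ is the (completed) graded Hilbert-space tensor product $\ker_{L^2}(D_1)\otimes\ker_{L^2}(D_2)$, and that of $D^{\ks_M\oplus\kn}_{\Phi^E}$ is this tensored with $\Bigwedge_{J_{\kk_M/\kt_M}}\kk_M/\kt_M\otimes V$.

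Next I would establish the finiteness statements. The operator $D_1$ is an ordinary deformed Dirac operator in the sense of \eqref{eq deformed Dirac} on the vector space $\ks_M$, with the constant taming map $\Phi_{\ks_M}\equiv\xi$: it is taming because $\xi$ is regular for $R^+_M$, so $v^{\Phi_{\ks_M}}(X) = -[\xi,X]$ vanishes only at $X = 0$, and the constant function $1$ is admissible by the same elementary estimate as in the proof of Proposition \ref{prop f = 1} ($\|v^{\Phi_{\ks_M}}\|$ grows linearly while the connection- and moment-type terms are bounded or linear). So Theorem \ref{thm Braverman} applies: $\ker_{L^2}(D_1)$ has finite multiplicities on each $H_M$-isotype, is nonzero (by Lemma \ref{lem index p} its index is a nonzero element of $\hat R(H_M)$), and its equivariant index equals $\indx_{H_M}(\ks_M\times\Bigwedge_{J_{\ks_M}}\ks_M,\Phi_{\ks_M})$. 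On the other hand Proposition \ref{prop htp}, applied with $f = 1$ (legitimate by Proposition \ref{prop f = 1}), shows $\ker_{L^2}(D^{\ks_M\oplus\kn}_{\Phi^E})$ has finite multiplicities on each $H_M$-isotype. Comparing with the tensor decomposition and using that $H_M$ is compact (so $H_M$-representations split into direct summands): choosing any irreducible $\sigma\subset\ker_{L^2}(D_1)$, the summand $\sigma\otimes\ker_{L^2}(D_2)\otimes\Bigwedge_{J_{\kk_M/\kt_M}}\kk_M/\kt_M\otimes V$ has finite isotypical components, and since $\sigma$ and $\Bigwedge_{J_{\kk_M/\kt_M}}\kk_M/\kt_M\otimes V$ are finite-dimensional this forces $\ker_{L^2}(D_2)$ to have finite multiplicities on each $H_M$-isotype. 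Hence $\indx_{H_M}(D^{\kn} - ic(v^{\kn}))\in\hat R(H_M)$ is well-defined.

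Taking $\Z_2$-graded equivariant dimensions of the tensor decomposition of the kernel then yields the claimed product formula, with $\Bigwedge_{J_{\kk_M/\kt_M}}\kk_M/\kt_M\otimes V$ carried along as an inert twist; this is just multiplicativity of the equivariant index under graded tensor products of Clifford modules. One further point to confirm is that the product $\indx_{H_M}(D^{\kn}-ic(v^{\kn}))\otimes\indx_{H_M}(\ks_M\times\Bigwedge_{J_{\ks_M}}\ks_M,\Phi_{\ks_M})$ genuinely lies in $\hat R(H_M)$; this holds once the $\kn$-factor index has finite support, which will fall out of the explicit computation in Subsection \ref{sec index n} (Lemma \ref{lem index n} identifies it with the trivial representation), and can also be seen directly from a lower bound of the form $\|v^{\kn}(Y)\|\geq c\|Y\|$, which makes $D_2^2$ a Schrödinger-type operator with a ground state concentrated near $0$ and hence caps the $\kt_M$-weights occurring in $\ker_{L^2}(D_2)$. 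I expect the genuine obstacle to be precisely that $D_2 = D^{\kn}-ic(v^{\kn})$ is \emph{not} of the form \eqref{eq deformed Dirac} — the deformation $v^{\kn}$ takes values in $\kn^-\oplus\kn^+$ rather than in $T\kn = \kn^+$ — so Braverman's theory cannot be invoked for it directly, and the finiteness of its $L^2$-kernel must come either from the bootstrap above or from the hands-on Weitzenböck estimate carried out in the next subsection.
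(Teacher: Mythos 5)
Your proof is correct and follows essentially the same route as the paper: the graded tensor structure in \eqref{eq decomp D s n mu E} makes the two summands anticommute, so the square decomposes as a sum of commuting nonnegative operators and the $L^2$-kernel factors as the graded tensor product of the two kernels (times the inert finite-dimensional factor). Your bootstrap for the finiteness of the $H_M$-multiplicities of $\ker_{L^2}(D^{\kn}-ic(v^{\kn}))$ — using that the $\ks_M$-factor has a nonzero kernel with finite multiplicities and that the total kernel has finite multiplicities by Propositions \ref{prop f = 1} and \ref{prop htp} — is a valid addition; the paper leaves this point implicit here and only establishes it directly via the estimates of Proposition \ref{prop indices mun}.
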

\begin{proof}
Because the tensor products in \eqref{eq decomp D s n mu E} are graded, we have
\begin{multline*} 
\bigl(D^{\ks_M \oplus \kn}_{\Phi^E}\bigr)^2 = \\
\bigl(D^{\ks_M} -ic( v^{\Phi_{\ks_M}})\bigr)^2 \otimes  1_{\bigwedge_{J_{\zeta}} (\kn^- \oplus \kn^+) } \otimes 1_{\bigwedge_{J_{\kk_M/\kt_M}} \kk_M/\kt_M} \otimes 1_V\\
+
1_{\bigwedge_{J_{\ks_M}} \ks_M} \otimes \bigl(D^{\kn} -i  c(v^{\kn})\bigr)^2 \otimes 1_{\bigwedge_{J_{\kk_M/\kt_M}} \kk_M/\kt_M} \otimes 1_V.
\end{multline*}
Since all operators occurring here are nonnegative, this implies that
\[
\ker (D^{\ks_M \oplus \kn}_{\Phi^E}) = \ker (D^{\ks_M} -ic( v^{\Phi_{\ks_M}})) \otimes \ker (D^{\kn} -i  c(v^{\kn})) \otimes  \Bigwedge_{J_{\kk_M/\kt_M}} \kk_M/\kt_M \otimes V.
\]
Here the kernels are $L^2$-kernels, and the equality includes gradings. 
\end{proof}

To prove Proposition \ref{prop decomp index}, we will show that we may replace $D^{\kn} -ic(v^{\kn})$ by $D^{\kn} -ic \circ J_{\zeta}$ in the above result. Consider the 
map  $\tilde v^{\kn}\colon \kn \to \kn^- \oplus \kn^+$
given by
\[
 \tilde v^{\kn}(Y) = |\ad(\zeta)|^{-1}\ad(\zeta)Y_{\ks},
\]
for $Y \in \kn$. 
\begin{lemma}\label{lem JY}
Let $Y \in \kn$. Under the identification
\[
T_{[e,Y]}E \cong T_{[e,0]}E = \ks_{M} \oplus \kn^-\oplus \kn^+ \oplus \kk_M/\kt_M
\]
via the map $\varphi^Y_0$ in \eqref{eq phi01}, we have
\[
\tilde v^{\kn}(Y) = J_{\zeta}Y.
\]
\end{lemma}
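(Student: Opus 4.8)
The plan is to reduce the statement to a one-line computation in the restricted root space decomposition of $\kg$ with respect to $\ka$, the only inputs being the positivity of $\langle\beta,\zeta\rangle$ on $\Sigma^+$ together with the explicit form of $J_{\zeta}$ and of the identification $\varphi^Y_0$ of \eqref{eq phi01}. Since $\varphi^Y_0$ is a linear isomorphism $T_{[e,0]}E\to T_{[e,Y]}E$, the assertion is equivalent to the equality $\tilde v^{\kn}(Y)=J_{\zeta}Y$ of elements of $\kg/\kh=T_{[e,0]}E$, once $\tilde v^{\kn}(Y)$ is read off in the decomposition $\kg/\kh=\ks_M\oplus\kn^-\oplus\kn^+\oplus\kk_M/\kt_M$ coming from \eqref{eq decomp gh 1} and \eqref{eq decomp gh}.

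First I would compute $J_{\zeta}Y$. Write $Y=\sum_{\beta\in\Sigma^+}Y_{\beta}$ with $Y_{\beta}\in\kg_{\beta}$. Since $\ad(\zeta)$ acts on $\kg_{\pm\beta}$ by the scalar $\pm\langle\beta,\zeta\rangle$ and $\langle\beta,\zeta\rangle>0$, the operator $|\ad(\zeta)|^{-1}\ad(\zeta)$ is the identity on $\kn^+$, so $J_{\zeta}Y=\theta\,|\ad(\zeta)|^{-1}\ad(\zeta)Y=\theta Y\in\kn^-$.

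Next I would evaluate $\tilde v^{\kn}(Y)=|\ad(\zeta)|^{-1}\ad(\zeta)Y_{\ks}$. Using that $\theta$ interchanges $\kg_{\beta}$ and $\kg_{-\beta}$, we get $\ad(\zeta)\bigl(Y_{\beta}-\theta Y_{\beta}\bigr)=\langle\beta,\zeta\rangle\bigl(Y_{\beta}+\theta Y_{\beta}\bigr)$, and since $|\ad(\zeta)|$ acts on $\kg_{\pm\beta}$ by $|\langle\beta,\zeta\rangle|$ and $\langle\beta,\zeta\rangle>0$,
\[
\tilde v^{\kn}(Y)=|\ad(\zeta)|^{-1}\ad(\zeta)\,\frac12\sum_{\beta\in\Sigma^+}\bigl(Y_{\beta}-\theta Y_{\beta}\bigr)=\frac12\sum_{\beta\in\Sigma^+}\bigl(Y_{\beta}+\theta Y_{\beta}\bigr)=\frac12\bigl(Y+\theta Y\bigr).
\]
This element lies in $\kk$ and is orthogonal to $\km$ (because $Y,\theta Y\in\kn^{\pm}\perp\km$), so it represents a class in $\kk/\kk_M$. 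Under the isomorphism $\kn^-\xrightarrow{\ \cong\ }\kk/\kk_M$, $X\mapsto\frac12(X+\theta X)$, of \eqref{eq decomp gh}, this class is the image of $\theta Y\in\kn^-$, since $\frac12\bigl(\theta Y+\theta^2 Y\bigr)=\frac12\bigl(\theta Y+Y\bigr)$. Hence, in the coordinates $\ks_M\oplus\kn^-\oplus\kn^+\oplus\kk_M/\kt_M$ on $\kg/\kh$, both $\tilde v^{\kn}(Y)$ and $J_{\zeta}Y$ equal $\theta Y$ sitting in the $\kn^-$-summand, which is the assertion.

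I do not expect a genuine obstacle here: the computation is immediate. The only point requiring attention is the bookkeeping of identifications — one must make sure that the passage from $\tilde v^{\kn}(Y)=|\ad(\zeta)|^{-1}\ad(\zeta)Y_{\ks}\in\kk$ to an element of $\kn^-$ (implicit already in the remark following \eqref{eq cvmuE}) uses exactly the isomorphism $\kk/\kk_M\cong\kn^-$ built into $\varphi^Y_0$ via \eqref{eq decomp gh}, and not some other splitting of $\kk$. With that fixed, the equality $\tilde v^{\kn}(Y)=J_{\zeta}Y$ follows from the two displayed computations above.
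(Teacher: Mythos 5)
Your proof is correct and follows essentially the same route as the paper's: the paper condenses your root-space computation into the single operator identity $|\ad(\zeta)|^{-1}\ad(\zeta)Y_{\ks}=\tfrac{1}{2}(1+\theta)J_{\zeta}Y$ (using that $\ad(\zeta)$ anticommutes with $\theta$), and then, exactly as you do, invokes the fact that the identification $\kn^{-}\cong\kk/\kk_M$ is the map $\tfrac{1}{2}(1+\theta)$. Your explicit observation that $J_{\zeta}Y=\theta Y$ on $\kn^{+}$ and your attention to which splitting of $\kk$ is being used are both accurate.
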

\begin{proof}
Let $Y \in \kn$.
Since $\ad(\zeta)$ anticommutes with $\theta$,
we have
\[
 |\ad(\zeta)|^{-1}\ad(\zeta)Y_{\ks} =
\frac{1}{2}(1+\theta) |\ad(\zeta)|^{-1}\ad(\zeta)Y  = \frac{1}{2}(1+\theta)\theta J_{\zeta} Y = \frac{1}{2}(1+\theta) J_{\zeta} Y.
\]
The identification $\kn^- \cong \kk/\kk_M$ is made via the map $\frac{1}{2}(1+\theta)$, so the claim follows.
\end{proof}

%
\begin{proposition}\label{prop indices mun}
The multiplicities of all irreducible representations of $H_M$ in the $L^2$-kernels of the operators
\beq{eq D tilde mu}
D^{\kn} -ic(v^{\kn}) \quad \text{and} \quad D^{\kn} - ic(\tilde v^{\kn})
\eeq
are finite, and we have
\[
\indx_{H_M}(D^{\kn} -ic(v^{\kn} )) = \indx_{H_M}(D^{\kn} -ic(\tilde v^{\kn} )) \quad \in \hat R(H_M).
\]
\end{proposition}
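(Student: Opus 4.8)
The plan is to join $v^{\kn}$ and $\tilde v^{\kn}$ by the affine homotopy through
$v_t(Y):=-t[\xi,Y]+\tilde v^{\kn}(Y)=-t[\xi,Y]+\theta Y$ for $Y\in\kn$, so that $v_0=\tilde v^{\kn}$ and $v_1=v^{\kn}$ (recall that by Lemma \ref{lem JY} the map $\tilde v^{\kn}$, viewed in $\kn^-\oplus\kn^+$, is $Y\mapsto J_{\zeta}Y=\theta Y$), and to prove homotopy invariance of the index directly. This cannot be quoted from Braverman's framework, because the ``$\kn^-$-part'' $\theta Y$ of the deformation is transverse to $T_Y\kn=\kn^+$, so $v_t$ is not the vector field attached to a moment map. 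Write $D_t:=D^{\kn}-ic(v_t)$, an odd, self-adjoint, $H_M$-equivariant operator on $L^2\bigl(\kn\times\Bigwedge_{J_{\zeta}}(\kn^-\oplus\kn^+)\bigr)$.

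First I would record a uniform coercivity estimate. Since $H_M<M$ normalises $N$ we have $[\xi,\kn]\subset\kn^+$, while $\theta Y\in\kn^-$, so the two summands of $v_t(Y)$ are orthogonal and, using that $\theta$ is an isometry,
\[
\|v_t(Y)\|^2=t^2\|[\xi,Y]\|^2+\|\theta Y\|^2\geq\|Y\|^2 .
\]
In particular $v_t$ vanishes only at $0\in\kn$, for every $t\in[0,1]$. Next I would prove a Bochner estimate on $H_M$-isotypical components, in the spirit of the proof of Theorem 2.9 in \cite{Braverman02} and of Lemma \ref{lem discr spec}. Since the bundle $\kn\times\Bigwedge_{J_{\zeta}}(\kn^-\oplus\kn^+)$ carries the flat connection and $v_t$ is linear in $Y$, a direct computation gives $D_t^2=(D^{\kn})^2+\|v_t\|^2-iA_t-2it\,\partial_{[\xi,\cdot]}$, where $A_t:=\sum_j c(Y_j)c(v_t(Y_j))$ is a constant (hence bounded) bundle endomorphism and $\partial_{[\xi,\cdot]}$ is the first-order operator with coefficient field $Y\mapsto[\xi,Y]$. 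The crucial point is that $Y\mapsto-[\xi,Y]$ is the vector field of the action of $\xi\in\kt_M$ (the Lie algebra of $H_M$, by \eqref{eq HM}); hence $\partial_{[\xi,\cdot]}$ differs by a bounded endomorphism from $-\calL_{\xi^{\kn}}$, and $\calL_{\xi^{\kn}}$ acts on each $H_M$-isotype $L^2(\cdots)_{\delta}$ through the infinitesimal action of $\xi$, so it is bounded there. Thus on $L^2(\cdots)_{\delta}$ we get $D_t^2\geq(D^{\kn})^2+\|v_t\|^2-C_{\delta}$ with $C_{\delta}$ independent of $t$, and since $(D^{\kn})^2+\|v_t\|^2\geq-\Delta+\|Y\|^2$ is, up to the flat factor, a harmonic oscillator, $D_t^2$ has compact resolvent on each isotype. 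Hence $\ker_{L^2}D_t$ has finite multiplicities and the indices in \eqref{eq D tilde mu} are well-defined elements of $\hat R(H_M)$.

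Finally I would let $t$ vary. On $L^2(\cdots)_{\delta}$ one has $D_t-D_s=i(t-s)\,c([\xi,\cdot])$, and the uniform bound $D_s^2\geq\|Y\|^2-C_{\delta}$ yields, in the sense of quadratic forms, $\|Y\|^2+1\leq(C_{\delta}+1)(D_s^2+1)$; combined with $\|c([\xi,Y])\|=\|[\xi,Y]\|=\cO(\|Y\|)$ this shows that $(D_t-D_s)(D_s^2+1)^{-1/2}$ is bounded, with norm $\cO(|t-s|)$ (implied constant depending on $\delta$). By the standard estimates for the bounded transform, $t\mapsto D_t(D_t^2+1)^{-1/2}$ is then norm-continuous on each isotype, hence a path of odd self-adjoint Fredholm operators whose index on that isotype is independent of $t$. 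Evaluating at $t=0$ and $t=1$ gives $\indx_{H_M}(D^{\kn}-ic(\tilde v^{\kn}))=\indx_{H_M}(D^{\kn}-ic(v^{\kn}))$.

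I expect the Bochner step to be the main obstacle: it has to be redone by hand, since $v_t$ is only ``half'' a genuine vector field, and the first-order cross term $\partial_{[\xi,\cdot]}$ has linearly growing coefficients — it becomes bounded only after restricting to an $H_M$-isotype, where its growth is absorbed by the compact group action. The structural feature that keeps the homotopy safe is that the ``fake'' component $\theta Y$ of the deformation is the same for all $t$ and is already coercive, so Fredholmness is never lost along the way.
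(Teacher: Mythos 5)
Your proposal is correct and follows essentially the same route as the paper: the same affine homotopy $v_t=t\,\xi^{\kn}+\tilde v^{\kn}$ (which coincides with the paper's $v^t=tv^{\kn}+(1-t)\tilde v^{\kn}$), the same Weitzenb\"ock-type lower bound $D_t^2\geq (D^{\kn})^2+\|\cdot\|_{\kn}^2+t^2\|\xi^{\kn}\|^2-C_{\delta}$ on $H_M$-isotypes obtained by absorbing $\calL_{\xi}$ and the constant Clifford terms, and the same norm-continuity of the resulting Fredholm path via relative boundedness of $c(\xi^{\kn})$ against $D_t$ (the paper uses $D_t(D_t+iC_{\delta})^{-1}$ where you use the bounded transform, an immaterial difference). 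No gaps.
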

This proposition will be proved in Subsection \ref{sec pf indices mun}.

\medskip
\begin{proofof}{Proposition \ref{prop decomp index}}
By \eqref{eq decomp ind E} and Lemma \ref{lem decomp index}, we have
\begin{multline*}
\indx_K\bigl(\Bigwedge_{J^E} TE \otimes L_V^E,  \Phi^E\bigr)
 = \\
 \bigl(L^2(K) \otimes \indx_{H_M}(\ks_{M} \times \Bigwedge_{J_{\ks_M}} \ks_M, \Phi_{\ks_M}) \otimes \indx_{H_M}(D^{\kn} -ic(v^{\kn})) \otimes \Bigwedge_{J_{\kk_M/\kt_M}} \kk_M/\kt_M \otimes V\bigr)^{H_M}.
\end{multline*}
By Lemma \ref{lem JY}, we have $c(\tilde v^{\kn}) = c\circ J_{\zeta}$, so the proposition follows from 
Proposition \ref{prop indices mun}. 
\end{proofof}

\subsection{The index on $\kn$} \label{sec index n}

The proof of Lemma \ref{lem index n} is a direct computation on a vector space. 
 Let $e_0$ and $e_1$ be the generators of the complex exterior algebra $\Bigwedge_{\C} \C = \C \oplus \C$ in degrees $0$ and $1$, respectively. The Clifford action $c$ by $\C$ on $\Bigwedge_{\C} \C $ as in Example \ref{ex S J} now has the form
\[
\begin{split}
c(z) e_0 &= ze_1; \\
c(z) e_1 &= -\bar z e_0,
\end{split}
\]
for $z \in \C$. Let $c\circ i$ be the endomorphism of the trivial bundle $\R \times \Bigwedge_{\C} \C \to \R$ given by
\[
(c\circ i) (x) = c(ix)
\]
for $x \in \R$. Consider the Dirac operator
\[
D^{\R} := c(1)\frac{d}{dx}
\]
on $C^{\infty}(\R, \Bigwedge_{\C} \C)$.
\begin{lemma}\label{lem index R}
The kernel of the operator $D^{\R} -ic\circ i$ intersected with $L^2(\R, \Bigwedge_{\C} \C)$ is one-dimensional, and spanned in degree zero by the function
\[
x\mapsto e^{-x^2/2}.
\]
\end{lemma}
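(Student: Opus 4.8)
The plan is to write $D^{\R}-ic\circ i$ out explicitly as a $2\times 2$ system of first-order differential operators in the frame $\{e_0,e_1\}$ of $\Bigwedge_{\C}\C$, solve the resulting decoupled pair of ordinary differential equations, and keep only the square-integrable solution.

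First I would read off the matrices of the relevant Clifford actions from the formulas $c(z)e_0=ze_1$, $c(z)e_1=-\bar z e_0$. In the ordered basis $(e_0,e_1)$ this gives $c(1)=\mattwo{0}{-1}{1}{0}$ and $c(ix)=ix\,\mattwo{0}{1}{1}{0}$ for $x\in\R$, hence $-i\,(c\circ i)(x)=x\,\mattwo{0}{1}{1}{0}$. Since $D^{\R}=c(1)\tfrac{d}{dx}$, we obtain
\[
D^{\R}-ic\circ i \;=\; \mattwo{0}{-\tfrac{d}{dx}+x}{\tfrac{d}{dx}+x}{0}.
\]
A smooth section $f_0 e_0+f_1 e_1$ then lies in the kernel precisely when $\bigl(\tfrac{d}{dx}+x\bigr)f_0=0$ and $\bigl(-\tfrac{d}{dx}+x\bigr)f_1=0$, i.e.\ when $f_0(x)=a\,e^{-x^2/2}$ and $f_1(x)=b\,e^{x^2/2}$ for constants $a,b\in\C$. (Elliptic regularity guarantees that any $L^2$ element of the kernel is smooth, so this also describes the kernel of the $L^2$-closure.) Since $e^{-x^2/2}\in L^2(\R)$ while $e^{x^2/2}\notin L^2(\R)$, the $L^2$-kernel is exactly the line spanned by the degree-zero section $x\mapsto e^{-x^2/2}$, which is the assertion.

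There is no substantial obstacle: the argument is the standard harmonic-oscillator computation. The one point worth double-checking is the bookkeeping of the signs and complex conjugations in the Clifford action, so as to confirm that it is the \emph{degree-zero} component $f_0$ that carries the $L^2$ solution $e^{-x^2/2}$ (and the degree-one component $f_1$ the non-integrable one), rather than the reverse; this is what will ultimately force the index in Lemma \ref{lem index n} to be the trivial representation with a $+$ sign rather than $-\C$.
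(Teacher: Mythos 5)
Your proof is correct and is essentially the paper's own argument: write a section as $s_0e_0+s_1e_1$, reduce the kernel equation to the decoupled ODEs $s_0'+xs_0=0$ and $s_1'-xs_1=0$, and discard the non-integrable solution $e^{x^2/2}$ in degree one. The sign bookkeeping in your matrix for $D^{\R}-ic\circ i$ matches the paper's equations, so the $L^2$ solution indeed sits in degree zero as claimed.
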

\begin{proof}
Let $s \in C^{\infty}(\R, \Bigwedge_{\C} \C)$, and write $s = s_0 e_0 + s_1 e_1$, for (complex-valued) $s_0, s_1 \in C^{\infty}(\R)$. The equation $(D^{\R} -ic\circ i)s=0$ then becomes
\[
\begin{split}
s_0'+xs_0 &= 0;\\
s_1'-xs_1 &= 0.
\end{split}
\]
This is to say that there are complex constants $a$ and $b$ such that for all $x \in \R$,
\[
\begin{split}
s_0(x) &= ae^{-x^2/2};\\
s_1(x) &= be^{x^2/2}.
\end{split}
\]
\end{proof}

Lemma \ref{lem index R} directly generalises to higher dimensions. Let $n \in \N$, and consider the endomorphism $c\circ i$ of the trivial bundle
\beq{eq bundle Rn}
\R^n \times \Bigwedge_{\C} \C^n \to \R,
\eeq
given by $c\circ i(x) = c(ix)$, for $x \in \R^n$. Let $\{v_1, \ldots, v_n\}$ be the standard basis of $\R^n$, with corresponding coordinates $(x_1, \ldots, x_n)$. Consider the Dirac operator
\[
D^{\R^n} := \sum_{j=1}^n c(v_j)\frac{\partial}{\partial x_j}
\]
on $C^{\infty}(\R^n, \Bigwedge_{\C} \C^n)$. We now also consider an isometric action by a compact Lie group $H$ on $\R^n$ (with the Euclidean metric), and suppose this action lifts to the bundle \eqref{eq bundle Rn} so that $c$ is $H$-invariant. 
\begin{lemma}\label{lem index Rn}
The kernel of the operator $D^{\R^n} -ic\circ i$ intersected with $L^2(\R^n, \Bigwedge_{\C} \C^n)$ is one-dimensional in even degree, zero-dimensional in odd degree, and the action by $H$ on this kernel is trivial.
\end{lemma}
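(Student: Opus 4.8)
The plan is to reduce to the one-dimensional case (Lemma \ref{lem index R}) by a tensor-product decomposition, and then handle the $H$-action by a separate symmetry argument. First I would observe that the bundle $\R^n \times \Bigwedge_{\C}\C^n \to \R^n$, together with its Clifford action $c$ and the deformation endomorphism $c\circ i$, factors as an ($n$-fold, graded) exterior tensor product of $n$ copies of the rank-one situation of Lemma \ref{lem index R}: writing $\R^n = \R v_1 \oplus \cdots \oplus \R v_n$ and $\Bigwedge_{\C}\C^n = \Bigwedge_{\C}\C \,\hat\otimes \cdots \hat\otimes \Bigwedge_{\C}\C$, one has $D^{\R^n} = \sum_j 1\otimes\cdots\otimes D^{\R}_{(j)}\otimes\cdots\otimes 1$ and $c\circ i = \sum_j 1\otimes\cdots\otimes (c\circ i)_{(j)}\otimes\cdots\otimes 1$, so that $D^{\R^n} - ic\circ i$ is the Dirac-type operator of the graded tensor product. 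Exactly as in the proof of Lemma \ref{lem decomp index}, squaring and using that the tensor factors are graded and the individual squared operators are nonnegative gives
\[
\ker_{L^2}(D^{\R^n} - ic\circ i) = \bigotimes_{j=1}^n \ker_{L^2}(D^{\R}_{(j)} - (c\circ i)_{(j)}).
\]
By Lemma \ref{lem index R}, each factor is one-dimensional, concentrated in degree zero, and spanned by $x_j \mapsto e^{-x_j^2/2}$. Taking the graded tensor product, the total kernel is one-dimensional, sits in even degree (degree $0$, the sum of $n$ zeros), and is spanned by the Gaussian $x \mapsto e^{-\|x\|^2/2}$ tensored with $e_0\otimes\cdots\otimes e_0$.

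It remains to check that $H$ acts trivially on this line. Since the kernel is one-dimensional and $H$ is compact, $H$ acts through a character $H \to \U(1)$; as $H$ is compact and the representation is on a complex line, triviality will follow once I exhibit a nonzero fixed vector, or argue the character is trivial on a generating set. The cleanest route: the generator $\Omega(x) := e^{-\|x\|^2/2}$ of the kernel is built from the Euclidean norm $\|x\|$, which is $H$-invariant because $H$ acts by isometries of $\R^n$; so the scalar part $x\mapsto e^{-\|x\|^2/2}$ is genuinely $H$-invariant. The potential subtlety is the action of $H$ on the ``spinor'' factor, i.e.\ on the top-exterior-power-free line $\C e_0\otimes\cdots\otimes\C e_0 = \Bigwedge^0_{\C}\C^n$, which is canonically the trivial representation of any group acting linearly. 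Hence the product is $H$-fixed, and the character is trivial.

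The main obstacle I anticipate is not the kernel computation itself — that is a routine separation of variables once the tensor decomposition is set up — but making the tensor-factorisation of the Clifford data precise and $H$-equivariant: one must check that the decomposition $\Bigwedge_{\C}\C^n \cong \bigotimes_j \Bigwedge_{\C}\C$ is only $\prod_j H_j$-equivariant for the factorwise actions, whereas the given $H$ acts diagonally through $\OO(n)$ and need not preserve the individual lines $\R v_j$. So the tensor-product argument computes the kernel as a \emph{vector space} (and its grading), valid for any choice of orthonormal basis, but the $H$-equivariance statement must instead be deduced intrinsically, as in the previous paragraph, from $H$-invariance of $\|x\|$ and the canonical triviality of $\Bigwedge^0_{\C}\C^n$. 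With those two inputs the lemma follows, and this is precisely the form in which Lemma \ref{lem index n} will apply it, with $n = \dim\kn$ and $H = H_M$.
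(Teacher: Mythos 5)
Your proposal is correct and takes essentially the same route as the paper: the paper likewise reduces to Lemma \ref{lem index R} by separation of variables, concluding that the kernel is one-dimensional, spanned in degree zero by $x\mapsto e^{-\|x\|^2/2}$, and then deduces triviality of the $H$-action from the fact that $H$ acts isometrically. Your additional remarks — that the tensor factorisation is basis-dependent and hence only computes the kernel as a graded vector space, and that the $H$-invariance must instead be read off from the invariance of $\|x\|$ together with the canonical triviality of $\Bigwedge^0_{\C}\C^n$ — simply make explicit points the paper leaves implicit.
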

\begin{proof}
It follows from Lemma \ref{lem index R} that $\ker(D^{\R^n} -ic\circ i) \cap L^2(\R^n, \Bigwedge_{\C} \C^n)$ is one-dimensional, spanned in degree zero by the function
\[
x\mapsto e^{-\|x\|^2/2}.
\]
Since $H$ acts isometrically, this function is $H$-invariant.
\end{proof}

\begin{proofof}{Lemma \ref{lem index n}}
Via a complex-linear isomorphism $\kn^- \oplus \kn^+ \cong \C^s$ mapping $\kn^+$ to $\R^s$, the operator  $D^{\kn} -ic\circ J_{\zeta}$ corresponds to the operator $D^{\R^n} -ic\circ i$ in Lemma \ref{lem index Rn}. That lemma therefore implies that the kernel of the operator $D^{\kn} -ic\circ J_{\zeta}$ intersected with $L^2(\kn, \Bigwedge_{J_{\zeta}} \kn^- \oplus \kn^+)$ is one-dimensional in even degree, zero-dimensional in odd degree, and the action by $H_M$ on this kernel is trivial.
\end{proofof}

\begin{remark}
From a higher viewpoint, Lemma \ref{lem index n} is a special case of Bott periodicity. Indeed, the operator $D^{\kn} -ic\circ J_{\zeta}$ is of Callias-type \cite{Bunke95, Kucerovsky01}. Therefore, it is Fredholm, and by Proposition 2.18 in \cite{Bunke95} or Lemma 3.1 in \cite{Kucerovsky01}, its index is the Kasparov product of the classes
\[
\begin{split}
[D^{\kn}] &\in KK_H(C_0(\kn), \C)\quad \text{and}\\ 
[ic\circ J_{\zeta}] &\in KK_H(\C, C_0(\kn)).
\end{split}
\]
The latter class is the Bott generator, and the fact that the index of $D^{\kn} -ic\circ J_{\zeta}$ is the trivial representation of $H_M$ means that $[D^{\kn}]$ is the inverse of that class.
\end{remark}

\subsection{Proof of Proposition \ref{prop indices mun}} \label{sec pf indices mun}

We cannot directly apply Theorem \ref{thm htp} to prove Proposition \ref{prop indices mun}, because the operators involved are not of the kind considered in Subsection \ref{sec deformed Dirac}. (The deformation terms are not given by Clifford multiplication by vector fields on $\kn$.) So we need slightly modified arguments.

Let $\|\cdot\|_{\kn}$ be the norm function on $\kn$. 
\begin{lemma}\label{lem square Rn}
We have
\[
(D^{\kn}-ic\circ J_{\zeta})^2 \geq (D^{\kn})^2 + \|\cdot\|_{\kn}^2 - \dim(\kn).
\]
\end{lemma}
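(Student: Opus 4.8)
The plan is a direct pointwise computation on the vector space $\kn$, expanding the square of the deformed operator. The key preliminary observation is that $J_{\zeta}$ is an \emph{orthogonal} transformation of $\kn^-\oplus\kn^+$: it is $\theta$ composed with $|\ad(\zeta)|^{-1}\ad(\zeta)$, and $\theta$ is orthogonal while $\ad(\zeta)$ is symmetric and invertible on $\kn^-\oplus\kn^+$, so $|\ad(\zeta)|^{-1}\ad(\zeta)$ is orthogonal. Hence $\|J_{\zeta}Y\|=\|Y\|$ for all $Y$, and viewing $c\circ J_{\zeta}$ as multiplication by the endomorphism-valued function $Y\mapsto c(J_{\zeta}Y)$, we get $(c\circ J_{\zeta})^2=-\|\cdot\|_{\kn}^2$ as a multiplication operator. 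Expanding,
\[
(D^{\kn}-ic\circ J_{\zeta})^2=(D^{\kn})^2-i\{D^{\kn},c\circ J_{\zeta}\}-(c\circ J_{\zeta})^2=(D^{\kn})^2+\|\cdot\|_{\kn}^2-i\{D^{\kn},c\circ J_{\zeta}\},
\]
so the claim reduces to the bound $\|\{D^{\kn},c\circ J_{\zeta}\}\|\le\dim(\kn)$ as a (self-adjoint) operator.

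Next I would compute the anticommutator using $D^{\kn}=\sum_{j}c(Y_j)\partial_{Y_j}$ for an orthonormal basis $\{Y_1,\dots,Y_s\}$ of $\kn$ and the Leibniz rule. Since $\partial_{Y_j}$ applied to the linear function $Y\mapsto c(J_{\zeta}Y)$ gives the constant endomorphism $c(J_{\zeta}Y_j)$,
\[
\{D^{\kn},c\circ J_{\zeta}\}=\sum_{j=1}^{s}c(Y_j)c(J_{\zeta}Y_j)+\sum_{j=1}^{s}\bigl(c(Y_j)c(J_{\zeta}Y)+c(J_{\zeta}Y)c(Y_j)\bigr)\partial_{Y_j}.
\]
The crucial point is that the first-order term vanishes: $c(Y_j)c(J_{\zeta}Y)+c(J_{\zeta}Y)c(Y_j)=-2(Y_j,J_{\zeta}Y)$, and $(Y_j,J_{\zeta}Y)=0$ because $Y_j\in\kn^+$ while $J_{\zeta}Y\in\kn^-$, and $\kn^+$ and $\kn^-$ are orthogonal for the inner product built from the Killing form and $\theta$ (they lie in the mutually orthogonal root spaces $\kg_{\beta}$ and $\kg_{-\beta}$). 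So $\{D^{\kn},c\circ J_{\zeta}\}$ is the zeroth-order operator $\sum_{j}c(Y_j)c(J_{\zeta}Y_j)$.

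Finally I would bound this endomorphism. Each $c(Y_j)$ is skew-adjoint with $c(Y_j)^2=-\|Y_j\|^2=-1$, hence unitary, and likewise each $c(J_{\zeta}Y_j)$; moreover $c(Y_j)$ and $c(J_{\zeta}Y_j)$ anticommute (again by $\kn^+\perp\kn^-$), which makes $ic(Y_j)c(J_{\zeta}Y_j)$ self-adjoint with operator norm $\le1$. Summing over the $s=\dim(\kn)$ basis vectors gives $\bigl\|i\sum_{j}c(Y_j)c(J_{\zeta}Y_j)\bigr\|\le\dim(\kn)$, so $-i\{D^{\kn},c\circ J_{\zeta}\}\ge-\dim(\kn)$, and combining with the expansion above yields the inequality. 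There is no real obstacle here; the only thing to watch is the orthogonality $\kn^+\perp\kn^-$, which simultaneously kills the would-be first-order error term and makes the Clifford generators anticommute — everything else is the standard Bochner-type identity for a Dirac operator deformed by a Clifford term of constant length.
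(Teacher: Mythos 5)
Your proof is correct and is essentially the paper's argument: both expand the square of the deformed operator directly, with the potential term $\|\cdot\|_{\kn}^2$ coming from $\|J_{\zeta}Y\|=\|Y\|$ and the error $-\dim(\kn)$ coming from the zeroth-order anticommutator term. The only presentational difference is that the paper passes to the model $(\R^n\subset\C^n,\; c\circ i)$ and quotes the one-dimensional harmonic-oscillator computation before factorising, whereas you work intrinsically on $\kn$ and make explicit the orthogonality $\kn^+\perp\kn^-$ that kills the first-order cross terms.
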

\begin{proof}
By a direct computation, we have
\[
(D^{\R}-ic\circ i)^2 = -\frac{d^2}{dx^2} + \mattwo{x^2-1}{0}{0}{x^2+1} \geq-\frac{d^2}{dx^2} + x^2-1,
\]
with respect to the basis $\{e_0, e_1\}$ of $\Bigwedge_{\C}\C$.
By factorising $D^{\R^n} - ic\circ i$, we deduce that
\[
(D^{\R^n}-ic\circ i)^2  \geq- \sum_{j=1}^n\frac{\partial ^2}{\partial x_j^2} + \|x\|^2-n.
\]
\end{proof}

For $t \in [0,1]$, set
\[
v^t := tv^{\kn} + (1-t)\tilde v^{\kn}.
\]
Define the map  $\xi^{\kn}\colon \kn \to \kn^- \oplus \kn^+$ by
\[
\xi^{\kn}(Y)=-[\xi,Y],
\]
for $Y \in \kn$.
\begin{lemma}\label{lem square mut}
We have for all $t \in [0,1]$, for any orthonormal basis $\{Y_1, \ldots, Y_{s}\}$ of $\kn$,
\[
(D^{\kn} -ic(v^t))^2 = (D^{\kn}-ic\circ J_{\zeta})^2 + t^2 \|\xi^{\kn}\|^2 - it \sum_{j} c(Y_j)c(Y_j(\xi^{\kn})) +2it \calL_{\xi}.
\]
\end{lemma}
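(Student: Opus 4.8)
The statement is a formal operator identity, and the plan is to prove it by directly squaring $D^{\kn}-ic(v^t)$ after rewriting it conveniently. Since $v^{\kn}=\xi^{\kn}+\tilde v^{\kn}$, we have $v^t=tv^{\kn}+(1-t)\tilde v^{\kn}=\tilde v^{\kn}+t\xi^{\kn}$, and by Lemma~\ref{lem JY} together with linearity of the Clifford action $c(\tilde v^{\kn})=c\circ J_{\zeta}$, so that
\[
D^{\kn}-ic(v^t)=\bigl(D^{\kn}-ic\circ J_{\zeta}\bigr)-it\,c(\xi^{\kn}).
\]
Squaring this gives
\[
\bigl(D^{\kn}-ic(v^t)\bigr)^2=\bigl(D^{\kn}-ic\circ J_{\zeta}\bigr)^2-t^2c(\xi^{\kn})^2-it\Bigl(\bigl(D^{\kn}-ic\circ J_{\zeta}\bigr)c(\xi^{\kn})+c(\xi^{\kn})\bigl(D^{\kn}-ic\circ J_{\zeta}\bigr)\Bigr),
\]
and it remains to identify the last two terms.

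The term $-t^2c(\xi^{\kn})^2$ is immediate: pointwise $c(\xi^{\kn})^2=-\|\xi^{\kn}\|^2$ by the Clifford relation, so $-t^2c(\xi^{\kn})^2=t^2\|\xi^{\kn}\|^2$. For the anticommutator I would first observe that the $c\circ J_{\zeta}$ part drops out. At a point $Y\in\kn$ the vector $J_{\zeta}Y$ lies in $\kn^-$ (this is what Lemma~\ref{lem JY} identifies $\tilde v^{\kn}(Y)$ with), while $\xi^{\kn}(Y)=-[\xi,Y]$ lies in $\kn^+$ because $\kt_M$ normalises $\kn$; and $\kn^+\perp\kn^-$ for the chosen inner product, since the Killing form vanishes on $\kn^+\times\kn^+$. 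Hence $c(J_{\zeta}Y)c(\xi^{\kn}(Y))+c(\xi^{\kn}(Y))c(J_{\zeta}Y)=-2\langle J_{\zeta}Y,\xi^{\kn}(Y)\rangle=0$, so the anticommutator reduces to $D^{\kn}c(\xi^{\kn})+c(\xi^{\kn})D^{\kn}$.

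Computing $D^{\kn}c(\xi^{\kn})+c(\xi^{\kn})D^{\kn}$ is where the actual work lies. Writing $D^{\kn}=\sum_jc(Y_j)\partial_{Y_j}$ (constant coefficients on the vector space $\kn$) and using that $Y\mapsto\xi^{\kn}(Y)=-[\xi,Y]$ is linear, the Leibniz rule splits this operator into a zeroth-order part $\sum_jc(Y_j)c(Y_j(\xi^{\kn}))$, coming from differentiating the bundle endomorphism $Y\mapsto c(\xi^{\kn}(Y))$, and a first-order part $\sum_j\bigl(c(Y_j)c(\xi^{\kn}(Y))+c(\xi^{\kn}(Y))c(Y_j)\bigr)\partial_{Y_j}=-2\sum_j\langle Y_j,\xi^{\kn}(Y)\rangle\partial_{Y_j}$, again by the Clifford relation. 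The latter is $-2$ times the derivative along the vector field $Y\mapsto\xi^{\kn}(Y)=-[\xi,Y]$, which is the fundamental vector field of $\xi$ on $\kn$; in the notation of the lemma this first-order operator is $-2\calL_{\xi}$. Collecting,
\[
D^{\kn}c(\xi^{\kn})+c(\xi^{\kn})D^{\kn}=\sum_jc(Y_j)c(Y_j(\xi^{\kn}))-2\calL_{\xi},
\]
and substituting this, together with $-t^2c(\xi^{\kn})^2=t^2\|\xi^{\kn}\|^2$, into the expansion above yields the asserted formula.

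The delicate point is exactly this last bookkeeping: fixing the sign of the first-order term, recognising it precisely as $\calL_{\xi}$ (so that the fibrewise action makes no extra contribution), and being careful with the identification $\varphi^Y_0$ used to trivialise the bundle over $\kn$. The independence of the right-hand side from the orthonormal basis $\{Y_1,\dots,Y_s\}$ is automatic, since both $D^{\kn}$ and $\sum_jc(Y_j)c(Y_j(\xi^{\kn}))$ are basis-independent. The content of the identity, to be used in the proof of Proposition~\ref{prop indices mun}, is that the term $t^2\|\xi^{\kn}\|^2$ is nonnegative while the remaining two extra terms are bounded on each $H_M$-isotypical summand, so that $(D^{\kn}-ic(v^t))^2$ is bounded below by the (coercive) estimate for $(D^{\kn}-ic\circ J_{\zeta})^2$ minus a bounded operator, uniformly in $t$.
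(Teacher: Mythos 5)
Your proposal is correct and follows essentially the same route as the paper: rewrite $v^t(Y)=t\xi^{\kn}(Y)+J_{\zeta}Y$ via Lemma \ref{lem JY}, expand the square, kill the cross term $(c\circ J_{\zeta})c(\xi^{\kn})+c(\xi^{\kn})(c\circ J_{\zeta})$ by the pointwise orthogonality of $J_{\zeta}Y\in\kn^-$ and $\xi^{\kn}(Y)\in\kn^+$, and compute $D^{\kn}c(\xi^{\kn})+c(\xi^{\kn})D^{\kn}$ by the Leibniz and Clifford relations. You are in fact more explicit than the paper (which dismisses the last step as "a direct computation"), and you rightly flag the only genuinely delicate point, namely matching the first-order term $-2\nabla_{\xi^{\kn}}$ with $-2\calL_{\xi}$ up to the fibrewise $\ad(\xi)$-action — a bounded discrepancy that in any case does not affect the use of the lemma in Lemma \ref{lem square munt}.
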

\begin{proof}
By Lemma \ref{lem JY}, we have for all $Y \in \kn$,
\[
v^t(Y) = t\xi^{\kn}(Y)+J_{\zeta}Y.
\]
So
\[
(D^{\kn} -ic(v^t))^2 =(D^{\kn}-ic\circ J_{\zeta})^2 + t^2 \|\xi^{\kn}\|^2 -it\bigl( (D^{\kn}-ic\circ J_{\zeta})c(\xi^{\kn}) + c(\xi^{\kn})(D^{\kn}-ic\circ J_{\zeta}) \bigr).
\]
Now if $Y \in \kg_{\beta}$, for $\beta \in \Sigma^+$, then $J_{\zeta}Y \in \kg_{-\beta}$, whereas $\xi^{\kn}(Y) \in \kg_{\beta}$. Since the adjoint action by $\ka$ is symmetric, it follows that 
$
J_{\zeta}Y \perp \xi^{\kn}(Y).
$
So
\[
(c\circ J_{\zeta})  c(\xi^{\kn}) +  c(\xi^{\kn}) (c\circ J_{\zeta}) = 0.
\]
Hence
\[
(D^{\kn}-ic\circ J_{\zeta})c(\xi^{\kn}) + c(\xi^{\kn})(D^{\kn}-ic\circ J_{\zeta}) = D^{\kn}c(\xi^{\kn}) + c(\xi^{\kn})D^{\kn}.
\]
Let $\{Y_{1}, \ldots, Y_{s}\}$ be an orthonormal basis of $\kn$. Then by a direct computation,
\[
 D^{\kn}c(\xi^{\kn}) + c(\xi^{\kn})D^{\kn} = \sum_{j} c(Y_j)c(Y_j(\xi^{\kn})) -2 \calL_{\xi}.
\]
\end{proof}

\begin{lemma}\label{lem square munt}
For all $\delta \in \hat K$, there is a constant $C_{\delta} > 0$ such that
on $L^2(\kn, \Bigwedge_{J_{\zeta}}(\kn^- \oplus \kn^+))_{\delta}$, we have for all $t \in [0,1]$,
\[
(D^{\kn} -ic(v^t))^2 \geq (D^{\kn})^2 + \|\cdot \|_{\kn}^2 + t^2\|\xi^{\kn}\|^2 - C_{\delta}.
\]
\end{lemma}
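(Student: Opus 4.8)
The plan is to bound $(D^{\kn} - ic(v^t))^2$ from below by combining Lemma~\ref{lem square Rn} with Lemma~\ref{lem square mut}, controlling the two ``error'' terms $-it\sum_j c(Y_j)c(Y_j(\xi^{\kn}))$ and $2it\calL_{\xi}$ on each $K$-isotypical component. First I would invoke Lemma~\ref{lem square mut} to write, for $t \in [0,1]$,
\[
(D^{\kn} -ic(v^t))^2 = (D^{\kn}-ic\circ J_{\zeta})^2 + t^2 \|\xi^{\kn}\|^2 - it \sum_{j} c(Y_j)c(Y_j(\xi^{\kn})) +2it \calL_{\xi},
\]
and then substitute the estimate $(D^{\kn}-ic\circ J_{\zeta})^2 \geq (D^{\kn})^2 + \|\cdot\|_{\kn}^2 - \dim(\kn)$ from Lemma~\ref{lem square Rn}. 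This already gives all the ``good'' positive terms $(D^{\kn})^2 + \|\cdot\|_{\kn}^2 + t^2\|\xi^{\kn}\|^2$ on the right, so what remains is to absorb the two error terms into a constant $C_\delta$ on $L^2(\kn, \Bigwedge_{J_{\zeta}}(\kn^- \oplus \kn^+))_{\delta}$.

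The term $-it\sum_j c(Y_j)c(Y_j(\xi^{\kn}))$ is the easiest: since $\xi^{\kn}(Y) = -[\xi,Y]$ is \emph{linear} in $Y$, the derivative $Y_j(\xi^{\kn})$ is the constant element $-[\xi,Y_j] \in \kn^- \oplus \kn^+$, so $\sum_j c(Y_j)c(Y_j(\xi^{\kn}))$ is a fixed bundle endomorphism of $\Bigwedge_{J_{\zeta}}(\kn^-\oplus\kn^+)$ with norm bounded by a constant depending only on $\xi$ and $\dim\kn$; multiplying by $t \in [0,1]$ keeps it bounded by that constant uniformly in $t$, and it contributes to $C_\delta$ (in fact the bound here is independent of $\delta$). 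The term $2it\calL_{\xi}$ is where the $\delta$-dependence enters: $\calL_{\xi}$ is the Lie derivative along the vector field induced by $\xi \in \kt_M \subset \kk$, and on a single $K$-isotypical component $L^2(\cdots)_{\delta}$ it acts as a skew-adjoint operator whose norm is bounded by a constant depending on $\delta$ (the relevant weights of $\delta$ paired with $\xi$). Since $\xi \in \kt_M$ normalizes the $H_M$-action but the ambient symmetry group acting on $\kn$ is compact, $\calL_\xi$ preserves each $\delta$-component and is bounded there; multiplying by $t$ does not spoil this. Collecting these two bounds into $C_\delta := \dim(\kn) + \|\sum_j c(Y_j)c(Y_j(\xi^{\kn}))\| + 2\|\calL_\xi|_{L^2(\cdots)_\delta}\|$ gives the claimed inequality.

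I expect the main (mild) obstacle to be making precise in what sense $\calL_\xi$ is a bounded operator on each isotypical component and why it preserves that component, since $\xi$ lies in $\kk$ but is only a vector in the Cartan of $\km$; the point is that the full isometric action on $\kn$ through which we decompose $L^2(\kn)$ is by a compact group (a copy of $H_M$ acting together with the rotations coming from the almost complex structure, or more simply one works directly with the $K$-action on $E$ restricted and uses that $\xi \in \kk$), so $\calL_\xi$ commutes with the decomposition into $\hat K$-types and is bounded on each finite-multiplicity piece by standard representation-theoretic estimates, exactly as $\calL_{X_j}$ was handled in the proof of Lemma~\ref{lem est comm}. Once these boundedness statements are in place, the inequality follows by simply adding the lower bounds term by term, and no further analysis is needed.
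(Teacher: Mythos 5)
Your proposal is correct and follows essentially the same route as the paper: combine Lemma \ref{lem square mut} with the lower bound of Lemma \ref{lem square Rn}, then absorb the zeroth-order term $-it\sum_j c(Y_j)c(Y_j(\xi^{\kn}))$ (bounded because $\xi^{\kn}=-\ad(\xi)$ is linear, so its derivatives are constant) and the term $2it\calL_{\xi}$ (bounded on each isotypical component) into $C_{\delta}$. The paper's proof is exactly this, stated more tersely.
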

\begin{proof}
By Lemmas \ref{lem square Rn} and \ref{lem square mut}, we have for all $t \in [0,1]$,
\[
(D^{\kn} -ic(v^t))^2 \geq  (D^{\kn})^2 + \|\cdot\|_{\kn}^2 - \dim(\kn) + t^2 \|\xi^{\kn}\|^2 - it \sum_{j} c(Y_j)c(Y_j(\xi^{\kn})) +2it \calL_{\xi}.
\]
On isotypical components, the Lie derivative $\calL_{\xi}$ is bounded. And for all $j$,
\[
c(Y_j)c(Y_j(\xi^{\kn})) \geq -\|Y_j(\xi^{\kn})\|.
\]
Since $\xi^{\kn} = -\ad(\xi)$ is  a linear map from $\kn$ to itself, its derivatives are constant.
\end{proof}

\begin{proofof}{Proposition \ref{prop indices mun}}
Fix $\delta \in \hat K$. Let $C_{\delta}$ be as in Lemma \ref{lem square munt}.
For $t \in \R$, set $D_t := D^{\kn} - ic(v^t)$. This operator is essentially self-adjoint by a finite propagation speed argument, see e.g.\ Proposition 10.2.11 in \cite{Higson00}. Also, it is $K$-equivariant, so it preserves isotypical components. Hence the operator
\[
 F_t := \frac{D_t}{D_t + iC_{\delta}} \quad \in  \cB(L^2(\kn, \Bigwedge_{J_{\zeta}}(\kn^- \oplus \kn^+))_{\delta})
\]
is well-defined. This operator is Fredholm for all $t$ by Lemma \ref{lem square munt}. It has the same kernel as $D_t$, so the claim follows if we prove that the path $t \mapsto F_t$ is continuous in the operator norm.

To prove this, we first show that $c(\xi^{\kn})(D_t + iC_{\delta})^{-1}$ is bounded. Let $C_{\xi}>0$ be a constant such that 
\[
\|\xi^{\kn}\| \leq C_{\xi}\|\cdot\|_{\kn}.
\]
(Explicitly, we can take $C_{\xi} = |\langle \alpha, \xi \rangle|$ for the root $\alpha$ of $(\kg^{\C}, \kh^{\C})$ for which this number is maximal.) Let $s \in \Gamma^{\infty}_c(\kn, \Bigwedge_{J_{\zeta}}(\kn^- \oplus \kn^+))_{\delta}$. Then by Lemma \ref{lem square munt} and self-adjointness of $D_t$,
\[
\|c(\xi^{\kn})s\|_{L^2}^2 \leq C_{\xi}^2 \bigl\| \|\cdot\|_{\kn}s \bigr\|_{L^2}^2 \leq C_{\xi}^2(\|D_t s\|_{L^2}^2 + C_{\delta}\|s\|)_{L^2}^2 =C_{\xi}^2 \|(D_t + iC_{\delta})s\|_{L^2}^2.
\]
In other words,
\[
\|c(\xi^{\kn})(D_t + iC_{\delta})^{-1}(D_t + iC_{\delta})s\|_{L^2}^2 \leq C_{\xi}^2 \|(D_t + iC_{\delta})s\|_{L^2}^2.
\]
Since $(D_t + iC_{\delta})$ is invertible, every element of $L^2(\kn, \Bigwedge_{J_{\zeta}}(\kn^- \oplus \kn^+))_{\delta}$ is of the form $(D_t + iC_{\delta})s$ for some $s \in (D_t + iC_{\delta})s$. So we conclude that $c(\xi^{\kn})(D_t + iC_{\delta})^{-1}$ is indeed bounded, with norm at most $C_{\xi}^2$.

Now let $t_1, t_2 \in \R$. Then
\[
F_{t_1} - F_{t_2} = C_{\delta} (t_2 - t_1)(D_{t_1}+iC_{\delta})^{-1}c(\xi^{\kn})(D_{t_2}+iC_{\delta})^{-1}.
\]
By the above argument, the operator 
\[
(D_{t_1}+iC_{\delta})^{-1}c(\xi^{\kn})(D_{t_2}+iC_{\delta})^{-1}
\]
is bounded uniformly in $t_1$ and $t_2$. We conclude that the path $t \mapsto F_t$ is indeed continuous in the operator norm.
\end{proofof}

\section{Rewriting $\pi|_K$} \label{sec pi K}

In this section, we rewrite the restricted representation $\pi|_K$ as follows. Let $\rho^M_c$ and $\rho^M_n$ be half the sums of the compact and noncompact roots in $R^+_M$, respectively.
\begin{proposition} \label{prop rewrite piK}
We have
\[
\pi|_K = \bigl(L^2(K)  \otimes \bigl(\Bigwedge_{J_{\ks_M}}\ks_M\bigr)^{-1} \otimes \Bigwedge_{J_{\kk_M/\kt_M}} \kk_M/\kt_M \otimes \C_{\lambda - \rho^M_c +\rho^M_n} \boxtimes \chi_M\bigr)^{H_M}.
\]
\end{proposition}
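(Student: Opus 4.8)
The plan is to reduce the statement to Blattner's formula for the limit of discrete series representation $\pi^M_{\lambda, R^+_M, \chi_M}$ of $M$, since that is the representation-theoretic input explicitly listed as available. First I would analyse the left-hand side. By definition $\pi = \Ind_P^G(\pi^M_{\lambda, R^+_M, \chi_M} \otimes e^\nu \otimes 1_N)$, and restriction to $K$ commutes with parabolic induction in the following sense: by the Iwasawa decomposition $G = KAN$, we have a $K$-equivariant identification $G/P \cong K/K_M$, and hence $\pi|_K \cong \Ind_{K_M}^K(\pi^M_{\lambda, R^+_M, \chi_M}|_{K_M})$, independently of $\nu$ (this is where Remark \ref{rem basic} is used). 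In the $L^2$-model this reads
\[
\pi|_K = \bigl(L^2(K) \otimes \pi^M_{\lambda, R^+_M, \chi_M}|_{K_M}\bigr)^{K_M}.
\]
So the proposition will follow once I show
\[
\pi^M_{\lambda, R^+_M, \chi_M}|_{K_M} = \bigl(L^2(K_M) \otimes \bigl(\Bigwedge_{J_{\ks_M}}\ks_M\bigr)^{-1} \otimes \Bigwedge_{J_{\kk_M/\kt_M}} \kk_M/\kt_M \otimes \C_{\lambda - \rho^M_c + \rho^M_n} \boxtimes \chi_M\bigr)^{H_M},
\]
using that for the closed subgroup $H_M < K_M < K$ one has $(L^2(K)\otimes W)^{H_M} = (L^2(K)\otimes (L^2(K_M)\otimes W)^{H_M})^{K_M}$ for an $H_M$-representation $W$.

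The second step is to identify the right-hand side of the displayed reduction with the $K_M$-character predicted by Blattner's formula. Recall Blattner's formula (Hecht--Schmid, \cite{HS75}) expresses the multiplicity of a $K_M$-type $\mu$ in $\pi^M_{\lambda, R^+_M, \chi_M}$ as an alternating sum over the Weyl group of $K_M$ of partition functions counting ways of writing $w(\mu + \rho^M_c) - (\lambda + \rho^M_n) \cdot (\text{correction})$ as a sum of noncompact positive roots. In character-theoretic terms, after inducing up from $T_M$ (or rather $H_M$, to account for the disconnectedness and the central character $\chi_M$), this is exactly the statement that, as a virtual character of $K_M$,
\[
\pi^M_{\lambda, R^+_M, \chi_M}|_{K_M} = \Ind_{H_M}^{K_M}\Bigl( \bigl(\Bigwedge_{J_{\ks_M}}\ks_M\bigr)^{-1} \otimes \C_{\lambda - \rho^M_c + \rho^M_n} \boxtimes \chi_M\Bigr)
\]
in the appropriate completed sense, where the induction from $H_M$ (whose identity component is $T_M$) already incorporates the Weyl denominator for $K_M$, i.e. an implicit tensoring by $\Bigwedge_{J_{\kk_M/\kt_M}}\kk_M/\kt_M$ divided by $\C_{\rho^M_c}$; writing it out with the $\Bigwedge_{J_{\kk_M/\kt_M}}\kk_M/\kt_M$ factor present and absorbing the $\C_{\rho^M_c}$ shift into the weight $\lambda - \rho^M_c + \rho^M_n$ gives precisely the claimed formula. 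Concretely I would: (i) expand $(L^2(K_M)\otimes -)^{H_M} = \Ind_{H_M}^{K_M}(-)$ and use the Weyl character formula / Kostant's branching machinery to rewrite $\Ind_{H_M}^{K_M}(\C_\eta) \otimes \Bigwedge_{J_{\kk_M/\kt_M}}\kk_M/\kt_M$ in terms of alternating sums over $W(K_M, T_M)$ of $\C_{w(\eta+\rho^M_c)}$; (ii) multiply in the formal geometric series $\bigl(\Bigwedge_{J_{\ks_M}}\ks_M\bigr)^{-1} = \bigotimes_{\alpha \in R_n^+}\sum_{n\ge 0}\C_{n\alpha}$, whose character is $\prod_{\alpha\in R_n^+}(1 - e^{\alpha})^{-1}$ up to the $H_M$-action subtlety; and (iii) recognise the resulting expression, with $\eta = \lambda - \rho^M_c + \rho^M_n$, as the right-hand side of Blattner's formula for the $K_M$-multiplicities of $\pi^M_{\lambda, R^+_M, \chi_M}$. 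The disconnectedness of $H_M$ and $K_M$ (via the central character $\chi_M$) is handled by noting $H_M = T_M Z_M'$ and $K_M = K_{M,0}\cdot Z_M'$, so all characters factor through compatibly and the formula for the connected case in \cite{HS75} applies on each component.

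The main obstacle I anticipate is bookkeeping rather than conceptual: matching the $\rho$-shifts exactly. Blattner's formula is traditionally stated with the Harish--Chandra parameter $\lambda$ and involves $\rho^M_c$ and $\rho^M_n$ in a specific combination, and one must be careful that the line bundle weight here is $\lambda - \rho^M$ (with $\rho^M = \rho^M_c + \rho^M_n$) whereas the weight appearing in the proposition is $\lambda - \rho^M_c + \rho^M_n$; the difference $2\rho^M_n$ is precisely absorbed by the relation $\bigl(\Bigwedge_{J_{\ks_M}}\ks_M\bigr)^{-1} = \C_{2\rho^M_n}\otimes(\text{the "positive expansion" series in } -\alpha)$ noted before Proposition \ref{prop lin index}, together with the fact (used in Lemma \ref{lem index p}) that $(\xi,\alpha)>0$ on $R_n^+$ selects this expansion. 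A secondary subtlety is that all these objects live in the completion $\hat R(K_M)$, so "equality of characters" must be interpreted in that completed sense; since Blattner's formula has this form verbatim (as a not-necessarily-finite alternating sum that nonetheless produces finite multiplicities), this is legitimate, but I would state carefully that the manipulations (geometric series, Weyl alternating sums) take place in $\hat R(H_M)$ and $\hat R(K_M)$ and are justified by the convergence inherent in Blattner's formula. Once the shift matching is done, assembling the two steps via Iwasawa and double induction gives the proposition.
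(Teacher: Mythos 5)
Your proposal is correct and follows essentially the same route as the paper: first reduce via the Iwasawa decomposition to $\pi|_K = \bigl(L^2(K)\otimes \pi^M_{\lambda,R^+_M,\chi_M}\bigr)^{K_M}$, then invoke Blattner's formula at the level of $M$ in the form $\pi^M_{\lambda,R^+_M,\chi_M}|_{K_M} = \bigl(L^2(K_M)\otimes (\Bigwedge_{J_{\ks_M}}\ks_M)^{-1}\otimes \Bigwedge_{J_{\kk_M/\kt_M}}\kk_M/\kt_M\otimes \C_{\lambda-\rho^M_c+\rho^M_n}\boxtimes\chi_M\bigr)^{H_M}$, and conclude by induction in stages and $H_M = T_MZ_M$. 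The only difference is that the paper cites this reformulation of Blattner's formula directly (Lemma 5.4 of Paradan, Lemma \ref{lem Blatt Par}, extended to the disconnected group $M$ in Proposition \ref{prop Blatt disconn} via Lemma \ref{lem ind tensor}), whereas you propose to re-derive it by the Weyl-character-formula and geometric-series manipulations, which is legitimate but reproves the cited lemma.
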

Together with Propositions \ref{prop lin index} and \ref{prop linearise}, this will allow us to prove Theorem \ref{main theorem} in Subsection \ref{sec pf main thm}.

\subsection{Rewriting Blattner's formula}

In this section only, suppose that $G$ is a Lie group satisfying the assumptions made in Subsection \ref{sec limit ds}. In particular, we suppose that $G$ has discrete series and limits of discrete series representations.

Paradan gave the following reformulation of Blattner's formula. 
\begin{lemma}[Paradan]\label{lem Blatt Par}
Let $\pi^{G_0}_{\lambda, R^+_G}$ be a discrete series or limit of discrete series representation of the connected group $G_0$, with parameters $(\lambda, R^+_G)$ as in Subsection \ref{sec limit ds}. Then 
\[
\pi^{G_0}_{\lambda, R^+_G}|_{K_0} = \bigl( L^2(K_0) \otimes (\Bigwedge_{J_{\ks}} \ks)^{-1} \otimes \Bigwedge_{J_{\kk/\kt}} \kk/\kt \otimes \C_{\lambda - \rho_c + \rho_n}\bigr)^T.
\]
\end{lemma}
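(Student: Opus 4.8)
The plan is to derive this reformulation directly from the classical Blattner formula for $K_0$-multiplicities of (limits of) discrete series representations. Recall that Blattner's formula, in the form proved by Hecht--Schmid, states that for $\mu \in i\kt^*$ a $K_0$-dominant weight, the multiplicity of the irreducible $K_0$-representation $V_\mu$ with highest weight $\mu$ in $\pi^{G_0}_{\lambda,R^+_G}|_{K_0}$ equals
\[
\sum_{w \in W_K} (-1)^{w} \, Q\bigl( w(\mu+\rho_c) - (\lambda + \rho_n - \rho_c) \bigr),
\]
where $Q$ is the Kostant partition function counting ways to write a weight as a nonnegative integral combination of the noncompact roots in $R^+_G$, and $W_K = N_{K_0}(T)/T$ is the compact Weyl group with $\rho_c,\rho_n$ the half-sums of the compact resp.\ noncompact roots in $R^+_G$. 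The first step is to rewrite this as an identity in the representation ring, or rather its completion $\hat R(K_0)$. The key observation is that the generating-function form of $Q$ is exactly the character of $(\Bigwedge_{J_{\ks}}\ks)^{-1} = \bigotimes_{\alpha \in R_n^+} \bigoplus_{n \ge 0} \C_{n\alpha}$ as a representation of $T$: its formal $T$-character is $\prod_{\alpha \in R_n^+}(1-e^\alpha)^{-1} = \sum_\nu Q(\nu) e^\nu$.

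Next I would reinterpret the $W_K$-alternating sum as the effect of the Weyl character formula / Weyl integration on $K_0$. Concretely, for any virtual $T$-representation $W$ whose character is $W_K$-antiinvariant after multiplication by $\prod_{\alpha \in R_c^+}(e^{\alpha/2}-e^{-\alpha/2})$ — equivalently, for any $W$ — the expression $\bigl(L^2(K_0) \otimes W\bigr)^T$ picks out precisely $\sum_\mu \Bigl(\sum_{w\in W_K}(-1)^w [\text{coeff of } e^{w(\mu+\rho_c)-\rho_c} \text{ in } \operatorname{ch} W]\Bigr) V_\mu$, because the $T$-invariants in $L^2(K_0)\otimes W$ decompose, via Frobenius reciprocity and the Weyl character formula applied term by term, into exactly this alternating sum over $W_K$ of coefficients of $W$. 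This is the standard manipulation identifying holomorphic induction from $T$ to $K_0$ (Bott's theorem / the algebraic Borel--Weil--Bott computation) with an alternating sum of partition-function values; see for instance the corresponding step in Paradan's paper. Applying this with
\[
W = \bigl(\Bigwedge_{J_{\ks}}\ks\bigr)^{-1} \otimes \Bigwedge_{J_{\kk/\kt}}\kk/\kt \otimes \C_{\lambda-\rho_c+\rho_n},
\]
and noting that $\operatorname{ch}\bigl(\Bigwedge_{J_{\kk/\kt}}\kk/\kt\bigr) = e^{-\rho_c}\prod_{\alpha\in R_c^+}(1+e^\alpha) = \prod_{\alpha\in R_c^+}(e^{\alpha/2}+e^{-\alpha/2})$, one checks that the contribution of $\Bigwedge_{J_{\kk/\kt}}\kk/\kt$ is exactly what converts the "naive" sum over $W_K$ of coefficients into the signed sum $\sum_{w}(-1)^w$ with the correct $\rho_c$-shifts; more precisely it absorbs the Weyl denominator for $K_0$. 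Carrying this out term by term against the character of $(\Bigwedge_{J_{\ks}}\ks)^{-1}\otimes \C_{\lambda-\rho_c+\rho_n}$ reproduces Blattner's formula above, proving the lemma.

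The main obstacle I anticipate is purely bookkeeping: getting all the $\rho$-shifts ($\rho_c$ versus $\rho_n$ versus $\rho_\lambda$), signs, and the distinction between highest-weight and Harish--Chandra parametrizations to line up exactly, and in particular verifying convergence/well-definedness of these infinite sums in $\hat R(K_0)$ — one must check that for each fixed $K_0$-type only finitely many terms contribute, which follows from the fact that the partition function $Q$ is supported in a cone and $\C_{\lambda-\rho_c+\rho_n}$ is a single weight, but needs to be stated. A secondary subtlety is the passage between $G$ (possibly disconnected, with the $\Ind_{G_0Z_G}^G$ in the definition of $\pi^G_{\lambda,R^+_G,\chi}$) and $G_0$: the lemma as stated is only about $G_0$ and $K_0$, so this issue is deferred, but I would remark that the disconnected case is handled afterwards by inducing up, using $H_M = T_M Z_M'$ from \eqref{eq HM} and the fact that $Z_M$ acts by the central character. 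Modulo these bookkeeping points, the proof is a direct translation of Blattner's formula via the Weyl character formula, and indeed in the paper's logic this is attributed to Paradan, so I would cite \cite{Paradan03} (and \cite{HS75} for Blattner's formula itself) for the underlying computation rather than reproving it from scratch.
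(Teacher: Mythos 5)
Your proposal is correct and follows essentially the same route as the paper: the paper's proof of this lemma is simply a citation of Lemma 5.4 in \cite{Paradan03} (whose proof is exactly the Blattner-formula/partition-function/Weyl-character-formula manipulation you sketch), together with the remark that Blattner's formula also holds for limits of discrete series by \cite{HS75}, so the argument extends verbatim. The only caveats are the bookkeeping points you already flag yourself — in particular $\Bigwedge_{J_{\kk/\kt}}\kk/\kt$ must be read as the graded (virtual) element of $\hat R(T)$ with character $\prod_{\alpha\in R_c^+}(1-e^{\alpha})$ for it to supply the Weyl denominator and the signs $(-1)^w$, and your stated character of $\Bigwedge_{J_{\kk/\kt}}\kk/\kt$ carries a spurious $e^{-\rho_c}$ — but since you defer to \cite{Paradan03} for the computation, this matches the paper's treatment.
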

For discrete series representations, this is Lemma 5.4 in \cite{Paradan03}. The proof given there extends directly to limits of discrete series, because Blattner's formula applies to those representations as well. (See the bottom of page 131 and the top of page 132 in \cite{HS75}.)

We will need a generalisation of this result to disconnected groups. Let 
\beq{eq Blatt disconn}
\pi^G_{\lambda, R^+_G, \chi} = \Ind_{G_0 Z_G}^G(\pi^{G_0}_{\lambda, R^+_G}\boxtimes \chi)
\eeq
be a discrete series or limit of discrete series representation of $G$, as in \eqref{eq lds G}.
\begin{proposition}\label{prop Blatt disconn}
We have
\begin{equation}\label{eq Blatt disconn2}
\pi^G_{\lambda, R^+_G, \chi}|_K =  \bigl( L^2(K) \otimes (\Bigwedge_{J_{\ks}} \ks)^{-1} \otimes \Bigwedge_{J_{\kk/\kt}} \kk/\kt \otimes \C_{\lambda - \rho_c + \rho_n} \boxtimes \chi \bigr)^{TZ_G}.
\end{equation}
\end{proposition}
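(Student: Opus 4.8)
<br>

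The plan is to derive Proposition \ref{prop Blatt disconn} from the connected-group case, Lemma \ref{lem Blatt Par}, by an induction-in-stages argument together with the Frobenius reciprocity / tensor identity
\[
\Ind_{G_0Z_G}^G(V)|_K = \bigl(L^2(K) \otimes V\bigr)^{K \cap (G_0Z_G)},
\]
which holds because $G = K\cdot(G_0Z_G)$ (every connected component of $G$ meets $K$, as $K$ is maximal compact and $G$ has finitely many components). First I would identify $K \cap (G_0 Z_G)$: since $Z_G$ is the centre, $K_0 Z_G \subseteq K \cap (G_0 Z_G)$, and one checks using the assumptions of Subsection \ref{sec limit ds} (in particular $G \subset G^{\C}Z(G)$, which forces $K/K_0$ to be detected by $Z_G$) that in fact $K \cap (G_0 Z_G) = K_0 Z_G$. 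Then, writing $\sigma := \pi^{G_0}_{\lambda, R^+_G} \boxtimes \chi$ for the representation of $G_0 Z_G$, I would use induction in stages through the intermediate subgroup $K_0 Z_G$:
\[
\pi^G_{\lambda, R^+_G, \chi}|_K = \Ind_{G_0 Z_G}^G(\sigma)|_K = \Ind_{K_0 Z_G}^K\bigl(\sigma|_{K_0 Z_G}\bigr),
\]
the second equality being Mackey's formula, valid because $G = K\cdot G_0 Z_G$ and $K\cap G_0Z_G = K_0 Z_G$ (so there is a single double coset).

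Next I would compute $\sigma|_{K_0 Z_G}$. On $K_0$ it is just $\pi^{G_0}_{\lambda,R^+_G}|_{K_0}$, to which Lemma \ref{lem Blatt Par} applies, giving $\bigl(L^2(K_0) \otimes (\Bigwedge_{J_{\ks}}\ks)^{-1} \otimes \Bigwedge_{J_{\kk/\kt}}\kk/\kt \otimes \C_{\lambda-\rho_c+\rho_n}\bigr)^T$; the factor $\chi$ records the $Z_G$-action, and on $T\cap Z_G$ the weight $\C_{\lambda-\rho_c+\rho_n}$ restricts compatibly with $\chi$ by the defining condition \eqref{eq chi} (note $\rho_{\lambda} = \rho_c + \rho_n$, so $\lambda-\rho_c+\rho_n$ and $\lambda-\rho_\lambda = \lambda - \rho_c - \rho_n$ agree modulo $2\rho_n$, and $2\rho_n$ is trivial on the relevant subgroup — this is exactly the consistency point that makes the $TZ_G$-invariants in \eqref{eq Blatt disconn2} well-defined). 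Substituting into the induced representation and using $\Ind_{K_0 Z_G}^K = (L^2(K)\otimes -)^{K_0 Z_G}$ together with the transitivity of taking invariants, $\bigl((L^2(K)\otimes -)^{K_0 Z_G}\text{ with an extra }T\text{-invariance}\bigr) = (L^2(K)\otimes -)^{TZ_G}$ (since $T\supset$ the part of $K_0$ that acts, and $K_0 Z_G$ together with $T$ generates a subgroup whose invariants coincide with $TZ_G$-invariants on the spaces in question), collapses everything to the right-hand side of \eqref{eq Blatt disconn2}.

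The main obstacle I anticipate is the bookkeeping of which compact subgroup one is taking invariants under at each stage, and in particular verifying $K\cap(G_0Z_G)=K_0Z_G$ and that the final double layer of invariants genuinely reduces to $TZ_G$-invariants — this requires care because $T$ is connected while $H_M$-type subgroups here can be disconnected, and one must make sure the $\chi$-twist is threaded consistently through \eqref{eq chi}. Once that is pinned down, the computation is a formal manipulation of induced representations. I expect no analytic difficulty, since all groups in sight are compact; the content is entirely the disconnectedness bookkeeping, which is why the statement is isolated as a separate proposition rather than folded into Lemma \ref{lem Blatt Par}.
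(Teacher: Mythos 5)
Your proposal is correct and follows essentially the same route as the paper: restrict the induced representation to $K$ via Mackey's formula through $K_0Z_G$ (using $G = K\cdot G_0Z_G$ and $K\cap G_0Z_G = K_0Z_G$), apply Lemma \ref{lem Blatt Par} on $K_0$, and collapse the resulting double induction to $TZ_G$-invariants by induction in stages — the last step being exactly what the paper isolates as Lemma \ref{lem ind tensor}. Your remark on the compatibility of $\C_{\lambda-\rho_c+\rho_n}$ with $\chi$ on $T\cap Z_G$ via the triviality of $e^{2\rho_n}$ matches the paper's remark following the proposition.
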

\begin{proof}
Note that
\[
\begin{split}
\pi^G_{\lambda, R^+_G, \chi}|_K &= \Ind_{G_0 Z_G}^G(\pi^{G_0}_{\lambda, R^+_G} \boxtimes \chi)|_K \\
&= \Ind_{K_0 Z_G}^K((\pi^{G_0}_{\lambda, R^+_G}\boxtimes \chi)|_{K_0Z_G})\\
&= \Ind_{K_0 Z_G}^K((\pi^{G_0}_{\lambda, R^+_G}|_{K_0}\boxtimes \chi).
\end{split}
\]
Consider the element
\[
V := (\Bigwedge_{J_{\ks}} \ks)^{-1} \otimes \Bigwedge_{J_{\kk/\kt}} \kk/\kt \otimes \C_{\lambda - \rho_c + \rho_n}  \in \hat R(T).
\]
then by Lemma \ref{lem Blatt Par},  and by Lemma \ref{lem ind tensor} below, we have
\[
\begin{split}
 \Ind_{K_0 Z_G}^K((\pi^{G_0}_{\lambda, R^+_G}|_{K_0}\boxtimes \chi)
&= \Ind_{K_0 Z_G}^K(\Ind_{T}^{K_0}(V) \boxtimes \chi) \\
&=  \Ind_{K_0 Z_G}^K(\Ind_{TZ_G}^{K_0Z_G}(V \boxtimes \chi)) \\
&= \Ind_{TZ_G}^K(V \boxtimes \chi),
\end{split}
\]
which is the right hand side of \eqref{eq Blatt disconn2}.
\end{proof}

\begin{remark}
Note that $Z_G$ acts on $\ks$ and $\kk/\kt$ trivially, hence the character $e^{2 \rho_n}$ is equal to the trivial character when restricted to $T \cap Z_G$. Therefore $e^{\lambda - \rho_c + \rho_n} = e^{\lambda - \rho}$ when restricted to $T \cap Z_G$ and the right hand side of \eqref{eq Blatt disconn2} makes sense given the condition \eqref{eq chi} on $\chi$.
\end{remark}

\begin{lemma}\label{lem ind tensor}
For all $V \in \hat R(T)$, 
  we have 
    \begin{equation}\label{eq ind tensor}
       \Ind_{T}^{K_0}(V) \boxtimes \chi =  \Ind_{TZ_G}^{K_0Z_G}(V \boxtimes \chi).
    \end{equation}
\end{lemma}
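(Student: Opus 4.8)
The statement is really an abstract fact about induction and the way $Z_G$ interacts with $K_0$: the key structural input is that $K = K_0 Z_G$ (this follows from the hypotheses, since $G$ is assumed to satisfy the conditions of Subsection \ref{sec limit ds}), that $Z_G$ is central, and that $K_0 \cap Z_G = T \cap Z_G$ is precisely the subgroup on which one must impose the compatibility condition \eqref{eq chi}. The plan is to verify \eqref{eq ind tensor} directly from the definition of normalised induction, exploiting centrality of $Z_G$. First I would note that since $Z_G$ is central and finite modulo its intersection with $K_0$, the product $K_0 Z_G$ is a group, $T Z_G$ is a subgroup of it, and the modular functions are all trivial (these are compact groups), so ``normalised'' induction is just ordinary induction and no $\rho$-shift factors appear.

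The cleanest route is to realise both sides as spaces of functions and exhibit a $K_0 Z_G$-equivariant isomorphism. For the right-hand side, $\Ind_{T Z_G}^{K_0 Z_G}(V \boxtimes \chi)$ consists of functions $f\colon K_0 Z_G \to V$ with $f(gtz) = \chi(z)^{-1} (t^{-1}\cdot f(g))$ for $t \in T$, $z \in Z_G$. Because $Z_G$ is central and $K_0 Z_G = K_0 \cdot Z_G$, such an $f$ is determined by its restriction to $K_0$: writing $g = k z_0$ with $k \in K_0$, $z_0 \in Z_G$, we get $f(kz_0) = \chi(z_0)^{-1} f(k)$, and the restriction $f|_{K_0}$ is an element of $\Ind_T^{K_0}(V)$ — one must check here that this is consistent, i.e. that if $k z_0 = k' z_0'$ then the two prescriptions agree, which boils down exactly to the condition that $\chi$ and $V$ agree as representations of $T \cap Z_G$, i.e. \eqref{eq chi}. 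Conversely, any element of $\Ind_T^{K_0}(V)$ extends to $K_0 Z_G$ by the formula $f(kz_0) := \chi(z_0)^{-1} f(k)$, again well-defined by \eqref{eq chi}. This sets up a linear bijection; one then checks it intertwines the $K_0 Z_G$-actions (left translation on both sides), which is immediate since left translation by $K_0$ matches and left translation by $z \in Z_G$ acts by $\chi(z)$ on the right-hand side and by the scalar $\chi(z)$ via the external tensor factor on the left-hand side.

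I would phrase the argument at the level of $\hat R(T)$ by linearity: it suffices to prove \eqref{eq ind tensor} for a single irreducible $V$ on which $T \cap Z_G$ acts compatibly with $\chi$ (the only $V$'s for which both sides are defined), and then both sides are genuine representations and the function-space argument above applies verbatim. Alternatively, and perhaps more slickly, one can use the projection/push-pull formula for induction together with the observation that $K_0 Z_G / T Z_G \cong K_0 / T$ as $K_0$-spaces (again because $Z_G$ is central), which identifies the underlying vector spaces, and then track the $Z_G$-action through the identification.

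The main obstacle — really the only subtle point — is the well-definedness of the identification, i.e. making sure that the two descriptions of a function on $K_0 Z_G$ via its restriction to $K_0$ are consistent precisely because of \eqref{eq chi}; this is where the hypothesis that $\chi|_{T \cap Z_G}$ matches the central character of $V$ is used, and it is the reason the lemma is stated for $V \in \hat R(T)$ satisfying that constraint rather than for arbitrary $V$. Everything else is a routine unwinding of definitions, with no analytic content, since all groups here are compact.
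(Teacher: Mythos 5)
Your proposal is correct and follows essentially the same route as the paper: the paper's (much terser) proof also rests on the identification $K_0/T \cong K_0Z_G/TZ_G$ and on checking that both sides restrict to $\Ind_T^{K_0}(V)$ on $K_0$ and to $\chi$ on the central subgroup $Z_G$. Your function-space argument simply spells out the "it can be shown" step of the paper, correctly locating the role of the compatibility condition \eqref{eq chi} in the well-definedness of the extension from $K_0$ to $K_0Z_G$.
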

\begin{proof}
  Since $K_0 / T \cong K_0 Z_G / T Z_G$, it can be shown that the restrictions of the representations on both sides of \eqref{eq ind tensor} to $K_0$ are equal to $\Ind_{T}^{K_0}(V)$. Since $Z_G$ commutes with all elements in $K_0$, the restrictions of the representations on both sides to $Z_G$ are equal to $\chi$. The lemma follows.
\end{proof}

\subsection{Proof of Theorem \ref{main theorem}} \label{sec pf main thm}


We are now prepared to prove Proposition \ref{prop rewrite piK} and Theorem \ref{main theorem}.
First, we consider the setting of Proposition \ref{prop rewrite piK}.
We have
\[
\pi|_K =  \Ind_P^G(\pi^{M}_{\lambda, R^+_M, \chi_M} \otimes e^{\nu} \otimes 1_N)|_K = \bigl(L^2(K) \otimes \pi^{M}_{\lambda, R^+_M, \chi_M}\bigr)^{K_M}.
\]
By Proposition \ref{prop Blatt disconn}, this equals
\begin{multline*}
\Bigl(L^2(K) \otimes   \bigl( L^2(K_M) \otimes (\Bigwedge_{J_{\ks_M}} \ks_M)^{-1} \otimes \Bigwedge_{J_{\kk_M/\kt_M}} \kk_M/\kt_M \otimes \C_{\lambda - \rho^M_c + \rho^M_n} \boxtimes \chi_M \bigr)^{T_MZ_M} \Bigr)^{K_M}\\
=
  \bigl( L^2(K) \otimes (\Bigwedge_{J_{\ks_M}} \ks_M)^{-1} \otimes \Bigwedge_{J_{\kk_M/\kt_M}} \kk_M/\kt_M \otimes \C_{\lambda - \rho^M_c + \rho^M_n} \boxtimes \chi_M \bigr)^{T_MZ_M}.
\end{multline*}
Proposition \ref{prop rewrite piK} now follows from \eqref{eq HM}.

Combining Propositions \ref{prop lin index} and \ref{prop rewrite piK}, we obtain a realisation of $\pi|_K$ as an index on $E$.
\begin{proposition}\label{prop lin comp}
In Proposition \ref{prop linearise}, if we take $V = \C_{\lambda- \rho^M} \boxtimes \chi_M$, then
\[
 \pi|_K = (-1)^{\dim(M/K_M)/2}
\indx_K\bigl(\Bigwedge_{J^E} TE \otimes L_V^E,  \Phi^E\bigr).
\]
\end{proposition}
By combining this with Proposition \ref{prop linearise}, we conclude that Theorem \ref{main theorem} is true.

\bibliographystyle{plain}
\bibliography{mybib}

\end{document}